\theoremstyle{plain}
\newtheorem{thm}{Theorem}[section]
\newtheorem{defn}[thm]{Definition}
\newtheorem{cor}[thm]{Corollary}
\newtheorem{lem}[thm]{Lemma}
\newtheorem{prop}[thm]{Proposition}
\newtheorem{rem}[thm]{Remark}
\newtheorem{notation}[thm]{Notation}
\newcommand{\bn}{\mathbb{B}_n}
\newcommand{\cn}{\mathbb{C}^n}
\newcommand{\dd}{\mathbb{D}}
\newcommand{\tori}{\mathbb{T}}
\newcommand{\bert}{L_{a,t}^{2}(\mathbb{B}_n)}
\newcommand{\Hol}{\mathrm{Hol}}
\newcommand{\sn}{\mathbb{S}_n}
\newcommand{\intd}{\mathrm{d}}
\newcommand{\la}{\langle}
\newcommand{\ra}{\rangle}
\newcommand{\bpartial}{\bar{\partial}}
\newcommand{\bz}{\bar{z}}
\newcommand{\bw}{\bar{w}}
\newcommand{\BTt}{T^{(t)}}
\newcommand{\BTtp}{T^{(t+1)}}
\newcommand{\BTtpt}{T^{(t+1,t)}}
\newcommand{\BHt}{H^{(t)}}
\newcommand{\BKt}{K^{(t)}}
\newcommand{\BKtp}{K^{(t+1)}}
\newcommand{\BFt}{\mathcal{F}^{(t)}}
\newcommand{\BGt}{\mathcal{G}^{(t)}}
\newcommand{\BPt}{P^{(t)}}
\newcommand{\Tr}{\mathrm{Tr}}
\newcommand{\sgn}{\mathrm{sgn}}
\newcommand{\ind}{\mathbb{N}_0^n}
\newcommand{\odd}{\mathrm{odd}}
\newcommand{\even}{\mathrm{even}}
\makeatletter\@addtoreset{equation}{section} \makeatother
\title {Helton-Howe Trace, Connes-Chern Character and Quantization}
\author{Xiang Tang\thanks{Department of Mathematics and Statistics, Washington University, St. Louis, MO, U.S.A., 63130, xtang@math.wustl.edu.}, Yi Wang
\thanks{Department of Mathematics, Chongqing University, Chongqing,  China, 400044,  wang\_yi@cqu.edu.cn}, and Dechao Zheng\thanks{Department of Mathematics, Vanderbilt University, Nashville, TN, U.S.A., 37240, dechao.zheng@vanderbilt.edu.} }
\date{}
\begin{document}
\maketitle
	
	\begin{abstract} 
	We study the Helton-Howe trace and the Connes-Chern character for Toeplitz operators on weighted Bergman spaces via the idea of quantization. We prove a local formula for the large $t$-limit of the Connes-Chern character as the weight goes to infinity. And we show that the Helton-Howe trace of Toeplitz operators is independent of the weight $t$ and obtain a local formula for the Helton-Howe trace for all weighted Bergman spaces using harmonic analysis and quantization. \\

\noindent{Keywords}: Connes-Chern character, Helton-Howe trace,  quantization, Toeplitz operator, weighted Bergman space
	\end{abstract}

	
	\section{Introduction}\label{sec: introduction}
	
	Toeplitz extensions are fundamental objects in noncommutative geometry. They are natural examples of finite summable Fredholm modules and define elements in the corresponding $K$-homology group. Trace on Toeplitz operators has been well studied with many interesting results, c.f.  \cite{Zhu:bookoperator}. Since the 70s, trace has been employed to extract geometric information of Toeplitz extension. In particular, Connes \cite[Sec. 2., Theorem 5]{Co:noncommutative} used trace on products of semi-commutators to define the Connes-Chern character of finite summable extensions.  
	
	For the unit disk $\dd$ in $\mathbb{C}$, let $L^2_a(\dd)$ be the Bergman space of $L^2$ analytic functions on $\dd$. Given $f\in \mathscr{C}^\infty(\overline{\dd})$, let $T^{(0)}_f$ be the Toeplitz operator on $L^2_a(\dd)$ associated to the symbol $f$.  The commutator $[T^{(0)}_f, T^{(0)}_g]:=T^{(0)}_fT^{(0)}_g-T^{(0)}_gT^{(0)}_f$ for $f,g\in \mathscr{C}^\infty(\overline{\dd})$ is a trace class operator. Helton and Howe \cite{HH1} discovered an interesting formula for the commutator
	\[
	\Tr\big([T^{(0)}_f, T^{(0)}_g]\big)=\frac{1}{2\pi \sqrt{-1}}\int_{\mathbb{D}} \intd f\wedge \intd g. 
	\]
	The above result is deeply connected to the Pincus function for a pair of noncommuting selfadjoint operators, c.f. \cite{Ca-Pi:exponential, Ca-Pi:mosaics, Pi:commutators}. 
	
	 For $\bn$, the commutator $[T^{(0)}_f, T^{(0)}_g]$ for two Toeplitz operators with smooth symbols $f,g$ on $L^2_a(\bn)$ is a Schatten-$p$ class operator for $p>n$.  Suppose $f_1, ..., f_{2n}\in \mathscr{C}^{\infty}(\overline{\bn}) $. Then the product of the commutators
	\[
	[T^{(0)}_{f_1}, T^{(0)}_{f_2}]\cdots [T^{(0)}_{f_{2n-1}}, T^{(0)}_{2n}]
	\]
	may not be a trace class operator. Helton and Howe \cite{HH2, Howeaftermath} made a breakthrough by considering the antisymmetric sum of $T_{f_1}, ..., T_{f_{2n}}$ defined by
	\[
	[T^{(0)}_{f_1}, ..., T^{(0)}_{f_{2n}}]:=\sum_{\tau\in S_{2n}} \operatorname{sgn}(\tau) T^{(0)}_{f_{\tau(1)}}T^{(0)}_{f_{\tau(2)}}...T^{(0)}_{f_{\tau(2n)}},
	\]
	where $S_{2n}$ is the permutations group of $2n$ elements and $\operatorname{sgn}$ is the sign of the permutation $\tau$. The following is a remarkable generalization of the Helton-Howe trace formula for the commutator of two Toeplitz operators on $L^2_a(\dd)$. 
	\begin{thm}[Helton-Howe]\label{thm:helton-howe} On the Bergman space $L^2_{a}(\bn)$ (and the Hardy space $H^2(\sn)$), the antisymmetric sum $[T^{(0)}_{f_1}, ..., T^{(0)}_{f_{2n}}]$ is a trace class operator, and 
		\begin{equation}\label{eq:Helton-Howe}
		\operatorname{Tr}\big([T^{(0)}_{f_1}, ..., T^{(0)}_{f_{2n}}]\big)=\frac{n!}{(2\pi \sqrt{-1})^n} \int_{\bn} \intd f_1\wedge \intd f_2\wedge \cdots \wedge \intd f_{2n}.
		\end{equation}
	\end{thm}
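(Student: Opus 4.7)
The plan is to establish (\ref{eq:Helton-Howe}) in three stages: first show that the antisymmetric sum is trace class; next identify both sides as totally antisymmetric multilinear functionals of $f_1,\ldots,f_{2n}$ depending only on the differentials $\intd f_i$; finally pin down the universal constant $n!/(2\pi\sqrt{-1})^n$ via a single explicit test case.

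The main obstacle is the trace-class assertion. One has $[T^{(0)}_f,T^{(0)}_g]\in\mathcal{S}_p$ for every $p>n$, so a generic product of $n$ such commutators lies only in $\mathcal{S}_q$ for $q>1$, falling just short of trace class. The plan is to expand the antisymmetric sum into a signed sum over perfect matchings of products of $n$ commutators and then exploit the extra cancellations from full antisymmetrization. Equivalently, one argues that replacing each $T^{(0)}_{f_i}$ by the multiplication operator built from its Berezin symbol alters the antisymmetric sum only by a trace-class remainder, while the replacement is an antisymmetric sum of mutually commuting operators and vanishes identically. Either route relies on refined Bergman-kernel estimates in the spirit of Berger--Coburn.

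Granted trace class, both sides of (\ref{eq:Helton-Howe}) are manifestly multilinear and completely antisymmetric in the $f_i$. Both vanish whenever some $f_i$ is constant (on the left because $T^{(0)}_c=cI$ is central and antisymmetrization annihilates central factors; on the right because $\intd c=0$), and both satisfy a Leibniz rule in each argument modulo terms killed by the trace together with the antisymmetrization (on the operator side this exploits that $T^{(0)}_{fg}-T^{(0)}_fT^{(0)}_g$ is compact with controlled Schatten norm). Each side therefore descends to a $C^\infty(\overline{\bn})$-linear totally antisymmetric functional of the one-forms $\intd f_1,\ldots,\intd f_{2n}$, i.e., a current of top degree on $\bn$. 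Both currents are invariant under biholomorphic automorphisms of $\bn$, and since $\mathrm{Aut}(\bn)$ acts transitively, each must be a constant multiple of $\int_{\bn}\intd f_1\wedge\cdots\wedge\intd f_{2n}$.

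To fix the constant I would test on the tuple $f_{2k-1}=z_k$, $f_{2k}=\bar z_k$ for $k=1,\ldots,n$. Since $T^{(0)}_{z_k}$ is multiplication by $z_k$ and $T^{(0)}_{\bar z_k}=(T^{(0)}_{z_k})^*$, and since $[T^{(0)}_{z_j},T^{(0)}_{z_k}]=[T^{(0)}_{\bar z_j},T^{(0)}_{\bar z_k}]=0$, most summands of the antisymmetric sum collapse, reducing the computation to a combinatorial multiple of $\prod_{k=1}^n[T^{(0)}_{z_k},T^{(0)}_{\bar z_k}]$ modulo the trace-class remainder above. The resulting trace is evaluated on the standard monomial orthonormal basis of $L^2_a(\bn)$ via a telescoping series, and matching it against $\int_{\bn}\intd z_1\wedge\intd\bar z_1\wedge\cdots\wedge\intd z_n\wedge\intd\bar z_n$ recovers the factor $n!/(2\pi\sqrt{-1})^n$. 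The Hardy-space statement on $\sn$ proceeds in parallel, with the Szeg\H{o} kernel replacing the Bergman kernel.
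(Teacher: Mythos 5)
Your outline takes a genuinely different route from the paper. The paper obtains Theorem \ref{thm:helton-howe} as the $t=0$ (resp.\ $t=-1$) case of the general weighted result Theorem \ref{thm: main}: it first shows the trace is independent of the weight $t$ (Lemma \ref{lem: AS sum trace t-tp}, Lemma \ref{lem: AS sum trace t-tp hardy}), then evaluates the $t\to\infty$ limit via the quantization expansion of Theorem \ref{thm: quantization bergman} and the even/odd partial antisymmetrizations of Section \ref{sec: even and odd}. Your automorphism-invariance-plus-normalization skeleton is a legitimate alternative in outline. However, two of your steps have real gaps, and they are precisely the steps that carry the analytic weight.

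First, the trace class assertion is asserted, not proved. You suggest that replacing each $T^{(0)}_{f_i}$ by a multiplication operator changes the antisymmetric sum only by a trace-class remainder; but the errors produced by pulling out a projection $P^{(0)}$ involve Hankel factors $(I-P^{(0)})M_{f_i}P^{(0)}$, which for $\mathscr{C}^2$ (indeed even $\mathscr{C}^\infty$) symbols lie only in $\mathcal{S}^p$ for $p>2n$, so a single such factor inside a length-$2n$ product does not by itself land in $\mathcal{S}^1$. Some further cancellation across the antisymmetrized remainders is required, and you have not identified its mechanism. In the paper this is exactly the crux: the semicommutator kernel is split into complex-normal and complex-tangential parts (Remark \ref{rem: CN CT}, Corollary \ref{cor: Rabfgk membership}), the normal part already has sub-$\mathcal{S}^n$ Schatten exponent, and the obstructing tangential part vanishes under antisymmetrization over $n$ symbols because there are only $n-1$ independent tangential directions at each point (Equation \eqref{eqn: Qpartial antisym =0}).

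Second, your normalization computation on the test tuple $f_{2k-1}=z_k$, $f_{2k}=\bar z_k$ breaks down. You propose to reduce the antisymmetric sum to a combinatorial multiple of $\prod_{k=1}^n[T^{(0)}_{z_k},T^{(0)}_{\bar z_k}]$ and compute its trace by a telescoping series over the monomial basis. But that product is \emph{not} trace class: as the paper records in Remark \ref{rem: even odd AS}, $\sigma_0(z_i,\bar z_i)$ is diagonal with eigenvalues $-\tfrac{n+|\alpha|-\alpha_i}{(n+|\alpha|)(n+|\alpha|+1)}$ and lies outside $\mathcal{S}^n$; the same eigenvalue asymptotics show $\prod_{k=1}^n[T^{(0)}_{z_k},T^{(0)}_{\bar z_k}]$ has eigenvalues on the order of $|\alpha|^{-n}$ over $\sim|\alpha|^{n-1}$ lattice points per degree, so its diagonal is not summable. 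The cross terms $[T^{(0)}_{z_j},T^{(0)}_{\bar z_k}]$ with $j\neq k$ are essential to the cancellation that makes the \emph{full} antisymmetric sum trace class, and their contribution cannot be decoupled into a single product trace. To pin down the constant this way you would need to carry the entire antisymmetrized combination through the basis computation, which defeats the telescoping simplification you rely on; the paper instead evaluates the constant via the leading Berezin--Toeplitz coefficient $c_{1,t}$ and $\Tr\BTt_F$ in the proof of \eqref{eqn: Tr TC1 sum limit}.
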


We observe that by Stoke's theorem, the above integral only depends on the value of $f_1, ..., f_{2n}$ on the unit sphere $\mathbb{S}^{2n-1}=\partial \overline{\bn}$, i.e. 
	\[
	\frac{n!}{(2\pi \sqrt{-1})^n} \int_{\bn} \intd f_1\wedge \intd f_2\wedge \cdots \wedge \intd f_{2n}=\frac{n!}{(2\pi \sqrt{-1})^n} \int_{\mathbb{S}^{2n-1}} f_1\intd f_2\wedge \cdots \wedge \intd f_{2n}.
	\]	
	
	The above idea of Schatten-$p$ commutators was revolutionized by Connes \cite{Co:noncommutative} into a fundamental concept in noncommutative geometry as $p$-summable Fredholm modules. And the Helton-Howe trace formula in Theorem \ref{thm:helton-howe}  inspired  Connes to his ingenious discovery of cyclic cohomology and the Chern character for $p$-summable Fredholm modules.  The building block of the Connes-Chern character is the semi-commutator
	\[
	\sigma_t(f, g)=T^{(t)}_f T^{(t)}_g-T^{(t)}_{fg}.
	\]
	Modulo constant, the Connes-Chern character for the Toeplitz extension is defined to be
	\begin{equation}\label{eq:chern char}
		\tau_t(f_0, \cdots, f_{2p-1}):=\operatorname{Tr}\big(\sigma_t(f_0, f_1)...\sigma_t(f_{2p-2}, f_{2p-1})\big)-\operatorname{Tr}\big(\sigma_t(f_1, f_2)... \sigma_t(f_{2p-1}, f_0)\big),
	\end{equation}
	for $p>n$.  And the Helton-Howe trace, Equation (\ref{eq:Helton-Howe}), is the top degree component of the above Connes-Chern character.  
	
Connes observed that the above cocycle in general is not local, i.e. the value of $\tau_t(f_0, \cdots, f_{2p-1})$ can not be expressed by the germ of $f_0\otimes \cdots \otimes f_{2p-1}$ on the diagonal in 
\[
\underbrace{\mathbb{B}_n\times \cdots \times \mathbb{B}_n}_{2p}.
\] 
Connes  \cite{Connes:book, Connes-Moscovici:local} improved the Chern character, Equation (\ref{eq:chern char}), by employing the Dixmier trace on the operator ideal $L^{1, \infty}$. In a series of works, Engli\v{s} and his coauthors, e.g. \cite{En-Gu-Zh:toeplitz, En-Ro:dixmier, En-Zh:hankel-domains, En-Zh:hankel}, studied a generalization of the Helton-Howe trace formula by considering the Dixmier trace on  the product
	\[
	[T^{(0)}_{f_1},T^{(0)}_{f_2}]\cdots [T^{(0)}_{f_{2n-1}}, T^{(0)}_{f_{2n}}]. 
	\] 
They expressed the Dixmier trace of the above product as an integral of the product of Poisson brackets between $f_{2k-1}$ and $f_{2k}$, $k=1,\cdots, n$. 

In this article, we take a different approach to study the Connes-Chern character (\ref{eq:chern char}) and the Helton-Howe trace, Theorem \ref{thm:helton-howe}. Our main idea is to put the Bergman space and Hardy space into the family of weighted Bergman spaces  $L^2_a(\bn, \lambda_t):=\bert$ for the measure 
	\[
	\intd\lambda_t(z)=\frac{(n-1)!}{\pi^nB(n, t+1)}(1-|z|^2)^t\intd m(z),
	\]
	where $B(n, t+1)$ is the Beta function. Let $T^{(t)}_f$ be the associated Toeplitz operator on $\bert$ with symbol $f$.  We study the large $t$ behavior of the Connes-Chern character and the Helton-Howe trace. 
	
	Our first result is about the Helton-Howe trace for $\BTt_{f_1}, \ldots, \BTt_{f_{2n}}$ for functions $f_1, \ldots, f_{2n}\in \mathscr{C}^2(\overline{\bn})$, which generalizes Theorem \ref{thm:helton-howe} to all weighted Bergman spaces.
	\begin{thm}\label{thm:HH} (Theorem \ref{thm: main}) 
		Suppose $f_1, f_2, \ldots, f_{2n}\in\mathscr{C}^2(\overline{\bn})$ and $t\geq -1$. 
		\begin{enumerate} 
			\item $[\BTt_{f_1}, \BTt_{f_2}, \ldots, \BTt_{f_{2n}}]$ is in the trace class $\mathcal{S}^1$.
			\item 
			\begin{equation}
				\Tr[\BTt_{f_1}, \BTt_{f_2}, \ldots, \BTt_{f_{2n}}]=\frac{n!}{(2\pi i)^n}\int_{\bn}\intd f_1\wedge\intd f_2\wedge\ldots\wedge\intd f_{2n},
			\end{equation}
			which is independent of $t$.
		\end{enumerate}
	\end{thm}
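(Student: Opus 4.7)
The plan is to prove the two assertions in sequence: establish trace-class membership first via weighted Schatten-ideal estimates, then deduce the formula by showing weight-independence and invoking the classical Helton-Howe theorem at $t=0$.

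For (1), I would extend the classical argument to all $t\geq -1$. For $f_i\in\mathscr{C}^2(\overline{\bn})$, each basic commutator $[\BTt_{f_i},\BTt_{f_j}]$ lies in the Schatten ideal $\mathcal{S}^p$ for every $p>n$; this follows from the explicit form of the weighted Bergman kernel $\BKt(z,w)=c_{n,t}(1-\la z,w\ra)^{-(n+1+t)}$ together with the standard Hankel/semi-commutator decomposition of $[\BTt_f,\BTt_g]$. Helton-Howe's combinatorial rearrangement rewrites the antisymmetric sum of $2n$ Toeplitz operators as a linear combination whose summands are each a product of $n$ commutators, so Hölder's inequality in the Schatten ideals yields $\mathcal{S}^1$ after choosing $p$ slightly above $n$.

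For (2), set $\phi(t):=\Tr[\BTt_{f_1},\ldots,\BTt_{f_{2n}}]$. If $\phi$ is constant on $t\geq -1$, then Theorem \ref{thm:helton-howe} at $t=0$ supplies the integral formula. I would establish $\phi'(t)=0$ by differentiating in $t$ under the trace: the explicit $t$-dependence of $\BKt$ produces a correction built from $\log(1-\la z,w\ra)$, which, when inserted into the $(2n)$-fold product, summed over $S_{2n}$, and collapsed via the cyclic property of the trace, cancels in pairs thanks to the total antisymmetrisation. A parallel route compatible with the paper's emphasis on quantization is to compute the semiclassical limit $t\to\infty$: the star-product expansion $\BTt_f\BTt_g=\BTt_{f\star_t g}$ with $f\star_t g = fg + t^{-1}\{f,g\}_{(1)} + O(t^{-2})$ reduces the antisymmetric sum to its leading-order antisymmetric piece, which, once balanced against the volume-scaling of the reproducing kernel, matches $\frac{n!}{(2\pi i)^n}\int_{\bn}\intd f_1\wedge\cdots\wedge\intd f_{2n}$.

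The main obstacle is the weight-independence step. Two difficulties arise: the Hilbert spaces $\bert$ differ for different $t$, so the derivative must be interpreted by transferring all operators to a common ambient space such as $L^2(\bn,dm)$, and uniform Schatten-norm bounds in $t$ on compact subintervals are needed to justify differentiation inside the trace. Second, the antisymmetric cancellation has to be verified for all $\mathscr{C}^2$ symbols and not only for polynomials; here harmonic analysis (rotation invariance of $\lambda_t$ and a density argument through polynomial symbols, where the combinatorics of the kernel derivative are explicit) will be essential. Cross-checking the result against the semiclassical evaluation $t\to\infty$ of the preceding paragraph then closes the loop.
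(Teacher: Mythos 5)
Your proposal has a fatal gap in part (1). You argue that each commutator $[\BTt_{f_i},\BTt_{f_j}]$ lies in $\mathcal{S}^p$ for all $p>n$, that the antisymmetric sum rewrites as a combination of products of $n$ commutators, and that H\"older then gives $\mathcal{S}^1$ "after choosing $p$ slightly above $n$." But this does not work: if each factor is in $\mathcal{S}^{p_i}$ with $p_i>n$, then $\sum_i 1/p_i < 1$, and H\"older only yields $\mathcal{S}^q$ for some $q>1$ --- never $\mathcal{S}^1$. This is not a technicality: the paper's Remark \ref{rem: even odd AS} shows explicitly that $\sigma_t(z_i,\bar z_i) \notin \mathcal{S}^n$ and that a naive product of semi-commutators (e.g.\ $\sigma_t(\bar z_1,z_2)\sigma_t(\bar z_2,z_3)\cdots\sigma_t(\bar z_n,z_1)$) is not trace class. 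Trace-class membership of the antisymmetric sum relies on a genuine cancellation that individual Schatten bounds cannot produce, and the paper obtains it by introducing even/odd partial antisymmetrizations, splitting the bilinear symbol $C_1(f,g)$ into complex-normal and complex-tangential pieces, and observing that the tangential piece involves only $n-1$ directions and therefore vanishes under antisymmetrization over $n$ symbols (Section \ref{sec: even and odd}, in particular Equation \eqref{eqn: Qpartial antisym =0}). Without some replacement for this cancellation mechanism, your part (1) is false as stated.

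Your part (2) also has problems independent of the paper's method. You propose to differentiate $\phi(t)=\Tr[\BTt_{f_1},\dots,\BTt_{f_{2n}}]$ and then cite Theorem \ref{thm:helton-howe} at $t=0$, but the classical Helton--Howe theorem is proved for \emph{smooth} symbols via pseudodifferential calculus, whereas the result you must establish is for $\mathscr{C}^2$ symbols. Citing the classical result would weaken the theorem; it is precisely to avoid this that the paper computes the $t\to\infty$ limit directly using the quantization expansion (Theorem \ref{thm: AS partial trace dimension n} and Corollary \ref{cor: AS sum trace membership and trace limit}). The differentiation idea for $t$-independence is conceptually attractive but also differs from what the paper does: instead of differentiating, the paper writes $\BTt_f = T^{(t+1,t)}_f + X^{(t)}_f + Y^{(t)}_{\bar Rf} + f(0)E_0$ (Lemma \ref{lem: Ttf=X+Y+K}), shows via an abstract perturbation lemma (Proposition \ref{prop: A}) plus an integral computation (Proposition \ref{prop: B}) that the residual antisymmetric sum is trace class with zero trace, and then transfers the trace between $L^2_{a,t}$ and $L^2_{a,t+1}$ using Lemma \ref{lem: trace on two spaces}. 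Justifying your derivative-under-the-trace approach would require uniform-in-$t$ control of Schatten norms of the $2n$-fold product, which --- given the above failure of the naive Schatten estimates --- is at least as hard as the perturbation argument.
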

	
Thanks to the use of pseudodifferential calculus  and its generalization in the proof, Helton-Howe's original theorem needs to assume that that the Toeplitz operators to have smooth symbols. In this paper, we develop a new approach to study the trace formula using harmonic analysis. As a result, we obtain an improvement of the Helton-Howe trace formula for Toeplitz operators with $\mathscr{C}^2$ symbols. 
	
Our second result is about the Connes-Chern character $\tau_t(f_0, \ldots, f_{2p-1})$ for $f_0, \ldots, f_{2p-1}\in \mathscr{C}^2(\overline{\bn})$.  Different from the Helton-Howe trace, $\tau_t$ vanishes as $t$ goes to $\infty$.  In the following theorem, we identify the leading term of $\tau_t$ as $t\to \infty$. 
	
\begin{thm}\label{thm:ConnesChern}(Theorem \ref{thm: connes chern})
Suppose $p\geq n+1$ is an integer and $f_0, f_1,\ldots, f_{2p-1}\in\mathscr{C}^2(\overline{\bn})$. Then 
\[
\begin{split}
&\lim_{t\to\infty}t^{p-n}\tau_t(f_0,f_1,\ldots,f_{2p-1})\\
=&\frac{n^p}{\pi^n}\int_{\bn}\bigg(\prod_{j=0}^{p-1}C_1(f_{2j},f_{2j+1})(z)-\prod_{j=0}^{p-1}C_1(f_{2j+1},f_{2j+2})(z)\bigg)\frac{\intd m(z)}{(1-|z|^2)^{n+1}},
\end{split}
\]
where $C_1(f,g)$ is defined as follows, 
\begin{equation}\label{eq:C1}
C_1(f,g)(z)=-\frac{1}{n}(1-|z|^2)\bigg[\sum_{i=1}^n\partial_if(z)\bpartial_ig(z)-Rf(z)\bar{R}g(z)\bigg],\ R=\sum_{i=1}^n z_i\partial _{z_i},\ \overline{R}=\sum_{i=1}^n \bar{z}_i\overline{\partial}_{z_i}.
\end{equation}
\end{thm}

Our approach to the above two main theorems is heavily influenced by the idea of quantization, \cite{Be:quantization, Bo-Gu:spectral, Bo-Le-Ta-We:asymptotic, Co:deformation, En:asymptotics, En:berezin, En:forelli-rudin, En:weighted}. Geometrically the defining function $\psi=1-|z|^2$ on $\bn$ defines the Bergman metric in the following way. 
	\[
	\begin{split}
		\omega:=&i\frac{-\psi \partial \bar{\partial}\psi+\partial \psi\wedge \bar{\partial}\psi}{\psi^2}\\
		=&i\frac{ (1-|z|^2)  \sum_{j=1}^n \partial z_j\wedge \bar{\partial} \bar{z}_j +(\sum_j \bar{z}_j\partial z_j)\wedge (\sum_{j'}z_{j'}\bar{\partial}\bar{z}_{j'})}{(1-|z|^2)^2}
	\end{split}
	\]
	defines a symplectic form on $\bn$, c.f. \cite[Prop. 2.6]{Le-Ta-We:poisson}. The Toeplitz operator $T^{(t)}_f$ gives a quantization of the symplectic form $i\omega $ , e.g. \cite[Theorem 3]{En:weighted}, satisfying
	\begin{equation}\label{eq:asymptotic}
		||T^{(t)}_fT^{(t)}_g-\sum_{j=0}^kt^{-j}T^{(t)}_{C_j(f,g)}||=O(t^{-k-1}),\ t\to \infty,
	\end{equation}
	where $C_j$ is a bilinear operator on $\mathscr{C}^\infty(\overline{\bn})$ and $C_1$ is defined in Equation (\ref{eq:C1}). The asymptotic expansion formula (\ref{eq:asymptotic}) provides the key tool to study the semi-commutator,
	\[
	\sigma_t(f, g)=T^{(t)}_f T^{(t)}_g-T^{(t)}_{fg}.
	\]
The asymptotic expansion formula (\ref{eq:asymptotic}) in the literature, e.g.  \cite{En:weighted}, was well studied for estimates on the operator norm. Estimates about the Schatten-$p$ norm in the expansion (\ref{eq:asymptotic}) are needed in our applications to the tracial property in Theorem \ref{thm:HH} and \ref{thm:ConnesChern}. We prove these estimates in Theorem \ref{thm: quantization bergman}. As we need to study Toeplitz operators with $\mathscr{C}^2$ symbols in Theorem \ref{thm:HH} and \ref{thm:ConnesChern} and an estimate on Schatten-$p$ norm in Theorem \ref{thm: quantization bergman}, we need a new method to develop the asymptotic estimate in Theorem \ref{thm: quantization bergman} different from the classical method via pseudodifferential/Toeplitz operator calculus \cite{BMS, Bo-Gu:spectral, En:berezin, En:weighted, En-Gu-Zh:toeplitz, En-Ro:dixmier, En-Zh:hankel-domains, En-Zh:hankel, Karabegov}, which requires to work with smooth symbols. Our main tool comes from integration formulas developed in Section 4 of \cite{TWZ:semicommutator}. Theorem \ref{thm:ConnesChern}  follows from the Schatten-$p$ estimate of the semi-commutator $\sigma_t(f,g)$. As a byproduct, our method also provides an explicit algorithm to compute the  bilinear differential operator $C_j$ in the asymptotic expansion (\ref{eq:asymptotic}), which is in general hard to compute. 
		
A crucial fact used in our estimate is the different behavior of the quantization in complex normal and complex tangential directions (see Remark \ref{rem: CN CT} and Corollaries \ref{cor: Rabfgk membership}, \ref{cor: semicom with radial functions}). Roughly speaking, the expansion of the integral kernel in the complex tangential direction creates improvement of $\frac{1}{2}$ in the Schatten-$p$ membership, whereas the expansion of the integral kernel in the complex normal direction creates improvement of $1$ in the Schatten-$p$ membership. Essentially, this allows us to reduce our estimates to the complex tangential direction. In contrast, in pseudodifferential calculous, Helton and Howe \cite{HH2} considered symbol functions for order 0 operators that are constant along the radial direction far away from the zero section of the cotangent bundle, which simplifies the corresponding estimates. And the difference between tangential and radial estimates suggests a deep link of our study with the Heisenberg calculus for contact manifolds, e.g. \cite{BG, HoweQuantumMechanics, Ponge, Taylor}.

Instead of a direct computation as in \cite{HH2}, we prove Theorem \ref{thm:HH} in two steps. Suppose $f_1, f_2, \ldots, f_{2n}\in\mathscr{C}^2(\overline{\bn})$ and $t\geq -1$.  
\begin{enumerate}
\item 		\begin{equation}\label{eq:tt+1}
		\Tr[\BTt_{f_1}, \BTt_{f_2}, \ldots, \BTt_{f_{2n}}]=\Tr[T^{(t+1)}_{f_1}, T^{(t+1)}_{f_2}, \ldots, T^{(t+1)}_{f_{2n}}].
		\end{equation}

\item  \begin{equation}\label{eqn:trace limit}
	\lim_{s\to\infty}\Tr[T^{(s)}_{f_1},\ldots,T^{(s)}_{f_{2n}}]=\frac{n!}{(2\pi i)^n}\int\intd f_1\wedge\ldots\wedge\intd f_{2n}.
\end{equation}
\end{enumerate} 
	
Equation (\ref{eq:tt+1}) is proved using the decomposition formula $\BTt_{f_i}=A_i+B_i$ in Section \ref{sec: traces}, Lemmas \ref{lem: Ttf=X+Y+K} and \ref{lem: T=X+Y+Z+E}. Here the operator $A_i=\BTtpt_{f_i}$ is the restriction of $\BTtp_{f_i}$ on $\bert$. We observe that $A_i$ is the ``principal part'' of $\BTt_{f_i}$ for $f_i\in \mathscr{C}^2(\overline{\bn})$. In Section \ref{sec: traces} we show that
	\begin{equation}\label{eqn: intro 1}
	\Tr\big\{[\BTt_{f_1},\BTt_{f_2},\ldots,\BTt_{f_{2n}}]-[A_1,A_2,\ldots,A_{2n}]\big\}=0.
	\end{equation}
	 We point out that the ``minor part'' $B_i$ does not live in Schatten class $\mathcal{S}^p$ for $p$ small enough. This fact prevents us from proving \eqref{eqn: intro 1} only using operator-theoretic tools. In Section \ref{sec: traces}, we develop Hypotheses A which handles the operator-theoretic part of the proof of \eqref{eqn: intro 1}, and Hypotheses B, where the rest is handled. As $[\BTtpt_{f_1},\ldots,\BTtpt_{f_{2n}}]$ is the restriction of $[\BTtp_{f_1},\BTtp_{f_2},\ldots,\BTtp_{f_{2n}}]$ on $\bert$, Lemma \ref{lem: trace on two spaces} gives
\begin{equation}\label{eqn: intro 3}
\Tr[\BTtpt_{f_1},\ldots,\BTtpt_{f_{2n}}]=\Tr[\BTtp_{f_1},\ldots,\BTtp_{f_{2n}}].
\end{equation}
We obtain Equation (\ref{eq:tt+1}) by combining Equations (\ref{eqn: intro 1}) and (\ref{eqn: intro 3}).

Equation (\ref{eqn:trace limit}) is proved by improving Theorem \ref{thm:ConnesChern}. In Theorem \ref{thm:ConnesChern}, $p$ is assumed to be greater than or equal to $n+1$, while there are only $2n$ functions in Equation (\ref{eqn:trace limit}). We observe that in general the product $\sigma_t(f_1, f_2)\ldots \sigma_t(f_{2n-1}, f_{2n})$ is not a trace class operator. This leads us to introduce even and odd partial antisymmetrizations in Section \ref{sec: even and odd}.  For $f_1, ..., f_n, g_1, ..., g_n\in L^\infty(\bn)$,  define
	\begin{equation*}
		[f_1, g_1,\ldots, f_n, g_n]^{\odd}_t=\sum_{\tau\in S_n}\sgn(\tau)\sigma_t(f_{\tau(1)}, g_1)\ldots\sigma_t(f_{\tau(n)}, g_n),
	\end{equation*}
	and
	\begin{equation*}
		[f_1,g_1,\ldots,f_n,g_n]^{\even}_t=\sum_{\tau\in S_n}\sgn(\tau)\sigma_t(f_1, g_{\tau(1)})\ldots\sigma_t(f_n,g_{\tau(n)}).
	\end{equation*}
Using the asymptotic expansion of semi-commutator in Section \ref{sec: Quantization}, we improve Theorem \ref{thm:ConnesChern} to the even and odd partial antisymmetrizations.
	\begin{thm}\label{thm:partial} (Theorem \ref{thm: AS partial trace dimension n})
		Suppose $t\geq-1$ and $f_1, g_1, \ldots, f_n, g_n\in\mathscr{C}^{2}(\overline{\bn})$.
		Then the partial antisymmetrizations $[f_1,g_1,\ldots,f_n, g_n]_t^{\odd}$ and $[f_1,g_1,\ldots,f_n, g_n]_t^{\even}$ are in the trace class. Moreover,
		\[
		\begin{split}
			\lim_{t\to\infty}\Tr[f_1,g_1,\ldots,f_n,g_n]_t^{\odd}=&\lim_{t\to\infty}\Tr[f_1,g_1,\ldots,f_n,g_n]_t^{\even}=\frac{1}{(2\pi i)^n}\int_{\bn}\partial f_1\wedge\bpartial g_1\wedge\ldots\wedge\partial f_n\wedge\bpartial g_n.
		\end{split}
		\]
	\end{thm}
As a corollary of Theorem \ref{thm:partial}, we obtain Equation (\ref{eqn:trace limit}) by further antisymmetrization of the even (odd) antisymmetric sums. 

	Our solution to the generalized Helton-Howe trace formula through quantization is closely related to the method developed in \cite{Bo-Le-Ta-We:asymptotic} to a solution to the Atiyah-Weinstein conjecture for quantized contact transform. Such a similarity suggests that our developments can be generalized to strongly pseudoconvex domains,  egg domains, Fock spaces, submodules and their quotient modules of $L^2_{a,t}(\bn)$, e.g. \cite{Fa-Wa-Xi:helton}, and the Dury-Arveson spaces.  More generally, we hope that our study will shed a light on constructing new cyclic cocycles beyond the Helton-Howe traces, which could have  applications in noncommutative geometry. 
	
	The paper is organized as follows. In Section \ref{sec: preliminary}, we recall the definitions of weighted Bergman spaces and Hardy spaces together with their basic properties and properties of Schatten $p$-class operators. Some tools developed in \cite{TWZ:semicommutator} are also reviewed. In Section \ref{sec: operator btw weighted spaces}, we develop criteria for integral operators to be bounded between different weighted spaces $L^2_{a, t}(\bn)$. We develop some useful estimate for integral operators to belong to Schatten-$p$ class in Section \ref{sec: schatten criterion}. In Section \ref{sec: traces}, we prove Equation (\ref{eqn: intro 1}). We develop the asymptotic expansion formula, Theorem \ref{thm:partial}, in Section \ref{sec: Quantization}. We introduce the even and odd antisymmetrization and prove Theorem \ref{thm: AS partial trace dimension n} and Equation (\ref{eqn:trace limit}) in Section \ref{sec: even and odd}. The two main theorems, Theorem \ref{thm:HH} and \ref{thm:ConnesChern}, are proved in Section \ref{sec: main}. And a proof of the generalized Bochner-Martinelli formula is included in the appendix. \\
	
	\noindent{\bf Acknowledgment:} We would like to thank Mohammad Jabbari, Richard Rochberg, and Jingbo Xia for inspiring discussions. Tang is partially supported by NSF Grants DMS 1800666, 1952551; Wang is partially supported by NSF Grant 2101370.
	
	\section{Preliminaries}\label{sec: preliminary}
	
	In this section, we recall some basic definitions and properties about weighted Bergman spaces and Schatten-$p$ class operators.
	
	\subsection{Spaces on $\bn$}\label{subsec: spaces on bn}
	Let $\bn$ be the open unit ball of $\cn$ and $\sn=\partial\bn$ the unit sphere. Let $m$ be the Lebesgue measure on $\bn$ and $\sigma$ be the surface measure on $\sn$. Denote $\sigma_{2n-1}=\sigma(\sn)=\frac{2\pi^n}{(n-1)!}$.
	
	~
	
	\noindent{\bf Hardy Space: }The Hardy space $H^2(\sn)$ is the Hilbert space of holomorphic functions on $\bn$ with the norm
	\[
	\|f\|_{H^2(\sn)}^2=\sup_{0<r<1}\int_{\sn}|f(rz)|^2\frac{\intd\sigma(z)}{\sigma_{2n-1}}.
	\]
	Equivalently, $H^2(\sn)$ is the closure of analytic polynomials in  $L^2(\sn):=L^2(\sn,\frac{\intd\sigma}{\sigma_{2n-1}})$. The Hardy space is a reproducing kernel Hilbert space on $\bn$ and the reproducing kernel is
	\[
	K_w(z)=\frac{1}{(1-\la z,w\ra)^n},\quad\forall w\in\bn.
	\]
	For any $f\in L^\infty(\sn)$, the Toeplitz operator on $H^2(\sn)$ with symbol $f$ is defined to be the compression
	\[
	T_f=PM_f|_{H^2(\sn)},
	\]
	where $M_f$ is the pointwise multiplication on $L^2(\sn)$, and $P$ is the orthogonal projection from $L^2(\sn)$ onto $H^2(\sn)$.
	Using the reproducing kernel, we can write $T_f$ as an integral operator. For $h\in H^2(\sn)$,
	\[
	T_fh(z)=\int_{\sn}f(w)h(w)K_w(z)\frac{\intd\sigma(z)}{\sigma_{2n-1}},\quad\forall z\in\bn.
	\]
	Our discussion will also involve Hankel operators. The Hankel operator with symbol $f$ is $H_f=(I-P)M_fP$ from $H^2(\sn)$ to $L^2(\sn)$.
	
	~
	
	\noindent{\bf Weighted Bergman Spaces: }For $t>-1$, define the probability measure on $\bn$:
	\[
	\intd\lambda_t(z)=\frac{(n-1)!}{\pi^nB(n, t+1)}(1-|z|^2)^t\intd m(z).
	\]
	Here $B(n, t+1)$ is the Beta function. The weighted Bergman space $\bert$ is the subspace of $L^2(\bn, \lambda_t)$ consisting of holomorphic functions on $\bn$. The reproducing kernel of $\bert$ is
	\[
	\BKt_w(z)=\frac{1}{(1-\la z,w\ra)^{n+1+t}},\quad\forall w\in\bn.
	\]
	For any $f\in L^\infty(\bn)$, the Toeplitz operator $\BTt_f$ is the compression
	\[
	\BTt_f=\BPt M^{(t)}_f|_{\bert},
	\]
	where $\BPt$ is the orthogonal projection from $L^2(\bn, \lambda_t)$ onto $\bert$, and $M^{(t)}_f$ is the multiplication operator on $L^2(\bn,\lambda_t)$. The Hankel operator with symbol $f$ is
	\[
	H^{(t)}_f=(I-P^{(t)})M_f^{(t)}P^{(t)}.
	\]
	Using the reproducing kernels, we can write $\BTt_f, H^{(t)}_f$ as integral operators. For $h\in\bert$, we have the following expressions,
	\[
	\BTt_fh(z)=\int_{\bn}f(w)h(w)\BKt_w(z)\intd\lambda_t(w),\quad\forall z\in\bn, 
	\]
	\[
	H^{(t)}_fh(z)=\int_{\bn}\big(f(z)-f(w)\big)h(w)\BKt_w(z)\intd\lambda_t(w),\quad\forall z\in\bn.
	\]
	It is sometimes understood that the Hardy space is the weighted Bergman space at $t=-1$. This is partially justified by the form of its reproducing kernel.

	An important tool on $\bn$ is the M\"{o}bius transform.
	\begin{defn}\label{defn: Mobius transform}
		For $z\in\bn$, $z\neq0$, the M\"{o}bius transform $\varphi_z$ is the biholomorphic mapping on $\bn$ defined as follows.
		\[
		\varphi_z(w)=\frac{z-P_z(w)-(1-|z|^2)^{1/2}Q_z(w)}{1-\la w,z\ra},\quad\forall w\in\overline{\bn}.
		\]
		Here $P_z$ and $Q_z$ denote the orthogonal projection from $\cn$ onto $\mathbb{C}z$ and $z^\perp$, respectively.
		Define
		\[
		\varphi_0(w)=-w,\quad\forall w\in\overline{\bn}.
		\]
		It is well-known that $\varphi_z$ is an automorphism of $\bn$ satisfying $\varphi_z\circ\varphi_z=Id$. Also, the two variable function $\rho(z,w):=|\varphi_z(w)|=|\varphi_w(z)|$ defines a metric on $\bn$. Moreover, $\beta(z,w):=\tanh^{-1}\rho(z,w)$ coincides with the Bergman metric on $\bn$.
	\end{defn}

	We list some lemmas that serve as basic tools on $\bn$. Most of the following can be found in \cite{Rudinbookunitball, Zhubookspaces}. Some are proved in our other paper \cite{TWZ:semicommutator}.
	
	For non-negative values $A, B$, by $A\lesssim B$ we mean that there is a constant $C$ such that $A\leq CB$. Sometimes, to emphasize that the constant $C$ depends on some parameter $a$, we write $A\lesssim_a B$. The notations $\gtrsim, \gtrsim_a, \approx, \approx_a$ are defined similarly.
	
	\begin{lem} (\cite[Lemma 2.2]{TWZ:semicommutator} ) \label{lem: Mobius basics}
		Suppose $z, w, \zeta\in\bn$.
		\begin{itemize}
			\item[(1)] $\frac{1}{1-\la\varphi_{\zeta}(z),\varphi_{\zeta}(w)\ra}=\frac{(1-\la z,\zeta\ra)(1-\la\zeta,w\ra)}{(1-|\zeta|^2)(1-\la z,w\ra)}$.
			\item[(2)] $1-|\varphi_z(w)|^2=\frac{(1-|z|^2)(1-|w|^2)}{|1-\la z,w\ra|^2}$.
			\item[(3)] For any $R>0$ there exists $C>1$ such that whenever $\beta(z,w)<R$,
			\[
			\frac{1}{C}\leq\frac{1-|z|^2}{1-|w|^2}\leq C,\quad\frac{1}{C}\leq\frac{|1-\la z,\zeta\ra|}{|1-\la w,\zeta\ra|}\leq C.
			\]
			\item[(4)] The real Jacobian of $\varphi_z$ is $\frac{(1-|z|^2)^{n+1}}{|1-\la z,\cdot\ra|^{2n+2}}$ on $\bn$ and $\frac{(1-|z|^2)^n}{|1-\la z,\cdot\ra|^{2n}}$ on $\sn$.
			\item[(5)] For $z\in\bn$,
			\[
			z-\varphi_z(w)=\frac{(1-|z|^2)P_z(w)+(1-|z|^2)^{1/2}Q_z(w)}{1-\la w,z\ra}:=\frac{A_zw}{1-\la w,z\ra},
			\]
			where $A_z=[a_z^{ij}]$ is an $n\times n$ matrix depending on $z$, and $w$ is viewed as a column vector.
			\item[(6)] There exists $C>0$ such that for any $z\in\bn$, $z\neq0$,
			\begin{equation}\label{eqn: |z-w|}
				|z-P_z(w)|\leq |\varphi_z(w)||1-\la z,w\ra|,\quad |Q_z(w)|\leq C|\varphi_z(w)||1-\la z,w\ra|^{1/2},
			\end{equation}
			and
			\begin{equation}\label{eqn: tangent nontangent estimates}
				|z-w|\leq C|\varphi_z(w)||1-\la z,w\ra|^{1/2}.
			\end{equation}
			In contrast, if $n=1$, then $|z-w|=|\varphi_z(w)||1-z\bw|$.
			\item[(7)] $1-|z|^2\leq2|1-\la z,w\ra|$.
		\end{itemize}
	\end{lem}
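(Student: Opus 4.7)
The plan is to handle the seven items in a logical order where later ones build on earlier ones, and exploit the fact that (1) is the master identity from which most statements descend.

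For (1), I would expand both sides using the explicit formula of Definition 2.3 together with the orthogonal decomposition $w = P_\zeta(w) + Q_\zeta(w)$, noting that $P_\zeta, Q_\zeta$ are self-adjoint with $P_\zeta Q_\zeta = 0$. A direct computation of $\langle \varphi_\zeta(z), \varphi_\zeta(w)\rangle$ collapses to the claimed fraction. Item (2) is just the diagonal specialization $z=w$ of (1). Item (5) is an algebraic rearrangement: plug $P_z(w)=\frac{\langle w,z\rangle}{|z|^2}z$ into $z-\varphi_z(w)$ and collect terms, and read off the matrix $A_z = (1-|z|^2)P_z + (1-|z|^2)^{1/2} Q_z$. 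Item (7) is the chain $|1-\langle z,w\rangle|\geq 1-|\langle z,w\rangle|\geq 1-|z||w|\geq 1-|z|\geq \tfrac{1-|z|^2}{2}$.

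Item (6) is the most substantial. From (5), $(1-\langle w,z\rangle)\varphi_z(w)=(z-P_z(w))-(1-|z|^2)^{1/2}Q_z(w)$ is an orthogonal decomposition since $z-P_z(w)\in\mathbb{C}z$ and $Q_z(w)\perp z$. Taking norms yields the Pythagorean identity
\[
|\varphi_z(w)|^2|1-\langle z,w\rangle|^2=|z-P_z(w)|^2+(1-|z|^2)|Q_z(w)|^2,
\]
from which $|z-P_z(w)|\leq|\varphi_z(w)||1-\langle z,w\rangle|$ is immediate. For the third (sharper) inequality I would prove $|z-w|^2\leq 4|\varphi_z(w)|^2|1-\langle z,w\rangle|$. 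Writing $\delta=|1-\langle z,w\rangle|$, $a=1-|z|^2$, $b=1-|w|^2$, the real-part identity $2\mathrm{Re}(1-\langle z,w\rangle)=|z-w|^2+a+b$ gives $|z-w|^2\leq 2\delta-a-b$, while (2) rewrites $|\varphi_z(w)|^2\delta^2 = \delta^2-ab$, so in particular $\delta\geq\sqrt{ab}$. An AM-GM manipulation then shows $2\delta-a-b\leq 4(\delta-ab/\delta)$, completing the bound. The second inequality of (6) then follows from $|Q_z(w)|\leq|z-w|$, and the special $n=1$ statement is automatic since $Q_z(w)=0$.

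For (4), I would obtain the ball Jacobian by either differentiating (5) (noting the block-diagonal structure of $A_z$ on $\mathbb{C}z\oplus z^\perp$ gives $\det_{\mathbb{R}}A_z = \pm(1-|z|^2)^{n+1}$) or by invoking the M\"obius invariance of the Bergman kernel $|K_w(z)|^2(1-|w|^2)^{n+1}$ together with (2); the sphere Jacobian follows by restriction and rescaling. For (3), item (2) shows that $|\varphi_z(w)|<\tanh R$ forces $(1-|z|^2)(1-|w|^2)>\mathrm{sech}^2R\,|1-\langle z,w\rangle|^2$; combining with (7) gives $\tfrac{1-|z|^2}{1-|w|^2}\leq 4\cosh^2R$ and its symmetric partner. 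The companion ratio $|1-\langle z,\zeta\rangle|/|1-\langle w,\zeta\rangle|$ is controlled by writing $1-\langle z,\zeta\rangle = (1-\langle w,\zeta\rangle)-\langle z-w,\zeta\rangle$ and estimating $|z-w|$ via (6) together with the first half of (3).

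The main obstacle will be (6.2), the tangential estimate on $|Q_z(w)|$: the naive bound from the Pythagorean identity produces $|Q_z(w)|\leq|\varphi_z(w)||1-\langle z,w\rangle|/(1-|z|^2)^{1/2}$, which is strictly weaker than the claim and goes in the wrong direction under (7). The AM-GM bootstrap $\delta^2\geq ab\Rightarrow 2\delta-a-b\leq 4(\delta-ab/\delta)$ is the essential workaround, and conceptually encodes the correct anisotropic scaling (normal distances $\sim|1-\langle z,w\rangle|$, tangential distances $\sim|1-\langle z,w\rangle|^{1/2}$) that will be central to the rest of the paper.
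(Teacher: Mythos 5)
Your argument for item (6), the only part the paper proves in full, takes a genuinely different and arguably cleaner route than the paper's. The paper establishes \eqref{eqn: temp 12} by a two-case dichotomy on $|\varphi_z(w)|$: when $|\varphi_z(w)|\leq\tfrac12$ it invokes item (3) to replace $|1-\la z,w\ra|$ by $1-|z|^2$ in the naive bound $|Q_z(w)|\leq|\varphi_z(w)||1-\la z,w\ra|/(1-|z|^2)^{1/2}$; when $|\varphi_z(w)|>\tfrac12$ it drops $|\varphi_z(w)|$ from below and uses $|z-w|^2\leq 2|1-\la z,w\ra|$. Your proposal instead derives \eqref{eqn: tangent nontangent estimates} directly and uniformly: with $\delta=|1-\la z,w\ra|$, $a=1-|z|^2$, $b=1-|w|^2$, the identity $|z-w|^2=2\mathrm{Re}(1-\la z,w\ra)-a-b\leq 2\delta-a-b$ combined with $\delta^2\geq ab$ (from (2)) and the elementary estimate $4ab\leq 2\delta^2+(a+b)\delta$ yields $|z-w|^2\leq 4(\delta-ab/\delta)=4|\varphi_z(w)|^2\delta$. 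I checked the AM--GM step and it is correct. This bypasses the case split, produces an explicit constant ($C=2$), and, unlike the paper's argument, does not invoke item (3), so (6) becomes self-contained modulo (2) and (5). Your Pythagorean identity $|\varphi_z(w)|^2|1-\la z,w\ra|^2=|z-P_z(w)|^2+(1-|z|^2)|Q_z(w)|^2$, obtained by taking norms in (5) using the orthogonality of $\mathbb{C}z$ and $z^\perp$, also gives the first inequality of \eqref{eqn: |z-w|} at the same stroke.

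Your sketch of the second ratio in item (3), however, does not close as written. The triangle bound $|1-\la z,\zeta\ra|\leq|1-\la w,\zeta\ra|+|\la z-w,\zeta\ra|\leq|1-\la w,\zeta\ra|+|z-w|$ is too weak: when $\beta(z,w)<R$, items (2), (6) and the first half of (3) give $|z-w|\lesssim(1-|w|^2)^{1/2}$, yet (7) only guarantees $|1-\la w,\zeta\ra|\gtrsim 1-|w|^2$, so the error term $|z-w|/|1-\la w,\zeta\ra|$ can be of order $(1-|w|^2)^{-1/2}$ (take $\zeta=w$ to see it blow up). The Euclidean distance $|z-w|$ scales like $|1-\la z,w\ra|^{1/2}$, which is the wrong order relative to $|1-\la w,\zeta\ra|$. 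The correct elementary argument uses the pseudo-metric of Lemma \ref{lem: d metric}: squaring $d(z,\zeta)\leq d(z,w)+d(w,\zeta)$ gives $|1-\la z,\zeta\ra|\leq 2|1-\la z,w\ra|+2|1-\la w,\zeta\ra|$, and then (2) with $1-|\varphi_z(w)|^2\geq\mathrm{sech}^2R$, the first half of (3), and (7) together give $|1-\la z,w\ra|\lesssim 1-|w|^2\lesssim|1-\la w,\zeta\ra|$. Since the paper delegates (3) to Rudin's book, the omission is inconsequential in practice, but the step as you sketched it would fail.
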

	
	\begin{lem}(\cite[Lemma 2.4]{TWZ:semicommutator})\label{lem: Rudin Forelli generalizations}~
		
		\begin{itemize}
			\item[(1)] Suppose $t>-1$, $c\in\mathbb{R}$. Then there exists $C>0$ such that for any $z\in\bn$,
			\begin{equation}\label{eqn: Rudin-Forelli 1-1}
				\int_{\bn}\frac{(1-|w|^2)^t}{|1-\la z,w\ra|^{n+1+t+c}}\intd m(w)\leq\begin{cases}
					C(1-|z|^2)^{-c},&c>0,\\
					C\ln\frac{1}{1-|z|^2},&c=0,\\
					C,&c<0,
				\end{cases}
			\end{equation}
			\begin{equation}\label{eqn: Rudin-Forelli 1-2}
				\int_{\sn}\frac{1}{|1-\la z,w\ra|^{n+c}}\intd\sigma(w)\leq \begin{cases}
					C(1-|z|^2)^{-c},&c>0,\\
					C\ln\frac{1}{1-|z|^2},&c=0,\\
					C,&c<0.
				\end{cases}
			\end{equation}
			\item[(2)] Suppose $t>-1$, $a, b, c>0$, $a\geq c, b\geq c$, and $a+b<n+1+t+c$. Then there exists $C>0$ such that for any $z, \xi\in\bn$,
			\begin{equation}\label{eqn: Rudin-Forelli 2}
				\int_{\bn}\frac{(1-|w|^2)^t}{|1-\la z, w\ra|^a|1-\la w, \xi\ra|^b}\intd m(w)\leq C\frac{1}{|1-\la z,\xi\ra|^c}.
			\end{equation}
			\item[(3)] Suppose $\phi:(0,1)\to[0,\infty)$ is measurable. Suppose $a>-n$, $b\in\mathbb{R}$, and
			\[
			\phi(s)\lesssim s^a(1-s)^b,\quad s\in(0,1).
			\]
			Then for any $t>-1-b$, $c>-b$ there exists $C>0$ such that for any $z\in\bn$,
			\begin{equation}\label{eqn: Rudin-Forelli 3}
				\int_{\bn}\phi(|\varphi_z(w)|^2)\frac{(1-|w|^2)^t}{|1-\la z,w\ra|^{n+1+t+c}}\intd m(w)\leq C(1-|z|^2)^{-c}.
			\end{equation}
		\end{itemize}
	\end{lem}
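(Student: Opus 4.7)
The plan is to establish (1) first, derive (3) from (1) by a M\"obius change of variable, and obtain (2) from (1) via the triangle inequality of Lemma \ref{lem: d metric}. For (1), I would use the unitary invariance of Lebesgue measure to rotate so that $z = re_1$ with $r = |z|$. Writing $w = s\zeta$ with $\zeta \in \sn$ and $s\in(0,1)$, the volume integral separates into a radial factor times an inner spherical average over $\sn$, the latter being a classical Forelli-Rudin expression (cf.\ \cite{Rudinbookunitball, Zhubookspaces}) that evaluates to a Gauss hypergeometric ${}_2F_1$ in $s^2r^2$. Its boundary asymptotics combined with the Beta-type radial integral produce the stated trichotomy in $c>0$, $c=0$, $c<0$. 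The sphere estimate \eqref{eqn: Rudin-Forelli 1-2} is the same computation in one fewer dimension, without the radial integration.

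For (3), I would perform the involutive change of variable $w = \varphi_z(u)$, using three identities for bookkeeping: $dm(w) = (1-|z|^2)^{n+1}|1-\langle z,u\rangle|^{-2n-2}\,dm(u)$ from Lemma \ref{lem: Mobius basics}(4); $1-|w|^2 = (1-|z|^2)(1-|u|^2)|1-\langle z,u\rangle|^{-2}$ from Lemma \ref{lem: Mobius basics}(2); and $|1-\langle z,w\rangle| = (1-|z|^2)/|1-\langle z,u\rangle|$, which follows from Lemma \ref{lem: Mobius basics}(1) applied with $z'=0$, $\zeta=z$, $w'=u$. The various powers of $1-|z|^2$ collapse to a single $(1-|z|^2)^{-c}$ outside the integral, leaving
\begin{equation*}
(1-|z|^2)^{-c}\int_{\bn}\phi(|u|^2)\frac{(1-|u|^2)^t}{|1-\langle z,u\rangle|^{n+1+t-c}}\,dm(u).
\end{equation*}
Substituting the hypothesis $\phi(|u|^2)\lesssim |u|^{2a}(1-|u|^2)^b$ yields weight $(1-|u|^2)^{t+b}$ with $t+b>-1$ and remaining exponent $n+1+(t+b)+(-b-c)$ with $-b-c<0$, so part (1) bounds the remainder by a constant uniformly in $z$. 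Integrability of $|u|^{2a}$ at the origin is ensured by $a>-n$.

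For (2), the triangle inequality $|1-\langle z,\xi\rangle|^{1/2}\leq|1-\langle z,w\rangle|^{1/2}+|1-\langle w,\xi\rangle|^{1/2}$ from Lemma \ref{lem: d metric}, after squaring and raising to the $c$th power, yields the pointwise bound
\begin{equation*}
|1-\langle z,\xi\rangle|^c \leq C\bigl(|1-\langle z,w\rangle|^c+|1-\langle w,\xi\rangle|^c\bigr).
\end{equation*}
Multiplying the target integral by $|1-\langle z,\xi\rangle|^c$ and inserting this bound pointwise reduces matters to showing that each of
\begin{equation*}
\int_{\bn}\frac{(1-|w|^2)^t\,dm(w)}{|1-\langle z,w\rangle|^{a-c}|1-\langle w,\xi\rangle|^b}\quad\text{and}\quad\int_{\bn}\frac{(1-|w|^2)^t\,dm(w)}{|1-\langle z,w\rangle|^a|1-\langle w,\xi\rangle|^{b-c}}
\end{equation*}
is bounded by a constant independent of $z$ and $\xi$, with nonnegative exponents (by $a,b\geq c$) that sum strictly below $n+1+t$ (by $a+b<n+1+t+c$). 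H\"older's inequality with conjugate exponents $p,q$ chosen so that $p(a-c)<n+1+t$ and $qb<n+1+t$ splits each into a product of one-variable integrals sitting strictly inside the constant-bounded regime of (1); such $p, q$ exist because $(a-c)/(n+1+t)+b/(n+1+t)<1$ leaves room to solve $1/p+1/q=1$ with each reciprocal strictly above its required threshold. The main obstacle is precisely this exponent bookkeeping: the H\"older split must land every factor strictly inside the constant regime of (1), which is where the strict inequality $a+b<n+1+t+c$ is essential, since equality would push at least one factor onto the logarithmic boundary case of \eqref{eqn: Rudin-Forelli 1-1}.
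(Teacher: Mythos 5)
Your treatments of (1) and (3) track the paper's, but your route through (2) is genuinely different---and valid. The paper decomposes $\bn$ into $A=\{w:|1-\la z,w\ra|\le|1-\la w,\xi\ra|\}$ and its complement $B$; on each piece Lemma \ref{lem: d metric} forces the dominant factor to be at least $\tfrac14|1-\la z,\xi\ra|$, and (using $a,b\ge c$) the residual exponent is transferred onto the smaller factor, reducing everything to a single application of part (1) with exponent $a+b-c<n+1+t$. You instead promote the triangle inequality to a global pointwise bound $|1-\la z,\xi\ra|^c\lesssim|1-\la z,w\ra|^c+|1-\la w,\xi\ra|^c$ and then apply H\"older with $(1-|w|^2)^t\,\intd m$ as the base measure; your exponent bookkeeping closes, since $(a-c)/(n+1+t)+b/(n+1+t)<1$ and $a/(n+1+t)+(b-c)/(n+1+t)<1$ leave a nonempty interval of admissible conjugate exponents, placing each one-variable factor in the $c<0$ regime of \eqref{eqn: Rudin-Forelli 1-1}. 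What the paper's route buys is economy: one split, one application of (1), no H\"older---and in fact each of your two reduced integrals could also be handled by that same domain split, so the H\"older step is avoidable even on your path, though as written it is correct. In (3) your M\"obius pullback and the identities from Lemma \ref{lem: Mobius basics} are exactly right; when writing it up, state explicitly that for $a<0$ the singular factor $|u|^{2a}$ is integrable near $u=0$ because there the kernel $|1-\la z,u\ra|^{-(n+1+t-c)}$ is bounded, so the threshold $a>-n$ is precisely what is needed---you flagged this, and a split of the domain into $|u|\le\tfrac12$ and $|u|>\tfrac12$ closes that loop.
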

	
	\begin{lem}\label{lem: trace on two spaces}
		Suppose $s>t\geq-1$ and $T$ is a bounded operator on $L_{a,s}^2(\bn)$. Suppose $L_{a,t}^2(\bn)$ is invariant under $T$. Denote $\hat{T}$ its restriction to $L_{a,t}^2(\bn)$. By the closed graph theorem, $\hat{T}$ is bounded on $L_{a,t}^2(\bn)$. Assume that $T\in\mathcal{S}^1\big(L_{a,s}^2(\bn)\big)$ and $\hat{T}\in\mathcal{S}^1\big(L_{a,t}^2(\bn)\big)$. Then
		\[
		\Tr T=\Tr\hat{T}.
		\]
	\end{lem}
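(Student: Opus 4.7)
The plan is to exploit a single family of functions, namely the monomials $\{z^\alpha\}_{\alpha\in\mathbb{N}_0^n}$, which forms an orthogonal basis for every weighted Bergman space $L^2_{a,u}(\bn)$ for $u\geq -1$ (including the Hardy case $u=-1$). Orthogonality in each of the two inner products follows from the rotational invariance of $\intd\lambda_u$ on $\bn$ (respectively $\intd\sigma$ on $\sn$ when $u=-1$): in polar coordinates the angular integral $\int_{\sn}\zeta^\alpha\overline{\zeta^\beta}\intd\sigma(\zeta)$ vanishes unless $\alpha=\beta$. Normalizing, set $e^{(s)}_\alpha:=z^\alpha/\|z^\alpha\|_s$ and $e^{(t)}_\alpha:=z^\alpha/\|z^\alpha\|_t$, which are orthonormal bases of $L^2_{a,s}(\bn)$ and $L^2_{a,t}(\bn)$ respectively.

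The next step is to identify a common ``diagonal coefficient.'' For each multi-index $\alpha$, the monomial $z^\alpha$ belongs to $L^2_{a,t}(\bn)$, which by hypothesis is $T$-invariant, so $\hat{T}z^\alpha=Tz^\alpha\in L^2_{a,t}(\bn)\subset L^2_{a,s}(\bn)$. Since $Tz^\alpha$ is a holomorphic function on $\bn$, it has a unique Taylor expansion
\[
Tz^\alpha=\sum_{\beta\in\mathbb{N}_0^n}c_{\alpha\beta}z^\beta,
\]
and this series converges in both $\|\cdot\|_s$ and $\|\cdot\|_t$ (norm convergence in either space implies uniform convergence on compact subsets of $\bn$, which forces the coefficients to match the Taylor coefficients). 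Orthogonality of monomials in both inner products then yields
\[
\langle Tz^\alpha,z^\alpha\rangle_s=c_{\alpha\alpha}\|z^\alpha\|_s^2,\qquad \langle Tz^\alpha,z^\alpha\rangle_t=c_{\alpha\alpha}\|z^\alpha\|_t^2,
\]
so $\langle Te^{(s)}_\alpha,e^{(s)}_\alpha\rangle_s=c_{\alpha\alpha}=\langle\hat{T}e^{(t)}_\alpha,e^{(t)}_\alpha\rangle_t$ for every $\alpha$.

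Finally, because $T\in\mathcal{S}^1(L^2_{a,s}(\bn))$ and $\hat{T}\in\mathcal{S}^1(L^2_{a,t}(\bn))$ by assumption, the trace of each is computable as the absolutely convergent sum of diagonal entries in any orthonormal basis, and the previous step gives
\[
\Tr T=\sum_\alpha\langle Te^{(s)}_\alpha,e^{(s)}_\alpha\rangle_s=\sum_\alpha c_{\alpha\alpha}=\sum_\alpha\langle\hat{T}e^{(t)}_\alpha,e^{(t)}_\alpha\rangle_t=\Tr\hat{T}.
\]
No obstacle is expected to be serious: the only subtlety is verifying that the coefficient $c_{\alpha\alpha}$ obtained via the $\|\cdot\|_t$-inner product coincides with the one obtained via $\|\cdot\|_s$, which reduces to the uniqueness of Taylor coefficients for the holomorphic function $Tz^\alpha$ combined with the fact that monomials are orthogonal (though not orthonormal) in both inner products simultaneously.
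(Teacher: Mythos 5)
Your argument is correct and is essentially the same as the paper's proof: both identify $\{z^\alpha\}$ as a common orthogonal basis, extract the shared diagonal coefficient $c_{\alpha\alpha}$ (the paper calls it $a_{\alpha,\alpha}$), and sum. You have merely spelled out the uniqueness-of-Taylor-coefficients step in a bit more detail.
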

	
	\begin{proof}
		The set $\{z^\alpha\}_{\alpha\in\ind}$ forms an orthogonal basis of both $L_{a,s}^2(\bn)$ and $L_{a,t}^2(\bn)$. For $\alpha\in\ind$, we write
		\[
		Tz^\alpha=\hat{T}z^\alpha=\sum_{\beta\in\ind}a_{\alpha,\beta}z^\beta.
		\]
		Compute the traces of $T$ and $\hat{T}$ as follows.
		\begin{flalign*}
			\Tr T=&\sum_{\alpha\in\ind}\frac{\la Tz^\alpha, z^\alpha\ra_{L_{a,s}^2(\bn)}}{\|z^\alpha\|_{L_{a,s}^2(\bn)}^{2}}\\
			=&\sum_{\alpha\in\ind}\frac{a_{\alpha,\alpha}\la z^\alpha,z^\alpha\ra_{L_{a,s}^2(\bn)}}{\|z^\alpha\|^2_{L_{a,s}^2(\bn)}}\\
			=&\sum_{\alpha\in\ind}a_{\alpha,\alpha}\\
			=&\sum_{\alpha\in\ind}\frac{\la \hat{T}z^\alpha, z^\alpha\ra_{L_{a,t}^2(\bn)}}{\|z^\alpha\|_{L_{a,t}^2(\bn)}^{2}}=\Tr\hat{T}.
		\end{flalign*}
		This completes the proof of Lemma \ref{lem: trace on two spaces}.
	\end{proof}

\subsection{Schatten Class Operators}
	
	For $p>0$, a bounded operator $T$ on a Hilbert space $\mathcal{H}$ is said to be in the Schatten-$p$ class $\mathcal{S}^p$ if $|T|^p$ belongs to the trace class. The Schatten-$p$ class operators $\mathcal{S}^p$ are analogues of $l^p$ spaces in the operator-theoretic setting. Conventionally, $\mathcal{S}^\infty$ denotes the  space of compact operators. The following two lemmas will be used constantly.
	\begin{lem}(\cite[Theorem 2.8]{Simon})\label{lem: holder inequality for operators}
	Suppose $A$, $B$ are bounded operators on a Hilbert space, and $1\leq p, q, r\leq\infty$, $\frac{1}{p}+\frac{1}{q}=\frac{1}{r}$. If $A\in\mathcal{S}^p$ and $B\in\mathcal{S}^q$, then
	\[
	AB\in\mathcal{S}^r,\quad\text{ and }\quad\|AB\|_{\mathcal{S}^r}\leq\|A\|_{\mathcal{S}^p}\|B\|_{\mathcal{S}^q}.
	\]
	\end{lem}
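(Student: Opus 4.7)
The plan is to reduce the operator-theoretic inequality to a pair of scalar inequalities for the singular values. Since $A\in \mathcal{S}^p$ and $B\in \mathcal{S}^q$ (with $p,q<\infty$; the endpoint cases where some exponent equals $\infty$ follow immediately from $s_k(AB)\leq \|A\|\, s_k(B)$ and $s_k(AB)\leq s_k(A)\,\|B\|$), both $A$ and $B$ are compact, so that I may work with their decreasing singular value sequences $\{s_k(A)\}$ and $\{s_k(B)\}$. The definition of the Schatten norm then gives
\[
\|T\|_{\mathcal{S}^p}^p=\sum_{k\geq 1} s_k(T)^p,\qquad 0<p<\infty.
\]

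The central ingredient I would invoke is the Horn log-majorization inequality for singular values of products:
\[
\prod_{k=1}^{n} s_k(AB)\leq \prod_{k=1}^{n} s_k(A)\,s_k(B),\qquad n=1,2,\ldots.
\]
This is a classical consequence of the Ky Fan min-max characterization of singular values together with Hadamard-style determinant estimates applied to the polar decompositions of $A$ and $B$. By the standard equivalence between log-majorization and weak majorization after composing with a convex increasing function (Weyl's lemma applied to $x\mapsto e^{rx}$), the inequality above implies
\[
\sum_{k=1}^{n} s_k(AB)^{r}\leq \sum_{k=1}^{n} s_k(A)^{r}\, s_k(B)^{r}
\]
for every $r>0$ and every $n\geq 1$. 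Letting $n\to\infty$ yields
\[
\|AB\|_{\mathcal{S}^r}^{r}\leq \sum_{k\geq 1} s_k(A)^{r}\, s_k(B)^{r}.
\]

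Finally, since $\frac{r}{p}+\frac{r}{q}=1$ by hypothesis, the scalar Hölder inequality applied to the nonnegative sequences $\bigl(s_k(A)^{r}\bigr)\in \ell^{p/r}$ and $\bigl(s_k(B)^{r}\bigr)\in \ell^{q/r}$ gives
\[
\sum_{k\geq 1} s_k(A)^{r}\, s_k(B)^{r}\leq \Bigl(\sum_{k\geq 1} s_k(A)^{p}\Bigr)^{r/p}\Bigl(\sum_{k\geq 1} s_k(B)^{q}\Bigr)^{r/q}=\|A\|_{\mathcal{S}^p}^{r}\,\|B\|_{\mathcal{S}^q}^{r}.
\]
Combining the two displays and extracting $r$-th roots gives both $AB\in\mathcal{S}^r$ and the asserted norm estimate. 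The main obstacle in this plan is the Horn log-majorization; everything else is either the reduction to singular value sequences or the scalar Hölder inequality. Horn's inequality, however, is a standard textbook result (and indeed the paper cites \cite{Simon} for the statement), so the conclusion follows with no further analytic input.
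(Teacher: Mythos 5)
The paper does not prove this lemma; it is cited verbatim from Simon's \emph{Trace Ideals and Their Applications} (Theorem 2.8), so there is no in-paper argument to compare against. Your proposal is a correct, self-contained proof, and it follows the standard textbook route that Simon himself uses: Horn's multiplicative (log-majorization) inequality $\prod_{k\le n}s_k(AB)\le\prod_{k\le n}s_k(A)s_k(B)$, followed by Weyl's lemma (applied to the increasing, geometrically convex function $x\mapsto x^r$) to pass from log-majorization to the additive inequality $\sum_{k\le n}s_k(AB)^r\le\sum_{k\le n}s_k(A)^rs_k(B)^r$, and finally the scalar H\"older inequality on $\ell^{p/r}$--$\ell^{q/r}$. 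The reduction of the endpoint cases $p=\infty$ or $q=\infty$ to the pointwise bounds $s_k(AB)\le\|A\|\,s_k(B)$ and $s_k(AB)\le s_k(A)\,\|B\|$ is also correct and necessary, since Horn's inequality as you state it presupposes compactness. One small point worth making explicit for the Weyl step: you should note that the comparison sequence $\{s_k(A)s_k(B)\}$ is itself nonincreasing (it is a product of two nonincreasing nonnegative sequences), so that the log-majorization hypothesis is stated between two correctly ordered sequences; with that observation the appeal to Weyl's lemma is airtight.
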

	
	\begin{lem}(\cite[Corollary 3.8]{Simon})\label{lem: trace of commutators}
	Suppose $A, B$ are bounded operators on a Hilbert space. If both $AB$ and $BA$ are in the trace class then
	\[
	\Tr[A, B]=0.
	\]
	\end{lem}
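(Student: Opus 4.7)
\textbf{Plan for Lemma \ref{lem: trace of commutators}.} Since $[A,B] = AB - BA$ and both composites are assumed trace class, linearity of the trace on $\mathcal{S}^1$ reduces the claim to $\Tr(AB) = \Tr(BA)$. The classical cyclicity identity $\Tr(XY) = \Tr(YX)$ when $X \in \mathcal{S}^1$ and $Y$ is bounded is not directly available here, since neither $A$ nor $B$ is individually assumed to lie in any Schatten ideal; only the products $AB$ and $BA$ are controlled, so a different mechanism is required.

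My approach is to invoke two classical ingredients. First, Lidskii's theorem asserts that for any $T \in \mathcal{S}^1$ the trace equals the sum of the eigenvalues counted with algebraic multiplicity,
\[
\Tr(T) = \sum_j \lambda_j(T).
\]
Second, for any bounded operators $A, B$ on a Hilbert space, the nonzero parts of the spectra of $AB$ and $BA$ coincide, and moreover each nonzero eigenvalue has the same algebraic multiplicity in $AB$ as in $BA$. The spectral coincidence follows from the Jacobson-type resolvent identity
\[
(\lambda - BA)^{-1} = \lambda^{-1} I + \lambda^{-1} B (\lambda - AB)^{-1} A,
\]
valid for $\lambda \notin \sigma(AB) \cup \{0\}$; the multiplicity statement follows by showing that $x \mapsto Bx$ induces a linear isomorphism between the generalized eigenspaces of $AB$ and $BA$ at any fixed $\lambda \neq 0$, with inverse (up to a scalar) supplied by $y \mapsto Ay$. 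Since both $AB$ and $BA$ are trace class by hypothesis, Lidskii applies to each, giving
\[
\Tr(AB) = \sum_j \lambda_j(AB) = \sum_j \lambda_j(BA) = \Tr(BA),
\]
from which $\Tr[A,B] = 0$ follows immediately.

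The hard part is not the spectral coincidence at the level of eigenvalues but the equality of \emph{algebraic} multiplicities, which is what Lidskii's theorem consumes; this requires the explicit verification that multiplication by $B$ preserves the dimension of each generalized eigenspace at a nonzero $\lambda$, a fact easily seen once one notes that $(\lambda - BA)^k B = B (\lambda - AB)^k$ by induction on $k$. An alternative route bypassing Lidskii would be to approximate $A$ and $B$ by finite-rank compressions $P_n A P_n$ and $P_n B P_n$, where cyclicity is trivial, and pass to the limit; but this requires $P_n AB P_n \to AB$ and $P_n BA P_n \to BA$ in trace norm, and that convergence step is exactly where the hypothesis $AB, BA \in \mathcal{S}^1$ (as opposed to boundedness of $A$ and $B$ alone) must be used, so the approximation route is in fact no easier than the Lidskii route.
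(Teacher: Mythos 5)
Your proof is correct and follows essentially the route the paper relies on: the lemma is cited without proof from Simon's \emph{Trace Ideals and Their Applications} (Corollary~3.8), where the argument is exactly this combination of Lidskii's theorem with the coincidence (including algebraic multiplicities) of the nonzero spectra of $AB$ and $BA$, established via the intertwining $(\lambda - BA)^k B = B(\lambda - AB)^k$. One small imprecision: on a generalized eigenspace at $\lambda \neq 0$ the restriction of $AB$ is $\lambda I$ plus a nilpotent, so $A$ inverts $B$ up to an \emph{invertible} map rather than a scalar, but that is all one needs to conclude that $B$ is injective there and hence that the multiplicities agree.
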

	
	\begin{rem}\label{rem: X1...Xn trace}
	Suppose $n>1$ and $X_1, X_2, \ldots, X_n$ are bounded operators on a Hilbert space such that
	\[
	X_i\in\mathcal{S}^p,\quad\forall p>n,
	\]
	and there exists $j\in\{1,\ldots,n\}$ such that $X_j$ is in $\mathcal{S}^p$ for some $p<n$. Then we can choose $p_1,\ldots, p_{j-1}, p_{j+1},\ldots, p_n>n$ and $1\leq p_j<n$ such that
	\[
	X_i\in\mathcal{S}^{p_i}, i=1,\ldots,n,\quad\text{ and }\quad\frac{1}{p_1}+\frac{1}{p_2}+\ldots+\frac{1}{p_n}=1.
	\]
	By an inductive application of Lemma \ref{lem: holder inequality for operators} we have
	\[
	X_1X_2\ldots X_n\in\mathcal{S}^1.
	\]
	Moreover, by Lemma \ref{lem: trace of commutators}, for any $k=1,\ldots,n$,
	\[
	\Tr X_1X_2\ldots X_n=\Tr X_kX_{k+1}\ldots X_nX_1\ldots X_{k-1}.
	\]
	This will be used repeatedly in this paper.
	\end{rem}
	
	\subsection{Integration by Parts}\label{subsec: intgration by parts}
	Some integral formulas developed in \cite{TWZ:semicommutator} will be used in our proofs. We give a brief review here. Let us start with some auxiliary functions and operations.
	
	Recall that by Lemma \ref{lem: Mobius basics} (5), for $z\in\bn$,
	\[
	(1-|z|^2)P_z(w)+(1-|z|^2)^{1/2}Q_z(w)=(1-\la w,z\ra)(z-\varphi_z(w))=A_zw,
	\]
	where $A_z$ is an $n\times n$ matrix depending on $z$, and $w$ is treated as a column vector.
	
	\begin{defn}\label{defn: d numbers I function}
		For multi-indices $\alpha, \beta\in\ind$ and $z\in\bn$, define
		\[
		d_{\alpha,\beta}(z)=\int_{\sn}\big(A_z\zeta\big)^\alpha\overline{\big(A_z\zeta\big)}^\beta\frac{\intd\sigma(\zeta)}{\sigma_{2n-1}}.
		\]
		In particular, $d_{0,0}=1$, and
		\begin{equation}\label{eqn: d at (1,0...0)}
			d_{\alpha,\beta}(z)=\delta_{\alpha,\beta}(1-|z|^2)^{\alpha_1+|\alpha|}\frac{(n-1)!\alpha!}{(n-1+|\alpha|)!},\quad\text{if }z=(z_1,0,\ldots,0).
		\end{equation}
		For multi-indices $\alpha, \beta\in\ind$ and $\zeta\in\cn$, denote
		\[
		I^{\alpha,\beta}(\zeta)=\zeta^\alpha\bar{\zeta}^\beta.
		\]
	\end{defn}
\begin{defn}\label{defn: BFt BGt}
	For $t\in\mathbb{R}$, denote
	\[
	\phi_t(s)=(1-s)^t.
	\]
	Suppose $\phi:(0,1)\to[0,\infty)$ is a measurable function. For a positive integer $m$ and any $t>-1$, define the operations on $\phi$
	\begin{equation}
		\BFt_m\phi(s)=\int_s^1 r^{m-1}\phi(r)(1-r)^t\intd r\in[0,\infty],
	\end{equation}
	and
	\begin{equation}
		\BGt_m\phi(s)=\frac{1}{s^m\phi_{t+1}(s)}\BFt_m\phi(s)=\frac{\int_s^1 r^{m-1}\phi(r)(1-r)^t\intd r}{s^m(1-s)^{t+1}}\in[0,\infty].
	\end{equation}
	For any $t>-1$, inductively define the functions
	\[
	\Phi_{n,0}^{(t)}\equiv1,\quad\Phi_{n,k+1}^{(t)}=M_{\phi_1}\big(\BGt_{n+k}\big)^2\Phi_{n,k}^{(t)}.
	\]
	Equivalently,
	\begin{equation}
		\Phi_{n,k}^{(t)}=M_{\phi_1}\big(\BGt_{n+k-1}\big)^2\ldots M_{\phi_1}\big(\BGt_n\big)^21.
	\end{equation}
\end{defn}

	\begin{lem}(\cite[Lemma 4.2]{TWZ:semicommutator})\label{lem: formula Bn}
		Suppose $t>-1$, $\alpha, \beta\in\ind$. Suppose $\phi: (0,1)\to[0,\infty)$ is measurable and $v\in\mathscr{C}^1(\bn)$. Then the following hold.
		\begin{enumerate}
			\item If $|\alpha|\geq|\beta|$ and all integrals converge absolutely, then
			\begin{flalign}\label{eqn: formula Bn d''}
				&\int_{\bn}\phi(|\varphi_z(w)|^2)I^{\alpha,\beta}(z-w) v(w)\BKt_w(z)\intd\lambda_t(w)\\
				=&\begin{cases}
					\frac{d_{\alpha,\beta}(z)}{B(n,t+1)}\cdot \BFt_{n+|\beta|}\phi(0) v(z)-\sum_{j=1}^n\int_{\bn}\BGt_{|\beta|+n}\phi(|\varphi_z(w)|^2)I^{\alpha,\beta+e_j}(z-w)S_j(w)\BKt_w(z)\intd\lambda_t(w),&\\
					\hspace{10cm} v(z)\neq0, \BFt_{n+|\beta|}\phi(0)<\infty,&\\
					&\\
					-\sum_{j=1}^n\int_{\bn}\BGt_{|\beta|+n}\phi(|\varphi_z(w)|^2)I^{\alpha,\beta+e_j}(z-w)S_j(w)\BKt_w(z)\intd\lambda_t(w),&\\
					\hspace{10cm}v(z)=0, \BFt_{n+|\beta|}\phi(0)\leq\infty,&
				\end{cases}
				\nonumber
			\end{flalign}
			where
			\[
			S_j(w,z)=\frac{(1-|w|^2)\bpartial_{w_j}\big[(1-\la z,w\ra)^{|\beta|}v(w)\big]}{(1-\la w,z\ra)(1-\la z,w\ra)^{|\beta|}}.
			\]
			\item If $|\alpha|\leq|\beta|$ and all integrals converge absolutely, then
			\begin{flalign}\label{eqn: formula Bn d'}
				&\int_{\bn}\phi(|\varphi_z(w)|^2)I^{\alpha,\beta}(z-w)v(z)\BKt_w(z)\intd\lambda_t(z)\\
				=&\begin{cases}
					\frac{d_{\alpha,\beta}(w)}{B(n,t+1)}\cdot\BFt_{n+|\alpha|}\phi(0)v(w)+\sum_{i=1}^n\int_{\bn}\BGt_{|\alpha|+n}\phi(|\varphi_z(w)|^2)I^{\alpha+e_i,\beta}(z-w)\widetilde{S}_i(z)\BKt_w(z)\intd\lambda_t(z),&\\
					\hspace{10cm}v(w)\neq0, \BFt_{n+|\alpha|}\phi(0)<\infty,&\\
					&\\
					\sum_{i=1}^n\int_{\bn}\BGt_{|\alpha|+n}\phi(|\varphi_z(w)|^2)I^{\alpha+e_i,\beta}(z-w)\widetilde{S}_i(z)\BKt_w(z)\intd\lambda_t(z),&\\
					\hspace{10cm}v(w)=0, \BFt_{n+|\alpha|}\phi(0)\leq\infty,&
				\end{cases}
				\nonumber
			\end{flalign}
			where
			\[
			\widetilde{S}_i(z,w)=\frac{(1-|z|^2)\partial_{z_i}\big[(1-\la z,w\ra)^{|\alpha|}v(z)\big]}{(1-\la w,z\ra)(1-\la z,w\ra)^{|\alpha|}}.
			\]
		\end{enumerate}	
	\end{lem}

A particular case is to take $\alpha=\beta=0$, $z=0$, $\phi=1$ in \eqref{eqn: formula Bn d''}, which gives the following. The estimates (2) and (3) are straight forward to verify.
	\begin{lem}\label{lem: formula R on bn}
	For any $t>-1$ the following hold.
	\begin{itemize}
		\item[(1)] For any $v\in\mathscr{C}^1(\overline{\bn})$,
		\begin{equation}\label{eqn: formula R on bn}
			\int_{\bn}v(z)\intd\lambda_t(z)=v(0)+\frac{t+1}{n+t+1}\int_{\bn}\BGt_n1(|z|^2)\bar{R}v(z)\intd\lambda_{t+1}(z).
		\end{equation}
		\item[(2)] $\BGt_n1(s)\approx s^{-n}$ in a neighborhood of $0$, and $\lim_{s\to 1^-}\BGt_n1(s)=\frac{1}{t+1}$.
		\item[(3)] $|\BGt_n1(s)-\frac{1}{t+1}|\lesssim 1-s$ for $s$ in a neighborhood of $1$.
	\end{itemize}
\end{lem}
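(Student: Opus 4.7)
The plan is to derive (1) by specializing Lemma \ref{lem: BM generalized on rSn} to $\alpha=\beta=0$ and then integrating against the radial weight defining $\lambda_t$, and to treat (2) and (3) as pointwise calculus questions about the one-variable function $\BGt_n 1(s)$.

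For (1): Take $\alpha=\beta=0$ in Lemma \ref{lem: BM generalized on rSn}. Since $0\le 0$, we have $a_{0,0}=1$, so
\[
\int_{r\sn} v(z)\,\intd\sigma_r(z)=\sigma_{2n-1} r^{2n-1} v(0)+2r^{2n-1}\int_{r\bn}\frac{1}{|z|^{2n}}\bar R v(z)\,\intd m(z).
\]
I would first integrate this identity against $(1-r^2)^t$ on $r\in(0,1)$, normalized by the constant $(n-1)!/\pi^n B(n,t+1)$. The $v(0)$ contribution simplifies using $\sigma_{2n-1}=2\pi^n/(n-1)!$ and $\int_0^1 r^{2n-1}(1-r^2)^t\,\intd r=\tfrac12 B(n,t+1)$, which produces exactly $v(0)$. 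For the second contribution, I would apply Fubini on the set $\{(r,z):|z|<r<1\}$: for fixed $z$ the radial variable $r$ ranges over $(|z|,1)$, so after the substitution $u=r^2$ the inner integral becomes
\[
\int_{|z|^2}^1 u^{n-1}(1-u)^t\,\intd u=\BFt_n 1(|z|^2)=|z|^{2n}(1-|z|^2)^{t+1}\BGt_n 1(|z|^2).
\]
Collecting constants, the matching factor is $B(n,t+2)/B(n,t+1)=(t+1)/(n+t+1)$, which is precisely the coefficient appearing in \eqref{eqn: formula R on bn}.

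For (2): Near $s=0$, $\BFt_n 1(s)\to\BFt_n 1(0)=B(n,t+1)$ by \eqref{eqn: BFt 1}, so $\BGt_n 1(s)\sim B(n,t+1)s^{-n}$, giving the claimed $s^{-n}$ behavior. Near $s=1$ the ratio $\int_s^1 r^{n-1}(1-r)^t\intd r/[s^n(1-s)^{t+1}]$ is of the indeterminate form $0/0$; I would apply L'Hôpital once, cancel the common factor $s^{n-1}(1-s)^t$ in numerator and denominator, and read off $\lim_{s\to 1^-}\BGt_n 1(s)=1/(t+1)$.

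For (3): I would integrate by parts in $\BFt_n 1(s)$, setting $u=r^{n-1}$ and $\intd v=(1-r)^t\intd r$, to obtain
\[
\BFt_n 1(s)=\frac{s^{n-1}(1-s)^{t+1}}{t+1}+\frac{n-1}{t+1}\int_s^1 r^{n-2}(1-r)^{t+1}\,\intd r.
\]
Dividing by $s^n(1-s)^{t+1}$ yields $\BGt_n 1(s)=\frac{1}{(t+1)s}+R(s)$, where the remainder $R(s)$ is bounded by a constant times $(1-s)$ near $s=1$ (its numerator is $O((1-s)^{t+2})$ while its denominator is $s^n(1-s)^{t+1}$). Combined with $\tfrac{1}{(t+1)s}-\tfrac{1}{t+1}=\tfrac{1-s}{(t+1)s}=O(1-s)$, this gives (3).

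The only genuinely delicate point is the constant bookkeeping in step (1); the appearance of $(t+1)/(n+t+1)$ forces using $d\lambda_{t+1}$ rather than $d\lambda_t$ on the right, and the Fubini step must produce the exact combination $|z|^{2n}(1-|z|^2)^{t+1}\BGt_n 1(|z|^2)$ so that the $|z|^{-2n}$ from the Bochner-Martinelli kernel and the $(1-|z|^2)^{t+1}$ from the weight convert cleanly into $\BGt_n 1(|z|^2)\,\intd\lambda_{t+1}(z)$. Everything else is routine one-variable calculus.
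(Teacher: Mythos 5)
Your proposal is correct and follows essentially the same route as the paper: specialize Lemma \ref{lem: BM generalized on rSn} to $\alpha=\beta=0$, integrate the resulting sphere identity against the radial weight, and apply Fubini to convert the inner $r$-integral into $\BFt_n 1(|z|^2)$; the constant bookkeeping ($\sigma_{2n-1}=2\pi^n/(n-1)!$ and $B(n,t+2)/B(n,t+1)=(t+1)/(n+t+1)$) is exactly what the paper does. The paper dispatches parts (2) and (3) with a one-line remark about L'H\^opital, whereas you supply an explicit integration-by-parts identity for (3) -- a pleasant, slightly more transparent variant, but not a different method.
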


\begin{lem}(\cite[Lemma 4.6]{TWZ:semicommutator})\label{lem: BM hardy}
	Suppose $\alpha, \beta\in\ind$, and $v\in\mathscr{C}^1(\overline{\bn})$. Then the following hold.
	\begin{enumerate}
		\item If $|\alpha|\geq|\beta|$, then
		\begin{flalign}\label{eqn: BM hardy d''}
			&\int_{\sn}I^{\alpha,\beta}(z-w)v(w)K_w(z)\frac{\intd\sigma(w)}{\sigma_{2n-1}}\\
			=&d_{\alpha,\beta}(z)v(z)-\frac{1}{n}\sum_{j=1}^n\int_{\bn}|\varphi_z(w)|^{-2|\beta|-2n}I^{\alpha,\beta+e_j}(z-w)\frac{\bpartial_j\big[(1-\la z,w\ra)^{|\beta|}v(w)\big]}{(1-\la z,w\ra)^{|\beta|}(1-\la w,z\ra)}K_w(z)\intd\lambda_0(w).\nonumber
		\end{flalign}
		\item If $|\alpha|\leq|\beta|$, then
		\begin{flalign}\label{eqn: BM hardy d'}
			&\int_{\sn}I^{\alpha,\beta}(z-w)v(z)K_w(z)\frac{\intd\sigma(z)}{\sigma_{2n-1}}\\
			=&d_{\beta,\alpha}(w)v(w)+\frac{1}{n}\sum_{i=1}^n\int_{\bn}|\varphi_z(w)|^{-2|\alpha|-2n}I^{\alpha+e_i,\beta}(z-w)\frac{\partial_i\big[(1-\la z,w\ra)^{|\alpha|}v(z)\big]}{(1-\la z,w\ra)^{|\alpha|}(1-\la w,z\ra)}K_w(z)\intd\lambda_0(z).\nonumber
		\end{flalign}
	\end{enumerate}
\end{lem}

	\begin{lem}(\cite[Lemma 4.3]{TWZ:semicommutator})\label{lem: formula F induction}
		Suppose $k$ is a non-negative integer and $\Gamma\subset\ind\times\ind$ is a finite set of multi-indices with $|\alpha|=|\beta|=k$ for every $(\alpha,\beta)\in\Gamma$. Suppose for some $\epsilon>-1-t$, $\{F_{\alpha,\beta}\}_{(\alpha,\beta)\in\Gamma}\subset\mathscr{C}^{2}(\bn\times\bn)$ and
		\begin{equation}\label{eqn: F assumption 1}
			\bigg|\sum_{(\alpha,\beta)\in\Gamma}I^{\alpha,\beta}(z-w)F_{\alpha,\beta}(z,w)\bigg|\lesssim|\varphi_z(w)|^{2k}|1-\la z,w\ra|^{2k+\epsilon},
		\end{equation}
		\begin{equation}\label{eqn: F assumption 2}
			\bigg|\sum_{j=1}^n\sum_{(\alpha,\beta)\in\Gamma}I^{\alpha,\beta+e_j}(z-w)\bpartial_{w_j} F_{\alpha,\beta}(z,w)\bigg|\lesssim|\varphi_z(w)|^{2k+1}|1-\la z,w\ra|^{2k+\epsilon}.
		\end{equation}
		
		Then
		\begin{flalign}\label{eqn: formula F induction}
			&\int_{\bn^2}\Phi^{(t)}_{n,k}(|\varphi_z(w)|^2)\frac{\sum_{(\alpha,\beta)\in\Gamma}I^{\alpha,\beta}(z-w)F_{\alpha,\beta}(z,w)}{|1-\la z,w\ra|^{2k}}\BKt_w(z)\intd\lambda_t(w)\intd\lambda_t(z)\nonumber\\
			=&\frac{\BFt_{n+k}\Phi^{(t)}_{n,k}(0)}{B(n,t+1)}\int_{\bn}(1-|z|^2)^{-2k}\sum_{(\alpha,\beta)\in\Gamma}d_{\alpha,\beta}(z)F_{\alpha,\beta}(z,z)\intd\lambda_t(z)\\
			&-\int_{\bn^2}\Phi^{(t)}_{n,k+1}(|\varphi_z(w)|^2)\frac{\sum_{i,j=1}^n\sum_{(\alpha,\beta)\in\Gamma}I^{\alpha+e_i,\beta+e_j}(z-w)D_{i,j}F_{\alpha,\beta}(z,w)}{|1-\la z,w\ra|^{2(k+1)}}\BKt_w(z)\intd\lambda_t(z)\intd\lambda_t(w).\nonumber
		\end{flalign}
		Here $D_{i,j}$ denotes the operation
		\[
		D_{i,j}=(1-\la z,w\ra)^2\partial_{z_i}\bpartial_{w_j}.
		\]
	\end{lem}

	\begin{lem}\label{lem: Phi esitmates}
		Suppose $k$ is a nonnegative integer. Then there exist $C>c>0$ such that for $t$ large enough,
		\begin{equation}\label{eqn: FPhi}
			ct^{-n-k}\leq\BFt_{n+k}\Phi_{n,k}^{(t)}(0)\leq Ct^{-n-k},
		\end{equation}
		and
		\begin{equation}\label{eqn: FPhi t/4}
			\int_0^1\Phi_{n,k}^{(t)}(s)s^{n+k-1}(1-s)^{\frac{t}{4}}\intd s\leq Ct^{-n-k}.
		\end{equation}
	\end{lem}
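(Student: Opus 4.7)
The plan is to induct on $k$. The base case $k=0$ uses $\Phi_{n,0}^{(t)}\equiv 1$: both $\BFt_n 1(0)=B(n,t+1)=\Gamma(n)\,t^{-n}(1+O(t^{-1}))$ and $\int_0^1 s^{n-1}(1-s)^{t/4}\intd s=B(n,t/4+1)\lesssim t^{-n}$ follow from the standard Beta--Gamma asymptotics, which handles both \eqref{eqn: FPhi} (lower and upper) and \eqref{eqn: FPhi t/4} simultaneously.

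For the inductive step of \eqref{eqn: FPhi}, the naive hypothesis $\BFt_{n+k}\Phi_{n,k}^{(t)}(0)\asymp t^{-n-k}$ is too weak, because the recursion coming from Lemma \ref{lem: FG properties} mixes shifted indices. I would strengthen the hypothesis to track the entire family
\[
I_{n,k,l}:=\BFt_{n+k+l}\Phi_{n,k}^{(t)}(0),\qquad l\geq 0,
\]
and show $I_{n,k,l}=\alpha_{n,k,l}\,t^{-n-k-l}\bigl(1+O(t^{-1})\bigr)$ with $\alpha_{n,k,l}>0$. The step proceeds in two moves: first, \eqref{eqn: BFt induction 1} applied with $m=n+k$, index $1+l$, and $\phi=\BGt_{n+k}\Phi_{n,k}^{(t)}$ gives
\[
I_{n,k+1,l}=\tfrac{1}{1+l}\,\BFt_{n+k+1+l}\BGt_{n+k}\Phi_{n,k}^{(t)}(0);
\]
then \eqref{eqn: BFt induction 2} expands the remaining $\BGt_{n+k}$ into the series
\[
I_{n,k+1,l}=\frac{1}{1+l}\sum_{j=0}^{\infty}\frac{I_{n,k,1+l+j}}{1+l+j}.
\]
Under the inductive hypothesis only the $j=0$ term contributes at leading order in $t^{-1}$, which gives $\alpha_{n,k+1,l}=\alpha_{n,k,1+l}/(1+l)^2>0$ and the two-sided bound \eqref{eqn: FPhi} at level $k+1$.

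For \eqref{eqn: FPhi t/4}, the induction is cleaner. Let $J_{n,k}:=\int_0^1 \Phi_{n,k}^{(t)}(s)\,s^{n+k-1}(1-s)^{t/4}\intd s$. Expanding $\Phi_{n,k+1}^{(t)}=M_{\phi_1}(\BGt_{n+k})^2\Phi_{n,k}^{(t)}$, applying Fubini twice, and using the elementary estimates
\[
\int_0^r(1-s)^{-3t/4}\intd s\leq r(1-r)^{-3t/4},\qquad \int_0^u(1-r)^{-3t/4-1}\intd r\leq \frac{(1-u)^{-3t/4}}{3t/4}
\]
collapses the resulting double integral to the clean recursion $J_{n,k+1}\leq \tfrac{4}{3t}\,J_{n,k}$; iterating against $J_{n,0}=B(n,t/4+1)\lesssim t^{-n}$ immediately yields \eqref{eqn: FPhi t/4}.

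The main obstacle is the \emph{lower} bound in \eqref{eqn: FPhi}. The upper bound follows from routine term-by-term estimation of the series for $I_{n,k+1,l}$, but the lower bound requires isolating the $j=0$ contribution and showing that $\sum_{j\geq 1}$ is genuinely of order $t^{-n-k-2-l}$ or smaller. Since the leading coefficients $\alpha_{n,k,m}$ grow super-polynomially in $m$ (essentially like $\Gamma(n+k+m)$), one cannot sum the inductive asymptotics uniformly. The remedy is to split the tail at some threshold $j_0\asymp t^{1/2}$, use the inductive asymptotic for $j<j_0$, and use the monotonicity $I_{n,k,m}\searrow 0$ together with a direct Stirling-type bound of the form $I_{n,k,m}\lesssim \Gamma(t+1)\,m^{-t-1}$ (valid for large $m$) to show that the tail beyond $j_0$ is exponentially small in $t$ and hence negligible.
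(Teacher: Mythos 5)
Your Fubini-based proof of \eqref{eqn: FPhi t/4} is correct, and it is a genuinely different and cleaner route than the paper's: the paper expands $(1-s)^{-3t/4}=\sum_{j\ge 0}\frac{\Gamma(\frac{3t}{4}+j)}{j!\Gamma(\frac{3t}{4})}s^j$, plugs into the multi-sum representation of $\BFt_{n+k+j}\Phi^{(t)}_{n,k}(0)$ coming from Lemma~\ref{lem: FG properties}, reindexes by partial sums, drops the ordering constraint, and telescopes; your nested-integral recursion $J_{n,k+1}\le\frac{4}{3t}J_{n,k}$ avoids all of that. Unpacking $\Phi_{n,k+1}^{(t)}=M_{\phi_1}(\BGt_{n+k})^2\Phi_{n,k}^{(t)}$, noting $(1-s)\cdot(1-s)^{t/4}\cdot(1-s)^{-t-1}=(1-s)^{-3t/4}$, Fubini to the order $s<u<r$, and your two elementary monotonicity estimates do close the recursion with $J_{n,0}=B(n,t/4+1)\lesssim t^{-n}$. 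This buys a shorter argument with transparent constants.

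Your plan for \eqref{eqn: FPhi}, however, is both misdirected and unnecessary. First, once \eqref{eqn: FPhi t/4} is in hand, the upper bound of \eqref{eqn: FPhi} is immediate from $(1-s)^t\le (1-s)^{t/4}$, so $\BFt_{n+k}\Phi_{n,k}^{(t)}(0)\le J_{n,k}\lesssim t^{-n-k}$; you never need the family $I_{n,k,l}$ at all. Second, you have the difficulty backwards: the lower bound is the trivial one, not the obstacle. Since every term of the expansion (equivalently, of your series $I_{n,k+1,l}=\frac{1}{1+l}\sum_{j\ge 0}\frac{I_{n,k,1+l+j}}{1+l+j}$) is nonnegative, keeping only $j=0$ at each of the $k$ stages gives $\BFt_{n+k}\Phi_{n,k}^{(t)}(0)\ge \frac{1}{(k!)^2}B(n+k,t+1)\gtrsim t^{-n-k}$ with no tail analysis whatsoever; the lemma asks only for a two-sided $\asymp t^{-n-k}$, not for the sharp asymptotic $\alpha_{n,k,l}t^{-n-k-l}(1+O(t^{-1}))$ that would require controlling the tail. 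Finally, even if you did want that sharper statement, the proposed split at $j_0\asymp t^{1/2}$ would not quite work: the crossover between the ``inductive asymptotic'' regime and the crude Stirling bound $I_{n,k,m}\lesssim\Gamma(t+k+1)m^{-t-k-1}$ is at $m\asymp t$, not $\sqrt{t}$ (one needs $(t/(em))^t$ small, i.e.\ $m\gtrsim t$, before the Stirling bound beats $t^{-n-k-2}$), so your split leaves an uncontrolled intermediate range $\sqrt{t}\lesssim j\lesssim t$.
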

	
	\begin{proof}
		The case when $k=0$ is simply a consequence of \cite[Equation (8.9)]{TWZ:semicommutator}. We assume $k>0$.
		
		From the Sterling's asymptotic formula (cf. \cite[Theorem 1.4.1]{specialfunctions}) and the well-known identity $B(x,y)=\frac{\Gamma(x)\Gamma(y)}{\Gamma(x+y)}$ it follows that for fixed $x>-1$,
		\[
		B(x,y)\approx_x y^{-x}
		\]
		for large $y$.
		By \cite[Lemma 8.4]{TWZ:semicommutator}, we compute $\BFt_{n+k}\Phi_{n,k}^{(t)}(0)$ as follows, 
		\begin{flalign*}
			&\BFt_{n+k}\Phi_{n,k}^{(t)}(0)\\
			=&\BFt_{n+k}M_{\phi_1}\big(\BGt_{n+k-1}\big)^2\ldots M_{\phi_1}\big(\BGt_n\big)^21(0)\\
			=&\sum_{j_1,\ldots,j_k=0}^\infty\frac{B(n+k+j_1+\ldots+j_k,t+1)}{1\cdot(1+j_1)(2+j_1)(2+j_1+j_2)\ldots(k+j_1+\ldots+j_{k-1})(k+j_1+\ldots+j_k)}\\
			=&\sum_{0\leq s_1\leq\ldots\leq s_k<\infty}\frac{B(n+k+s_k,t+1)}{(1+s_1)(2+s_1)\ldots(k-1+s_{k-1})(k+s_{k-1})(k+s_k)}\\
			\leq&\sum_{s_k=0}^\infty\sum_{s_1,\ldots,s_{k-1}=0}^\infty \frac{B(n+k+s_k,t+1)}{(1+s_1)(2+s_1)\ldots(k-1+s_{k-1})(k+s_{k-1})(k+s_k)}\\
			=&\sum_{s_k=0}^\infty \frac{B(n+k+s_k,t+1)}{(k-1)!(k+s_k)}\\
			=&\frac{1}{(k-1)!}\int_0^1\big(\sum_{s_k=0}^\infty\frac{x^{n-1+(k+s_k)}}{k+s_k}\big)(1-x)^t\intd x\\
			\leq&\frac{1}{(k-1)!}\int_0^1\big(\sum_{s_k=0}^\infty x^{n-1}\bigg(\ln\frac{1}{1-x}\bigg)(1-x)^t\intd x\\
			\lesssim&\int_0^1x^{n-1}(1-x)^{t-1}\intd x\\
			=&B(n,t)\\
			\lesssim&t^{-n-k},
		\end{flalign*}
		when $t$ is large. The other inequality $\BFt_{n+k}\Phi_{n,k}^{(t)}(0)\gtrsim t^{-n-k}$ is also obvious from the equation above. This proves \eqref{eqn: FPhi}.
		
		By the expansion
		\[
		\frac{1}{(1-s)^{\frac{3t}{4}}}=\sum_{j=0}^\infty \frac{\Gamma(\frac{3t}{4}+j)}{j!\Gamma(\frac{3t}{4})}s^j,
		\]
		we directly compute the following integral, 
		\begin{flalign*}
			&\int_0^1\Phi_{n,k}^{(t)}(s)s^{n+k-1}(1-s)^{\frac{t}{4}}\intd s\\
			=&\sum_{j=0}^\infty\frac{\Gamma(\frac{3t}{4}+j)}{j!\Gamma(\frac{3t}{4})}\int_0^1\Phi_{n,k}^{(t)}(s)s^{n+k+j-1}(1-s)^t\intd s\\
			=&\sum_{j_0=0}^\infty\frac{\Gamma(\frac{3t}{4}+j_0)}{j_0!\Gamma(\frac{3t}{4})}\BFt_{n+k+j_0}\Phi_{n,k}^{(t)}(0).
		\end{flalign*}
		Similarly as the proof of \eqref{eqn: FPhi}, assuming $t$ is large enough, we estimate the above term as follows, 
		\begin{flalign*}
			\leq&\sum_{0\leq s_0\leq s_k<\infty}\frac{\Gamma(\frac{3t}{4}+s_0)B(n+k+s_k,t+1)}{(s_0+1)!\Gamma(\frac{3t}{4})(k-1)!(k+s_k)}\\
			=&\sum_{a=0}^\infty\sum_{b=0}^\infty \frac{\Gamma(\frac{3t}{4}+a)B(n+k+a+b,t+1)}{(a+1)!\Gamma(\frac{3t}{4})(k-1)!(k+a+b)}\\
			\leq&\sum_{a=0}^\infty \frac{\Gamma(\frac{3t}{4}+a)}{(a+1)!\Gamma(\frac{3t}{4})(k-1)!}\int_0^1\sum_{b=0}^\infty\frac{s^{b+1}}{k+a+b}s^{n+k+a-2}(1-s)^t\intd s\\
			\leq&\sum_{a=0}^\infty \frac{\Gamma(\frac{3t}{4}+a)}{(a+1)!\Gamma(\frac{3t}{4})(k-1)!}\int_0^1\sum_{b=0}^\infty\frac{s^{b+1}}{b+1}s^{n+k+a-2}(1-s)^t\intd s\\
			=&\sum_{a=0}^\infty \frac{\Gamma(\frac{3t}{4}+a)}{(a+1)!\Gamma(\frac{3t}{4})(k-1)!}\int_0^1s^{n+k+a-2}(1-s)^t\ln\frac{1}{1-s}\intd s\\
			\lesssim&\sum_{a=0}^\infty \frac{\Gamma(\frac{3t}{4}+a)}{(a+1)!\Gamma(\frac{3t}{4})(k-1)!}B(n+k+a,t)\\
			\leq&\int_0^1s^{n+k-1}(1-s)^{t-1-\frac{3t}{4}}\intd s\\
			=&B(n+k,\frac{t}{4})\\
			\lesssim&t^{-n-k}.
		\end{flalign*}
		This completes the proof of Lemma \ref{lem: Phi esitmates}.
	\end{proof}

	\section{Operators Between Weighted Spaces}\label{sec: operator btw weighted spaces}
	For our proofs in this paper it is important to get criteria for integral operators to be bounded between different weighted spaces. In this section, we obtain such criteria. Most of the lemmas established in this section follow from standard techniques from, for example, \cite{Rudinbookunitball, Zhubookspaces}. One thing less standard is that our integral kernels may involve functions of $|\varphi_z(w)|^2$. These functions come from applying the integration by parts formulas in Subsection \ref{subsec: intgration by parts} on integral formulas of Toeplitz operators.
	\begin{lem}\label{lem: bounded operator using Schur's test}
		Suppose $t>-1, s>-1$, $a>-n$, $b\geq0$, and $c<t+1+b-\frac{s+1}{2}$. Suppose $F(z,w)$ is measurable on $\bn\times\bn$, and
		\[
		|F(z,w)|\leq\frac{|\varphi_z(w)|^{2a}(1-|\varphi_z(w)|^2)^b}{|1-\la z, w\ra|^{n+1+t-c}},\quad\forall z, w\in\bn.
		\]
		Then the integral operator
		\[
		Th(z)=\int_{\bn}h(w)F(z,w)\intd\lambda_t(w)
		\]
		is bounded from $L^2(\lambda_{s+2c})$ to $L^2(\lambda_s)$.
	\end{lem}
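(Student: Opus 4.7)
The plan is to apply a Schur-type estimate with a radial test weight $g(w) = (1-|w|^2)^\beta$, the key tool being the generalized Rudin--Forelli estimate of Lemma 2.4(3), which is tailor-made for kernels containing a factor of $\phi(|\varphi_z(w)|^2)$.

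First I would apply Cauchy--Schwarz,
$$|Th(z)|^2 \leq \Big(\int_{\bn} |F(z,w)| g(w) \intd\lambda_t(w)\Big) \Big(\int_{\bn} |F(z,w)| \frac{|h(w)|^2}{g(w)} \intd\lambda_t(w)\Big),$$
then integrate in $z$ against $\intd\lambda_s(z)$ and swap order by Fubini. Setting
$$I_1(z) := \int_{\bn} |F(z,w)|(1-|w|^2)^\beta \intd\lambda_t(w), \quad I_2(w) := \int_{\bn} (1-|z|^2)^{c+\beta} |F(z,w)| \intd\lambda_s(z),$$
the boundedness from $L^2(\lambda_{s+2c})$ to $L^2(\lambda_s)$ reduces to establishing the two pointwise bounds $I_1(z) \lesssim (1-|z|^2)^{c+\beta}$ and $I_2(w) \lesssim (1-|w|^2)^{\beta+s+2c-t}$, since these combine to yield $\int_{\bn}|Th|^2 \intd\lambda_s \lesssim \int_{\bn} |h(w)|^2 (1-|w|^2)^{s+2c-t} \intd\lambda_t(w) \approx \|h\|^2_{L^2(\lambda_{s+2c})}$.

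Both $I_1$ and $I_2$ fit directly into Lemma 2.4(3) applied to $\phi(r) = r^a(1-r)^b$; the hypothesis $a > -n$ from that lemma is met by assumption. For $I_1$ we take the Rudin--Forelli parameters $\tilde{t} = t+\beta$, $\tilde{c} = -c-\beta$; for $I_2$, after using the symmetry $|\varphi_z(w)| = |\varphi_w(z)|$ to interchange the roles of $z$ and $w$, we take $\tilde{t} = s+c+\beta$, $\tilde{c} = t-2c-s-\beta$. The two admissibility conditions $\tilde{t} > -1-b$ and $\tilde{c} > -b$ in each case translate into the four constraints on $\beta$:
$$-1-b-t < \beta < b-c, \qquad -1-b-c-s < \beta < b+t-2c-s.$$

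The main (and essentially only) obstacle is verifying that such a $\beta$ exists. Comparing the four endpoints pairwise, the binding lower-versus-upper inequality turns out to be $-1-b-t < b+t-2c-s$, which rearranges precisely to the hypothesis $c < t+1+b-\frac{s+1}{2}$; the other three cross-comparisons then follow automatically from $b \geq 0$ and $s > -1$. Picking any admissible $\beta$ and tracking constants completes the argument.
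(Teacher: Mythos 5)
Your argument is correct and follows essentially the same route as the paper: you choose a radial power test weight, derive the same four admissibility constraints via Lemma \ref{lem: Rudin Forelli generalizations}(3), and identify the same binding inequality $c < t+1+b-\tfrac{s+1}{2}$. The only cosmetic difference is that you unfold Schur's test by hand through Cauchy--Schwarz and Fubini, whereas the paper just cites Schur's test and verifies the two integral bounds; your $\beta$ plays exactly the role of the paper's exponent $x$ in $p(w)=(1-|w|^2)^x$.
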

	
	\begin{proof}
		By assumption, we can take $x\in\mathbb{R}$ so that
		\[
		\max\{-1-t-b, -c-s-1-b\}<x<\min\{b-c, t+b-2c-s\}.
		\]
		Then we have the following inequalities,
		\[
		t+x>-1-b,\quad -x-c>-b,\quad x+c+s>-1-b,\quad t-x-2c-s>-b.
		\]
		Take $p(w)=(1-|w|^2)^x$ and $q(z)=(1-|z|^2)^{x+c}$. The integral kernel of $T$ as an operator from $L^2(\lambda_{s+2c})$ to $L^2(\lambda_s)$ is
		\[
		\frac{B(n,s+2c+1)}{B(n,t+1)}F(z,w)(1-|w|^2)^{t-s-2c}.
		\]
		Then by the inequalities above and Lemma \ref{lem: Rudin Forelli generalizations} (3), we have the following estimates of integrals,
		\begin{flalign*}
			&\int_{\bn}|F(z,w)|(1-|w|^2)^{t-s-2c}p(w)\intd\lambda_{s+2c}(w)\\
			\lesssim&\int_{\bn}|\varphi_z(w)|^{2a}(1-|\varphi_z(w)|^2)^b\frac{(1-|w|^2)^{t+x}}{|1-\la z,w\ra|^{n+1+t-c}}\intd m(w)\\
			\lesssim&(1-|z|^2)^{x+c}=q(z),
		\end{flalign*}
		and
		\begin{flalign*}
			&\int_{\bn}|F(z,w)|(1-|w|^2)^{t-s-2c}q(z)\intd\lambda_s(z)\\
			\lesssim& \int_{\bn}|\varphi_z(w)|^{2a}(1-|\varphi_z(w)|^2)^b\frac{(1-|w|^2)^{t-s-2c}(1-|z|^2)^{x+c+s}}{|1-\la z,w\ra|^{n+1+t-c}}\intd m(z)\\
			\lesssim&(1-|w|^2)^x=p(w).
		\end{flalign*}
		
		The conclusion follows from Schur's test (cf. \cite[Theorem 3.6]{Zhu:bookoperator}). This completes the proof of Lemma \ref{lem: bounded operator using Schur's test}.
	\end{proof}

	\begin{lem}\label{lem: bounded operator to Sn using Schur's test}
		Suppose $t>-1$ and $0<d<c<t+1$. Suppose $F(z,w)$ is piecewise continuous on $\bn\times\bn$, and
		\[
		|F(z,w)|\leq\frac{1}{|1-\la z,w\ra|^{n+1+t-c}},\quad\forall z, w\in\bn.
		\]
		For any $0<r<1$, define the integral operator
		\[
		T_rh(z)=\int_{\bn}h(w)F(rz,w)\intd\lambda_t(w),\quad z\in\sn.
		\]
		Then each $T_r$ defines a bounded operator from $L^2(\lambda_{-1+2d})$ to $L^2(\sn)$. Moreover,
		\[
		\sup_{0<r<1}\|T_r\|_{L^2(\lambda_{-1+2d})\to L^2(\sn)}<\infty.
		\]
	\end{lem}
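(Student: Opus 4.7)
The plan is to mimic the proof of Lemma \ref{lem: bounded operator using Schur's test} by using Schur's test, but adapted to the situation where the target measure lives on $\sn$ rather than on $\bn$. The main point, compared to the previous lemma, is to handle the extra ``$r$'' in a way that produces bounds which do not blow up as $r\to 1^-$.

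First I would rewrite the integral operator with respect to the measure on its domain: since $\intd\lambda_t(w)=\frac{B(n,2d)}{B(n,t+1)}(1-|w|^2)^{t+1-2d}\intd\lambda_{-1+2d}(w)$, the effective kernel of $T_r:L^2(\lambda_{-1+2d})\to L^2(\sn)$ is, up to a constant,
\[
K(z,w)=|F(rz,w)|\,(1-|w|^2)^{t+1-2d},\qquad z\in\sn,\ w\in\bn.
\]
Then I would try Schur weights $\phi\equiv 1$ on $\sn$ and $\psi(w)=(1-|w|^2)^{x}$ on $\bn$, choosing the exponent $x$ at the end. The two Schur inequalities reduce, via the hypothesis $|F(z,w)|\leq|1-\la z,w\ra|^{-(n+1+t-c)}$, to the estimates
\[
\int_{\bn}\frac{(1-|w|^2)^{t+x}}{|1-\la rz,w\ra|^{n+1+t-c}}\intd m(w)\lesssim 1,\qquad (1-|w|^2)^{t+1-2d}\int_{\sn}\frac{\intd\sigma(z)}{|1-\la rz,w\ra|^{n+1+t-c}}\lesssim(1-|w|^2)^{x}.
\]

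The first bound follows from Lemma \ref{lem: Rudin Forelli generalizations}(1) applied on $\bn$ with the ``$c$''-parameter equal to $-c-x$: provided $x>-c$ and $x>-1-t$, the integral is bounded by a constant independent of $rz$ and hence of $r$ and $z\in\sn$. For the second bound I would use $\la rz,w\ra=\la z,rw\ra$ to rewrite the sphere integral and apply the spherical Rudin–Forelli estimate (second inequality of Lemma \ref{lem: Rudin Forelli generalizations}(1)) with parameter $1+t-c>0$, producing $(1-|rw|^2)^{c-t-1}$. Here is the crucial step: since $|rw|\leq|w|$ we have $1-|rw|^2\geq 1-|w|^2$, and because the exponent $c-t-1$ is negative the bound only improves when $r$ decreases, so
\[
(1-|rw|^2)^{c-t-1}\leq (1-|w|^2)^{c-t-1}
\]
uniformly in $r\in(0,1)$. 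Multiplying by the front factor $(1-|w|^2)^{t+1-2d}$ yields $(1-|w|^2)^{c-2d}$, so we need $x\leq c-2d$.

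The two constraints $-c<x\leq c-2d$ and $x>-1-t$ are simultaneously solvable: the assumption $d<c$ guarantees $c-2d>-c$, and the assumption $c<t+1$ gives $c-2d>-1-t$ as well. Taking, e.g., $x=c-2d$, both Schur inequalities hold with constants independent of $r$, and Schur's test \cite[Theorem~3.6]{Zhu:bookoperator} yields $\|T_r\|_{L^2(\lambda_{-1+2d})\to L^2(\sn)}\leq\sqrt{M_1M_2}$, uniformly in $r\in(0,1)$. The main obstacle in the proof, which I would want to be explicit about, is preserving uniformity in $r$; this is delivered entirely by the two observations that the $\bn$-Rudin–Forelli bound does not involve the base point $rz$ once the ``$c$''-parameter is negative, and that $1-|rw|^2\geq 1-|w|^2$ converts the sphere estimate into an $r$-independent one.
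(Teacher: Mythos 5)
Your proof is correct and takes essentially the same route as the paper: the same Schur test with a power weight $(1-|w|^2)^x$ on $\bn$ and constant weight on $\sn$, reduced to the same two Rudin--Forelli estimates from Lemma \ref{lem: Rudin Forelli generalizations}. The only superficial difference is that you take $x=c-2d$ rather than the paper's choice $x=-d$ (both satisfy the required constraints since $0<d<c<t+1$), and you spell out the $r$-uniformity argument ($1-|rw|^2\geq 1-|w|^2$ together with the negative exponent $c-t-1$) that the paper leaves implicit in the sentence ``The estimates are independent of $r$.''
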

	
	\begin{proof}
		Take $p(w)=(1-|w|^2)^{-d}$, $q(z)=1$. The integral kernel of $T_r$ is
		\[
		T_r(z,w):=\frac{B(n,2d)}{B(n,t+1)}F(rz,w)(1-|w|^2)^{t+1-2d}.
		\]
		As $t-d>-1$ and $d<c$, by Lemma \ref{lem: Rudin Forelli generalizations}, we have the following estimates of integrals,
		\[
		\int_{\bn}|T_r(z,w)|p(w)\intd\lambda_{-1+2d}(w)\lesssim\int_{\bn}\frac{(1-|w|^2)^{t-d}}{|1-\la rz,w\ra|^{n+1+t-c}}\intd m(w)\lesssim 1=q(z),
		\]
		and
		\begin{eqnarray*}
		\int_{\sn}|T_r(z,w)|q(z)\frac{\intd\sigma(z)}{\sigma_{2n-1}}&\lesssim& \int_{\sn}\frac{(1-|w|^2)^{t+1-2d}}{|1-\la z,rw\ra|^{n+1+t-c}}\intd\sigma(z)\\
		& \lesssim&(1-|w|^2)^{c-2d}\lesssim(1-|w|^2)^{-d}=p(w).
		\end{eqnarray*}
		The estimates are independent of $r$. Thus the conclusion follows from Schur's test. This completes the proof of Lemma \ref{lem: bounded operator to Sn using Schur's test}.
	\end{proof}
	
	\begin{lem}\label{lem: bounded operator to H2 using Schur's test}
		Suppose $t>-1$ and $0<d<c<t+1$. Suppose $F(z,w)$ is piecewise continuous on $\bn\times\bn$, and
		\[
		|F(z,w)|\leq\frac{1}{|1-\la z,w\ra|^{n+1+t-c}},\quad\forall z, w\in\bn.
		\]
		Assume further that $F(z,w)$ is holomorphic in $z$ for each fixed $w$. Define the operator
		\[
		Th(z)=\int_{\bn}h(w)F(z,w)\intd\lambda_t(w),\quad z\in\bn.
		\]
		Then $T$ defines a bounded operator from $L^2(\lambda_{-1+2d})$ to $H^2(\sn)$.
	\end{lem}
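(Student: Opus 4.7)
The plan is to reduce this directly to the previous lemma by using the definition of the Hardy space norm. Since $F(z,w)$ is holomorphic in $z$ for each fixed $w$ and $h\in L^2(\lambda_{-1+2d})$, we first verify (either by differentiation under the integral or by noting that the integrand is a uniform limit of analytic functions on compact subsets of $\bn$) that $Th$ is holomorphic on $\bn$. The main task is therefore to bound $\sup_{0<r<1}\|(Th)_r\|_{L^2(\sn)}$, where $(Th)_r(z) := Th(rz)$.

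Observe that
\[
(Th)_r(z) = \int_{\bn} h(w) F(rz,w)\,\intd\lambda_t(w) = T_r h(z),
\]
with $T_r$ exactly the operator considered in Lemma~\ref{lem: bounded operator to Sn using Schur's test}. The hypothesis $|F(z,w)| \leq |1 - \la z, w\ra|^{-(n+1+t-c)}$ on $\bn\times\bn$ passes to $|F(rz,w)|$ for $z\in\sn$, so the assumptions of Lemma~\ref{lem: bounded operator to Sn using Schur's test} are in force, and that lemma yields a constant $C>0$, independent of $r\in(0,1)$, such that
\[
\|T_r h\|_{L^2(\sn)} \leq C \|h\|_{L^2(\lambda_{-1+2d})}.
\]

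Taking the supremum over $r\in(0,1)$ and invoking the definition of the Hardy space norm,
\[
\|Th\|_{H^2(\sn)}^2 = \sup_{0<r<1}\int_{\sn}|Th(rz)|^2\,\frac{\intd\sigma(z)}{\sigma_{2n-1}} = \sup_{0<r<1}\|T_r h\|_{L^2(\sn)}^2 \leq C^2\|h\|_{L^2(\lambda_{-1+2d})}^2,
\]
which gives the desired boundedness from $L^2(\lambda_{-1+2d})$ into $H^2(\sn)$.

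There is no serious obstacle: the uniformity of the Schur bound in the previous lemma was precisely arranged for this application. The only small care required is justifying that $Th$ is genuinely holomorphic (so that evaluating on $r\sn$ and taking $\sup_r$ really computes the Hardy norm), but this follows from holomorphicity of $F(\cdot,w)$ together with the Rudin–Forelli estimates of Lemma~\ref{lem: Rudin Forelli generalizations}, which guarantee local uniform absolute convergence of the defining integral.
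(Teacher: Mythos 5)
Your proposal is correct and follows exactly the same route as the paper: note $Th$ is holomorphic, identify $(Th)_r$ with $T_r h$, invoke the uniform Schur bound from Lemma~\ref{lem: bounded operator to Sn using Schur's test}, and take the supremum over $r$ to bound the Hardy norm. The only addition is your slightly more explicit justification of holomorphicity of $Th$, which the paper states without elaboration.
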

	
	\begin{proof}
		Since $F(z,w)$ is holomorphic in $z$, for every $h$, the function $Th$ is holomorphic in $\bn$. Denote
		\[
		M=\sup_{0<r<1}\|T_r\|_{L^2(\lambda_{-1+2d})\to L^2(\sn)}<\infty.
		\]
		Then for any $h\in L^2(\lambda_{-1+2d})$, we verify the following identity
		\[
		\sup_{0<r<1}\int_{\sn}|Th(rz)|^2\intd\sigma(z)=\sup_{0<r<1}\|T_rh\|_{L^2(\sn)}^2\leq M^2\|h\|_{L^2(\lambda_{-1+2d})}^2.
		\]
		Therefore $T$ is bounded. This completes the proof of Lemma \ref{lem: bounded operator to H2 using Schur's test}.
	\end{proof}
	
	The following fact about embedding operators between different weighted spaces is well-known to experts. In our proof, estimates of the operator norms and Schatten norms of these embeddings are needed. We give the calculation for completeness.
	\begin{lem}\label{lem: Embedding Schatten membership}
		Suppose $t\geq-1$ and $c>0$. Then for any $p>\frac{2n}{c}$ the following hold.
		\begin{itemize}
			\item[(1)] The embedding map $E_{t,c}$ from $L_{a,t}^2(\bn)$ to $L_{a,t+c}^2(\bn)$ (identifying $L_{a,-1}(\bn)$ with $H^2(\sn)$) is in the Schatten $p$ class $\mathcal{S}^p$.
			\item[(2)]There exists $C>0$, independent of $t$, such that
			\[
			\|E_{t,c}\|=1,\quad\|E_{t,c}\|_p\leq C(t+n+1)^{\frac{n}{p}}.
			\]
		\end{itemize}
	\end{lem}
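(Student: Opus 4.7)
The plan is to diagonalize $E_{t,c}$ using the monomial basis $\{z^\alpha\}_{\alpha\in\ind}$, which is an orthogonal basis of every $L^2_{a,t}(\bn)$ and of $H^2(\sn)$ simultaneously. Since the embedding acts trivially on $z^\alpha$, writing $e^{(t)}_\alpha := z^\alpha/\|z^\alpha\|_{L^2_{a,t}(\bn)}$ for the orthonormalized bases, one has $E_{t,c}\,e^{(t)}_\alpha = s_\alpha\, e^{(t+c)}_\alpha$ where $s_\alpha := \|z^\alpha\|_{L^2_{a,t+c}(\bn)}/\|z^\alpha\|_{L^2_{a,t}(\bn)}$. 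Thus $E_{t,c}$ is a diagonal operator and both its operator norm and Schatten $p$-norm are completely determined by the sequence $\{s_\alpha\}$.

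Step one is to compute $\|z^\alpha\|^2_{L^2_{a,t}(\bn)} = \alpha!\,\Gamma(n+1+t)/\Gamma(n+1+t+|\alpha|)$, either by polar coordinates and the beta-function identity or directly from the power series of the reproducing kernel $(1-\la z,w\ra)^{-(n+1+t)}$ via the reproducing property. The Hardy-space identity $\|z^\alpha\|^2_{H^2(\sn)} = \alpha!(n-1)!/\Gamma(n+|\alpha|)$ fits the same formula at $t=-1$. This yields
\[
s_\alpha^2 = \frac{\Gamma(n+1+t+c)\,\Gamma(n+1+t+|\alpha|)}{\Gamma(n+1+t)\,\Gamma(n+1+t+c+|\alpha|)}.
\]
Since $s_\alpha$ depends only on $k := |\alpha|$, denote this common value $s_{(k)}$; the ratio $s_{(k+1)}^2/s_{(k)}^2 = (n+1+t+k)/(n+1+t+c+k) < 1$ shows $s_{(k)}$ is strictly decreasing in $k$, maximized at $s_{(0)} = 1$. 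This proves $\|E_{t,c}\| = 1$.

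For the Schatten estimate, set $T := n+1+t$ and group by degree using $\#\{\alpha : |\alpha|=k\} = \binom{n+k-1}{n-1}$:
\[
\|E_{t,c}\|_p^p = \sum_{k=0}^\infty \binom{n+k-1}{n-1}\bigg[\frac{\Gamma(T+c)\,\Gamma(T+k)}{\Gamma(T)\,\Gamma(T+c+k)}\bigg]^{p/2}.
\]
A uniform Wendel-type bound $\Gamma(T+k)/\Gamma(T+c+k) \lesssim (T+k)^{-c}$ combined with $\Gamma(T+c)/\Gamma(T) \lesssim (T+c)^c$ makes the bracket $\lesssim (T/(T+k))^c$ with constants depending only on $c$. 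Splitting the sum at $k = T$, the low-degree portion contributes $\lesssim T^n$ because the bracket is bounded and $\sum_{k \leq T}k^{n-1} \lesssim T^n$, while the tail is $\lesssim T^{cp/2}\sum_{k>T}k^{n-1-cp/2} \lesssim T^n$ exactly when $cp/2 > n$, i.e., $p > 2n/c$. This gives $\|E_{t,c}\|_p \leq CT^{n/p}$ with $C$ depending only on $n, c, p$.

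The main technical point is keeping the implicit constants in the gamma-ratio inequality uniform in $t$; Wendel's inequality, a direct consequence of log-convexity of $\Gamma$, accomplishes this by providing bounds independent of the shift parameter $T$. Once this is established the rest is a routine dyadic-style split of the series, and the Hardy case $t=-1$ requires no separate treatment since $T = n > 0$ makes the entire calculation valid verbatim.
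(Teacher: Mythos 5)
Your proof is correct and follows essentially the same route as the paper: diagonalize $E_{t,c}$ in the monomial basis, compute the singular values as a ratio of gamma functions (equivalently a ratio of beta functions, as the paper writes it), observe monotonicity in $|\alpha|$ to get $\|E_{t,c}\|=1$, bound the ratio by $\big(T/(T+k)\big)^{c}$ with constants uniform in $T$, and then estimate the weighted sum over degrees. The only cosmetic differences are that you justify the gamma-ratio bound by citing Wendel's inequality where the paper quietly uses the standing asymptotic $B(x,y)\approx_x y^{-x}$, and you finish by splitting the sum at $k=T$ where the paper passes to an integral and rescales by $y=x/(n+t+1)$; both give identical conclusions.
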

	
	\begin{proof}
		For any multi-index $\alpha\in\ind$, we have the norm
		\[
		\|z^\alpha\|_{\bert}^2=\frac{\Gamma(n+t+1)\alpha!}{\Gamma(n+|\alpha|+t+1)}.
		\]
		Thus $E_{t,c}$ is unitarily equivalent to a diagonal operator with entry
		\[
		\frac{\|z^\alpha\|_{L_{a,t+c}^2(\bn)}}{\|z^\alpha\|_{L_{a,t}^2(\bn)}}=\sqrt{\frac{\Gamma(n+t+c+1)\Gamma(n+|\alpha|+t+1)}{\Gamma(n+|\alpha|+t+c+1)\Gamma(n+t+1)}}=\sqrt{\frac{B(n+|\alpha|+t+1, c)}{B(n+t+1,c)}}
		\]
		at $\alpha\in\ind$. It follows immediately that $\|E_{t,c}\|=1$. Thus it suffices to show
		\begin{eqnarray*}
		\sum_{\alpha\in\ind}\bigg(\frac{B(n+|\alpha|+t+1, c)}{B(n+t+1,c)}\bigg)^\frac{p}{2}&=&\sum_{d=0}^\infty\frac{(d+n-1)!}{d!(n-1)!}\cdot\bigg(\frac{B(n+d+t+1, c)}{B(n+t+1,c)}\bigg)^\frac{p}{2}\lesssim (t+n+1)^n.
		\end{eqnarray*}
		Since $B(n+d+t+1, c)$ is decreasing in $d$, and $\frac{(d+n-1)!}{d!}\approx d^{n-1}$ for $d\geq1$, we have the following inequalities,
		\begin{flalign*}
			&\sum_{d=0}^\infty\frac{(d+n-1)!}{d!(n-1)!}\cdot\bigg(\frac{B(n+d+t+1, c)}{B(n+t+1,c)}\bigg)^\frac{p}{2}\\
			\lesssim&1+\int_0^\infty x^{n-1}\bigg(\frac{B(n+x+t+1, c)}{B(n+t+1,c)}\bigg)^{p/2}\intd x\\
			\lesssim&1+\int_0^\infty x^{n-1}\bigg(\frac{n+t+1}{n+x+t+1}\bigg)^{\frac{pc}{2}}\intd x.
		\end{flalign*}
		If we take the change of variable $y=\frac{x}{n+t+1}$ then the integral above has the following bounds,
		\[
		\lesssim1+(n+t+1)^n\int_0^\infty y^{n-1}\bigg(\frac{1}{1+y}\bigg)^{\frac{pc}{2}}\intd y\lesssim(n+t+1)^n
		\]
		when $n<\frac{pc}{2}$, i.e., $p>\frac{2n}{c}$. This completes the proof of Lemma \ref{lem: Embedding Schatten membership}.
	\end{proof}

	\section{Schatten Class Criteria}\label{sec: schatten criterion}
	In this section, we obtain criteria for integral operators to belong to Schatten class. As explained in the beginning of Section \ref{sec: operator btw weighted spaces}, the integral kernels may involve functions of $|\varphi_z(w)|^2$. Thus we need to modify some of the well-known results to fit our case.
	
	A standard way of proving Schatten class membership of integral operators is to use the following lemma. See \cite[Lemma 8.26]{Zhu:bookoperator} for a proof when $n=1$, using interpolation. The exact same proof works for general $n$.
	\begin{lem}\label{lem: Lp implies Schatten p }
		Suppose $t>-1$ and $G(z,w)$ is measurable on $\bn\times\bn$. Suppose $p\geq 2$ and
		\[
		\int_{\bn^2}|G(z,w)|^p|\BKt_w(z)|^2\intd\lambda_t(w)\intd\lambda_t(z)<\infty.
		\]
		Define the operator
		\[
		Th(z)=\int_{\bn}h(w)G(z,w)\BKt_w(z)\intd\lambda_t(w).
		\]
		Then $T$ defines a bounded operator on $L^2(\lambda_t)$ that belongs to $\mathcal{S}^p$.
	\end{lem}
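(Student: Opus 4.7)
The plan is to establish the lemma by complex interpolation in the Schatten scale, anchored by the Hilbert--Schmidt identity at $p=2$ and a bounded-operator estimate on the other side of an interpolation strip, and to rely on the standard identification $[\mathcal{S}^{\infty},\mathcal{S}^2]_{\theta}=\mathcal{S}^{2/\theta}$ for $\theta\in(0,1]$.

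First I would dispose of the base case $p=2$ directly. Viewed as an integral operator on $L^2(\lambda_t)$, $T$ has kernel $K_T(z,w):=G(z,w)\BKt_w(z)$ with respect to the product measure $d\lambda_t(w)\otimes d\lambda_t(z)$, and the Hilbert--Schmidt norm of such an operator is the $L^2$-norm of its kernel:
\[
\|T\|_{\mathcal{S}^2}^{2}=\int_{\bn^{2}}|G(z,w)|^{2}|\BKt_w(z)|^{2}\,d\lambda_t(w)\,d\lambda_t(z),
\]
which is exactly the hypothesis at $p=2$. In particular $T\in\mathcal{S}^2$ and $T$ is bounded.

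For $p>2$, set $\theta=2/p\in(0,1)$ and consider the analytic family on the strip $\{0\leq\operatorname{Re}\zeta\leq 1\}$,
\[
T_{\zeta}h(z)=\int_{\bn}h(w)\,G(z,w)\,|G(z,w)|^{p\zeta/2-1}\,\BKt_w(z)\,d\lambda_{t}(w),
\]
where the integrand is taken to vanish where $G(z,w)=0$. One checks that $\zeta\mapsto\langle T_{\zeta}\varphi,\psi\rangle$ is analytic on the strip and continuous up to the boundary for a dense class of $\varphi,\psi$, and that $T_{2/p}=T$. On the line $\operatorname{Re}\zeta=1$ the kernel has modulus $|G|^{p/2}|\BKt_w(z)|$, so by the $p=2$ computation $\|T_{1+it}\|_{\mathcal{S}^2}^{2}$ equals the hypothesis integral, uniformly in $t$. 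Stein's complex interpolation theorem, applied to the pair $(\mathcal{S}^{\infty},\mathcal{S}^{2})$ at $\theta=2/p$, then delivers $T=T_{2/p}\in\mathcal{S}^{p}$ with the expected quantitative bound, provided one has a uniform operator-norm estimate $\|T_{it}\|\lesssim 1$ on the opposite boundary $\operatorname{Re}\zeta=0$.

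The main obstacle is precisely this boundedness of $T_{it}$ on $L^{2}(\lambda_{t})$, since $T_{it}$ has kernel $|\BKt_w(z)|$ modulated by a unit-modulus phase, and the bare operator with kernel $|\BKt_w(z)|$ is \emph{not} bounded on $L^{2}(\lambda_{t})$ -- Schur's test with constant weight fails by the Rudin--Forelli estimate of Lemma~\ref{lem: Rudin Forelli generalizations} at $c=0$, which yields only a logarithmic bound. To sidestep this I would approximate $G$ by a truncation $G_{N}:=G\,\chi_{\{N^{-1}\leq|G|\leq N\}}$, replay the interpolation argument for the corresponding operator $T^{(N)}$ (for which at $\operatorname{Re}\zeta=0$ the kernel is bounded above in modulus by a constant multiple of $|\BKt_w(z)|$ supported where $G\neq 0$, and the operator is bounded on $L^{2}(\lambda_{t})$ with polynomial-in-$N$ constants that enter Stein's theorem harmlessly), and obtain
\[
\|T^{(N)}\|_{\mathcal{S}^{p}}\leq\Big(\int_{\bn^{2}}|G_{N}|^{p}|\BKt_w(z)|^{2}\,d\lambda_{t}(w)\,d\lambda_{t}(z)\Big)^{1/p},
\]
with a constant independent of $N$ coming from the interpolation exponent. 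Finally I would let $N\to\infty$ and use monotone convergence for the right-hand side together with lower semicontinuity of the Schatten-$p$ norm (Fatou for $\mathcal{S}^{p}$) to transfer the bound to $T$, concluding $T\in\mathcal{S}^{p}$ with the stated estimate.
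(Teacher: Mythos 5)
Your overall strategy (complex interpolation between the Hilbert--Schmidt endpoint and an operator-norm endpoint, using the analytic family $T_{\zeta}$ with kernel $G|G|^{p\zeta/2-1}\BKt_w(z)$) is precisely the proof the paper has in mind: the paper simply refers to Zhu's Lemma 8.26, which argues by interpolation in the same way. However, the execution has a real problem, and it starts with a false assertion. You claim that the operator with kernel $|\BKt_w(z)|$ fails to be bounded on $L^{2}(\lambda_{t})$ because ``Schur's test with constant weight fails.'' The failure of Schur's test with \emph{one particular} weight does not show unboundedness. In fact Schur's test succeeds with the weight $h(w)=(1-|w|^{2})^{-s}$ for any $0<s<t+1$: by Lemma~\ref{lem: Rudin Forelli generalizations}\,(1) with exponent $\tilde t=t-s>-1$ and $\tilde c=s>0$,
\[
\int_{\bn}\big|\BKt_w(z)\big|\,(1-|w|^{2})^{-s}\,\intd\lambda_{t}(w)
\;\lesssim\;\int_{\bn}\frac{(1-|w|^{2})^{t-s}}{|1-\la z,w\ra|^{n+1+t}}\,\intd m(w)
\;\lesssim\;(1-|z|^{2})^{-s},
\]
with the symmetric estimate following by the symmetry $|\BKt_w(z)|=|\BKt_z(w)|$. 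This is nothing more than Lemma~\ref{lem: bounded operator using Schur's test} of the paper itself with $a=b=c=0$ and $s=t$, whose hypothesis $0<t+1-\tfrac{t+1}{2}$ holds automatically. So the boundary operator $T_{i\sigma}$, whose kernel has modulus dominated by $|\BKt_w(z)|$, is in fact bounded on $L^{2}(\lambda_{t})$ uniformly in $\sigma$, and Stein interpolation applies directly without any truncation.

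Because you believed the endpoint was unbounded, you inserted the truncation $G_{N}=G\,\chi_{\{N^{-1}\le|G|\le N\}}$ and claimed the resulting $N$-dependent operator-norm bounds ``enter Stein's theorem harmlessly.'' That step would fail even if the endpoint had genuinely been a problem. The Stein/three-lines estimate at $\theta=2/p$ yields $\|T^{(N)}\|_{\mathcal{S}^{p}}\le M_{0}(N)^{1-2/p}M_{1}^{2/p}$, where $M_{0}(N)$ is the $\operatorname{Re}\zeta=0$ operator-norm bound and $M_{1}$ the $\operatorname{Re}\zeta=1$ Hilbert--Schmidt bound. If $M_{0}(N)$ grows polynomially in $N$ (which is the best you can extract from the truncation via the Hilbert--Schmidt bound $\lesssim N^{p/2}$), then for $p>2$ the exponent $1-2/p>0$ makes $M_{0}(N)^{1-2/p}\to\infty$, so the interpolated bound is \emph{not} $N$-independent and Fatou for $\mathcal{S}^p$ buys you nothing. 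The remedy is simply to discard the truncation, recognize that the $\operatorname{Re}\zeta=0$ boundary is already controlled by the weighted Schur test above, and run the interpolation once.
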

	
	\begin{cor}\label{cor: integral schatten membership no phi}
		Suppose $t>-1$ and $F(z,w)$ is measurable on $\bn\times\bn$. Suppose $c>0$ and
		\[
		|F(z,w)|\leq\frac{1}{|1-\la z,w\ra|^{n+1+t-c}},\quad z, w\in\bn.
		\]
		Define the integral operator
		\[
		Th(z)=\int_{\bn}h(w)F(z,w)\intd\lambda_t(w),\quad h\in L^2(\lambda_t).
		\]
		Then $T$ is bounded on $L^2(\lambda_t)$, and for any $p>\frac{n}{c}$ and $p\geq2$, $T\in\mathcal{S}^p$.
	\end{cor}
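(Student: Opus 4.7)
The plan is to split the proof into two parts: boundedness of $T$ on $L^2(\lambda_t)$, and the Schatten--$p$ membership for $p > n/c$ with $p \geq 2$.

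\smallskip

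For boundedness, I would observe that since $|1-\la z,w\ra| \leq 2$ on $\overline{\bn}\times\overline{\bn}$, one has the trivial domination
\[
\frac{1}{|1-\la z,w\ra|^{n+1+t-c}} = \frac{|1-\la z,w\ra|^c}{|1-\la z,w\ra|^{n+1+t}} \leq \frac{2^c}{|1-\la z,w\ra|^{n+1+t}}.
\]
Consequently $|F(z,w)|$ is controlled (up to a constant) by the kernel appearing in Lemma \ref{lem: bounded operator using Schur's test} in the case $s=t$, $a=b=0$, $c=0$, and boundedness of $T$ on $L^2(\lambda_t)$ follows. Alternatively, one may apply Schur's test directly with $h(z) = (1-|z|^2)^\alpha$ for any $\alpha \in (-1-t, -c)$ (or simply $\alpha = -(t+1)/2$), using the Rudin--Forelli estimate \eqref{eqn: Rudin-Forelli 1-1} to verify the two Schur conditions.

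\smallskip

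For the Schatten membership, I would reduce to Lemma \ref{lem: Lp implies Schatten p } by factoring out the reproducing kernel. Define
\[
G(z,w) := F(z,w)\,(1-\la z,w\ra)^{n+1+t},
\]
so that $T h(z) = \int_{\bn} h(w) G(z,w) \BKt_w(z)\, d\lambda_t(w)$. The hypothesis then gives the pointwise bound $|G(z,w)| \leq |1-\la z,w\ra|^c$. It remains to verify that, for $p>n/c$ with $p\geq 2$,
\[
\int_{\bn^2} |G(z,w)|^p |\BKt_w(z)|^2\, d\lambda_t(w)\, d\lambda_t(z) < \infty.
\]
Up to constants depending on $t$, the integrand is dominated by
\[
\frac{(1-|z|^2)^t (1-|w|^2)^t}{|1-\la z,w\ra|^{2(n+1+t)-pc}}.
\]

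\smallskip

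To finish, I would apply the Rudin--Forelli estimate \eqref{eqn: Rudin-Forelli 1-1} in the $w$ variable, with exponent $n+1+t+c_0$ in the denominator where $c_0 := n+1+t-pc$. The integral over $w$ is dominated by $(1-|z|^2)^{-c_0}$ when $c_0>0$, by a logarithmic factor when $c_0=0$, and is bounded when $c_0<0$. The subsequent $z$-integral, carrying the weight $(1-|z|^2)^t$, then reduces in the most restrictive case ($c_0>0$) to $\int_{\bn}(1-|z|^2)^{pc-n-1}\,dm(z)$, which converges precisely when $pc > n$, i.e.\ $p > n/c$. In the cases $c_0\leq 0$ the integral is trivially finite. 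Hence Lemma \ref{lem: Lp implies Schatten p } yields $T\in \mathcal{S}^p$. The main (minor) subtlety is keeping track of the three Rudin--Forelli cases simultaneously and noting that $p\geq 2$ is required only to invoke the interpolation lemma; the $L^p$ condition itself is the sole constraint generating the threshold $p > n/c$.
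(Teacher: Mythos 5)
Your proposal is correct and follows essentially the same route as the paper: factor out the kernel $\BKt_w(z)$ to get $G(z,w)=F(z,w)(1-\la z,w\ra)^{n+1+t}$ with $|G|\leq|1-\la z,w\ra|^c$, bound the double integral via Rudin--Forelli, and invoke Lemma~\ref{lem: Lp implies Schatten p }. The only differences are cosmetic: you treat all three Rudin--Forelli cases ($c_0>0$, $c_0=0$, $c_0<0$) directly, whereas the paper computes only the range $\frac{n}{c}<p<\frac{n+1+t}{c}$ and disposes of larger $p$ by noting $G$ is bounded (so $|G|^{p_2}\lesssim|G|^{p_1}$ for $p_1<p_2$); and you include a separate Schur-test argument for boundedness, which is redundant since boundedness already follows from $\mathcal{S}^p$-membership at any admissible $p\geq2$. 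Both are valid; neither changes the substance of the argument.
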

	
	\begin{proof}
		Let
		\[
		G(z,w)=\frac{F(z,w)}{\BKt_w(z)},\quad z, w\in\bn.
		\]
		Then by assumption, we have the following bound,
		\[
		|G(z,w)|=|F(z,w)|\cdot|1-\la z,w\ra|^{n+1+t}\leq|1-\la z,w\ra|^c.
		\]
		Choose $p$ so that $\frac{n}{c}<p<\frac{n+1+t}{c}$. Then by Lemma \ref{lem: Rudin Forelli generalizations}, we compute the following integral. 
		\begin{flalign*}
			&\int_{\bn}\int_{\bn}|G(z,w)|^p|\BKt_w(z)|^2\intd\lambda_t(w)\intd\lambda_t(z)\\
			\lesssim&\int_{\bn}\int_{\bn}\frac{(1-|z|^2)^t(1-|w|^2)^t}{|1-\la z,w\ra|^{2(n+1+t)-cp}}\intd m(w)\intd m(z)\\
			\lesssim&\int_{\bn}(1-|z|^2)^{t-(n+1+t-cp)}\intd m(z)\\
			=&\int_{\bn}(1-|z|^2)^{-1+(cp-n)}\intd m(z)<\infty.
		\end{flalign*}
		Since $G(z,w)$ is bounded, for $p\geq\frac{n+1+t}{c}$ the integral is also finite. Therefore we have the following bound, 
		\[
		\int_{\bn}\int_{\bn}|G(z,w)|^p|\BKt_w(z)|^2\intd\lambda_t(w)\intd\lambda_t(z)<\infty,\quad\forall p>\frac{n}{c}.
		\]
		Finally, by Lemma \ref{lem: Lp implies Schatten p }, $T\in\mathcal{S}^p$ for any $p>\frac{n}{c}$ and $p\geq2$. This completes the proof of Corollary \ref{cor: integral schatten membership no phi}.
	\end{proof}

	\begin{cor}\label{cor: commutator schatten membership no phi}
		Let $t, c, F, T$ be as in Corollary \ref{cor: integral schatten membership no phi}. Then for any Lipschitz function $u$ on $\bn$ and any $p>\frac{n}{c+\frac{1}{2}}$ and $p\geq2$, the commutator
		\[
		[T, M_u]\in\mathcal{S}^p.
		\]
	\end{cor}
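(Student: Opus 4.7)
The plan is to reduce the statement directly to Corollary \ref{cor: integral schatten membership no phi} by showing that $[T, M_u]$ is again an integral operator whose kernel satisfies the same type of bound as $F$, but with the exponent $c$ improved by $\tfrac12$.

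First I would compute the kernel. For $h \in L^2(\lambda_t)$,
\[
[T, M_u]h(z) \;=\; T(uh)(z) - u(z)(Th)(z) \;=\; \int_{\bn} h(w)\bigl(u(w)-u(z)\bigr)F(z,w)\,\intd\lambda_t(w),
\]
so $[T, M_u]$ has integral kernel $\widetilde{F}(z,w) := (u(w)-u(z))F(z,w)$.

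Next I would bound the factor $u(w)-u(z)$ in terms of $|1-\la z,w\ra|^{1/2}$. Since $u$ is Lipschitz on $\bn$, it extends with the same Lipschitz constant $L$ to $\overline{\bn}$, so $|u(z)-u(w)| \leq L|z-w|$. The elementary estimate $2|1-\la z,w\ra|\geq |z-w|^2$ (already used in the proof of Lemma \ref{lem: Mobius basics}(6)) gives $|z-w| \leq \sqrt{2}\,|1-\la z,w\ra|^{1/2}$, and hence
\[
|\widetilde F(z,w)| \;\leq\; L\sqrt{2}\,\frac{|1-\la z,w\ra|^{1/2}}{|1-\la z,w\ra|^{n+1+t-c}} \;=\; \frac{L\sqrt{2}}{|1-\la z,w\ra|^{n+1+t-(c+\frac12)}}.
\]
In other words, the commutator with a Lipschitz multiplier improves the effective parameter from $c$ to $c+\tfrac12$.

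Finally, I would invoke Corollary \ref{cor: integral schatten membership no phi} applied to $\widetilde{F}$ with the new parameter $c' = c+\tfrac12$; this immediately yields $[T, M_u] \in \mathcal{S}^p$ for every $p > \frac{n}{c+1/2}$ with $p\geq 2$, which is the desired conclusion. There is no real obstacle: once the elementary Lipschitz-plus-Möbius estimate is in place, everything is reduced to the previously established criterion. I would note in passing that one could alternatively retain the sharper bound $|z-w|\leq C|\varphi_z(w)|\,|1-\la z,w\ra|^{1/2}$ from Lemma \ref{lem: Mobius basics}(6), giving an extra factor of $|\varphi_z(w)|$ in $\widetilde F$; this is consistent with the remark in the introduction that a complex tangential improvement contributes $\tfrac12$ to the Schatten index, but it is not needed for the statement at hand.
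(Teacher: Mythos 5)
Your proposal is correct and follows essentially the same route as the paper: compute the kernel of $[T,M_u]$, bound $|u(z)-u(w)|$ by the Lipschitz constant times $|z-w|\lesssim|1-\la z,w\ra|^{1/2}$, and then invoke Corollary \ref{cor: integral schatten membership no phi} with $c$ replaced by $c+\tfrac12$. The paper cites Lemma \ref{lem: Mobius basics}(6) for the estimate $|z-w|\lesssim|1-\la z,w\ra|^{1/2}$, whereas you invoke the underlying elementary inequality $2|1-\la z,w\ra|\geq|z-w|^2$ directly, which is the same ingredient.
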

	\begin{proof}
		By definition, for $h\in L^2(\lambda_t)$, we have the following expression,
		\[
		[T,M_u]h(z)=\int_{\bn}\big(u(w)-u(z)\big)F(z,w)h(w)\intd\lambda_t(w).
		\]
		Since $u$ is Lipschitz, by Lemma \ref{lem: Mobius basics} (6), we get the following bounds, 
		\[
		\big|\big(u(w)-u(z)\big)F(z,w)\big|\lesssim|z-w||F(z,w)|\lesssim\frac{1}{|1-\la z,w\ra|^{n+1+t-c-1/2}}.
		\]
		The corollary then follows from Corollary \ref{cor: integral schatten membership no phi}. This completes the proof of Corollary \ref{cor: commutator schatten membership no phi}.
	\end{proof}
	
	It is well-known that Hankel operators of Lipschitz symbols belong to $\mathcal{S}^p$ for any $p>2n$. In fact, the Schatten-class membership for Hankel operators is completely characterized (see \cite{Luecking92}, \cite[Theorem 8.36]{Zhu:bookoperator}). 
	\begin{cor}\label{cor: hankel schatten membership}
		Suppose $t>-1$ and $u$ is a Lipschitz function on $\bn$. Then the Hankel operator $H^{(t)}_u=(I-P^{(t)})M_uP^{(t)}$ belongs to $\mathcal{S}^p$ for any $p>2n$.
	\end{cor}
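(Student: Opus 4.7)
The plan is to reduce the Schatten membership of $\BHt_u$ to a single application of Corollary \ref{cor: integral schatten membership no phi}. The starting point is the algebraic identity
\[
\BHt_u \;=\; (I - \BPt) M_u \BPt \;=\; M_u \BPt - \BPt M_u \BPt \;=\; [M_u, \BPt]\,\BPt,
\]
so, since $\BPt$ is a bounded projection on $L^2(\lambda_t)$, it suffices to prove that the commutator $[M_u, \BPt]$ belongs to $\mathcal{S}^p$ for every $p > 2n$.

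I would then realize this commutator as an integral operator on $L^2(\lambda_t)$. Starting from $\BPt h(z) = \int_{\bn} h(w) \BKt_w(z) \intd\lambda_t(w)$, a direct computation gives
\[
[M_u, \BPt] h(z) \;=\; \int_{\bn} \bigl( u(z) - u(w) \bigr) h(w) \BKt_w(z) \intd\lambda_t(w),
\]
so the kernel is $F(z,w) = \bigl( u(z) - u(w) \bigr) \BKt_w(z)$. Combining the Lipschitz bound $|u(z) - u(w)| \lesssim |z - w|$ with the subroot tangential estimate $|z - w| \lesssim |1 - \la z, w \ra|^{1/2}$ of Lemma \ref{lem: Mobius basics}(6), I would conclude
\[
|F(z, w)| \;\lesssim\; \frac{|z - w|}{|1 - \la z, w \ra|^{n+1+t}} \;\lesssim\; \frac{1}{|1 - \la z, w \ra|^{n+1+t - 1/2}}.
\]
This places $[M_u, \BPt]$ precisely in the hypotheses of Corollary \ref{cor: integral schatten membership no phi} with $c = 1/2$, which then yields $[M_u, \BPt] \in \mathcal{S}^p$ for all $p > n/(1/2) = 2n$; the auxiliary requirement $p \geq 2$ in that corollary is automatic once $p > 2n$ and $n \geq 1$. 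Multiplying by the bounded operator $\BPt$ preserves $\mathcal{S}^p$, giving $\BHt_u \in \mathcal{S}^p$ as desired.

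No substantial obstacle remains; the single nontrivial move is the commutator trick, which is what converts the borderline $c = 0$ kernel bound for $\BPt$ itself into the strictly positive $c = 1/2$ kernel bound for $[M_u, \BPt]$. This is the same mechanism used a few lines earlier in Corollary \ref{cor: commutator schatten membership no phi}, here specialized to the borderline integral operator $T = \BPt$; the $1/2$ in the exponent is exactly what produces the expected Luecking-type threshold $p > 2n$.
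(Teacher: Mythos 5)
Your proof is correct and follows essentially the same route as the paper: the paper also writes $H^{(t)}_u = [M_u, P^{(t)}]P^{(t)}$ and then invokes Corollary~\ref{cor: commutator schatten membership no phi}. The one small improvement in your version is worth noting: Corollary~\ref{cor: commutator schatten membership no phi} is stated for $c>0$ (inherited from the hypotheses of Corollary~\ref{cor: integral schatten membership no phi}), whereas the kernel of $P^{(t)}$ itself only satisfies the bound with $c=0$; by redoing the commutator estimate directly and applying Corollary~\ref{cor: integral schatten membership no phi} to the \emph{commutator} kernel with $c=1/2$, you avoid this borderline issue that the paper's citation glosses over.
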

	\begin{proof}
		The corollary follows from Corollary \ref{cor: commutator schatten membership no phi} and the equation
		\[
		H^{(t)}_u=(I-P^{(t)})M_uP^{(t)}=[M_u, P^{(t)}]P^{(t)}.
		\]
		This completes the proof of Corollary \ref{cor: hankel schatten membership}.
	\end{proof}
	
	Corollaries \ref{cor: integral schatten membership no phi} and \ref{cor: commutator schatten membership no phi} provide convenient criteria for an integral operator to be in $\mathcal{S}^p$, when $p\geq2$. However, in this paper we will also need to deal with the case when $1\leq p<2$. Moreover, we will need to consider integral operators of the form
	\[
	Th(z)=\int_{\bn}\phi(|\varphi_z(w)|^2)F(z,w)h(w)\intd\lambda_t(w),
	\]
	where $\phi$ is an unbounded function. For such $T$, if we take the double integral as in Lemma \ref{lem: Lp implies Schatten p }, its integral is very likely to be infinite. In application, it is enough for us to obtain Schatten-class membership of the operators $TP^{(t)}$ or $P^{(t)}T$. An alternative way is to take advantage of Lemma \ref{lem: Embedding Schatten membership}. In particular, the following lemma holds.
	
	\begin{lem}\label{lem: integral operator Schatten membership}
		Suppose $t>-1$, $a>-n$, $b\geq0$ and $c>0$. Suppose $F(z,w)$ is measurable on $\bn\times\bn$, $\phi:(0,1)\to[0,\infty)$ is measurable, and
		\[
		\phi(s)\leq s^a(1-s)^b,\quad s\in(0,1),
		\]
		\[
		|F(z,w)|\leq\frac{1}{|1-\la z, w\ra|^{n+1+t-c}},\quad\forall z, w\in\bn.
		\]
		Define the integral operator on $L^2(\lambda_t)$.
		\[
		Th(z)=\int_{\bn}\phi(|\varphi_z(w)|^2)F(z,w)h(w)\intd\lambda_t(w).
		\]
		Then both $\BPt T$ and $T\BPt$ belong to $\mathcal{S}^p$ for any $p>\max\{\frac{n}{c}, \frac{n}{b+\frac{1+t}{2}}\}$.
	\end{lem}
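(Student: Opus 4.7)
The strategy is to combine the Schur-type boundedness of $T$ between weighted $L^2$ spaces (from Lemma \ref{lem: bounded operator using Schur's test}) with the Schatten estimate on inclusions (from Lemma \ref{lem: Embedding Schatten membership}). Using $\phi(s) \leq s^a(1-s)^b$ and the hypothesis on $F$, the kernel of $T$ satisfies
\[
|\phi(|\varphi_z(w)|^2)F(z,w)| \leq \frac{|\varphi_z(w)|^{2a}(1-|\varphi_z(w)|^2)^b}{|1-\la z,w\ra|^{n+1+t-c}},
\]
which is precisely the form required by Lemma \ref{lem: bounded operator using Schur's test}. Thus $T$ extends to a bounded operator $L^2(\lambda_{s+2c}) \to L^2(\lambda_s)$ for any $s \in (-1,\, 2(t+b-c)+1)$.

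For $T\BPt$, the plan is to factor it as
\[
L^2(\lambda_t) \xrightarrow{\BPt} L_{a,t}^2(\bn) \xrightarrow{E_{t,\delta}} L_{a,t+\delta}^2(\bn) \hookrightarrow L^2(\lambda_{t+\delta}) \xrightarrow{T} L^2(\lambda_s) \hookrightarrow L^2(\lambda_t),
\]
where $\delta = s+2c-t$, and $s$ is chosen so that $s \leq t$ (for the final inclusion to be valid) and $\delta > 0$ (so the middle embedding goes into a genuinely larger weighted space). The middle arrow $E_{t,\delta}$ lies in $\mathcal{S}^p$ for $p > 2n/\delta$ by Lemma \ref{lem: Embedding Schatten membership}, while the remaining arrows are bounded; hence $T\BPt \in \mathcal{S}^p$ for $p > 2n/\delta$, and the goal is to maximize $\delta$. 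Two parameter choices suffice: (i) if $c \leq (1+t)/2+b$, take $s=t$, giving $\delta=2c$ and $p > n/c$; (ii) if $(1+t)/2+b < c < t+b+1$, take $s$ just below $2(t+b-c)+1$ (which lies in the required interval $(-1,t)$), giving $\delta \to t+2b+1$ and $p > n/(b+(1+t)/2)$. In the remaining regime $c \geq t+b+1$, first replace $c$ by a smaller $c'' \in \big((1+t)/2+b,\ t+b+1\big)$ using the elementary estimate $|1-\la z,w\ra|^{-(n+1+t-c)} \leq 2^{c-c''}\,|1-\la z,w\ra|^{-(n+1+t-c'')}$, valid for $c'' \leq c$, and apply (ii) with $c''$; the resulting bound $p > n/(b+(1+t)/2)$ is independent of $c''$. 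Combining (i) and (ii) then yields $T\BPt \in \mathcal{S}^p$ for every $p > \max\{n/c,\, n/(b+(1+t)/2)\}$.

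For $\BPt T$, I would use $(\BPt T)^* = T^*\BPt$: the kernel of $T^*$ is $\overline{\phi(|\varphi_w(z)|^2)F(w,z)}$, and since $|\varphi_z(w)| = |\varphi_w(z)|$ and $|1-\la z,w\ra| = |1-\la w,z\ra|$, it satisfies the same pointwise bounds as the kernel of $T$. Applying the factorization above to $T^*$ in place of $T$ gives $T^*\BPt \in \mathcal{S}^p$, and therefore $\BPt T = (T^*\BPt)^* \in \mathcal{S}^p$ for the same range of $p$. The main technical obstacle is the parameter optimization, especially reducing the ``large $c$'' regime $c \geq t+b+1$ (where Schur's test does not apply directly) to a regime where it does via the pointwise trick above; once this reduction is made, the remainder is a mechanical composition of the cited lemmas.
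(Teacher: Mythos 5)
Your proof is correct and takes essentially the same route as the paper: reduce $\BPt T$ to $T\BPt$ by adjoints, factor $T\BPt$ through the Schatten-class embedding of Lemma \ref{lem: Embedding Schatten membership}, and invoke Lemma \ref{lem: bounded operator using Schur's test} for the remaining bounded operator. The paper avoids your three-case split by uniformly replacing $c$ with $c'=n/q<\min\{c,\,b+\frac{1+t}{2}\}$ (the same $|1-\la z,w\ra|\leq 2$ trick you use only in your last case) and always taking $s=t$, which collapses all of your cases into a single computation.
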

	
	\begin{proof}
		Notice that $\BPt T\in\mathcal{S}^p$ if and only if $T^*\BPt\in\mathcal{S}^p$, and that $T^*$ is an integral operator with integral kernel satisfying the same estimate as $T$. Thus it suffices to prove the statement for $T\BPt$. For any $q>\max\{\frac{n}{c},\frac{n}{b+\frac{1+t}{2}}\}$, let $c'=\frac{n}{q}$. Then $c'<b+\frac{1+t}{2}$. Split the map $T\BPt$ as follows.
		\[
		T\BPt: L^2(\lambda_t)\xrightarrow{\BPt}L_{a,t}^2(\bn)\xrightarrow{E_{t,2c'}}L_{a,t+2c'}^2(\bn)\xrightarrow{\hat{T}}L^2(\lambda_t).
		\]
		Here $\hat{T}:L_{a,t+2c'}^2(\bn)\to L^2(\lambda_t)$ is defined by the same integral formula as $T$.
		By Lemma \ref{lem: Embedding Schatten membership}, $E_{t,2c'}\in\mathcal{S}^p$ for any $p>\frac{n}{c'}=q$. Also by Lemma \ref{lem: bounded operator using Schur's test}, $\hat{T}$ is bounded. Since $q$ is any number with $q>\max\{\frac{n}{c},\frac{n}{b+\frac{1+t}{2}}\}$, we have $T\BPt\in\mathcal{S}^p, \forall p>\max\{\frac{n}{c},\frac{n}{b+\frac{1+t}{2}}\}$. This completes the proof of Lemma \ref{lem: integral operator Schatten membership}.
	\end{proof}
	
	In the case when $\phi=s^k\Phi^{(t)}_{n,k}$, the following Schatten-norm estimate holds.
	\begin{thm}\label{thm: integral Schatten membership}
		Suppose $t>-1$, $c>0$ and $k$ is a non-negative integer. Suppose $F(z,w)$ is measurable on $\bn\times\bn$, and
		\[
		|F(z,w)|\leq\frac{|\varphi_z(w)|^{2k}}{|1-\la z, w\ra|^{n+1+t-c}},\quad\forall z, w\in\bn.
		\]
		Define the integral operator on $L^2(\bn,\lambda_t)$.
		\[
		Th(z)=\int_{\bn}\Phi_{n,k}^{(t)}(|\varphi_z(w)|^2)F(z,w)h(w)\intd\lambda_t(w).
		\]
	Then both $\BPt T$ and $T\BPt$ belong to $\mathcal{S}^p$ for any $p>\max\{\frac{n}{c}, \frac{n}{k+\frac{1+t}{2}}\}$, $p\geq1$. Moreover, for $p>\frac{n}{c}$, and $p\geq1$, and $t$ large enough, we have
		\[
		\|\BPt T\|_p\lesssim t^{-k+\frac{n}{p}},\quad \| T\BPt\|_p\lesssim t^{-k+\frac{n}{p}}.
		\]
	\end{thm}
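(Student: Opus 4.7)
The first assertion reduces to the general criterion already established in Lemma \ref{lem: integral operator Schatten membership}, while the second assertion requires a quantitative refinement that tracks the $t$-dependence.

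For $(1)$, I would invoke Lemma \ref{lem: Phi asymptotic behavior} to get the pointwise bound $\Phi_{n,k}^{(t)}(s)\lesssim s^{-n-k+1/2}(1-s)^k$. Combining with the hypothesis on $F$ yields
\[
|\Phi_{n,k}^{(t)}(|\varphi_z(w)|^2)F(z,w)|\lesssim \frac{|\varphi_z(w)|^{1-2n}(1-|\varphi_z(w)|^2)^k}{|1-\la z,w\ra|^{n+1+t-c}}.
\]
Lemma \ref{lem: integral operator Schatten membership} then applies with effective exponents $a=\tfrac{1}{2}-n>-n$ and $b=k\geq 0$, giving $\BPt T,\,T\BPt\in\mathcal{S}^p$ for all $p>\max\{n/c,\,n/(k+(1+t)/2)\}$ with $p\geq 1$.

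For $(2)$, I would mimic the factorization used in the proof of Lemma \ref{lem: integral operator Schatten membership}. Fix $p>n/c$ and set $c'=n/p<c$. Decompose
\[
T\BPt:L^2(\lambda_t)\xrightarrow{\BPt}L_{a,t}^2(\bn)\xrightarrow{E_{t,2c'}}L_{a,t+2c'}^2(\bn)\xrightarrow{\hat T}L^2(\lambda_t),
\]
where $\hat T$ acts via the same integral formula as $T$. Lemma \ref{lem: Embedding Schatten membership} supplies $\|E_{t,2c'}\|_p\lesssim (t+n+1)^{n/p}$, so once we show
\[
\|\hat T:L^2(\lambda_{t+2c'})\to L^2(\lambda_t)\|\lesssim t^{-k}
\]
for $t$ large, H\"older in the Schatten scale (Lemma \ref{lem: holder inequality for operators}) gives $\|T\BPt\|_p\lesssim t^{-k+n/p}$. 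The symmetric bound for $\BPt T$ follows by taking adjoints, using that $\Phi_{n,k}^{(t)}(|\varphi_z(w)|^2)=\Phi_{n,k}^{(t)}(|\varphi_w(z)|^2)$ and that $T^{*}$ satisfies the same pointwise hypotheses as $T$.

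\textbf{Main obstacle.} The quantitative operator-norm estimate $\|\hat T\|\lesssim t^{-k}$ is the crux; the pointwise bound used in $(1)$ is insufficient because it loses the smallness in $t$ that $\Phi_{n,k}^{(t)}$ carries in an integrated sense. I plan to apply Schur's test to $\hat T$ with test weights $p(w)=(1-|w|^2)^x$ and $q(z)=(1-|z|^2)^{x+c'}$ for a suitably chosen $x$. Absorbing the density ratio $\mathrm{d}\lambda_t/\mathrm{d}\lambda_{t+2c'}$, changing variables $\xi=\varphi_z(w)$ via Lemma \ref{lem: Mobius basics}, and integrating out the spherical component with the Rudin--Forelli estimate \eqref{eqn: Rudin-Forelli 3}, both Schur integrals reduce to a radial integral of the form
\[
\int_0^1\Phi_{n,k}^{(t)}(s)\,s^{n+k-1}(1-s)^{\tau}\,\mathrm{d}s.
\]
Choosing $x$ so that $\tau\geq t/4$, the integrated estimate $\int_0^1\Phi_{n,k}^{(t)}(s)s^{n+k-1}(1-s)^{t/4}\mathrm{d}s\lesssim t^{-n-k}$ from Lemma \ref{lem: Phi esitmates} supplies the decisive factor $t^{-n-k}$; cancelling against the volume constant $C_t\approx t^n$ in $\mathrm{d}\lambda_t$ produces the desired bound $t^{-k}$. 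The delicate part is tuning $x$ so that the radial exponent $\tau$ in both Schur integrals simultaneously reaches $t/4$ while the complementary exponents stay in the range where the Rudin--Forelli estimate is applicable; this is where the assumption $c'=n/p<c$ is needed.
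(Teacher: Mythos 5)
Your overall strategy coincides with the paper's: Part (1) via the pointwise bound $\Phi_{n,k}^{(t)}(s)\lesssim s^{-n-k+1/2}(1-s)^k$ and Lemma \ref{lem: integral operator Schatten membership}; Part (2) by factoring $T\BPt$ through an embedding operator, applying Schur's test with power weights, reducing via the Rudin--Forelli estimate of Lemma \ref{lem:kernel est} to the radial integral $\int_0^1\Phi_{n,k}^{(t)}(s)s^{n+k-1}(1-s)^{t/4}\,\intd s$, and then invoking \eqref{eqn: FPhi t/4} from Lemma \ref{lem: Phi esitmates} to extract the factor $t^{-n-k}$, which against the normalization constant $B(n,t+1)^{-1}\approx t^{n}$ yields $\|\hat T\|\lesssim t^{-k}$. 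Your reduction of $\BPt T$ to $T\BPt$ by taking adjoints and using $|\varphi_z(w)|=|\varphi_w(z)|$ is also exactly as in the paper.

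There is, however, a genuine error in your choice of $c'$. You set $c'=n/p$, so that $p=n/c'$. But Lemma \ref{lem: Embedding Schatten membership} gives $E_{t,2c'}\in\mathcal{S}^q$ only for $q>\frac{2n}{2c'}=\frac{n}{c'}$, i.e. for $q>p$; at $q=p$ the membership fails (the defining series diverges logarithmically at the endpoint $p=\frac{2n}{c}$). Thus $E_{t,2c'}\notin\mathcal{S}^p$, and the proposed composition $\hat T\circ E_{t,2c'}\circ\BPt$ does not yield a $\mathcal{S}^p$ estimate. The fix is to take $c'\in(n/p,\,c\,]$; the paper takes $c'=c$, which is the natural choice since the full strength $c$ of the kernel decay is then used to boost the weight to $t+2c$, and $p>n/c$ guarantees $E_{t,2c}\in\mathcal{S}^p$ with $\|E_{t,2c}\|_p\lesssim t^{n/p}$. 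After this correction, your Schur-test step works: with the paper's weights $p(w)=(1-|w|^2)^{-x}$, $q(z)=(1-|z|^2)^{c-x}$ and $x=c+\frac{t+1}{2}$, the shift $t-x=\frac{t}{2}+(\frac{t}{2}-x)$ leaves the parameter $c_0=\frac{t}{2}-x=-c-\frac12$ independent of $t$, which is what makes the constant in Lemma \ref{lem:kernel est} uniform in $t$ — this is the precise reason for the tuning, rather than merely reaching exponent $t/4$.
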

	We postpone the proof of Theorem \ref{thm: integral Schatten membership} to the end of this section. As in the proof of Corollary \ref{cor: commutator schatten membership no phi}, Lemma \ref{lem: integral operator Schatten membership} and Theorem \ref{thm: integral Schatten membership} imply the following.
	\begin{cor}\label{cor: commutator schatten membership with phi}
		Suppose $t>-1$, $c>0$, $F(z,w)$ is measurable on $\bn\times\bn$, and
		\[
		|F(z,w)|\leq\frac{1}{|1-\la z,w\ra|^{n+1+t-c}},\quad z, w\in\bn.
		\]
		Suppose $\phi:(0,1)\to[0,\infty)$ is measurable. Define the integral operator on $L^2(\lambda_t)$,
		\[
		Th(z)=\int_{\bn}\phi(|\varphi_z(w)|^2)F(z,w)h(w)\intd\lambda_t(w).
		\]
		Assume that $u$ is a Lipschitz function on $\bn$.
		\begin{enumerate}
			\item Suppose $a>-n$, $b\geq0$ and $\phi(s)\leq s^a(1-s)^b$. Then
			\[
			[T,M_u]P^{(t)},\quad P^{(t)}[T, M_u]\in\mathcal{S}^p,\quad\forall p>\max\{\frac{n}{c+\frac{1}{2}}, \frac{n}{b+\frac{1+t}{2}}\}.
			\]
			\item If $\phi(s)=s^k\Phi^{(t)}_{n,k}$, then for $t$ large enough and $p>\frac{n}{c+\frac{1}{2}}$,
			\[
			\big\|[T,M_u]P^{(t)}\big\|_{\mathcal{S}^p}\lesssim t^{-k+\frac{n}{p}},\quad\big\|P^{(t)}[T,M_u]\big\|_{\mathcal{S}^p}\lesssim t^{-k+\frac{n}{p}}.
			\]
		\end{enumerate}
	\end{cor}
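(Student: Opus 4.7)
The plan is to reduce the proof to the framework of Lemma \ref{lem: integral operator Schatten membership} and Theorem \ref{thm: integral Schatten membership}, exactly as was done for Corollary \ref{cor: commutator schatten membership no phi}. First, I would write out the commutator as an integral operator:
\[
[T,M_u]h(z) = \int_{\bn}\bigl(u(w)-u(z)\bigr)\,\phi(|\varphi_z(w)|^2)\,F(z,w)\,h(w)\,\intd\lambda_t(w).
\]
Since $u$ is Lipschitz, the tangent/nontangent estimate \eqref{eqn: tangent nontangent estimates} from Lemma \ref{lem: Mobius basics}(6) gives $|u(z)-u(w)| \lesssim |z-w| \lesssim |\varphi_z(w)|\,|1-\la z,w\ra|^{1/2}$. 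Absorbing the half-power of $|1-\la z,w\ra|$ into the $F$-factor and the factor $|\varphi_z(w)|$ into the $\phi$-factor, I obtain the pointwise bound
\[
\bigl|(u(w)-u(z))\phi(|\varphi_z(w)|^2)F(z,w)\bigr| \lesssim |\varphi_z(w)|\,\phi(|\varphi_z(w)|^2) \cdot \frac{1}{|1-\la z,w\ra|^{n+1+t-(c+1/2)}}.
\]
Thus $[T,M_u]$ is an integral operator of the same shape as $T$, but with $\phi(s)$ replaced by $\sqrt{s}\,\phi(s)$ and $c$ replaced by $c+\frac{1}{2}$.

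For part (1), the hypothesis $\phi(s)\leq s^a(1-s)^b$ becomes $\sqrt{s}\,\phi(s)\leq s^{a+1/2}(1-s)^b$, and $a+\frac{1}{2}>-n$ since $a>-n$. Applying Lemma \ref{lem: integral operator Schatten membership} to $[T,M_u]$ with shifted parameters $(a',b',c') = (a+\frac{1}{2},\,b,\,c+\frac{1}{2})$ immediately yields membership of both $[T,M_u]P^{(t)}$ and $P^{(t)}[T,M_u]$ in $\mathcal{S}^p$ for every $p > \max\bigl\{\frac{n}{c+1/2},\,\frac{n}{b+(1+t)/2}\bigr\}$, which is precisely the desired range.

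For part (2), when $\phi(s) = s^k \Phi^{(t)}_{n,k}(s)$, the new factor $\sqrt{s}\,\phi(s) = s^{k+1/2}\Phi^{(t)}_{n,k}(s)$ is bounded above by $s^k\,\Phi^{(t)}_{n,k}(s)$ because $s<1$. Consequently the integral kernel of $[T,M_u]$ is dominated by
\[
|\varphi_z(w)|^{2k}\,\Phi^{(t)}_{n,k}(|\varphi_z(w)|^2) \cdot \frac{1}{|1-\la z,w\ra|^{n+1+t-(c+1/2)}},
\]
which has exactly the form required by Theorem \ref{thm: integral Schatten membership} with the same integer $k$ and with $c$ replaced by $c+\frac{1}{2}$. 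That theorem then delivers both the Schatten membership range $p > \frac{n}{c+1/2}$ (with $p\geq 1$) and the quantitative bound $\|P^{(t)}[T,M_u]\|_{\mathcal{S}^p}$, $\|[T,M_u]P^{(t)}\|_{\mathcal{S}^p} \lesssim t^{-k+n/p}$ for $t$ large. The entire argument has no real obstacle: the one nontrivial input is the tangent/nontangent inequality \eqref{eqn: tangent nontangent estimates}, which trades a factor of $|z-w|$ for a half-power gain in $|\varphi_z(w)|$ and a half-power gain in $|1-\la z,w\ra|$; everything else amounts to bookkeeping the shift $c\mapsto c+\frac{1}{2}$ through the two previous results.
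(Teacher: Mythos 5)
Your proof is correct and follows essentially the same route the paper intends: write $[T,M_u]$ as an integral operator, use the Lipschitz estimate together with Lemma~\ref{lem: Mobius basics}(6) to gain a half-power of $|1-\la z,w\ra|$, and feed the resulting kernel into Lemma~\ref{lem: integral operator Schatten membership} (part 1) and Theorem~\ref{thm: integral Schatten membership} (part 2). The only cosmetic difference is that you also keep the factor $|\varphi_z(w)|$ from \eqref{eqn: tangent nontangent estimates} and then discard it via $s^{1/2}\le 1$, whereas the paper's analogous argument for Corollary~\ref{cor: commutator schatten membership no phi} uses the simpler bound $|z-w|\lesssim|1-\la z,w\ra|^{1/2}$ directly; both lead to the same conclusion.
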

	
	A trivial application of Theorem \ref{thm: integral Schatten membership} gives the following.
	\begin{lem}\label{lem: Toeplitz Schatten Class}
		Suppose $c>0$, $t>-1$ and $f\in L^{\infty}(\bn)$ satisfies
		\[
		|f(z)|\leq (1-|z|^2)^c,\quad\forall z\in\bn.
		\]
		Then for any $p>\max\{\frac{n}{c},\frac{2n}{1+t}\}, p\geq1$,
		\[
		\BTt_f\in\mathcal{S}^p.
		\]
		For any $p>\frac{n}{c}$, $p\geq1$ and $t$ large enough,
		\[
		\|\BTt_f\|_{\mathcal{S}^p}\lesssim_p t^{\frac{n}{p}}.
		\]
	\end{lem}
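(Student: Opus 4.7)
The plan is to realize $\BTt_f$ as an integral operator and invoke Theorem \ref{thm: integral Schatten membership} with $k=0$. Using the reproducing kernel, we write
\[
\BTt_f h(z) = \int_{\bn} f(w)\, h(w)\, \BKt_w(z)\, \intd\lambda_t(w), \qquad h\in\bert,
\]
so that $\BTt_f$ is the restriction to $\bert$ of the integral operator $Sh(z):=\int_{\bn} f(w)h(w)\BKt_w(z)\intd\lambda_t(w)$ on $L^2(\lambda_t)$. Since $S$ already takes values in $\bert$, we have $S\BPt = \BPt M_f\BPt$, whose Schatten $p$-norm as an operator on $L^2(\lambda_t)$ coincides with $\|\BTt_f\|_{\mathcal{S}^p}$. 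Hence it suffices to estimate $S\BPt$.

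Next, I would convert the hypothesis on $|f|$ into a kernel bound of the form required by Theorem \ref{thm: integral Schatten membership}. The hypothesis $|f(w)|\le (1-|w|^2)^c$ combined with the elementary estimate $1-|w|^2 \le 2|1-\la z,w\ra|$ of Lemma \ref{lem: Mobius basics} (7) gives
\[
|f(w)\BKt_w(z)| \le \frac{(1-|w|^2)^c}{|1-\la z,w\ra|^{n+1+t}} \le \frac{2^c}{|1-\la z,w\ra|^{n+1+t-c}}.
\]
Thus the integral kernel $F(z,w):=f(w)\BKt_w(z)$ of $S$ fits the hypothesis of Theorem \ref{thm: integral Schatten membership} with $k=0$ (so $\Phi^{(t)}_{n,0}\equiv 1$) and constant $c$.

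Applying Theorem \ref{thm: integral Schatten membership} directly then yields $S\BPt \in \mathcal{S}^p$ for every $p\ge 1$ satisfying $p>\max\{n/c,\, n/(k+\frac{1+t}{2})\} = \max\{n/c,\, 2n/(1+t)\}$, which is the first assertion. For the quantitative bound, once $t$ is large enough that $2n/(1+t) < n/c$, the condition $p>n/c$ alone suffices, and the norm estimate in Theorem \ref{thm: integral Schatten membership} produces $\|S\BPt\|_{\mathcal{S}^p} \lesssim_p t^{-k+n/p} = t^{n/p}$ since $k=0$. This gives $\|\BTt_f\|_{\mathcal{S}^p}\lesssim_p t^{n/p}$, as desired.

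There is essentially no obstacle: the lemma is a direct corollary of Theorem \ref{thm: integral Schatten membership}, and the only substantive step is the one-line observation that pointwise decay $(1-|w|^2)^c$ of the symbol transfers to decay of the form $|1-\la z,w\ra|^c$ in the kernel, which is exactly what the Schatten-class criterion wants to see.
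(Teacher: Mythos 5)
Your proof is correct and follows essentially the same path as the paper: write $\BTt_f$ via its reproducing-kernel integral formula, bound the kernel by $|f(w)\BKt_w(z)| \lesssim |1-\la z,w\ra|^{-(n+1+t-c)}$ using $1-|w|^2 \le 2|1-\la z,w\ra|$, and invoke Theorem \ref{thm: integral Schatten membership} with $k=0$ (so $\Phi^{(t)}_{n,0}\equiv 1$). The only difference is that you spell out the small observation that $\|\BTt_f\|_{\mathcal{S}^p}=\|S\BPt\|_{\mathcal{S}^p}$, which the paper leaves implicit.
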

	\begin{proof}
		By definition, we have the following expression of $\BTt_f$,
		\[
		\BTt_fh(z)=\int_{\bn}f(w)h(w)\BKt_w(z)\intd\lambda_t(w).
		\]
		By the assumption, we have the following inequalities
		\[
		|f(w)\BKt_w(z)|\leq\frac{(1-|z|^2)^c}{|1-\la z,w\ra|^{n+1+t}}\lesssim\frac{1}{|1-\la z, w\ra|^{n+1+t-c}}.
		\]
		Since $\Phi^{(t)}_{n,0}=1$, the conclusion follows directly from Theorem \ref{thm: integral Schatten membership}. This completes the proof of Lemma \ref{lem: Toeplitz Schatten Class}.
	\end{proof}

	\begin{lem}\label{lem:kernel est}
		Suppose $\phi:(0,1)\to[0,\infty)$ is measurable. Then for any $c, d\in\mathbb{R}$ there exist $C>0$ and $t_0>0$ such that whenever $t>t_0$,
		\begin{flalign}\label{eqn: Rudin-Forelli 4}
			\int_{\bn}\phi(|\varphi_z(w)|^2)\frac{(1-|w|^2)^{\frac{t}{2}+c}}{|1-\la z,w\ra|^{n+1+t+d}}\intd m(w)\leq C\int_0^1\phi(s)s^{n-1}(1-s)^{\frac{t}{4}}\intd s\cdot(1-|z|^2)^{-\frac{t}{2}-d+c},\quad\forall z\in\bn.
		\end{flalign}
	\end{lem}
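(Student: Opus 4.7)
The plan is a direct Rudin--Forelli style computation built around the Möbius change of variables $w = \varphi_z(\xi)$. This is the same maneuver used in Lemma \ref{lem: Rudin Forelli generalizations}(3), but because the ambient weight here is $(1-|w|^2)^{t/2+c}$ rather than $(1-|w|^2)^t$, the kernel is no longer ``Möbius invariant'' and the surplus weight will be absorbed into the surplus factor $(1-s)^{t/4}$ for large $t$.

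First I would perform the substitution $w=\varphi_z(\xi)$. By Lemma \ref{lem: Mobius basics}(1), (2), (4), together with the identity
\[
1-\la z,\varphi_z(\xi)\ra = \frac{1-|z|^2}{1-\la z,\xi\ra},
\]
(a direct consequence of Lemma \ref{lem: Mobius basics}(1) with $a=0,\,b=\xi$), one obtains
\[
\int_{\bn}\phi(|\varphi_z(w)|^2)\frac{(1-|w|^2)^{\frac{t}{2}+c}}{|1-\la z,w\ra|^{n+1+t+d}}\intd m(w)
= (1-|z|^2)^{-\frac{t}{2}-d+c}\!\int_{\bn}\!\phi(|\xi|^2)(1-|\xi|^2)^{\frac{t}{2}+c}|1-\la z,\xi\ra|^{d-1-n-2c}\intd m(\xi).
\]
The prefactor $(1-|z|^2)^{-t/2-d+c}$ is exactly what we need, so it remains to bound the $\xi$-integral by $C\int_0^1\phi(s)s^{n-1}(1-s)^{t/4}\intd s$ for $t$ large.

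Next, write $\xi=r\eta$ with $r\in(0,1),\eta\in\sn$ and apply the sphere form of the Rudin--Forelli estimate (Lemma \ref{lem: Rudin Forelli generalizations}(1), Equation \eqref{eqn: Rudin-Forelli 1-2}) to the inner integral over $\sn$ with exponent $n+(1+2c-d)$. Setting $e_+=\max(1+2c-d,0)$, the sphere integral is controlled (up to a logarithmic loss that costs only an arbitrarily small $\epsilon$ in the exponent) by $(1-r^2|z|^2)^{-e_+}\leq (1-r^2)^{-e_+}$. Substituting back produces
\[
\int_{\bn}\phi(|\xi|^2)(1-|\xi|^2)^{\frac{t}{2}+c}|1-\la z,\xi\ra|^{d-1-n-2c}\intd m(\xi)
\lesssim \int_0^1\phi(r^2)r^{2n-1}(1-r^2)^{\frac{t}{2}+c-e_+-\epsilon}\intd r.
\]
Choose $t_0$ so that for $t>t_0$ the exponent satisfies $\frac{t}{2}+c-e_+-\epsilon \geq \frac{t}{4}$; this is possible since $c$, $d$, $e_+$, $\epsilon$ are all fixed constants. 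The change of variable $s=r^2$ then yields the right-hand side of \eqref{eqn: Rudin-Forelli 4} up to the multiplicative constant $\tfrac12$.

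No single step is hard; the main obstacle is simply keeping the exponent bookkeeping straight through the Möbius substitution and verifying that the three regimes of Lemma \ref{lem: Rudin Forelli generalizations}(1) ($e_+>0$, $=0$, $<0$) are all absorbed by taking $t$ large enough to guarantee the residual exponent on $(1-r^2)$ dominates $t/4$.
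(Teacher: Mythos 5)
Your proposal is correct and follows essentially the same route as the paper: Möbius change of variables $w=\varphi_z(\xi)$ to pull out the prefactor $(1-|z|^2)^{-t/2-d+c}$, then polar coordinates plus the sphere Rudin--Forelli estimate, then choose $t_0$ large enough so the residual exponent on $(1-s)$ dominates $t/4$. Your exponent bookkeeping ($e_+=\max(1+2c-d,0)$, with an $\epsilon$ loss to absorb the log) is in fact a touch cleaner than the paper's, which uses $a=2c-d$ where the Rudin--Forelli parameter actually reads $1+2c-d$ (an inconsequential slip, since $a$ enters only through the choice of $t_0$).
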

	
	\begin{proof}
		Make the change of variable $w=\varphi_z(\xi)$. Using Lemma \ref{lem: Mobius basics} we have the following equation,
		\[
		\frac{(1-|w|^2)^t}{|1-\la z,w\ra|^{n+1+t}}\intd m(w)\xlongequal[\xi=\varphi_z(w)]{w=\varphi_z(\xi)}\frac{(1-|\xi|^2)^t}{|1-\la z,\xi\ra|^{n+1+t}}\intd m(\xi).
		\]
		Then we have the following estimate of the left side of Equation (\ref{eqn: Rudin-Forelli 4}),
		\begin{flalign*}
			&\int_{\bn}\phi(|\varphi_z(w)|^2)\frac{(1-|w|^2)^{\frac{t}{2}+c}}{|1-\la z,w\ra|^{n+1+t+d}}\intd m(w)\\
			=&\int_{\bn}\phi(|\xi|^2)\bigg(\frac{(1-|z|^2)(1-|\xi|^2)}{|1-\la z,\xi\ra|^2}\bigg)^{c-\frac{t}{2}}\bigg(\frac{|1-\la z,\xi\ra|}{1-|z|^2}\bigg)^d\frac{(1-|\xi|^2)^t}{|1-\la z,\xi\ra|^{n+1+t}}\intd m(\xi)\\
			=&(1-|z|^2)^{c-\frac{t}{2}-d}\int_{\bn}\phi(|\xi|^2)\frac{(1-|\xi|^2)^{c+\frac{t}{2}}}{|1-\la z,\xi\ra|^{n+1+2c-d}}\intd m(\xi)\\
			=&(1-|z|^2)^{c-\frac{t}{2}-d}\int_0^1\phi(r^2)(1-r^2)^{c+\frac{t}{2}}r^{2n-1}\bigg[\int_{\sn}\frac{1}{|1-\la rz,\zeta\ra|^{n+1+2c-d}}\intd\sigma(\zeta)\bigg]\intd r\\
			\lesssim&(1-|z|^2)^{c-\frac{t}{2}-d}\int_0^1\phi(r^2)(1-r^2)^{c+\frac{t}{2}}r^{2n-1}(1-|rz|^2)^{-a}\intd r\\
			\leq&(1-|z|^2)^{c-\frac{t}{2}-d}\int_0^1\phi(r^2)(1-r^2)^{c-a+\frac{t}{2}}r^{2n-1}\intd r\\
			\xlongequal{s=r^2}&\frac{1}{2}(1-|z|^2)^{c-\frac{t}{2}-d}\int_0^1\phi(s)(1-s)^{c-a+\frac{t}{2}}s^{n-1}\intd s.
		\end{flalign*}
		Here $a=2c-d$ if $2c-d>0$; $a=\frac{1}{2}$ if $2c-d=0$; $a=0$ if $2c-d<0$. For $t$ large enough, $c-a+\frac{t}{2}>\frac{t}{4}$. So we have the following inequality,
		\[
		\int_0^1\phi(s)(1-s)^{c-a+\frac{t}{2}}s^{n-1}\intd s\leq \int_0^1\phi(s)(1-s)^{\frac{t}{4}}s^{n-1}\intd s.
		\]
		This completes the proof of Lemma \ref{lem:kernel est}.
	\end{proof}

	\begin{proof}[{\bf Proof of Theorem \ref{thm: integral Schatten membership}}]
		The Schatten class memberships of $T\BPt$ and $\BPt T$ are implied by Lemma \ref{lem: integral operator Schatten membership} and \cite[Lemma 8.3]{TWZ:semicommutator}. It remains to prove the Schatten norm estimates. We may assume that $t$ is large enough so that $0<c<k+\frac{1+t}{2}$.
		Also, as in the proof of Lemma \ref{lem: integral operator Schatten membership}, it suffices to prove the statement for $T\BPt$.
		
		Split the operator $T\BPt$ as
		\[
		L^2(\bn,\lambda_t)\xrightarrow{\BPt}\bert\xrightarrow{E_{t,2c}} L_{a,t+2c}^2(\bn)\xrightarrow{\hat{T}}L^2(\bn,\lambda_t),
		\]
		where $\hat{T}$ is the same integral operator as $T$.
		By Lemma \ref{lem: Embedding Schatten membership}, it suffices to show that $\hat{T}$ defines a bounded operator from $L_{a,t+2c}^2(\bn)$ to $L^2(\bn,\lambda_t)$ with $\|\hat{T}\|_{L_{a,t+2c}^2(\bn)\to\bert}\lesssim t^{-k}$ for large $t$.
		
		This is done by Schur's test. Since $c<k+\frac{t+1}{2}$, if we take $x=c+\frac{t+1}{2}$, then $t+1+k>x>2c-k$. Take $p(w)=(1-|w|^2)^{-x}$ and $q(z)=(1-|z|^2)^{c-x}$. The integral kernel of $\hat{T}:L^2(\bn,\lambda_{t+2c})\to L^2(\bn,\lambda_t)$ is
		\[
		T(z,w):=\frac{B(n,t+2c+1)}{B(n,t+1)}\Phi_{n,k}^{(t)}(|\varphi_z(w)|^2)F(z,w)(1-|w|^2)^{-2c}.
		\]
		Then by \eqref{eqn: Rudin-Forelli 3}, \eqref{eqn: Rudin-Forelli 4}, \eqref{eqn: FPhi} and \eqref{eqn: FPhi t/4}, we have the following estimates, 
		\begin{flalign*}
			&\int_{\bn}|T(z,w)|p(w)\intd\lambda_{t+2c}(w)\\
			\approx&\frac{1}{B(n,t+1)}\int_{\bn}\Phi^{(t)}_{n,k}(|\varphi_z(w)|^2)|\varphi_z(w)|^{2k}\frac{(1-|w|^2)^{t-x}}{|1-\la z,w\ra|^{n+1+t-c}}\intd m(w)\\
			\lesssim&\frac{\int_0^1\Phi_{n,k}^{(t)}(s)s^{n+k-1}(1-s)^{\frac{t}{4}}\intd s}{B(n,t+1)}q(z)\lesssim t^{-k}q(z).
		\end{flalign*}
		Similarly, we have the following inequality, 
		\begin{equation*}
			\int_{\bn}|T(z,w)|q(z)\intd\lambda_t(z)
			\lesssim t^{-k}p(w).
		\end{equation*}
		From this we have the following bound for $p>n/c$ and large $t$,
		\[
		\|T\|_{\mathcal{S}^p}\leq\|E_{t,2c}\|_{\mathcal{S}^p}\|\hat{T}\|\lesssim t^{-k+n/p}.
		\]
		This completes the proof of Theorem \ref{thm: integral Schatten membership}.
	\end{proof}

\section{Traces on Different Weighted Bergman Spaces}\label{sec: traces}
As explained in the introduction, the goal of this section is to prove Equation \eqref{eqn: intro 1}. More precisely, we will prove Lemma \ref{lem: AS sum trace t-tp} and Lemma \ref{lem: AS sum trace t-tp hardy} stated below.

Suppose $t>s\geq-1$. It is well-known that $L_{a,s}^2(\bn)\subset L_{a,t}^2(\bn)$, and $L_{a,s}^2(\bn)$ is dense in $L_{a,t}^2(\bn)$. For a Toeplitz operator $\BTt_f$ on $\bert$, if the restriction of $\BTt_f$ on $L_{a,s}^2(\bn)$ defines a bounded operator on $L_{a,s}^2(\bn)$, then we denote this restriction to be $T^{(t,s)}_f$. On the other hand, if a Toeplitz operator $T^{(s)}_f$ on $L_{a,s}^2(\bn)$ extends (uniquely) into a bounded operator on $\bert$, then we denote this operator to be $T^{(s,t)}_f$. It follows from Lemma \ref{lem: bounded operator using Schur's test} that if $f\in L^\infty(\bn)$ and $t>s>-1$, then the restricted operator $T^{(t,s)}_f$ is well-defined.

	\begin{lem}\label{lem: AS sum trace t-tp}
		Suppose $f_1, f_2, \ldots, f_{2n}\in\mathscr{C}^2(\overline{\bn})$ and $t>-1$. Then
		\[
		[\BTt_{f_1},\BTt_{f_2},\ldots,\BTt_{f_{2n}}]-[\BTtpt_{f_1},\BTtpt_{f_2},\ldots,\BTtpt_{f_{2n}}]
		\]
		is a trace class operator on $\bert$ with zero trace.
	\end{lem}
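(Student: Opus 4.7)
The plan is to decompose each Toeplitz operator as $\BTt_{f_i}=A_i+B_i$ with $A_i=\BTtpt_{f_i}$ the ``principal part'' and $B_i$ a remainder carrying strictly better regularity, expand the antisymmetric sum by multilinearity, and show that every term containing at least one $B_i$-factor contributes zero trace. The decomposition comes from applying Lemma \ref{lem: formula R on bn} to $v(w)=f_i(w)h(w)\BKt_w(z)$ (permissible because $h\in\bert$ is holomorphic, so $\bar R h=0$), together with the direct identity
\[
\bar{R}_w\BKt_w(z)=(n+1+t)\big[\BKtp_w(z)-\BKt_w(z)\big].
\]
Separating the boundary value $\lim_{s\to 1^-}\BGt_n 1(s)=\frac{1}{t+1}$ from the remainder $\BGt_n 1(s)-\frac{1}{t+1}=O(1-s)$ (Lemma \ref{lem: formula R on bn}(2),(3)), the leading piece reassembles $\BTtpt_{f_i}h(z)=A_i h(z)$, while everything else---a rank-one piece $f_i(0)h(0)$, a piece carrying $\bar R f_i$, and a piece carrying $(1-|w|^2)$---collects into $B_i$.

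By multilinearity and antisymmetry,
\[
[\BTt_{f_1},\ldots,\BTt_{f_{2n}}]-[A_1,\ldots,A_{2n}]=\sum_{\emptyset\neq S\subseteq\{1,\ldots,2n\}}P_S,
\]
where $P_S$ is the antisymmetric sum in which position $k\in S$ carries a $B$-factor and position $k\notin S$ carries an $A$-factor. It suffices to prove each such $P_S$ is trace class with zero trace. I would do this by isolating one $B$-slot, using antisymmetry to rewrite the sum as $\sum_j\pm[X_j,Y_j]$ with the isolated $B$-factor absorbed into $X_j$ or $Y_j$, and invoking cyclic trace vanishing (Lemma \ref{lem: trace of commutators}). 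The regularity gains of $B_i$---a $\tfrac12$-gain in the Schatten exponent from the $\bar R f_i$ piece via Corollary \ref{cor: commutator schatten membership no phi}, and a $1$-gain from the $(1-|w|^2)$ piece via Lemma \ref{lem: Toeplitz Schatten Class}---combine with the Schatten H\"older inequality (Lemma \ref{lem: holder inequality for operators}) to push both $X_jY_j$ and $Y_jX_j$ into $\mathcal{S}^1$; the rank-one piece is handled directly.

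The main obstacle, flagged in the introduction, is that $B_i$ on its own does not belong to $\mathcal{S}^p$ for $p$ small, so a slot-by-slot H\"older estimate on $P_S$ fails. The fix is to preserve the antisymmetrization long enough for the regularity improvements of the $B$-factors to be redistributed across several positions via cyclicity; this operator-theoretic reorganization is packaged as Hypotheses A, and the concrete Schatten bounds needed to verify it---supplied by the integration-by-parts machinery of Section \ref{sec: IntByPts} together with the estimates in Sections \ref{sec: operator btw weighted spaces}--\ref{sec: schatten criterion}---are packaged as Hypotheses B.
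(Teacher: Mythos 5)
Your decomposition $\BTt_{f_i}=A_i+B_i$ via Lemma~\ref{lem: formula R on bn}, the multilinear expansion of the antisymmetric sum, and the reduction to showing that $B$-containing terms contribute zero trace all match the paper (Lemma~\ref{lem: Ttf=X+Y+K}, Proposition~\ref{prop: A}, Proposition~\ref{prop: B}). However, there is a genuine gap in how you propose to kill the single-$B$-factor terms, and you misdescribe the relationship between Hypotheses~A and~B.

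First, your claim about ``regularity gains of $B_i$'' is not correct. The pieces $X^{(t)}_{f}$ and $Y^{(t)}_{\bar R f}$ making up $B_i$ (modulo a rank-one piece) are only in $\mathcal{S}^p$ for $p>n$ (Lemma~\ref{lem: B X Y}), which is the \emph{same} Schatten class as $[A_i,A_j]$ (Lemma~\ref{lem: Ttpt commutator}); the introduction flags exactly this. The improvement lives only in the mixed commutators $[A_i,B_j]\in\mathcal S^p$ for some $p<n$ (Lemma~\ref{lem: B commutator}). Because of this, the cyclicity argument you describe---``rewrite as $\sum_j\pm[X_j,Y_j]$ and push both $X_jY_j$ and $Y_jX_j$ into $\mathcal S^1$''---is precisely the proof of Lemma~\ref{lem: 2B trace zero} and works only when $P_S$ contains at least \emph{two} $B$-factors: only then does every admissible split $U$, $V$ yield $UV, VU\in\mathcal S^1$ via Lemma~\ref{lem: holder inequality for operators}. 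When $P_S$ has exactly one $B$-factor and that $B$ is the isolated slot, all remaining commutators are $[A_i,A_j]\in\mathcal S^p$ with $p>n$, the Schatten exponents sum to exactly $1$ rather than strictly more, and Lemma~\ref{lem: trace of commutators} cannot be invoked. Corollary~\ref{cor: 1 B improved} shows cyclicity does suffice when $B_i\in\mathcal S^p$ for some $p<n$ --- which covers the rank-one piece $f_i(0)E_0$ but not $X_{f_i}$ or $Y_{\bar R f_i}$ --- and Remark~\ref{rem: difference btw Toeplitz and pseudodifferential} spells out this limitation.

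Second, you describe Hypotheses~B as ``the concrete Schatten bounds needed to verify'' Hypotheses~A, but that is not its role. Hypotheses~A (Proposition~\ref{prop: A}) reduces the problem to showing $\Tr\sum_k[A_1,\ldots,B_k,\ldots,A_{2n}]=0$. Hypotheses~B is an independent condition on the integral kernels $X_w(z),Y_w(z)$, and Proposition~\ref{prop: B} proves the remaining trace vanishing by a fundamentally non-operator-theoretic route: expressing the trace as a $(2n+1)$-fold integral via Lemma~\ref{lem: trace is integral of Berezin}, verifying absolute integrability (Lemma~\ref{lem: k kernels absolutely integrable}), applying Fubini's theorem to integrate $\xi$ first (using that $X_w(z)$, $Y_w(z)$ are holomorphic in $z$), and then reading off the vanishing from the antisymmetry of $\det[f_i(z_j)]$ (and the identity $Y_{f\bar R g+g\bar R f}=Y_{\bar R(fg)}$ for the $Y$-terms). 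This step cannot be replaced by the cyclicity argument you outline, and a complete proof must supply it.
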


In the case of the Hardy space we show the following.
\begin{lem}\label{lem: AS sum trace t-tp hardy}
	Suppose $f_1, f_2, \ldots, f_{2n}\in\mathscr{C}^2(\overline{\bn})$. Then
	\[
	[T_{f_1},T_{f_2},\ldots,T_{f_{2n}}]-[T^{(1,-1)}_{f_1},T^{(1,-1)}_{f_2},\ldots,T^{(1,-1)}_{f_{2n}}]
	\]
	is a trace class operator on $H^2(\sn)$ with zero trace.
\end{lem}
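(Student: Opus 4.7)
The plan is to follow the same strategy as for Lemma \ref{lem: AS sum trace t-tp}, replacing the Bergman integration-by-parts identities of Section \ref{sec: IntByPts} with the sphere analogue in Lemma \ref{lem: BM hardy}. First, I decompose each Hardy Toeplitz operator as $T_{f_i}=A_i+B_i$ on $H^2(\sn)$, where $A_i:=T^{(1,-1)}_{f_i}$ is the restriction of $T^{(1)}_{f_i}$ to $H^2(\sn)$ and $B_i$ is the remainder. Concretely, I would iterate Lemma \ref{lem: BM hardy} in the sphere integral defining $T_{f_i}$---moving the integration off of $\sn$ onto $\bn$ and then raising the weight successively from $t=0$ to $t=1$---using the $\mathscr{C}^2$ hypothesis on each $f_i$ to absorb two derivatives onto the symbol. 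This yields an explicit integral representation of $B_i$ whose kernel carries factors of the form $|\varphi_z(w)|^{2k}$ with $k\geq 1$, so that $B_i$ fits the Schatten estimates of Theorem \ref{thm: integral Schatten membership} and Corollary \ref{cor: commutator schatten membership with phi}.

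Second, I expand
\[
[T_{f_1},\ldots,T_{f_{2n}}]-[A_1,\ldots,A_{2n}]=\sum_{\tau\in S_{2n}}\sgn(\tau)\sum_{\emptyset\neq J\subset\{1,\ldots,2n\}}\prod_{k=1}^{2n}X^{J,\tau}_k,
\]
where $X^{J,\tau}_k=B_{\tau(k)}$ if $\tau(k)\in J$ and $X^{J,\tau}_k=A_{\tau(k)}$ otherwise. For each index set $J$ I would group the terms so that at least two of the factors can be estimated as Schatten class operators with conjugate exponents summing to $1$, apply H\"older's inequality for Schatten classes (Lemma \ref{lem: holder inequality for operators}) to place each mixed product in the trace class, and then use cyclicity of trace (Lemma \ref{lem: trace of commutators}) to reduce the sum over permutations to orbits on which the symmetric content of the kernels forces the trace to vanish. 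This realizes the Hardy analogue of Hypothesis A.

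For those products that fall outside the operator-theoretic regime---the Hardy analogue of Hypothesis B---I would compute the trace directly. Lemma \ref{lem: trace is integral of Berezin} expresses the trace as an iterated integral against Berezin kernels; by applying Fubini together with Lemma \ref{lem: BM hardy} and Lemma \ref{lem: formula Bn}, the integrand can be rewritten in a form where the Bochner--Martinelli coefficients $d_{\alpha,\beta}$ introduce a symmetry among the indices being antisymmetrized, producing pointwise cancellation of the integrand.

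\textbf{Main obstacle.} As in the Bergman case, the minor parts $B_i$ fail to belong to $\mathcal{S}^p$ for $p$ small enough to make every mixed product $X^{J,\tau}_1\cdots X^{J,\tau}_{2n}$ automatically trace class; in the Hardy setting this is sharpest because the Cauchy kernel $(1-\la z,w\ra)^{-n}$ already saturates the Rudin--Forelli thresholds of Lemma \ref{lem: Rudin Forelli generalizations}, leaving no slack in the exponents. Consequently the combinatorial partition into ``operator-theoretic'' and ``direct integration'' cases has to be executed carefully and verified case-by-case, exactly paralleling the Hypothesis A/Hypothesis B dichotomy used to prove Lemma \ref{lem: AS sum trace t-tp}.
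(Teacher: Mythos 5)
Your high-level plan — decompose $T_{f_i}=A_i+B_i$ via sphere integration by parts, split the antisymmetric difference into an operator-theoretic Hypothesis~A piece and a direct-integration Hypothesis~B piece — mirrors the philosophy of the Bergman case, but the execution has a real gap at the Hypothesis~B step. You propose to ``compute the trace directly'' on $H^2(\sn)$ via Lemma~\ref{lem: trace is integral of Berezin} and an analogue of Proposition~\ref{prop: B}. However, both of those tools are formulated only for weighted Bergman spaces $t>-1$: Lemma~\ref{lem: trace is integral of Berezin} is a Berezin trace formula for $L^2_{a,t}(\bn)$, and Proposition~\ref{prop: B} is proved by Fubini arguments that rely on the volume integral over $\bn$ and the absolute integrability estimates of Lemma~\ref{lem: k kernels absolutely integrable}, none of which transfer to the boundary measure $\sigma$ on $\sn$ without a separate, delicate argument. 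On the Hardy space the Cauchy kernel already saturates the Rudin--Forelli thresholds, as you yourself note, so there is no room to make a direct Berezin-style trace computation work there.

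The paper avoids this obstruction entirely by splitting the two tasks between two Hilbert spaces. First, applying Lemma~\ref{lem: formula Sn R 0} and then Lemma~\ref{lem: formula Bn R 0} to the sphere integral yields the decomposition $T_f=T^{(1,-1)}_f+X^{(-1)}_f+Y^{(-1)}_{\bar R f}+Z^{(-1)}_{\bar R^2 f}+f(0)E_0$ (Lemma~\ref{lem: T=X+Y+Z+E}); note the extra term $Z^{(-1)}_{\bar R^2 f}$, which your sketch omits, lands in $\mathcal{S}^p$ for $p>2n/3$ (Lemmas~\ref{lem: Z -1 0}, \ref{lem: X Y Z -1}) and can be absorbed by Corollary~\ref{cor: 1 B improved}, while $X^{(-1)}$ and $Y^{(-1)}_{\bar R f}$ satisfy the $t=0$ instance of Hypothesis~B. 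Second, and this is the key step you are missing, the paper establishes only the \emph{trace class membership} of $[T_{f_1},\dots,T_{f_{2n}}]-[T^{(1,-1)}_{f_1},\dots,T^{(1,-1)}_{f_{2n}}]$ on $H^2(\sn)$, via Lemma~\ref{lem: X Y Z -1}, Lemma~\ref{lem: X Y commutators for hardy}, and Proposition~\ref{prop: A}, never attempting the trace computation there. The \emph{value} of the trace is computed instead on $L^2_{a,0}(\bn)$, where the same difference operator appears as $[T^{(-1,0)}_{f_1},\dots]-[T^{(1,0)}_{f_1},\dots]$ (Lemma~\ref{lem: t-tp hardy on bergman}, which invokes Proposition~\ref{prop: B} at $t=0$), and the two traces are identified using the invariant-subspace trace principle Lemma~\ref{lem: trace on two spaces}. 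Without this restriction-then-transfer mechanism, your plan to run the antisymmetrization and integration argument directly on the Hardy space cannot be completed with the tools developed in the paper.
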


The fact that each $T^{(1,-1)}_{f_i}$ is well-defined will be explained in Remark \ref{rem: T 1 -1}. The proof of Lemma \ref{lem: AS sum trace t-tp} (and Lemma \ref{lem: AS sum trace t-tp hardy}) involves writing $\BTt_f$ (resp. $T_f$) as the sum of $\BTtpt_f$ (resp. $T^{(1,-1)}_f$) and some perturbation operators.
	
	\begin{defn}\label{defn: X Y}
		For $t>-1$, define
		\begin{equation}
			X^{(t)}_w(z)=\frac{t+1}{n+t+1}\BGt_n1(|w|^2)\bar{R}_w\BKt_w(z)-\BKtp_w(z),
		\end{equation}
		and
		\begin{equation}
			Y^{(t)}_w(z)=\frac{t+1}{n+t+1}\BGt_n1(|w|^2)\BKt_w(z).
		\end{equation}
		For a symbol function $g$, formally define the integral operators
		\[
		X^{(t)}_{g}h(z)=\int_{\bn}g(w)h(w)X^{(t)}_w(z)\intd\lambda_{t+1}(w);\quad Y^{(t)}_gh(z)=\int_{\bn}g(w)h(w)Y^{(t)}_w(z)\intd\lambda_{t+1}(w).
		\]
		It will be clear in subsequent proof that for $f\in\mathscr{C}^2(\overline{\bn})$, $X^{(t)}_f$ and $Y^{(t)}_{\bar{R}f}$ define bounded operators on $\bert$ (see Lemma \ref{lem: B for bergman} and Lemma \ref{lem: B X Y}).
	\end{defn}

	\begin{lem}\label{lem: Ttf=X+Y+K}
		Suppose $f\in\mathscr{C}^2(\overline{\bn})$. Then for any $t>-1$,
		\begin{equation}\label{eqn: Ttf=X+Y+K}
			\BTt_f=\BTtpt_f+X^{(t)}_{f}+Y^{(t)}_{\bar{R}f}+f(0)E_0,
		\end{equation}
		where $E_0h=h(0)$ is a rank one operator.
	\end{lem}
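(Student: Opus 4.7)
The plan is to apply the integration-by-parts identity in Lemma \ref{lem: formula R on bn} to the defining integral of $\BTt_f h(z)$, treating $z$ as a fixed parameter, and then identify the resulting terms against the definitions of $X^{(t)}_f$, $Y^{(t)}_{\bar{R}f}$, and $\BTtpt_f$. Since polynomials are dense in $\bert$, it suffices to establish \eqref{eqn: Ttf=X+Y+K} applied to an arbitrary polynomial $h$; the extension to all of $\bert$ is then immediate once the relevant operators are known to be bounded (which will follow from the forthcoming Lemmas \ref{lem: B for bergman} and \ref{lem: B X Y}).

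For $h$ polynomial and $z\in\bn$ fixed, set $v(w) = f(w)\,h(w)\,\BKt_w(z)$. Because $f\in\mathscr{C}^2(\overline{\bn})$ and $h$, $\BKt_w(z)$ are $\mathscr{C}^\infty$ in $w$ on $\overline{\bn}$, we have $v\in\mathscr{C}^1(\overline{\bn})$, and all integrands below are bounded, so absolute convergence is automatic. Lemma \ref{lem: formula R on bn}(1) gives
\[
\BTt_f h(z) \;=\; \int_{\bn} v(w)\,\intd\lambda_t(w) \;=\; v(0) + \frac{t+1}{n+t+1}\int_{\bn}\BGt_n 1(|w|^2)\,\bar{R}_w v(w)\,\intd\lambda_{t+1}(w).
\]
The boundary value is $v(0) = f(0)\,h(0)\,\BKt_0(z) = f(0)\,(E_0 h)(z)$. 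Since $h$ is holomorphic, $\bar{R}_w h(w) = 0$, so the product rule yields
\[
\bar{R}_w v(w) \;=\; \bar{R}f(w)\,h(w)\,\BKt_w(z) \;+\; f(w)\,h(w)\,\bar{R}_w \BKt_w(z).
\]

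Substituting back, the term coming from $\bar{R}f$ is, by the definition of $Y^{(t)}_w(z)$, exactly $Y^{(t)}_{\bar{R}f}h(z)$. For the remaining piece, the definition of $X^{(t)}_w(z)$ rearranges to
\[
\frac{t+1}{n+t+1}\,\BGt_n 1(|w|^2)\,\bar{R}_w\BKt_w(z) \;=\; X^{(t)}_w(z) \;+\; \BKtp_w(z),
\]
so its contribution splits as $X^{(t)}_f h(z) + \int_{\bn} f(w)\,h(w)\,\BKtp_w(z)\,\intd\lambda_{t+1}(w) = X^{(t)}_f h(z) + \BTtp_f h(z)$. Since $h\in\bert\subset L^2_{a,t+1}(\bn)$, the last integral is precisely $\BTtpt_f h(z)$. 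Collecting everything yields
\[
\BTt_f h(z) \;=\; \BTtpt_f h(z) + X^{(t)}_f h(z) + Y^{(t)}_{\bar{R}f}h(z) + f(0)\,(E_0 h)(z),
\]
which is \eqref{eqn: Ttf=X+Y+K}. The main subtlety, such as it is, lies not in the algebra but in checking hypotheses: one must confirm that each individual integral converges absolutely before splitting $\bar{R}_w v$; restricting initially to polynomial $h$ sidesteps this cleanly, after which density plus the boundedness of $X^{(t)}_f$ and $Y^{(t)}_{\bar{R}f}$ (to be proved separately) promotes the identity to all of $\bert$.
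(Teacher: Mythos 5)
Your proof is correct and takes essentially the same route as the paper: both apply Lemma \ref{lem: formula R on bn} to $v(w)=f(w)h(w)\BKt_w(z)$ for fixed $z$, split $\bar{R}_w v$ by the product rule using $\bar{R}_w h=0$, and read off each piece against the definitions of $X^{(t)}_f$, $Y^{(t)}_{\bar{R}f}$, and $\BTtpt_f$. The only cosmetic difference is that you restrict to polynomial $h$ and invoke density, while the paper works with $h\in\Hol(\overline{\bn})$; you also correctly write $f(0)(E_0h)(z)$ where the paper has an evident typo $f(0)(E_0f)(z)$.
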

	
	\begin{proof}
		For any $h\in\Hol(\overline{\bn})$, by Lemma \ref{lem: formula R on bn}, we compute $\BTt_f$ as follows, 
		\begin{flalign*}
			\BTt_fh(z)=&\int_{\bn}f(w)h(w)\BKt_w(z)\intd\lambda_t(w)\\
			=&f(0)h(0)+\frac{t+1}{n+t+1}\int_{\bn}\BGt_n1(|w|^2)\bar{R}f(w)h(w)\BKt_w(z)\intd\lambda_{t+1}(w)\\
			&+\frac{t+1}{n+t+1}\int_{\bn}\BGt_n1(|w|^2)f(w)h(w)\bar{R}\BKt_w(z)\intd\lambda_{t+1}(w)\\
			=&f(0)\big(E_0f\big)(z)+Y^{(t)}_{\bar{R}f}h(z)+X^{(t)}_fh(z)+\BTtpt_fh(z).
		\end{flalign*}
		This completes the proof of Lemma \ref{lem: Ttf=X+Y+K}.
	\end{proof}

In the case of the Hardy space, we need to apply integration by parts twice to make our arguments work.
\begin{defn}\label{defn: X Y hardy}
	Let $\phi(s)=s^{-n}$.
	Define
	\[
	X^{(-1)}_w(z)=\frac{1}{n(n+1)}\mathcal{G}_n^{(0)}\phi(|w|^2)\bar{R}^2_wK_w(z)-K_w^{(1)}(z),
	\]
	\[
	Y^{(-1)}_w(z)=\frac{2}{n(n+1)}\mathcal{G}_n^{(0)}\phi(|w|^2)\bar{R}_wK_w(z),
	\]
	and
	\[
	Z^{(-1)}_w(z)=\frac{1}{n(n+1)}\mathcal{G}_n^{(0)}\phi(|w|^2)K_w(z).
	\]
	Formally define the symboled integral operators
	\[
	X^{(-1)}_fh(z)=\int_{\bn}h(w)f(w)X^{(-1)}_w(z)\intd\lambda_1(w),
	\]
	\[
	Y^{(-1)}_fh(z)=\int_{\bn}h(w)f(w)Y^{(-1)}_w(z)\intd\lambda_1(w),
	\]
	and
	\[
	Z^{(-1)}_fh(z)=\int_{\bn}h(w)f(w)Z^{(-1)}_w(z)\intd\lambda_1(w).
	\]
	Again, it will be clear later that for $f\in\mathscr{C}^2(\overline{\bn})$, $X^{(-1)}_f$, $Y^{(-1)}_{\bar{R}f}$ and $Z^{(-1)}_{\bar{R}^2f}$ define bounded operators on $H^2(\sn)$ (see Lemma \ref{lem: X Y Z -1}).
\end{defn}
\begin{lem}\label{lem: T=X+Y+Z+E}
	Suppose $f\in\mathscr{C}^2(\overline{\bn})$. Then
	\[
	T_f=T^{(1,-1)}_f+X^{(-1)}_f+Y^{(-1)}_{\bar{R}f}+Z^{(-1)}_{\bar{R}^2f}+f(0)E_0.
	\]
\end{lem}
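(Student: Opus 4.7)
The proof mirrors that of Lemma~\ref{lem: Ttf=X+Y+K}, but since the Hardy space corresponds formally to $t=-1$ while the target measure is $\intd\lambda_{1}$, one must integrate by parts twice to climb two levels of weight. I would work first with $h\in\Hol(\overline{\bn})$, which is dense in $H^{2}(\sn)$, and extend by density once boundedness of each summand on the right-hand side is verified later. Throughout, $\phi(s)=s^{-n}$ as in Definition~\ref{defn: X Y hardy}.

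First, transfer the sphere integral $T_{f}h(z)=\int_{\sn}f(w)h(w)K_{w}(z)\,\tfrac{\intd\sigma(w)}{\sigma_{2n-1}}$ to a ball integral. Applying Lemma~\ref{lem: BM generalized on rSn} at $r=1$, $|\alpha|=|\beta|=0$, to $v(w)=f(w)h(w)K_{w}(z)$ and using $\tfrac{2}{\sigma_{2n-1}}\intd m=\tfrac{1}{n}\intd\lambda_{0}$ produces
\[
T_{f}h(z) = f(0)h(0) + \frac{1}{n}\int_{\bn}\phi(|w|^{2})\,\bar{R}_{w}\big[f(w)h(w)K_{w}(z)\big]\,\intd\lambda_{0}(w),
\]
with convergence ensured because the $\bw_{j}$ factor in $\bar{R}$ makes the integrand $O(|w|^{1-2n})$ near the origin. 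Next, derive a weighted analogue of Lemma~\ref{lem: formula R on bn} by the same argument as in its proof, with an extra factor $\phi(r^{2})$ inserted into the radial integral: for any $v\in\mathscr{C}^{1}(\overline{\bn})$ with $v(0)=0$,
\[
\int_{\bn}\phi(|w|^{2})\,v(w)\,\intd\lambda_{0}(w) = \frac{1}{n+1}\int_{\bn}\mathcal{G}_{n}^{(0)}\phi(|w|^{2})\,\bar{R}v(w)\,\intd\lambda_{1}(w).
\]
Apply this with $v(w)=\bar{R}_{w}[f(w)h(w)K_{w}(z)]$, which vanishes at $w=0$ thanks to the $\bw_{j}$ prefactor in $\bar{R}$. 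Combining the two integrations by parts yields
\[
T_{f}h(z) = f(0)h(0) + \frac{1}{n(n+1)}\int_{\bn}\mathcal{G}_{n}^{(0)}\phi(|w|^{2})\,\bar{R}_{w}^{\,2}\big[f(w)h(w)K_{w}(z)\big]\,\intd\lambda_{1}(w).
\]

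Now expand $\bar{R}_{w}^{\,2}[fhK_{w}(z)]$ via the Leibniz rule; since $h$ is holomorphic, $\bar{R}h=0$, so every cross-term containing $\bar{R}h$ drops out and only three summands survive:
\[
\bar{R}_{w}^{\,2}\big[fhK_{w}(z)\big] = \bar{R}^{2}f\cdot hK_{w}(z) + 2\,\bar{R}f\cdot h\,\bar{R}_{w}K_{w}(z) + fh\,\bar{R}_{w}^{\,2}K_{w}(z).
\]
Integrating each piece against $\tfrac{1}{n(n+1)}\mathcal{G}_{n}^{(0)}\phi(|w|^{2})\intd\lambda_{1}$, the first summand is by definition $Z^{(-1)}_{\bar{R}^{2}f}h(z)$; the second is $Y^{(-1)}_{\bar{R}f}h(z)$ (the factor $2$ being absorbed into the definition of $Y^{(-1)}_{w}(z)$); and the third, after adding and subtracting $\int f(w)h(w)K_{w}^{(1)}(z)\intd\lambda_{1}(w)$ so as to pair up with the $-K_{w}^{(1)}(z)$ piece of $X^{(-1)}_{w}(z)$, splits as $X^{(-1)}_{f}h(z)+T^{(1,-1)}_{f}h(z)$. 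Together with $f(0)h(0)=f(0)(E_{0}h)(z)$, this gives the claimed decomposition.

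The main technical subtlety is the second integration by parts, where $\mathcal{F}_{n}^{(0)}\phi(0)=\int_{0}^{1}s^{-1}\intd s=+\infty$ prevents a literal application of Lemma~\ref{lem: formula R on bn}. The boundary term produced by the $\phi$-weighted formula is $\tfrac{\mathcal{F}_{n}^{(0)}\phi(0)}{B(n,1)}v(0)$, and the hypothesis $v(0)=0$ (automatic from the $\bw_{j}$ in $\bar{R}$) cancels the divergence unambiguously; absolute convergence of the remaining volume integral follows from the explicit identity $\mathcal{G}_{n}^{(0)}\phi(s)(1-s)=-s^{-n}\ln s$ together with the Rudin--Forelli estimates of Lemma~\ref{lem: Rudin Forelli generalizations}.
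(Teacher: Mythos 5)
Your proposal is correct and follows essentially the same route as the paper's proof: one integration by parts over the sphere via Lemma \ref{lem: BM generalized on rSn} at $r=1$, $\alpha=\beta=0$ (equivalently Lemma \ref{lem: formula Sn R 0}), a second $\phi$-weighted integration by parts over the ball (the paper invokes Lemma \ref{lem: formula Bn R 0}, which is the same formula you propose to rederive from the Cauchy-type radial argument), then a Leibniz expansion of $\bar{R}^2$ killing the $\bar{R}h$ terms, and matching of the three surviving pieces to $Z^{(-1)}_{\bar{R}^2f}$, $Y^{(-1)}_{\bar{R}f}$, and $X^{(-1)}_f+T^{(1,-1)}_f$. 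Your extra care with the $\mathcal{F}^{(0)}_n\phi(0)=\infty$ issue and the $v(0)=0$ cancellation is exactly the case distinction built into Lemma \ref{lem: formula Bn R 0}, so no gap.
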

	
Taking $\alpha=\beta=0$ and $z=0$ in Lemma \ref{lem: BM hardy}, we get the following.
\begin{lem}\label{lem: formula Sn R 0}
	Suppose $v\in\mathscr{C}^1(\overline{\bn})$. Then
	\begin{equation}\label{eqn: formula Sn R 0}
		\int_{\sn}v(w)\frac{\intd\sigma(w)}{\sigma_{2n-1}}=v(0)+\frac{1}{n}\int_{\bn}|w|^{-2n}\bar{R}v(w)\intd\lambda_0(w).
	\end{equation}
\end{lem}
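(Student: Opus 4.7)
The plan is to derive this as a direct specialization of Lemma \ref{lem: BM hardy}, equation \eqref{eqn: BM hardy d''}, taking $\alpha=\beta=0$ and $z=0$. With these choices, $|\alpha|\geq|\beta|$ is trivially satisfied, so the formula applies. I will then simply simplify each factor and read off the identity.

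Concretely, at $z=0$ we have $K_w(0)=1$, $\varphi_0(w)=-w$ (so $|\varphi_0(w)|=|w|$), $1-\langle 0,w\rangle=1$, and $1-\langle w,0\rangle=1$. With $\alpha=\beta=0$ we get $I^{0,0}(z-w)=1$ and $d_{0,0}\equiv 1$, so $d_{0,0}(0)\,v(0)=v(0)$. The sum over $j$ has only the terms with $\beta+e_j=e_j$, so $I^{0,e_j}(z-w)\big|_{z=0}=\overline{-w_j}=-\bar w_j$, and the derivative $\bpartial_j\bigl[(1-\langle z,w\rangle)^{0}\,v(w)\bigr]$ collapses to $\bpartial_j v(w)$. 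Putting these together, \eqref{eqn: BM hardy d''} becomes
\[
\int_{\sn}v(w)\frac{\intd\sigma(w)}{\sigma_{2n-1}}
=v(0)-\frac{1}{n}\sum_{j=1}^n\int_{\bn}|w|^{-2n}\,(-\bar w_j)\,\bpartial_j v(w)\,\intd\lambda_0(w),
\]
and rewriting $\sum_j \bar w_j\,\bpartial_j v=\bar R v$ yields exactly \eqref{eqn: formula Sn R 0}.

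There is no real obstacle: once \eqref{eqn: BM hardy d''} is in hand, this is purely bookkeeping. The only point worth double-checking is the absolute convergence of the right-hand integral near $w=0$, where the factor $|w|^{-2n}$ looks singular. But this is absorbed by the vanishing of $\bar R v(w)=\sum_j \bar w_j\,\bpartial_j v(w)$, which is $O(|w|)$ as $w\to 0$ since $v\in\mathscr{C}^1(\overline{\bn})$ gives $\bpartial_j v$ bounded; together with the $|w|^{2n-1}$ Jacobian from polar coordinates this makes the integrand integrable. Alternatively, one could derive the formula directly from Lemma \ref{lem: BM generalized on rSn} at $r=1$ with $\alpha=\beta=0$ (where $a_{0,0}=1$) and convert between $\intd m$ and $\intd\lambda_0$ using $\intd\lambda_0=\tfrac{n!}{\pi^n}\intd m$ and $\sigma_{2n-1}=\tfrac{2\pi^n}{(n-1)!}$; this would give the same identity as an independent check.
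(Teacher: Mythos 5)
Your proof is correct and follows exactly the paper's route: the paper obtains this lemma precisely by setting $\alpha=\beta=0$ and $z=0$ in \eqref{eqn: BM hardy d''}, and your simplifications of each factor (including the sign from $I^{0,e_j}(-w)=-\bar w_j$) check out. The convergence remark near $w=0$ and the alternative derivation from Lemma \ref{lem: BM generalized on rSn} are both accurate bonuses.
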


Also, taking $\alpha=\beta=0, t=0, z=0$ in Lemma \ref{lem: formula Bn} gives the following.
\begin{lem}\label{lem: formula Bn R 0}
	Suppose $\phi:(0,1)\to[0,\infty)$ is measurable and $v\in\mathscr{C}^1(\bn)$. Then whenever all integrals converge absolutely, we have
	\begin{flalign}\label{eqn: formula Bn R 0}
		&\int_{\bn}\phi(|w|^2)v(w)\intd\lambda_0(w)\\
		=&\begin{cases}
		\frac{\mathcal{F}^{(0)}_n\phi(0)}{B(n,t+1)}v(0)+\frac{1}{n+1}\int_{\bn}\mathcal{G}^{(0)}_n\phi(|w|^2)\bar{R}v(w)\intd\lambda_1(w),&\mathcal{F}^{(0)}_n\phi(0)<\infty\\
		\frac{1}{n+1}\int_{\bn}\mathcal{G}^{(0)}_n\phi(|w|^2)\bar{R}v(w)\intd\lambda_1(w),&\mathcal{F}^{(0)}_n\phi(0)\leq\infty,~v(0)=0
		\end{cases}.\nonumber
	\end{flalign}
\end{lem}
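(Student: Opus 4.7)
The plan is to specialize Lemma \ref{lem: formula Bn}(1) to the values $\alpha=\beta=0$, $t=0$, and $z=0$, and then simplify each ingredient term by term. There is no real obstacle; the work is bookkeeping, with the only substantive step being identifying the constant that relates $(1-|w|^2)\intd\lambda_0(w)$ to $\intd\lambda_1(w)$.

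First I would collect the simplifications. Since $\varphi_0(w)=-w$, we have $|\varphi_0(w)|^2=|w|^2$, so the prefactor $\phi(|\varphi_z(w)|^2)$ becomes $\phi(|w|^2)$. With $\alpha=\beta=0$, the monomial factor $I^{0,0}(z-w)\equiv 1$ and, at $z=0$, $\BKt_w(z)=K^{(0)}_w(0)=1$. On the coefficient of $v(z)$, Definition \ref{defn: d numbers I function} gives $d_{0,0}(0)=1$, while $\BFt_{n+|\beta|}\phi(0)=\mathcal{F}^{(0)}_n\phi(0)$, producing the leading term $\mathcal{F}^{(0)}_n\phi(0)\,v(0)/B(n,1)$.

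Next I would process the correction terms. With $\beta=0$, the factor $(1-\la z,w\ra)^{|\beta|}$ in $S_j$ collapses to $1$, and at $z=0$ also $(1-\la w,z\ra)^{-1}=1$, so $S_j(w)$ reduces simply to $(1-|w|^2)\bpartial_{w_j}v(w)$. Similarly $I^{0,e_j}(z-w)=\overline{(z_j-w_j)}=-\bar w_j$ at $z=0$. Combining with the overall minus sign in Lemma \ref{lem: formula Bn}(1) turns the sum into
\[
\sum_{j=1}^n\int_{\bn}\mathcal{G}^{(0)}_n\phi(|w|^2)\,\bar w_j(1-|w|^2)\,\bpartial_{w_j}v(w)\intd\lambda_0(w)
=\int_{\bn}\mathcal{G}^{(0)}_n\phi(|w|^2)(1-|w|^2)\bar R v(w)\intd\lambda_0(w).
\]

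Finally I would convert the weight. Using $B(n,1)=1/n$ and $B(n,2)=1/(n(n+1))$, a direct comparison of the definitions of $\intd\lambda_0$ and $\intd\lambda_1$ yields $(1-|w|^2)\intd\lambda_0(w)=\tfrac{1}{n+1}\intd\lambda_1(w)$, giving exactly the factor $\tfrac{1}{n+1}$ in the claimed formula. The case $v(0)=0$, $\mathcal{F}^{(0)}_n\phi(0)\le\infty$ is handled identically by invoking the second branch of Lemma \ref{lem: formula Bn}(1), in which the $v(z)$ term simply drops out. This completes the reduction.
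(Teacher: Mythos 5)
Your proposal is correct and takes essentially the same approach as the paper, which simply states that the lemma follows by ``taking $\alpha=\beta=0$, $t=0$, $z=0$ in Lemma \ref{lem: formula Bn}''; you have merely written out the routine specializations ($\varphi_0(w)=-w$, $I^{0,e_j}(0-w)=-\bar w_j$, $d_{0,0}=1$, $\BKt_w(0)=1$, $S_j(w)=(1-|w|^2)\bpartial_{w_j}v(w)$) and the weight conversion $(1-|w|^2)\intd\lambda_0=\tfrac{1}{n+1}\intd\lambda_1$ that the paper leaves implicit.
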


\begin{proof}[{\bf Proof of Lemma \ref{lem: T=X+Y+Z+E}}]
	By Lemma \ref{lem: formula Sn R 0} and Lemma \ref{lem: formula Bn R 0}, for $h\in\Hol(\overline{\bn})$, we compute $T_f$ as follows,
	\begin{flalign*}
		T_fh(z)=&\int_{\sn}h(w)f(w)K_w(z)\frac{\intd\sigma(w)}{\sigma_{2n-1}}\\
		\xlongequal{\eqref{eqn: formula Sn R 0}}&f(0)h(0)+\frac{1}{n}\int_{\bn}\phi(|w|^2)\bar{R}\bigg[h(w)f(w)K_w(z)\bigg]\intd\lambda_0(w)\\
		\xlongequal{\eqref{eqn: formula Bn R 0}}&f(0)h(0)+\frac{1}{n(n+1)}\int_{\bn}\mathcal{G}_n^{(0)}\phi(|w|^2)\bar{R}^2\bigg[h(w)f(w)K_w(z)\bigg]\intd\lambda_1(w)\\
		=&f(0)h(0)+\frac{1}{n(n+1)}\int_{\bn}\mathcal{G}_n^{(0)}\phi(|w|^2)h(w)\intd\lambda_1(w)\\
		&\qquad \qquad \qquad \bigg[\bar{R}^2f(w)K_w(z)+2\bar{R}f(w)\bar{R}_wK_w(z)+f(w)\bar{R}_w^2K_w(z)\bigg]\\
		=&f(0)\big(E_0f\big)(z)+T^{(1,-1)}_fh(z)+X^{(-1)}_fh(z)+Y^{(-1)}_{\bar{R}f}h(z)+Z^{(-1)}_{\bar{R}^2f}h(z).
	\end{flalign*}
	This completes the proof of Lemma \ref{lem: T=X+Y+Z+E}.
\end{proof}

In view of \eqref{eqn: Ttf=X+Y+K}, Lemma \ref{lem: AS sum trace t-tp} essentially says that the trace
\[
\Tr[\BTtpt_{f_1},\BTtpt_{f_2},\ldots, \BTtpt_{f_{2n}}]
\]
if exist, is invariant under the perturbations of $X^{(t)}_{f_i}$, $Y^{(t)}_{\bar{R}f_i}$ and $f_i(0)E_0$. Similarly, Lemma \ref{lem: AS sum trace t-tp hardy} can also be interpreted as the stability of trace under certain perturbations.

	\subsection{Hypotheses A}\label{subsec: hypotheses A}
	In this subsection, let us assume the following.
	
	~
		
	\noindent{\bf Hypotheses A:} Suppose $A_1, A_2, \ldots, A_{2n}, B_1, B_2, \ldots, B_{2n}$ are bounded linear operators on a Hilbert space $\mathcal{H}$. Denote $C_i=A_i+B_i, i=1, \ldots, 2n$. The operators satisfy the following properties.
	\begin{enumerate}
		\item[(1)] For any $i=1, \ldots, 2n$,
		\[
		B_i\in\mathcal{S}^p,\quad\forall p>n.
		\]
		\item[(2)] For any $i, j=1,\ldots, 2n$,
		\[
		[A_i, A_j]\in\mathcal{S}^p,\quad\forall p>n.
		\]
		\item[(3)] For any $i, j=1, \ldots, 2n,$
		\[
		[A_i, B_j]\in\mathcal{S}^p,\quad\text{ for some } p<n.
		\]
	\end{enumerate}
	
	~
	The goal of this subsection is to prove the following.
	\begin{prop}\label{prop: A}
		Assume Hypotheses A. Then the operator
		\[
		[C_1, C_2, \ldots, C_{2n}]-[A_1, A_2, \ldots, A_{2n}]
		\]
		is in the trace class. Moreover, the operator
		\[
		[C_1, C_2, \ldots, C_{2n}]-[A_1, A_2, \ldots, A_{2n}]-\sum_{k=1}^{2n}[A_1,\ldots,A_{k-1},B_k,A_{k+1},\ldots,A_{2n}]
		\]
		is a trace class operator with zero trace.
	\end{prop}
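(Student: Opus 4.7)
My plan is to expand $[C_1,\ldots,C_{2n}]$ by the multilinearity of the antisymmetric bracket: writing $C_i=A_i+B_i$ yields
\[
[C_1,\ldots,C_{2n}]=\sum_{S\subseteq\{1,\ldots,2n\}}[X_1^S,\ldots,X_{2n}^S],
\]
where $X_i^S=B_i$ if $i\in S$ and $X_i^S=A_i$ otherwise. The $S=\emptyset$ term is $[A_1,\ldots,A_{2n}]$, the $|S|=1$ terms collect to $\sum_{k=1}^{2n}[A_1,\ldots,B_k,\ldots,A_{2n}]$, and the remaining summands for $|S|\geq 2$ form the operator under analysis. Both claims of the proposition therefore reduce to showing that $[X_1^S,\ldots,X_{2n}^S]\in\mathcal{S}^1$ with vanishing trace for every $S$ with $|S|\geq 2$.

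For trace class membership I plan to use the standard matching representation
\[
[Y_1,\ldots,Y_{2n}]=\sum_M \epsilon_M \prod_{(a,b)\in M}[Y_a,Y_b]
\]
indexed by perfect matchings of $\{1,\ldots,2n\}$. Hypotheses (2), (3) together with Lemma~\ref{lem: holder inequality for operators} give three classes of commutators: $[A_i,A_j]\in\mathcal{S}^p$ for all $p>n$, $[A_i,B_j]\in\mathcal{S}^{p_0}$ for some $p_0<n$, and $[B_i,B_j]\in\mathcal{S}^p$ for all $p>n/2$. For $|S|\geq 2$ every matching contains at least one commutator in a Schatten class strictly smaller than $\mathcal{S}^n$, and a direct H\"{o}lder bookkeeping of the $[A,A]$, $[A,B]$, $[B,B]$ factors shows that each matching product lies in some $\mathcal{S}^r$ with $r<1$; summing over the finitely many matchings delivers $[X_1^S,\ldots,X_{2n}^S]\in\mathcal{S}^1$.

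For the trace vanishing, the main idea is cyclic invariance. Partition $S_{2n}$ into orbits under the cyclic action $\tau\mapsto\tau\circ c$ with $c=(1,2,\ldots,2n)$; since $\sgn(c)=(-1)^{2n-1}=-1$, the signs alternate along each length-$2n$ orbit. When $|S|\geq n+1$ each individual product $X_{\tau(1)}^S\cdots X_{\tau(2n)}^S$ is itself trace class by Lemma~\ref{lem: holder inequality for operators}, so the cyclic property equates its trace to that of its cyclic shifts and the alternating orbit sum vanishes term by term.

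The main obstacle is the sub-critical range $2\leq|S|\leq n$, where individual products are no longer trace class and the trace cannot be distributed across the orbit term by term. My plan here is to telescope each alternating cyclic sum into consecutive differences, giving single-step commutators of the form $[X_{\tau(j)}^S,\,X_{\tau(j+1)}^S\cdots X_{\tau(j-1)}^S]$, and then rearrange the tail via the identity $A_iB_j=B_jA_i+[A_i,B_j]$. Each such swap exchanges a factor in $\mathcal{S}^{n+\epsilon}$ for one in the strictly smaller class $\mathcal{S}^{p_0}$; after enough swaps the tail is forced into a Schatten class stronger than $\mathcal{S}^n$, the outer commutator becomes trace class, and Lemma~\ref{lem: trace of commutators} makes its trace vanish. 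The delicate point is checking that this bookkeeping closes uniformly in every sub-critical case, which is precisely what the strict inequality $p_0<n$ in Hypothesis (3) is designed to guarantee.
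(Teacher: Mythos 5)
Your decomposition into subsets $S$ and the commutator-pairing identity for the trace-class claim are both correct and match the paper (one cosmetic note: the pairing identity is a $2^{-n}$-weighted sum over $\tau\in S_{2n}$ of $\sgn(\tau)[X_{\tau_1},X_{\tau_2}]\cdots[X_{\tau_{2n-1}},X_{\tau_{2n}}]$, not a sum over unordered perfect matchings, since the commutator factors do not commute with one another). The gap is in the trace-vanishing argument for $2\leq|S|\leq n$, and the proposed repair does not close it. Writing an orbit sum as $\sum_k(P_{2k}-P_{2k+1})=\sum_k[X_{\tau(2k+1)},Y_k]$, you want Lemma~\ref{lem: trace of commutators} to kill each trace, but that lemma requires both $X_{\tau(2k+1)}Y_k$ and $Y_kX_{\tau(2k+1)}$ --- i.e.\ the consecutive cyclic shifts $P_{2k}$, $P_{2k+1}$ themselves --- to be trace class. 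When $|S|\leq n$ each $P_j$ has at most $n$ factors lying in $\bigcap_{p>n}\mathcal{S}^p$ and at least $n$ merely bounded factors, so $P_j$ is at best in $\mathcal{S}^{(n+\epsilon)/|S|}$ with exponent strictly greater than $1$, hence not trace class. The identity $A_iB_j=B_jA_i+[A_i,B_j]$ does not improve this: the principal term $B_jA_i$ carries exactly the same Schatten content as $A_iB_j$, while the correction $[A_i,B_j]$ appears \emph{in addition}, not in place of anything, so no finite sequence of swaps changes the Schatten class of the tail. (It is also unclear that individual orbit sums are trace class at all; the pairing identity only controls the global sum over $S_{2n}$.)

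What is missing is that the commutator-pairing must be applied \emph{before} the cyclic argument, not only for the trace-class claim. The paper's Lemma~\ref{lem: 2B trace zero} first reduces to $[B,X_1,\ldots,X_{2n-1}]$ and then iteratively pairs consecutive factors, producing $2^{-n+1}\sum_k\sum_{\tau\in S_{2n-1}}\sgn(\tau)$ times a product of $B$, one unpaired $X_{\tau_{2k-1}}$, and $n-1$ commutators $[X_{\tau_i},X_{\tau_j}]$. This upgrades every bounded $A_i$ except possibly one into a commutator lying in $\bigcap_{p>n}\mathcal{S}^p$; the resulting $n+1$ factors then have H\"older exponents summing strictly past $1$ (either $X_{\tau_{2k-1}}$ is a second $B$-type factor, or one of the commutators involves a $B$ and hence lies in $\mathcal{S}^{p_0}$ with $p_0<n$, absorbing the single bounded factor), so every product and every cyclic shift is trace class and Lemma~\ref{lem: trace of commutators} lets $B$ be rotated to the front. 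The cancellation then comes not from an alternating orbit sum but from the invariance of $\sum_{\tau\in S_{2n-1}}\sgn(\tau)(\cdots)$ under the even permutation re-labeling the commutator slots, which forces the two resulting $k$-sums to coincide termwise and cancel.
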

	
	\begin{lem}\label{lem: 2B trace zero}
		Suppose $\{X_1, X_2, \ldots, X_{2n}\}$ is a subset of $\{A_1,\ldots,A_{2n}, B_1,\ldots, B_{2n}\}$, and at least two of $X_1, X_2,\ldots, X_{2n}$ are in $\{B_1, B_2, \ldots, B_{2n}\}$. Then the operator
		\[
		[X_1, X_2,\ldots, X_{2n}]
		\]
		is in the trace class of zero trace.
	\end{lem}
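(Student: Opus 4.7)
The plan is to establish both claims via the antisymmetrization identity
\[
[X_1,\ldots,X_{2n}] \;=\; \frac{1}{2^n}\sum_{\tau\in S_{2n}} \sgn(\tau)\,\prod_{j=1}^n [X_{\tau(2j-1)},X_{\tau(2j)}],
\]
which I would derive by repeatedly writing $X_aX_b=\frac{1}{2}\{X_a,X_b\}+\frac{1}{2}[X_a,X_b]$ at adjacent positions $(2j-1,2j)$ and observing that the anticommutator contribution at each step is killed by its partner under the transposition $(2j-1,2j)$ in the signed sum.

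For trace-class membership, under Hypotheses A we have $[A_i,A_j]\in\mathcal{S}^p$ for all $p>n$, $[A_i,B_j]\in\mathcal{S}^{p_0}$ for some fixed $p_0<n$, and $[B_i,B_j]=B_iB_j-B_jB_i\in\mathcal{S}^q$ for all $q>n/2$ (using A(1) and Lemma~\ref{lem: holder inequality for operators}). Since at least two of the $X_i$'s are $B$'s, each product $\prod_j[X_{\tau(2j-1)},X_{\tau(2j)}]$ contains either one $[B,B]$ factor or two $[A,B]$-type factors, alongside the remaining $[A,A]$ factors. Writing $m_0,m_1,m_2$ for the number of pairs of each type (so $m_0+m_1+m_2=n$ and $m_1+2m_2\geq 2$), H\"older's inequality (Lemma~\ref{lem: holder inequality for operators}) gives
\[
\frac{1}{r} \;\leq\; \frac{m_0}{p_{AA}}+\frac{m_1}{p_0}+\frac{m_2}{p_{BB}},
\]
whose supremum over admissible $p_{AA},p_{BB}$ equals $1+m_1(1/p_0-1/n)+m_2/n>1$, so $r<1$ is achievable. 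Each summand, and hence the whole antisymmetric sum, therefore lies in $\mathcal{S}^1$.

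For the vanishing of the trace, I would use the Laplace-type identity
\[
[X_1,\ldots,X_{2n}] \;=\; \frac{1}{2}\sum_{i=1}^{2n}(-1)^{i-1}[X_i,M_i], \qquad M_i:=[X_1,\ldots,\widehat{X_i},\ldots,X_{2n}],
\]
obtained by equating the two expansions grouped by $\tau(1)=i$ and by $\tau(2n)=i$ and solving. By Lemma~\ref{lem: trace of commutators}, each $\Tr[X_i,M_i]=0$ provided $X_iM_i$ and $M_iX_i$ lie in $\mathcal{S}^1$. I verify this by applying the analogous odd-length decomposition to $M_i$ (giving $n-1$ commutator pairs and a single leftover factor) and rerunning the H\"older bookkeeping, exploiting that removing a single $X_i$ still leaves at least one $B$ (and at least two if $X_i$ itself is an $A$) among the arguments of $M_i$, combined with the extra factor $X_i$ multiplying from outside.

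The main obstacle is a boundary Schatten estimate: when $X_i$ is an $A$, both $B$'s land inside a single $[B,B]$ commutator of the decomposition of $M_i$, and the leftover single factor of $M_i$ is also an $A$, the naive H\"older bound gives exactly $1/r=1$ in the limit. To push $r<1$ strictly I expect either to exploit the strictness of the inequalities in Hypotheses A(1)--(2) together with the specific value of $p_0<n$ in A(3), or, if the borderline is genuine, to invoke the Jacobi identity $[X_i,[X_a,X_b]]=[[X_i,X_a],X_b]+[X_a,[X_i,X_b]]$ to reveal an additional $[A,B]$ factor in $\mathcal{S}^{p_0}$ with $p_0<n$, tipping the total Schatten exponent strictly below $1$ and completing the argument via Lemma~\ref{lem: trace of commutators}.
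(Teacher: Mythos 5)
Your antisymmetrization identity and the H\"older bookkeeping for trace-class membership are correct and match the paper's Proposition~\ref{prop: A}. The issue is the trace-vanishing argument. You apply Lemma~\ref{lem: trace of commutators} to each $[X_i,M_i]$ individually, which requires both $X_iM_i$ and $M_iX_i$ to lie in $\mathcal{S}^1$. Consider the case with exactly two $B$'s, say $X_1=B_a$, $X_2=B_b$, and $X_i=A_i$ for $i\geq 3$. Take $i\geq 3$. Then $M_i$ contains both $B$'s among its $2n-1$ arguments, and in the odd-length decomposition of $M_i$ into $n-1$ commutator pairs and one singleton, there are terms where $B_a,B_b$ pair into a single $[B_a,B_b]$ and the singleton is an $A$. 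For such a term the Schatten thresholds are $\tfrac{2}{n}$ for $[B_a,B_b]$, $\tfrac{1}{n}$ for each of $n-2$ commutators $[A,A]$, and $0$ for the singleton and the prefactor $A_i$, summing to exactly $1$. Since Hypotheses~A only give \emph{open} Schatten classes (every $p>n$ etc.), the supremum $1$ is not attained, and $X_iM_i\in\mathcal{S}^1$ does not follow. This is not a technicality you can wave away: nothing in Hypotheses~A forces these products into the trace class.

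Your two proposed escapes do not close the gap. The strict inequalities in A(1)--(2) are one-sided (``all $p>n$'') and give no slack here; A(3) supplies slack only when an $A$ meets a $B$ inside a commutator, which is precisely what fails to occur in the bad terms. The Jacobi/Leibniz idea improves the single Leibniz term where $X_i$ lands on $[B_a,B_b]$, but the Leibniz terms where $X_i$ lands on an $[A,A]$ pair remain borderline, so the obstacle persists.

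The paper avoids the problem by a different decomposition. It pulls one $B$ entirely outside the antisymmetrization, writing the operator as $[B,X_1,\ldots,X_{2n-1}]$ and expanding so that the freestanding $B$ is slotted before or after a single unpaired factor $X_{\tau_{2k-1}}$, while the remaining $2n-2$ factors are packaged into $n-1$ commutators. Because that $B$ never enters a commutator, and because the second $B$ is forced into one of the commutators whenever the singleton $X_{\tau_{2k-1}}$ is an $A$, every product of the form $[\cdot,\cdot]\cdots B X_{\tau_{2k-1}}[\cdot,\cdot]\cdots$ has a H\"older exponent strictly below $1$. Remark~\ref{rem: X1...Xn trace} then permits a cyclic shift moving $B$ to the front, and the two placements of $B$ (before and after the singleton) telescope to zero across the sum. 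That structural choice, not a sharper Schatten estimate, is the missing ingredient in your argument.
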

	
	\begin{proof}
		The lemma can be restated as follows: 
		
		``if $B\in\{B_1, B_2,\ldots, B_{2n}\}$, $\{X_1,\ldots,X_{2n-1}\}\subset\{A_1,\ldots,A_{2n}, B_1,\ldots,B_{2n}\}$, and at least one of $X_1,\ldots,X_{2n-1}$ is in $\{B_1,\ldots,B_{2n}\}$, then the operator
		\[
		[B, X_1,\ldots, X_{2n-1}]
		\]
		is in the trace class with zero trace.''
		
		Under the above assumption on $B$ and $X_1, \cdots, X_{2n-1}$, we compute the antisymmetrization $[B,X_1,\ldots,X_{2n-1}]$ as follows.
		\begin{flalign*}
			&[B,X_1,\ldots,X_{2n-1}]\\
			=&\sum_{k=1}^n\bigg(\sum_{\tau\in S_{2n-1}}\sgn(\tau)X_{\tau_1}\ldots X_{\tau_{2k-2}}BX_{\tau_{2k-1}}\ldots X_{\tau_{2n-1}}-\sum_{\tau\in S_{2n-1}}\sgn(\tau)X_{\tau_1}\ldots X_{\tau_{2k-1}}BX_{\tau_{2k}}\ldots X_{\tau_{2n-1}}\bigg)\\
			=&\frac{1}{2}\sum_{k=1}^n\bigg(\sum_{\tau\in S_{2n-1}}\sgn(\tau)[X_{\tau_1},X_{\tau_2}]\ldots X_{\tau_{2k-2}}BX_{\tau_{2k-1}}\ldots X_{\tau_{2n-1}}\\
			&\qquad\qquad \qquad -\sum_{\tau\in S_{2n-1}}\sgn(\tau)[X_{\tau_1},X_{\tau_2}]\ldots X_{\tau_{2k-1}}BX_{\tau_{2k}}\ldots X_{\tau_{2n-1}}\bigg)\\
			&\ldots\\
			=&2^{-n+1}\sum_{k=1}^n\bigg(\sum_{\tau\in S_{2n-1}}\sgn(\tau)[X_{\tau_1},X_{\tau_2}]\ldots[X_{\tau_{2k-3}},X_{\tau_{2k-2}}]BX_{\tau_{2k-1}}[X_{\tau_{2k}},X_{\tau_{2k+1}}]\ldots[X_{\tau_{2n-2}},X_{\tau_{2n-1}}]\\
			&~~~~-\sum_{\tau\in S_{2n-1}}\sgn(\tau)[X_{\tau_1},X_{\tau_2}]\ldots[X_{\tau_{2k-3}},X_{\tau_{2k-2}}]X_{\tau_{2k-1}}B[X_{\tau_{2k}},X_{\tau_{2k+1}}]\ldots[X_{\tau_{2n-2}},X_{\tau_{2n-1}}]\bigg)\\
			=&2^{-n+1}\sum_{k=1}^n\sum_{\tau\in S_{2n-1}}\sgn(\tau)\bigg([X_{\tau_1},X_{\tau_2}]\ldots[X_{\tau_{2k-3}},X_{\tau_{2k-2}}]BX_{\tau_{2k-1}}[X_{\tau_{2k}},X_{\tau_{2k+1}}]\ldots[X_{\tau_{2n-2}},X_{\tau_{2n-1}}]\\
			&~~~~-[X_{\tau_1},X_{\tau_2}]\ldots[X_{\tau_{2k-3}},X_{\tau_{2k-2}}]X_{\tau_{2k-1}}B[X_{\tau_{2k}},X_{\tau_{2k+1}}]\ldots[X_{\tau_{2n-2}},X_{\tau_{2n-1}}]\bigg).
		\end{flalign*}
		In this proof, we use the following notation. 
\begin{notation}		
		For operators $S, T$, write $S\sim T$ when $S-T$ is a trace class operator with zero trace.
\end{notation}		
		For each $k=1,\ldots, n$ and $\tau\in S_{2n-1}$, we {\bf claim} that
		\begin{flalign*}
			&[X_{\tau_1},X_{\tau_2}]\ldots[X_{\tau_{2k-3}},X_{\tau_{2k-2}}]BX_{\tau_{2k-1}}[X_{\tau_{2k}},X_{\tau_{2k+1}}]\ldots[X_{\tau_{2n-2}},X_{\tau_{2n-1}}]\\
			\sim&BX_{\tau_{2k-1}}[X_{\tau_{2k}},X_{\tau_{2k+1}}]\ldots[X_{\tau_{2n-2}},X_{\tau_{2n-1}}][X_{\tau_1},X_{\tau_2}]\ldots[X_{\tau_{2k-3}},X_{\tau_{2k-2}}],
		\end{flalign*}
		and
		\begin{flalign*}
			&[X_{\tau_1},X_{\tau_2}]\ldots[X_{\tau_{2k-3}},X_{\tau_{2k-2}}]X_{\tau_{2k-1}}B[X_{\tau_{2k}},X_{\tau_{2k+1}}]\ldots[X_{\tau_{2n-2}},X_{\tau_{2n-1}}]\\
			\sim&B[X_{\tau_{2k}},X_{\tau_{2k+1}}]\ldots[X_{\tau_{2n-2}},X_{\tau_{2n-1}}][X_{\tau_1},X_{\tau_2}]\ldots[X_{\tau_{2k-3}},X_{\tau_{2k-2}}]X_{\tau_{2k-1}}.
		\end{flalign*}
		If $X_{\tau_{2k-1}}\in\{B_1,B_2,\ldots,B_{2n}\}$, then we have
		\[
		B, X_{\tau_{2k-1}}\in\mathcal{S}^p, \forall p>n,\quad[X_{\tau_i},X_{\tau_j}]\in\mathcal{S}^p, \forall p>n, \forall i, j=1,\ldots,2n-1.
		\]
		By Remark \ref{rem: X1...Xn trace} it is easy to see that the claim holds. If $X_{\tau_{2k-1}}\in\{A_1,\ldots, A_{2n}\}$, then at least one of $X_{\tau_1},\ldots,X_{\tau_{2k-2}}, X_{\tau_{2k}},\ldots, X_{\tau_{2n-1}}$ is in $\{B_1, B_2,\ldots, B_{2n}\}$. Thus at least one of the commutators
		\[
		[X_{\tau_1},X_{\tau_2}],\ldots,[X_{\tau_{2k-3}},X_{\tau_{2k-2}}], [X_{\tau_{2k}},X_{\tau_{2k+1}}], \ldots, [X_{\tau_{2n-2}},X_{\tau_{2n-1}}]
		\]
		is in $\mathcal{S}^p$ for some $p<n$. Again, by Remark \ref{rem: X1...Xn trace}, the claim also follows. Thus in both cases the claim holds. By the claim, we compute $[B,X_1,\ldots,X_{2n-1}]$.
		\begin{flalign*}
			&[B,X_1,\ldots,X_{2n-1}]\\
			=&2^{-n+1}\sum_{k=1}^n\sum_{\tau\in S_{2n-1}}\sgn(\tau)\bigg([X_{\tau_1},X_{\tau_2}]\ldots[X_{\tau_{2k-3}},X_{\tau_{2k-2}}]BX_{\tau_{2k-1}}[X_{\tau_{2k}},X_{\tau_{2k+1}}]\ldots[X_{\tau_{2n-2}},X_{\tau_{2n-1}}]\\
			&~~~~-[X_{\tau_1},X_{\tau_2}]\ldots[X_{\tau_{2k-3}},X_{\tau_{2k-2}}]X_{\tau_{2k-1}}B[X_{\tau_{2k}},X_{\tau_{2k+1}}]\ldots[X_{\tau_{2n-2}},X_{\tau_{2n-1}}]\bigg)\\
			\sim&2^{-n+1}\sum_{k=1}^n\sum_{\tau\in S_{2n-1}}\sgn(\tau)BX_{\tau_{2k-1}}[X_{\tau_{2k}},X_{\tau_{2k+1}}]\ldots[X_{\tau_{2n-2}},X_{\tau_{2n-1}}][X_{\tau_1},X_{\tau_2}]\ldots[X_{\tau_{2k-3}},X_{\tau_{2k-2}}]\\
			&-2^{-n+1}\sum_{k=1}^n\sum_{\tau\in S_{2n-1}}\sgn(\tau)B[X_{\tau_{2k}},X_{\tau_{2k+1}}]\ldots[X_{\tau_{2n-2}},X_{\tau_{2n-1}}][X_{\tau_1},X_{\tau_2}]\ldots[X_{\tau_{2k-3}},X_{\tau_{2k-2}}]X_{\tau_{2k-1}}\\
			=&2^{-n+1}\sum_{k=1}^n\sum_{\tau\in S_{2n-1}}\sgn(\tau)BX_{\tau_1}[X_{\tau_2},X_{\tau_3}]\ldots[X_{\tau_{2n-2}},X_{\tau_{2n-1}}]\\
			&-2^{-n+1}\sum_{k=1}^n\sum_{\tau\in S_{2n-1}}\sgn(\tau)B[X_{\tau_1},X_{\tau_2}]\ldots[X_{\tau_{2n-3}},X_{\tau_{2n-2}}]X_{\tau_{2n-1}}\\
			=&\sum_{k=1}^n\sum_{\tau\in S_{2n-1}}\sgn(\tau)BX_{\tau_1}\ldots X_{\tau_{2n-1}}-\sum_{k=1}^n\sum_{\tau\in S_{2n-1}}\sgn(\tau)BX_{\tau_1}\ldots X_{\tau_{2n-1}}\\
			=&0.
		\end{flalign*}
	Here the third-to-last equality is because the antisymmetric sums $\sum_{\tau\in S_{2n-1}}\sgn(\tau)\cdots$ are invariant under any even permutation.
		This completes the proof of Lemma \ref{lem: 2B trace zero}.
	\end{proof}

A verbatim repetition of the proof of Lemma \ref{lem: 2B trace zero} also proves the following.
\begin{cor}\label{cor: 1 B improved}
Assume in addition to Hypotheses A that
\[
B_i\in\mathcal{S}^p,\quad\text{ for some}~ p<n.
\]
Suppose $\{X_1, X_2, \ldots, X_{2n}\}\subset\{A_1,\ldots, A_{2n}, B_1,\ldots, B_{2n}\}$, and at least one of $X_1, X_2,\ldots, X_{2n}\in\{B_1,\ldots, B_{2n}\}$. Then the operator
\[
[X_1, X_2,\ldots, X_{2n}]
\]
is in the trace class with zero trace. Consequently, the operator
\[
[C_1, C_2,\ldots, C_{2n}]-[A_1, A_2,\ldots, A_{2n}]
\]
is in the trace class with zero trace.
\end{cor}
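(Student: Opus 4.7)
The plan is to follow the template of Lemma \ref{lem: 2B trace zero} almost verbatim, with a careful re-accounting of Schatten exponents at the one place where the old proof actively used ``two $B$'s.'' Fix an arbitrary $B \in \{B_1, \ldots, B_{2n}\}$ appearing among the $X_i$'s and rename so that $[X_1, \ldots, X_{2n}] = [B, X_1, \ldots, X_{2n-1}]$. Expand this antisymmetrization in exactly the same way as before, writing it as
\[
2^{-n+1} \sum_{k=1}^{n} \sum_{\tau \in S_{2n-1}} \sgn(\tau)\bigl\{[X_{\tau_1},X_{\tau_2}]\cdots BX_{\tau_{2k-1}}\cdots[X_{\tau_{2n-2}},X_{\tau_{2n-1}}] - [X_{\tau_1},X_{\tau_2}]\cdots X_{\tau_{2k-1}}B\cdots[X_{\tau_{2n-2}},X_{\tau_{2n-1}}]\bigr\}.
\]
Then try to show, as in the old proof, that each of these ordered products is trace class, so that Lemma \ref{lem: trace of commutators} lets us cycle the factors and move $B$ to the far left.

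The only new input is needed exactly when $X_{\tau_{2k-1}}$ is of $A$-type and none of the remaining $X$'s are $B$'s (the single extremal case the weaker hypothesis left uncovered). In that case every commutator $[X_{\tau_{2i-1}},X_{\tau_{2i}}] = [A,A]$ lies merely in $\mathcal{S}^p$ for $p>n$, so the argument used previously breaks down. But by the new assumption $B \in \mathcal{S}^{p_0}$ for some $p_0 < n$. I would apply Hölder (Lemma \ref{lem: holder inequality for operators}) with $\frac{1}{p_0} + \sum_{i=1}^{n-1}\frac{1}{p_i} = 1$, choosing each $p_i$ slightly larger than $n$; this is possible precisely because $p_0 < n$. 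This puts the product $B X_{\tau_{2k-1}}[X_{\tau_{2k}},X_{\tau_{2k+1}}]\cdots[X_{\tau_{2n-2}},X_{\tau_{2n-1}}]$ into $\mathcal{S}^1$. All other cases are handled by the original Schatten estimates of Lemma \ref{lem: 2B trace zero}. With every summand now in trace class, the cyclic rearrangement goes through, and the remaining combinatorics (pairing up terms with $B$ on the left versus on the right, using the $\sgn(\tau)$ sign and the invariance of the antisymmetric sum under even permutations) collapses to zero exactly as in the old proof.

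For the consequence, expand $C_i = A_i + B_i$ multilinearly inside the antisymmetrization to get
\[
[C_1, \ldots, C_{2n}] - [A_1, \ldots, A_{2n}] = \sum_{\emptyset \neq S \subseteq \{1, \ldots, 2n\}} [X_1^S, \ldots, X_{2n}^S],
\]
where $X_i^S = B_i$ if $i \in S$ and $X_i^S = A_i$ otherwise. Each summand has at least one $B$-type entry, so the first part of the corollary applies term by term, and each is trace class with zero trace. Summing finitely many such operators preserves both properties.

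The only obstacle I anticipate is keeping the Hölder book-keeping honest across all $2n-1$ factors, since we are now relying on a single factor of improved Schatten regularity (that of $B$) rather than the two we had before; I would write it out once at the extremal case described above and then invoke it uniformly. Everything else is a mechanical repetition of the proof of Lemma \ref{lem: 2B trace zero}.
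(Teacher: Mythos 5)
Your proof is correct and takes essentially the same route the paper intends: the paper simply asserts that a verbatim repetition of Lemma \ref{lem: 2B trace zero} suffices, and your argument is that repetition with the one genuinely new step made explicit. You correctly isolated the single case the weaker hypothesis leaves uncovered (a lone $B$, with $X_{\tau_{2k-1}}$ of $A$-type and all commutators only $\mathcal{S}^p$ for $p>n$) and correctly showed the new assumption $B\in\mathcal{S}^{p_0}$ with $p_0<n$ repairs the H\"older bookkeeping, since $\frac{1}{p_0}+(n-1)\frac{1}{p_1}=1$ forces $p_1>n$ precisely when $p_0<n$.
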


\begin{rem}\label{rem: difference btw Toeplitz and pseudodifferential}
Notice that the assumption of Corollary \ref{cor: 1 B improved} is equivalent to the following.
\begin{enumerate}
	\item For $i, j=1,\ldots, 2n$,
	\[
	[A_i, A_j]\in\mathcal{S}^p,\quad\forall p>n.
	\]
	\item For $i=1,\ldots,2n$,
	\[
	B_i\in\mathcal{S}^p,\quad\text{ for some }p<n.
	\]
\end{enumerate}
Thus Corollary \ref{cor: 1 B improved} essentially says that the trace
\[
\Tr[A_1,A_2,\ldots, A_{2n}]
\]
is stable under any perturbation that belongs to $\mathcal{S}^p$ for some $p<n$. 
\end{rem}
	\begin{proof}[{\bf Proof of Proposition \ref{prop: A}}]
		By definition, we have the following equation
		\[
		[C_1, C_2,\ldots,C_{2n}]-[A_1, A_2,\ldots,A_{2n}]=\sum[X_1,X_2,\ldots,X_{2n}],
		\]
		where the sum is taken over all tuples $(X_1,\ldots,X_{2n})$ such that each $X_i$ belongs to $\{A_i, B_i\}$, and at least one of $X_1,\ldots, X_{2n}$ belongs to $\{B_1,B_2,\ldots,B_{2n}\}$. For this tuple $(X_1,X_2,\ldots,X_{2n})$, we have the following expansion
		\begin{flalign*}
			&[X_1,X_2,\ldots,X_{2n}]\\
			=&\sum_{\tau\in S_{2n}}\sgn(\tau)X_{\tau_1}X_{\tau_2}\ldots X_{\tau_{2n}}\\
			=&2^{-n}\sum_{\tau\in S_{2n}}\sgn(\tau)[X_{\tau_1},X_{\tau_2}][X_{\tau_3},X_{\tau_4}]\ldots[X_{\tau_{2n-1}},X_{\tau_{2n}}].
		\end{flalign*}
		By Hypotheses A, each commutator $[X_i,X_j]$ is at least in $\mathcal{S}^p$ for any $p>n$, and when $X_i\in\{B_1,B_2,\ldots,B_{2n}\}$, $[X_i,X_j]\in\mathcal{S}^p$ for some $p<n$, $\forall j=1,\ldots,2n$. Thus by Lemma \ref{lem: holder inequality for operators}, each product
		\[
		[X_{\tau_1},X_{\tau_2}][X_{\tau_3},X_{\tau_4}]\ldots[X_{\tau_{2n-1}},X_{\tau_{2n}}]
		\]
		is in the trace class. So we obtain the following estimate
		\[
		[C_1, C_2,\ldots,C_{2n}]-[A_1, A_2,\ldots,A_{2n}]\in\mathcal{S}^1.
		\]
		On the other hand, we have the following equation
		\begin{flalign*}
			&[C_1, C_2, \ldots, C_{2n}]-[A_1, A_2, \ldots, A_{2n}]-\sum_{k=1}^{2n}[A_1,\ldots,A_{k-1},B_k,A_{k+1},\ldots,A_{2n}]\\
			=&\sum[X_1,X_2,\ldots,X_{2n}],
		\end{flalign*}
		where each $X_i\in\{A_i, B_i\}$, and at least two of $X_1,X_2,\ldots,X_{2n}$ is in $\{B_1,B_2,\ldots,B_{2n}\}$. By Lemma \ref{lem: 2B trace zero}, each $[X_1,X_2,\ldots,X_{2n}]$ is a trace class operator of zero trace. Thus we conclude that
		\[
		[C_1, C_2, \ldots, C_{2n}]-[A_1, A_2, \ldots, A_{2n}]-\sum_{k=1}^{2n}[A_1,\ldots,A_{k-1},B_k,A_{k+1},\ldots,A_{2n}]
		\]
		is a trace class operator of zero trace. This completes the proof of Proposition \ref{prop: A}.
	\end{proof}

\subsection{Proof of Lemmas \ref{lem: AS sum trace t-tp} and \ref{lem: AS sum trace t-tp hardy}}
In this subsection, temporarily fix the notations
	\[
	A_i=\begin{cases}
		\BTtpt_{f_i},&\text{ if }t>-1\\
		T^{(1,-1)}_{f_i},&\text{ if }t=-1
	\end{cases},\quad
B_i=\begin{cases}
	X^{(t)}_{f_i}+Y^{(t)}_{\bar{R}f_i}+f_i(0)E_0,&\text{ if }t>-1\\
	X^{(-1)}_{f_i}+Y^{(-1)}_{\bar{R}f_i}+Z^{(-1)}_{\bar{R}^2f_i}+f_i(0)E_0,&\text{ if }t=-1
\end{cases},
	\]
	and $C_i=A_i+B_i$. Then by Lemma \ref{lem: Ttf=X+Y+K} and Lemma \ref{lem: T=X+Y+Z+E},
	\[
	C_i=\begin{cases}
	\BTt_{f_i},&\text{ if }t>-1,\\
	T_{f_i},&\text{ if }t=-1.
	\end{cases}
	\]
	Thus Lemmas \ref{lem: AS sum trace t-tp} and \ref{lem: AS sum trace t-tp hardy} are equivalent to the property that
	\[
	[C_1, C_2,\ldots, C_{2n}]-[A_1, A_2,\ldots, A_{2n}]
	\]
	is a trace class operator with zero trace.
	
	~
	
	Suppose $X_w(z), Y_w(z)$ are measurable functions on $\bn\times\bn$. We introduce the following hypotheses. 
	
	For $t>-1$, we say that $X_w(z), Y_w(z)$ satisfy {\bf Hypotheses B at $t$}, if there are $\epsilon>0$ and $C>0$ such that the following hold.
	\begin{enumerate}
		\item For each $w\in\bn$, $X_w(z), Y_w(z)$ is holomorphic in $z$, 
		\item $|X_w(z)|\leq C|w|^{-2n+\epsilon}|\BKt_w(z)|$, 
		\item $|Y_w(z)|\leq C|w|^{-2n-1+\epsilon}|\BKt_w(z)|$.
	\end{enumerate}
	For a function $f$ on $\bn$, formally define the symboled integral operators
	\begin{equation}\label{eqn: XY operator defn}
		X_fh(z)=\int_{\bn}h(w)f(w)X_w(z)\intd\lambda_{t+1}(w),\quad Y_fh(z)=\int_{\bn}h(w)f(w)Y_w(z)\intd\lambda_{t+1}(w).
	\end{equation}

	\begin{lem}\label{lem: B X Y}
	Assume Hypotheses B. Then for any $f\in\mathscr{C}^2(\overline{\bn})$, the integral operators $X_f$ and $Y_{\bar{R}f}$ define bounded operators on $\bert$ that belong to $\mathcal{S}^p$ for any $p>n$.
\end{lem}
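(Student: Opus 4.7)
The plan is to treat $X_f$ and $Y_{\bar{R}f}$ as two instances of a single integral operator $Th(z) = \int_{\bn} h(w) K_w(z)\, \intd\lambda_{t+1}(w)$ whose kernel $K_w$ is holomorphic in $z$ and satisfies $|K_w(z)| \leq C|w|^{-2n+\epsilon}|\BKt_w(z)|$. For $X_f$ this is immediate from Hypotheses B~(2) and the boundedness of $f$; for $Y_{\bar{R}f}$ it follows because $\bar{R}f(w)=\sum_j\bar{w}_j\bpartial_jf(w)$ vanishes linearly at $w=0$ for $f\in\mathscr{C}^2(\overline{\bn})$, supplying the extra factor of $|w|$ that compensates the worse singularity allowed by Hypotheses B~(3).

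Fix a smooth cutoff $\chi$ with $\chi\equiv 1$ on $B_{1/2}(0)$ and $\mathrm{supp}(\chi)\subset B_{3/4}(0)$, and split $T = T^{\mathrm{loc}}+T^{\mathrm{glob}}$, with $\chi(w)$ inserted into the integrand of $T^{\mathrm{loc}}$. Using the reproducing identity $h(w)=\la h,\BKt_w\ra_{L^2(\lambda_t)}$, I would realize $T^{\mathrm{loc}}$ as the Bochner integral $\int\chi(w)\,K_w\otimes\BKt_w\,\intd\lambda_{t+1}(w)$ of rank-one operators $h\mapsto\la h,\BKt_w\ra K_w$ on $\bert$. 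On $\mathrm{supp}(\chi)$ the reproducing kernel is uniformly bounded in $z,w$, so $\|K_w\|_{\infty}\lesssim|w|^{-2n+\epsilon}$, and $\|\BKt_w\|_{\bert}$ is uniformly bounded, giving the rank-one estimate $\|K_w\otimes\BKt_w\|_{\mathcal{S}^1}\lesssim|w|^{-2n+\epsilon}$. Since $|w|^{-2n+\epsilon}$ is locally integrable at the origin for $\epsilon>0$, we obtain $T^{\mathrm{loc}}\in\mathcal{S}^1\subset\mathcal{S}^p$ for every $p\geq 1$.

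For $T^{\mathrm{glob}}$ the origin singularity is removed, and converting the integration from $\intd\lambda_{t+1}$ to $\intd\lambda_t$ produces a kernel bound $C(1-|w|^2)|\BKt_w(z)|$. Given $p>n$, I would factor $T^{\mathrm{glob}}=\hat T\circ E_{t,2c}$ with $c\in(n/p,1)$, where $E_{t,2c}:\bert\to L_{a,t+2c}^2(\bn)$ is the canonical embedding lying in $\mathcal{S}^p$ by Lemma~\ref{lem: Embedding Schatten membership}, and $\hat T:L_{a,t+2c}^2(\bn)\to\bert$ is defined by the same integral formula; such a $c$ exists precisely because $p>n$.

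The main obstacle is to verify that $\hat T$ is bounded for $c<1$. Its kernel against $\intd\lambda_{t+2c}$ is dominated by $C(1-|w|^2)^{1-2c}/|1-\la z,w\ra|^{n+1+t}$, whose exponent $1-2c$ may be negative. I would apply Schur's test with the weights $(1-|w|^2)^x$ and $(1-|z|^2)^{1+x}$ for $x\in(-2-t,-1)$ close to $-1$, and use the Rudin--Forelli estimates in Lemma~\ref{lem: Rudin Forelli generalizations}~(1) to verify
\[
\int|K(z,w)|(1-|w|^2)^x\,\intd\lambda_{t+2c}(w)\lesssim(1-|z|^2)^{1+x},
\]
\[
\int|K(z,w)|(1-|z|^2)^{1+x}\,\intd\lambda_t(z)\lesssim(1-|w|^2)^{2-2c+x},
\]
with the second bound dominated by $(1-|w|^2)^x$ precisely when $c\leq 1$. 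Combining the local and global pieces then yields $T\in\mathcal{S}^p$ for every $p>n$, which specializes to both $X_f$ and $Y_{\bar{R}f}$.
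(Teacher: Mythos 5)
Your proposal is correct and mirrors the paper's argument: split off the origin singularity (paper: decompose $f$ into a compactly supported piece plus a piece supported away from $0$; you: insert a cutoff $\chi(w)$ into the kernel — operationally the same), dispose of the local piece by trace class, and handle the global piece by factoring through the embedding $E_{t,2c}$ whose Schatten membership is Lemma~\ref{lem: Embedding Schatten membership}, with Schur's test showing the remaining factor is bounded. Two small remarks. First, you correctly fill in the paper's ``easy to see'' for the compactly supported part by realizing it as a Bochner integral of rank-one operators with integrable $\mathcal S^1$ norm. Second, your own Schur estimate shows the second inequality holds precisely when $c\leq 1$, so the endpoint $c=1$ (which is what the paper uses via $E_{t,2}$) is admissible; it simultaneously gives $T^{\mathrm{glob}}\in\mathcal S^p$ for \emph{all} $p>n$, so there is no need to pick a $p$-dependent $c\in(n/p,1)$, and the ``obstacle'' you flag is not one — for $c<1$ the weight $(1-|w|^2)^{1-2c}$ is in fact \emph{less} singular than at $c=1$, and your Schur test already absorbs it.
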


\begin{proof}
	For $f\in\mathscr{C}^2(\overline{\bn})$, we have
	\[
	|f(w)|\lesssim 1,\quad |\bar{R}f(w)|\lesssim|w|.
	\]
	Thus by Hypotheses B, we obtain the following estimates,
	\[
	|f(w)X_w(z)|\lesssim|w|^{-2n+\epsilon}|\BKt_w(z)|,\quad|\bar{R}f(w)Y_w(z)|\lesssim|w|^{-2n+\epsilon}|\BKt_w(z)|.
	\]
	Thus for any $h\in\Hol(\overline{\bn})$, $X_fh(z), Y_{\bar{R}f}h(z)$ are defined pointwise. If $f$ has compact support contained in $\bn$, then it is easy to see that $X_f, Y_{\bar{R}f}$ belong to $\mathcal{S}^p$ for any $p$. In general, we can always write $f=f_1+f_2$, where $f_1$ has compact support in $\bn$, and the support of $f_2$ is away from the origin. We might as well assume that $f$ itself has support away from zero. In this case, we have the following estimates
	\[
	|f(w)X_w(z)|\lesssim|\BKt_w(z)|,\quad|\bar{R}f(w)Y_w(z)|\lesssim|\BKt_w(z)|.
	\]
	Denote $G_w(z)$ to be either $f(w)X_w(z)$ or $\bar{R}f(w)Y_w(z)$. Then $G_w(z)$ is holomorphic in $z$, and
	\[
	|G_w(z)|\lesssim|\BKt_w(z)|.
	\]
	Define
	\[
	T_Gh(z)=\int_{\bn}h(w)G_w(z)\intd\lambda_{t+1}(w).
	\]
	Then $T_G$ equals either $X_f$ or $Y_{\bar{R}f}$. Split the map as follows.
	\[
	T_G: \bert\xrightarrow{E_{t,2}}L_{a,t+2}^2(\bn)\xrightarrow{\hat{T}_G}\bert,
	\]
	where $\hat{T}_G$ is defined by the same integral formula as $T_G$. By Lemma \ref{lem: bounded operator using Schur's test}, $\hat{T}_G$ is bounded. By Lemma \ref{lem: Embedding Schatten membership}, $E_{t,2}\in\mathcal{S}^p$ for all $p>n$. Therefore $T_G\in\mathcal{S}^p$ for all $p>n$. This completes the proof of Lemma \ref{lem: B X Y}.
\end{proof}

	We postpone the proof of the following proposition to the end of this section.
	\begin{prop}\label{prop: B}
		Suppose $t>-1$ and $X_w(z), Y_w(z)$ satisfy Hypotheses B at $t$. Suppose  that $f_1, f_2, \ldots, f_{2n}\in\mathscr{C}^2(\overline{\bn})$, and $K_1, K_2, \ldots, K_{2n}$ are bounded operators on $\bert$ that belong to $\mathcal{S}^p$ for some $p<n$. Denote
		\[
		\hat{A}_i=\BTtpt_{f_i},\quad \hat{B}_i=X_{f_i}+Y_{\bar{R}f_i}+K_i,\quad\hat{C}_i=\hat{A}_i+\hat{B}_i,\quad i=1,\ldots, 2n.
		\]
		Then the operator
		\[
		[\hat{C}_1, \hat{C}_2,\ldots, \hat{C}_{2n}]-[\hat{A}_1, \hat{A}_2, \ldots, \hat{A}_{2n}]
		\]
		is a trace class operator with zero trace on $\bert$.
	\end{prop}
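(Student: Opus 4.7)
The plan is to verify that $\hat A_i$ and $\hat B_i$ satisfy Hypotheses~A, invoke Proposition~\ref{prop: A} to reduce the identity to a claim about the single-perturbation antisymmetric sums $[\hat A_1,\ldots,\hat B_k,\ldots,\hat A_{2n}]$, and then argue that these residual traces vanish.

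First I would verify Hypotheses~A for $\{\hat A_i\}$ and $\{\hat B_i\}$. The membership $\hat B_i \in \mathcal{S}^p$ for all $p>n$ follows from Lemma~\ref{lem: B X Y} (for the $X_{f_i}$ and $Y_{\bar{R}f_i}$ summands) together with the hypothesis $K_i \in \mathcal{S}^p$ for some $p<n$, which by inclusion of Schatten classes gives $K_i \in \mathcal{S}^p$ for every $p>n$ as well. The Toeplitz commutator bound $[\hat A_i,\hat A_j] \in \mathcal{S}^p$ for $p>n$ is a standard consequence of Corollary~\ref{cor: commutator schatten membership no phi} applied to $P^{(t+1)}$ with the Lipschitz symbols, using the identification of $\hat A_i$ as the restriction of $T^{(t+1)}_{f_i}$. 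The decisive improvement $[\hat A_i,\hat B_j] \in \mathcal{S}^p$ for some $p<n$ splits according to the three summands of $\hat B_j$: the $K_j$-contribution is immediate, while for the $X_{f_j}$ and $Y_{\bar{R}f_j}$ contributions one exploits the decay factor $|w|^{-2n+\epsilon}$ (resp.\ $|w|^{-2n-1+\epsilon}$) from Hypotheses~B together with the Lipschitz bound $|f_i(z)-f_i(w)| \lesssim |1-\langle z,w\rangle|^{1/2}$ (Lemma~\ref{lem: Mobius basics}(6)) to gain an extra factor $|1-\langle z,w\rangle|^{1/2}$ in the commutator kernel, pushing the Schatten index strictly below $n$ via analogues of Corollary~\ref{cor: commutator schatten membership with phi}.

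Proposition~\ref{prop: A} then yields that
\[
\Delta := [\hat C_1,\ldots,\hat C_{2n}] - [\hat A_1,\ldots,\hat A_{2n}] - \sum_{k=1}^{2n} [\hat A_1,\ldots,\hat B_k,\ldots,\hat A_{2n}]
\]
is trace class with zero trace, so it suffices to show $\sum_k \Tr[\hat A_1,\ldots,\hat B_k,\ldots,\hat A_{2n}] = 0$. Decomposing $\hat B_k = X_{f_k} + Y_{\bar{R}f_k} + K_k$ by multilinearity, the $K_k$-contribution is handled at once by Corollary~\ref{cor: 1 B improved}: with $\{\hat A_i\}$ and $\{K_i\}$ playing the roles of $\{A_i\}$ and $\{B_i\}$, the extra hypothesis $K_i \in \mathcal{S}^p$ with $p<n$ is exactly what we have, so each $\Tr[\hat A_1,\ldots,K_k,\ldots,\hat A_{2n}]$ vanishes.

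The main obstacle is handling the $X_{f_k}$ and $Y_{\bar{R}f_k}$ summands, since neither operator belongs to $\mathcal{S}^p$ for $p<n$ and Corollary~\ref{cor: 1 B improved} does not apply directly. My plan is to expand
\[
[\hat A_1,\ldots,X_{f_k},\ldots,\hat A_{2n}] \;=\; \frac{1}{2^n}\sum_{\tau\in S_{2n}} \mathrm{sgn}(\tau)\,[Y_{\tau(1)},Y_{\tau(2)}]\cdots[Y_{\tau(2n-1)},Y_{\tau(2n)}],
\]
with $Y_k = X_{f_k}$ and $Y_i = \hat A_i$ for $i\ne k$. In every summand $X_{f_k}$ sits inside exactly one commutator $[\hat A_i, X_{f_k}]$, which by Step~1 lies in $\mathcal{S}^p$ with $p<n$, while the remaining $n-1$ commutators are in $\mathcal{S}^q$ with $q$ close to $n$ from above; H\"older's inequality (Lemma~\ref{lem: holder inequality for operators}) then places each product in the trace class. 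I would next adapt the cyclic rearrangement ``$\sim$'' moves from the proof of Lemma~\ref{lem: 2B trace zero}, treating the unique small commutator as the pivotal block, to show that the signed sum over $\tau$ telescopes to zero. The hardest step will be ensuring that trace-classness is preserved through each intermediate reshuffle, since having only one small element---rather than the two available in Lemma~\ref{lem: 2B trace zero}---leaves less room to absorb Schatten norm, forcing the distinguished small commutator to remain adjacent to enough neighbours at every stage of the rearrangement.
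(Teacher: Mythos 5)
Your set-up matches the paper: you correctly verify Hypotheses~A for $\{\hat A_i,\hat B_i\}$, invoke Proposition~\ref{prop: A}, and dispose of the $K_k$-contributions via Corollary~\ref{cor: 1 B improved}. But the step you flag as hard — the operator-theoretic cyclic rearrangement for the $X_{f_k}$ and $Y_{\bar R f_k}$ contributions — is not merely delicate; it genuinely cannot be made to work, and the paper says so. The rearrangements in the proof of Lemma~\ref{lem: 2B trace zero} invoke Lemma~\ref{lem: trace of commutators}, which requires both the original and the cyclically swapped product to be trace class. With two $B$-type entries (as Lemma~\ref{lem: 2B trace zero} assumes) there is enough Schatten norm available at every stage; with a single perturbation all the remaining $X_{\tau_i}$ are $\hat A$'s, so the factor being moved is $B\,X_{\tau_{2k-1}}$ with $X_{\tau_{2k-1}}$ merely bounded, and the two blocks involved have total Schatten exponent $\frac{1}{p}+\frac{n-1}{q}<1$ whenever $p,q>n$ — strictly short of trace class. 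There is no way to ``keep the small commutator adjacent'' through the cycle, because the swap by construction splits the small commutator from some of the other factors. Remark~\ref{rem: difference btw Toeplitz and pseudodifferential} makes this explicit: Corollary~\ref{cor: 1 B improved} needs $B_i\in\mathcal S^p$ for some $p<n$, which $X_{f_i}$ and $Y_{\bar R f_i}$ do not satisfy.

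The paper's actual proof abandons the purely operator-theoretic route at precisely this point and passes to the integral kernels. Writing $\operatorname{Tr}$ as a $(2n+1)$-fold integral of the Berezin transform, absolute convergence is established via Lemma~\ref{lem: k kernels absolutely integrable}, using crucially the determinant structure $\det[f_i(z_j)]$ which provides a gain $\big(\sum_{i,j}|1-\langle z_i,z_j\rangle|\big)^{n-1/2}$; Fubini then shows that the trace is unchanged under the cyclic reindexing $X_k\mapsto X'_k$, and antisymmetry ($X'=-X$) forces the trace to vanish. For the $Y_{\bar R f_k}$ terms the integral kernel has an extra singularity $|w|^{-1}$, so even this argument needs the further decomposition of the determinant into $D_k$ plus correction terms $E_{k,j}$, the latter of which are shown to cancel identically by the Leibniz rule $\bar R(fg)=f\bar R g+g\bar R f$. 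None of this machinery is present in your proposal.
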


\begin{lem}\label{lem: B for bergman}
	For $t>-1$, the integral kernels $X^{(t)}_w(z)$ and $Y^{(t)}_w(z)$ satisfy Hypotheses B at $t$.
\end{lem}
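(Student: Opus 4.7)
The plan is to verify the three conditions of Hypotheses B at $t$ for the kernels from Definition \ref{defn: X Y}. The first step is the computation
\[
\bar{R}_w K^{(t)}_w(z)=(n+1+t)\langle z,w\rangle\, K^{(t+1)}_w(z),
\]
which follows from $\bpartial_{w_j}(1-\langle z,w\rangle)=-z_j$ together with the chain rule. This immediately shows that both $X^{(t)}_w(z)$ and $Y^{(t)}_w(z)$ are holomorphic in $z$, and yields the convenient factorization
\[
X^{(t)}_w(z)=K^{(t+1)}_w(z)\Bigl[(t+1)\BGt_n1(|w|^2)\langle z,w\rangle-1\Bigr].
\]

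For $Y^{(t)}_w$ the estimate is direct: by Lemma \ref{lem: formula R on bn}(2), $\BGt_n1(s)\lesssim s^{-n}$ on $(0,1)$ (it is $\approx s^{-n}$ near $0$ and bounded near $1$), so $|Y^{(t)}_w(z)|\lesssim|w|^{-2n}|K^{(t)}_w(z)|$. Since $|w|<1$, for any $\epsilon\in(0,1)$ one has $|w|^{-2n}\leq|w|^{-2n-1+\epsilon}$, giving property (3).

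For $X^{(t)}_w$ I would split into $\{|w|\leq 1/2\}$ and $\{|w|\geq 1/2\}$. On the first region, $|1-\langle z,w\rangle|\in[1/2,3/2]$, so $|K^{(t)}_w(z)|$ and $|K^{(t+1)}_w(z)|$ are comparable to $1$; the factor $\langle z,w\rangle$ in the bracket provides an extra $|w|$, so $|X^{(t)}_w(z)|\lesssim|w|^{-2n}\cdot|w|+1\lesssim|w|^{-2n+1}|K^{(t)}_w(z)|$. On the second region I use the alternative form
\[
X^{(t)}_w(z)=-(t+1)\BGt_n1(|w|^2)K^{(t)}_w(z)+\Bigl[(t+1)\BGt_n1(|w|^2)-1\Bigr]K^{(t+1)}_w(z),
\]
obtained by writing $\langle z,w\rangle=1-(1-\langle z,w\rangle)$. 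The first summand is obviously $\lesssim|K^{(t)}_w(z)|$. For the second, Lemma \ref{lem: formula R on bn}(3) gives $|(t+1)\BGt_n1(|w|^2)-1|\lesssim 1-|w|^2$, and Lemma \ref{lem: Mobius basics}(7) gives $(1-|w|^2)\leq 2|1-\langle z,w\rangle|$; these combine to cancel the extra factor $|K^{(t+1)}_w(z)|/|K^{(t)}_w(z)|=|1-\langle z,w\rangle|^{-1}$. Hence $|X^{(t)}_w(z)|\lesssim|K^{(t)}_w(z)|$ when $|w|\geq 1/2$, and since $|w|^{-2n+1}\geq 1$ in that range, property (2) holds with $\epsilon=1$.

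The main delicate point is the matching in the second region: the zero of $(t+1)\BGt_n 1(|w|^2)-1$ as $|w|\to 1^-$ must happen at exactly the rate $1-|w|^2$ in order to absorb the boundary singularity of $K^{(t+1)}_w/K^{(t)}_w$ through Lemma \ref{lem: Mobius basics}(7). Once this cancellation is in place, the proof is purely a calculus verification; all needed estimates are already established in Sections \ref{sec: preliminary} and \ref{sec: IntByPts}.
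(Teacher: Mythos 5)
Your proof is correct and uses essentially the same algebraic decomposition and key ingredients as the paper: the computation of $\bar R_w K^{(t)}_w$, the rate $|(t+1)\mathcal G^{(t)}_n 1(s)-1|\lesssim 1-s$ near $s=1$ from Lemma \ref{lem: formula R on bn}, and $1-|w|^2\lesssim |1-\langle z,w\rangle|$ to cancel the extra factor of $|1-\langle z,w\rangle|^{-1}$. The only cosmetic difference is that the paper avoids your two-region case split by using the uniform bound $|\mathcal G^{(t)}_n 1(s)-\tfrac1{t+1}|\lesssim s^{-n}(1-s)$ valid on all of $(0,1)$, which absorbs both the $|w|\to 0$ and $|w|\to 1$ behavior in one estimate.
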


\begin{proof}
	The fact that $X^{(t)}_w(z)$ and $Y^{(t)}_w(z)$ are holomorphic in $z$ is obvious from Definition \ref{defn: X Y}.
	By direct computation, we obtain the following equation
	\[
	\bar{R}_w\BKt_w(z)=(n+1+t)\frac{\la z,w\ra}{(1-\la z,w\ra)^{n+2+t}}=(n+1+t)\BKtp_w(z)-(n+1+t)\BKt_w(z).
	\]
	So we obtain the following estimates,
	\begin{equation}\label{eqn: temp 5}
		|\bar{R}_w\BKt_w(z)|\lesssim|w||\BKtp_w(z)|,\quad\big|\bar{R}_w\BKt_w(z)-(n+1+t)\BKtp_w(z)\big|\lesssim|\BKt_w(z)|.
	\end{equation}
	Also by Lemma \ref{lem: formula Bn R 0}, we get the following inequalities
	\begin{equation}\label{eqn: temp 6}
		|\BGt_n1(s)|\lesssim s^{-n},\quad\big|\BGt_n1(s)-\frac{1}{t+1}\big|\lesssim s^{-n}(1-s).
	\end{equation}
	By \eqref{eqn: temp 5} and \eqref{eqn: temp 6}, we read
	\begin{flalign*}
		\big|Y^{(t)}_w(z)\big|\lesssim&|w|^{-2n}|\BKt_w(z)|,
	\end{flalign*}
	and
	\begin{flalign*}
		\big|X^{(t)}_w(z)\big|=&\bigg|\bigg((t+1)\BGt_n1(|w|^2)-1\bigg)\cdot\frac{\bar{R}_w\BKt_w(z)}{n+t+1}+\bigg(\frac{\bar{R}_w\BKt_w(z)}{n+t+1}-\BKtp_w(z)\bigg)\bigg|\\
		\leq&\bigg|(t+1)\BGt_n1(|w|^2)-1\bigg|\cdot\bigg|\frac{\bar{R}_w\BKt_w(z)}{n+t+1}\bigg|+\bigg|\frac{\bar{R}_w\BKt_w(z)}{n+t+1}-\BKtp_w(z)\bigg|\\
		\lesssim&|w|^{-2n}(1-|w|^2)\cdot|w|\big|\BKtp_w(z)\big|+\big|\BKt_w(z)\big|\\
		\lesssim&|w|^{-2n+1}\big|\BKt_w(z)\big|.
	\end{flalign*}
	This completes the proof of Lemma \ref{lem: B for bergman}.
\end{proof}

\begin{proof}[{\bf Proof of Lemma \ref{lem: AS sum trace t-tp}}]
	Take $K_i=f_i(0)E_0$. Then each $K_i$ is a rank one operator. Take $X_w(z)=X^{(t)}_w(z)$ and $Y_w(z)=Y^{(t)}_w(z)$ and define $\hat{A}_i$, $\hat{B}_i$, $\hat{C}_i$ as in Proposition \ref{prop: B}. Then clearly they satisfy
	\[
A_i=\hat{A}_i,\quad B_i=\hat{B}_i,\quad C_i=\hat{C}_i,\quad i=1,\ldots,2n.
	\]
	By Lemma \ref{lem: B for bergman}, $X_w(z)$ and $Y_w(z)$ satisfy Hypotheses B at $t$. Thus by Proposition \ref{prop: B}, we verify
	\begin{flalign*}
&[\BTt_{f_1},\BTt_{f_2},\ldots,\BTt_{f_{2n}}]-[\BTtpt_{f_1},\BTtpt_{f_2},\ldots,\BTtpt_{f_{2n}}]\\
=&[C_1, C_2,\ldots, C_{2n}]-[A_1, A_2,\ldots, A_{2n}]\\
=&[\hat{C}_1, \hat{C}_2,\ldots, \hat{C}_{2n}]-[\hat{A}_1, \hat{A}_2, \ldots, \hat{A}_{2n}]
	\end{flalign*}
	is a trace class operator with zero trace. This completes the proof of Lemma \ref{lem: AS sum trace t-tp}.
\end{proof}

Next, we prove Lemma \ref{lem: AS sum trace t-tp hardy}.

	\begin{lem}\label{lem: XYZ kernel estimates}
	The integral kernels $X^{(-1)}_w(z), Y^{(-1)}_w(z)$ and $Z^{(-1)}_w(z)$ satisfy the following estimates.
	\begin{enumerate}
		\item $|X^{(-1)}_w(z)|\lesssim |w|^{-2n+1/2}|K^{(0)}_w(z)|$;
		\item $|Y^{(-1)}_w(z)|\lesssim |w|^{-2n+1/2}|K^{(0)}_w(z)|$;
		\item $|Z^{(-1)}_w(z)|\lesssim|w|^{-2n-1/2}|K_w(z)|$.
	\end{enumerate}
	In particular, $X^{(-1)}_w(z), Y^{(-1)}_w(z)$ satisfy Hypotheses B at $t=0$.
\end{lem}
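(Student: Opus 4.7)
The plan is to prove each pointwise bound by (i) computing the building blocks $\bar R_w^k K_w(z)$ and $\mathcal{G}_n^{(0)}\phi(s)$ explicitly, and then (ii) splitting the ball into the regions $\{|w|^2\le 1/2\}$ and $\{|w|^2\ge 1/2\}$, using a crude majorant in the former and a cancellation-based rewrite in the latter. From $\bar\partial_{w_i}(1-\la z,w\ra)^{-m}=m z_i(1-\la z,w\ra)^{-m-1}$ one gets by induction $\bar R_w K_w^{(m)}(z)=(m+n+1)\la z,w\ra K_w^{(m+1)}(z)$, whence $\bar R_w K_w(z)=n\la z,w\ra K_w^{(0)}(z)$ and $\bar R_w^2 K_w(z)=n\la z,w\ra K_w^{(0)}(z)+n(n+1)\la z,w\ra^2 K_w^{(1)}(z)$. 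For $\phi(s)=s^{-n}$ the integration is explicit: $g(s):=\mathcal{G}_n^{(0)}\phi(s)=\frac{-\ln s}{s^n(1-s)}$. Two standard asymptotics are all I need: (a) since $s^{\delta}(-\ln s)$ is bounded near $s=0$ for every $\delta>0$, $g(s)\lesssim s^{-n-\delta}$ there; (b) L'H\^opital (equivalently, Taylor expansion of $-\ln s$ about $s=1$) gives $g(s)\to 1$ and $|g(s)-1|\lesssim 1-s$ as $s\to 1^-$.

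With these preliminaries, items (2) and (3) are essentially formal. Writing $Y^{(-1)}_w(z)=\tfrac{2}{n+1}g(|w|^2)\la z,w\ra K_w^{(0)}(z)$ and $Z^{(-1)}_w(z)=\tfrac{1}{n(n+1)}g(|w|^2)K_w(z)$, I reduce (2) to $g(|w|^2)|w|\lesssim |w|^{-2n+1/2}$ and (3) to $g(|w|^2)\lesssim |w|^{-2n-1/2}$ on $\bn\setminus\{0\}$. On $\{|w|^2\le 1/2\}$ I apply (a) with $\delta=1/2$; on $\{|w|^2\ge 1/2\}$ the function $g(|w|^2)$ is bounded by (b), and the right-hand sides $|w|^{-2n\pm 1/2}$ are themselves bounded below by a positive constant on that region.

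The genuine obstacle is (1): a naive estimate of $X^{(-1)}_w(z)=\tfrac{1}{n(n+1)}g(|w|^2)\bar R_w^2 K_w(z)-K_w^{(1)}(z)$ by the triangle inequality loses all control near $|w|=1$, because the factor $|K_w^{(1)}(z)|$ can blow up like $|1-\la z,w\ra|^{-(n+2)}$ and is not dominated by $|K_w^{(0)}(z)|$ alone. To fix this I would treat the two regions separately. On $\{|w|^2\le 1/2\}$, since $|\la z,w\ra|\le|w|$, one has $|\bar R_w^2 K_w(z)|\lesssim |w|$ with $|K_w^{(0)}(z)|,|K_w^{(1)}(z)|\approx 1$, and the bound $g(|w|^2)|w|\lesssim |w|^{-2n+1/2}$ from (a) suffices. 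On $\{|w|^2\ge 1/2\}$ I exploit the algebraic identities $\la z,w\ra^2-1=-(1-\la z,w\ra)(1+\la z,w\ra)$ and $(1-\la z,w\ra)K_w^{(1)}(z)=K_w^{(0)}(z)$ to rewrite
\[
X^{(-1)}_w(z)=-g(|w|^2)\Bigl[1+\tfrac{n}{n+1}\la z,w\ra\Bigr]K_w^{(0)}(z)+\bigl(g(|w|^2)-1\bigr)K_w^{(1)}(z).
\]
The first summand is $\lesssim |K_w^{(0)}(z)|$ by boundedness of $g$, and the second is $\lesssim (1-|w|^2)|K_w^{(1)}(z)|$ by (b), which is in turn $\lesssim |K_w^{(0)}(z)|$ by Lemma~\ref{lem: Mobius basics}(7). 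Since $|w|^{-2n+1/2}$ is bounded below on $\{|w|^2\ge 1/2\}$, this yields (1). The final assertion that $X^{(-1)}_w(z)$ and $Y^{(-1)}_w(z)$ satisfy Hypotheses B at $t=0$ is then immediate with $\epsilon=1/2$ in the former and $\epsilon=3/2$ in the latter, since $|w|^{-2n+1/2}=|w|^{-2n-1+3/2}$.
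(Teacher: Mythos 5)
Your proof is correct and the final bounds all check out, but you take a genuinely different route from the paper. The paper does not split into regions: it decomposes $X^{(-1)}_w(z)$ globally as
\[
X^{(-1)}_w(z)=\frac{1}{n(n+1)}\bigl(\mathcal{G}_n^{(0)}\phi(|w|^2)-1\bigr)\bar{R}^2_w K_w(z)+\Bigl[\tfrac{1}{n(n+1)}\bar{R}^2_w K_w(z)-K_w^{(1)}(z)\Bigr],
\]
then uses the \emph{global} estimates $|\mathcal{G}_n^{(0)}\phi(s)-1|\lesssim s^{-n-1/4}(1-s)$ and $|\bar{R}_w^2K_w(z)|\lesssim|w|\,|K_w^{(1)}(z)|$ on the first summand, and the algebraic identity $\bar R_w^2K_w(z)=n(n+1)K_w^{(1)}(z)-n(2n+1)K_w^{(0)}(z)+n^2K_w(z)$ to bound the second summand directly by $|K_w^{(0)}(z)|$. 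Your region-$\{|w|^2\ge 1/2\}$ rewrite
\[
X^{(-1)}_w(z)=-g(|w|^2)\Bigl[1+\tfrac{n}{n+1}\la z,w\ra\Bigr]K_w^{(0)}(z)+\bigl(g(|w|^2)-1\bigr)K_w^{(1)}(z)
\]
is algebraically equivalent to the same fact, and the cancellation of $K_w^{(1)}$ is the shared idea. The paper's single decomposition makes the case-split unnecessary: their two-sided bound on $\mathcal{G}_n^{(0)}\phi-1$ already absorbs both the $s\to 0$ and $s\to 1$ behaviour, which is slightly cleaner. Your approach buys transparency about which mechanism dominates where (crude size near the origin, cancellation near the sphere) at the cost of a longer argument. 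A few small presentational points worth tightening: (i) Hypotheses B asks for a single $\epsilon$ that works for both $X$ and $Y$; you quote $\epsilon=1/2$ for one and $\epsilon=3/2$ for the other, when the easier observation is that $\epsilon=1/2$ serves for both since $|w|^{-2n+1/2}\le|w|^{-2n-1/2}$ on $\bn$. (ii) Hypotheses B also requires holomorphy in $z$, which you should at least note is immediate from the formulas. (iii) In region 1 your phrase ``the bound $g(|w|^2)|w|\lesssim|w|^{-2n+1/2}$ suffices'' elides that the $-K_w^{(1)}(z)$ summand, of size $\approx 1$, must also be compared to $|w|^{-2n+1/2}\gtrsim 1$; you clearly know this, but it should be said. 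Incidentally, your claim $g(s)\to 1$ and $|g(s)-1|\lesssim 1-s$ is right, while the paper's displayed L'H\^opital computation of $\lim_{s\to 1^-}(\mathcal{G}_n^{(0)}\phi(s)-1)/(1-s)$ drops the $s^n$ factors and reports $\frac12$ where the correct value is $n+\frac12$; the lemma's conclusion is unaffected, since all that is used is finiteness of the limit.
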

\begin{proof}
	We compute the following limit by L'Hospital's rule,
	\[
	\lim_{s\to1^-}\frac{\mathcal{G}_n^{(0)}\phi(s)-1}{1-s}=\lim_{s\to1^-}\frac{\int_s^1r^{-1}\intd r-(1-s)}{(1-s)^2}=\lim_{s\to1^-}\frac{-s^{-1}+1}{-2(1-s)}=\frac{1}{2}.
	\]
	Also by \cite[Lemma 8.2]{TWZ:semicommutator} with $a=n+\frac{1}{4}$, $\mathcal{G}_n^{(0)}\phi(s)\lesssim s^{-n-\frac{1}{4}}$. Therefore we obtain the estimates
	\begin{equation}\label{eqn: temp Gn0phi estimate}
		\bigg|\mathcal{G}_n^{(0)}\phi(s)-1\bigg|\lesssim s^{-n-\frac{1}{4}}(1-s),\quad\big|\mathcal{G}_n^{(0)}\phi(s)\big|\lesssim s^{-n-\frac{1}{4}}.
	\end{equation}
	Also, direct computation gives the following identities
	\[
	\bar{R}_wK_w(z)=\frac{n\la z,w\ra}{(1-\la z,w\ra)^{n+1}}=nK^{(0)}_w(z)-nK_w(z),
	\]
	\[
	\bar{R}^2_wK_w(z)=\frac{n(n+1)\la z,w\ra}{(1-\la z,w\ra)^{n+2}}-\frac{n^2\la z,w\ra}{(1-\la z,w\ra)^{}}=n(n+1)K_w^{(1)}(z)-n(2n+1)K_w^{(0)}(z)+n^2K_w(z).
	\]
	So we have the following estimates, 
	\begin{equation}\label{eqn: temp R^2 Kwz-Kwz estimate}
		\bigg|\bar{R}_w^2K_w(z)-n(n+1)K_w^{(1)}(z)\bigg|\lesssim|K_w^{(0)}(z)|, 
	\end{equation}
	and
	\begin{equation}\label{eqn: temp R R^2 Kwz estimates}
		|\bar{R}_w^2K_w(z)|\lesssim|w||K^{(1)}_w(z)|,\quad |\bar{R}_wK_w(z)|\lesssim|w||K_w^{(0)}(z)|.
	\end{equation}
	By \eqref{eqn: temp Gn0phi estimate}, \eqref{eqn: temp R^2 Kwz-Kwz estimate} and \eqref{eqn: temp R R^2 Kwz estimates}, we estimate $|X^{(-1)}_w(z)|, |Y^{(-1)}_w(z)|, |Z^{(-1)}_w(z)|$ as follows,
	\begin{flalign*}
		|X^{(-1)}_w(z)|\leq&\frac{1}{n(n+1)}\bigg|\big(\mathcal{G}_n^{(0)}\phi(|w|^2)-1\big)\bar{R}^2_wK_w(z)\bigg|+\bigg|\frac{1}{n(n+1)}\bar{R}^2_wK_w(z)-K_w^{(1)}(z)\bigg|\\
		\lesssim&|w|^{-2n-\frac{1}{2}}(1-|w|^2)|w||K_w^{(1)}(z)|+|K_w^{(0)}(z)|\\
		\lesssim&|w|^{-2n+\frac{1}{2}}|K_w^{(0)}(z)|,
	\end{flalign*}
	\begin{flalign*}
		|Y^{(-1)}_w(z)|\lesssim&|w|^{-2n-\frac{1}{2}}|w||K_w^{(0)}(z)|=|w|^{-2n+\frac{1}{2}}|K_w^{(0)}(z)|,
	\end{flalign*}
	and
	\begin{flalign*}
		|Z^{(-1)}_w(z)|\lesssim&|w|^{-2n-\frac{1}{2}}|K_w(z)|.
	\end{flalign*}
	This completes the proof of Lemma \ref{lem: XYZ kernel estimates}.
\end{proof}

By Lemma \ref{lem: XYZ kernel estimates} and Lemma \ref{lem: B X Y}, for $f\in\mathscr{C}^2(\overline{\bn})$, the integral formulas of $X^{(-1)}_f$ and $Y^{(-1)}_{\bar{R}f}$ define bounded operators on $L_{a,0}^2(\bn)$. Denote these operators $X^{(-1,0)}_f$ and $Y^{(-1,0)}_{\bar{R}f}$.

\begin{lem}\label{lem: Z -1 0}
Suppose $f, g\in\mathscr{C}^2(\overline{\bn})$. Then the integral formula of $Z^{(-1)}_{\bar{R}^2f}$ defines a bounded operator on $L_{a,0}^2(\bn)$ that belongs to $\mathcal{S}^p$ for any $p>\frac{2n}{3}$. Denote this operator to be $Z^{(-1,0)}_{\bar{R}^2f}$.
\end{lem}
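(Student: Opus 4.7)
The plan is to view $T := Z^{(-1,0)}_{\bar{R}^2 f}$ as the integral operator
\[
Th(z) = \int_{\bn} h(w)\, \bar{R}^2 f(w)\, Z^{(-1)}_w(z)\,\intd\lambda_1(w),
\]
and exploit the estimate $|\bar{R}^2 f(w)| \lesssim |w|$ (coming from $f \in \mathscr{C}^2(\overline{\bn})$) together with Lemma \ref{lem: XYZ kernel estimates} to deduce
\[
|\bar{R}^2 f(w) Z^{(-1)}_w(z)| \lesssim \frac{|w|^{-2n+1/2}}{|1-\la z, w\ra|^n}.
\]
The singularity at $w=0$ is the obstruction, so I will split $T = T_{\mathrm{in}} + T_{\mathrm{out}}$ via a smooth radial cutoff $\chi \in \mathscr{C}^\infty_c(\bn)$ with $\chi \equiv 1$ on $|w| \leq 1/4$ and $\chi \equiv 0$ on $|w| \geq 1/2$, with symbols $\chi\bar{R}^2 f$ and $(1-\chi)\bar{R}^2 f$ respectively.

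For $T_{\mathrm{out}}$, the radial factor $\mathcal{G}_n^{(0)}\phi(|w|^2) = -\ln|w|^2/(|w|^{2n}(1-|w|^2))$ stays bounded on the support of $1-\chi$ (converging to $1$ as $|w|\to 1$ by L'Hospital), so $|(1-\chi(w))\bar{R}^2 f(w) Z^{(-1)}_w(z)| \lesssim |K_w(z)|$. Rewriting $\intd\lambda_1 = c(1-|w|^2)\intd\lambda_0$ produces kernel density $\lesssim (1-|w|^2)/|1-\la z,w\ra|^n$ with respect to $\intd\lambda_0$. The key step is the factorization
\[
T_{\mathrm{out}}:\ L^2_{a,0}(\bn) \xrightarrow{E_{0,s}} L^2_{a,s}(\bn) \xrightarrow{\tilde{T}_{\mathrm{out}}} L^2_{a,0}(\bn),
\]
with $\tilde{T}_{\mathrm{out}}$ defined by the same integral formula. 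A Schur test with weights $p(w) = (1-|w|^2)^x$ and $q(z) = (1-|z|^2)^y$, together with the Rudin-Forelli estimates of Lemma \ref{lem: Rudin Forelli generalizations}, shows $\tilde{T}_{\mathrm{out}}$ is bounded whenever $-2 < x \leq 1-s$ and $-1 < y \leq 0$; these constraints are jointly satisfiable precisely when $s < 3$. Since Lemma \ref{lem: Embedding Schatten membership} provides $E_{0,s} \in \mathcal{S}^p$ for $p > 2n/s$, letting $s \to 3^-$ yields $T_{\mathrm{out}} \in \mathcal{S}^p$ for every $p > 2n/3$.

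For $T_{\mathrm{in}}$, the kernel $k(z,w) := \chi(w)\bar{R}^2 f(w) Z^{(-1)}_w(z)$ is supported where $|w|\leq 1/2$, and there $|K_w(z)| \leq 2^n$ uniformly in $z\in\bn$ together with $|\mathcal{G}_n^{(0)}\phi(|w|^2)| \lesssim |w|^{-2n}|\log|w||$. Writing $h(w) = \la h, K^{(0)}_w\ra_{L^2_{a,0}}$ realizes $T_{\mathrm{in}}$ as a Bochner integral of rank-one operators, and the rank-one identity $\|u\otimes v\|_{\mathcal{S}^1} = \|u\|\|v\|$ gives
\[
\|T_{\mathrm{in}}\|_{\mathcal{S}^1} \leq \int_{|w|\leq 1/2} \|k(\cdot,w)\|_{L^2_{a,0}}\|K^{(0)}_w\|_{L^2_{a,0}}\,\intd\lambda_1(w) \lesssim \int_0^{1/2}|\log r|\,\intd r < \infty,
\]
so $T_{\mathrm{in}}$ is trace class. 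For $n \geq 2$ we have $2n/3 \geq 4/3 > 1$ and $\mathcal{S}^1 \subset \mathcal{S}^p$ for every $p > 2n/3$, completing the proof. The main obstacle is the case $n=1$, where $2n/3 = 2/3 < 1$ so trace-class inclusion does not suffice; here I would supplement the argument with a direct computation of the matrix entries of $T_{\mathrm{in}}$ in the orthonormal monomial basis, where the support condition $|w|\leq 1/2$ forces geometric decay of order $2^{-|\alpha|-|\beta|}$ (up to polynomial correction), yielding $T_{\mathrm{in}} \in \mathcal{S}^p$ for every $p > 0$.
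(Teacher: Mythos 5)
Your proof is correct and follows the paper's core strategy: factor through the embedding $E_{0,s}$ with $s \to 3^-$, show the remaining integral operator is bounded via a Schur test, and invoke Lemma \ref{lem: Embedding Schatten membership} to get $\mathcal{S}^p$ membership for $p > 2n/3$. What you add --- and it is a genuine completion rather than a digression --- is the explicit cutoff near $w=0$. The paper's proof cites Lemma \ref{lem: bounded operator using Schur's test} directly, but that lemma's hypothesis requires a kernel bound of the form $|\varphi_z(w)|^{2a}(1-|\varphi_z(w)|^2)^b/|1-\la z,w\ra|^{n+1+t-c}$, whereas the kernel $\bar{R}^2 f(w)Z^{(-1)}_w(z)$ carries a factor of $|w|^{-2n+1/2}$ which diverges as $w\to 0$ even though $|\varphi_z(w)|\to|z|$ stays bounded away from zero --- so the lemma does not apply verbatim. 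The paper is implicitly reusing the ``support away from the origin'' reduction spelled out in Lemmas \ref{lem: B X Y} and \ref{lem: X Y Z -1}; your $T_{\mathrm{out}}$ argument recovers the paper's Schur-test step once the origin is excised, and your Schur weight bookkeeping ($-2 < x \leq 1-s$, $-1 < y \leq 0$, satisfiable iff $s<3$) is correct. Your $T_{\mathrm{in}}$ argument (trace class via the rank-one Bochner integral, supplemented by geometric decay of matrix entries for $n=1$, where $2n/3 < 1$ and $\mathcal{S}^1 \subset \mathcal{S}^p$ fails) is sound and covers the case the paper leaves tacit. So: same route as the paper, with the near-origin gap filled in explicitly.
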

\begin{proof}
Split the map as follows.
\[
Z^{(-1,0)}_{\bar{R}^2f}: L_{a,0}^2(\bn)\xrightarrow{E_{0,3-2\epsilon}}L_{a,3-2\epsilon}^2(\bn)\xrightarrow{\hat{Z}^{(-1,0)}_{\bar{R}^2f}}L_a^2(\bn).
\]
Here $\hat{Z}^{(-1,0)}_{\bar{R}^2f}$ is defined by the same integral formula as $Z^{(-1,0)}_{\bar{R}^2f}$, and $\epsilon>0$ is any sufficiently small number. The boundedness of $\hat{Z}^{(-1,0)}_{\bar{R}^2f}$ follows from Lemma \ref{lem: XYZ kernel estimates} and Lemma \ref{lem: bounded operator using Schur's test}. Finally, by Lemma \ref{lem: Embedding Schatten membership}, $E_{0,3-2\epsilon}\in\mathcal{S}^p$ for any $p>\frac{n}{\frac{3}{2}-\epsilon}$. Since $\epsilon>0$ is arbitrary, we have $Z^{(-1,0)}_{\bar{R}^2f}\in\mathcal{S}^p$ for any $p>\frac{2n}{3}$. This completes the proof of Lemma \ref{lem: Z -1 0}.
\end{proof}

By Lemma \ref{lem: Ttf=X+Y+K}, for $f\in\mathscr{C}^2(\overline{\bn})$, the operator
\[
T^{-1,0}_f=T^{(1,0)}_f+X^{(-1,0)}_f+Y^{(-1,0)}_{\bar{R}f}+Z^{(-1,0)}_{\bar{R}^2f}+f(0)E_0
\]
is a well-defined bounded operator on $L_{a,0}^2(\bn)$. It follows from Lemma \ref{lem: XYZ kernel estimates}, Lemma \ref{lem: Z -1 0} and Proposition \ref{prop: B} that the following holds.
\begin{lem}\label{lem: t-tp hardy on bergman}
	Suppose $f_1, f_2,\ldots, f_{2n}\in\mathscr{C}^2(\overline{\bn})$. Then the operator on $L_{a,0}^2(\bn)$,
	\[
	[T_{f_1}^{(-1,0)},T^{(-1,0)}_{f_2},\ldots,T^{(-1,0)}_{f_{2n}}]-[T^{(1,0)}_{f_1},T^{(1,0)}_{f_2},\ldots,T^{(1,0)}_{f_{2n}}]
	\]
	is a trace class operator of zero trace.
\end{lem}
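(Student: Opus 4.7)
The plan is to deduce Lemma \ref{lem: t-tp hardy on bergman} as a direct application of Proposition \ref{prop: B} at $t=0$, after identifying the appropriate integral kernels and compact perturbation terms. The setup of the target decomposition
\[
T^{(-1,0)}_f = T^{(1,0)}_f + X^{(-1,0)}_f + Y^{(-1,0)}_{\bar{R}f} + Z^{(-1,0)}_{\bar{R}^2 f} + f(0)E_0
\]
mirrors the hypothesis ``$\hat{C}_i = \hat{A}_i + \hat{B}_i = \BTtpt_{f_i} + (X_{f_i} + Y_{\bar{R}f_i} + K_i)$'' of Proposition \ref{prop: B} exactly when $t=0$, provided we can absorb both $Z^{(-1,0)}_{\bar{R}^2 f_i}$ and $f_i(0)E_0$ into the operator $K_i$.

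First, I would take $X_w(z) = X^{(-1)}_w(z)$ and $Y_w(z) = Y^{(-1)}_w(z)$ in the statement of Proposition \ref{prop: B}, viewed as integral kernels for operators on $L_{a,0}^2(\bn)$. By Lemma \ref{lem: XYZ kernel estimates}, both of these kernels satisfy Hypotheses B at $t=0$, since $|X^{(-1)}_w(z)|, |Y^{(-1)}_w(z)| \lesssim |w|^{-2n+1/2}|K^{(0)}_w(z)|$, which fits the form required in Hypotheses B with $\epsilon = 1/2$ (using $\BKt_w(z) = K^{(0)}_w(z)$ when $t=0$). The associated operators $X^{(-1,0)}_{f_i}$ and $Y^{(-1,0)}_{\bar{R}f_i}$ are then precisely those of Definition \ref{defn: X Y hardy} viewed on $L_{a,0}^2(\bn)$, and they are bounded by Lemma \ref{lem: B X Y}.

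Next, I would set $K_i = Z^{(-1,0)}_{\bar{R}^2 f_i} + f_i(0) E_0$. Since $E_0$ has rank one, $f_i(0)E_0 \in \mathcal{S}^p$ for every $p>0$. By Lemma \ref{lem: Z -1 0}, $Z^{(-1,0)}_{\bar{R}^2 f_i} \in \mathcal{S}^p$ for every $p > 2n/3$, and in particular we may pick some $p_0$ with $2n/3 < p_0 < n$, giving $K_i \in \mathcal{S}^{p_0}$ with $p_0 < n$. This is exactly the $\mathcal{S}^p$-for-some-$p<n$ condition demanded by Proposition \ref{prop: B}.

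With these identifications, $\hat{A}_i = T^{(1,0)}_{f_i}$ and $\hat{C}_i = T^{(-1,0)}_{f_i}$ by the decomposition of Lemma \ref{lem: T=X+Y+Z+E} restricted to $L_{a,0}^2(\bn)$, so Proposition \ref{prop: B} yields that
\[
[T^{(-1,0)}_{f_1}, T^{(-1,0)}_{f_2}, \ldots, T^{(-1,0)}_{f_{2n}}] - [T^{(1,0)}_{f_1}, T^{(1,0)}_{f_2}, \ldots, T^{(1,0)}_{f_{2n}}]
\]
is trace class with zero trace, as desired. I do not anticipate a substantive obstacle here: the work has been done in advance in Lemmas \ref{lem: XYZ kernel estimates} and \ref{lem: Z -1 0}, whose role is precisely to verify that the ``Hardy-type'' perturbation kernels $X^{(-1)}, Y^{(-1)}$ satisfy Hypotheses B while the extra kernel $Z^{(-1)}$ contributes only a $\mathcal{S}^p$ remainder with $p<n$. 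The only point requiring attention is confirming $2n/3 < n$ (which holds for all $n\geq 1$), so that a legitimate choice of $p_0<n$ exists.
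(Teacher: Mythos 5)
Your proof is correct and follows exactly the same route as the paper: applying Proposition~\ref{prop: B} at $t=0$ with $X^{(-1)}_w$, $Y^{(-1)}_w$ serving as the Hypotheses~B kernels (justified by Lemma~\ref{lem: XYZ kernel estimates}) and $K_i = Z^{(-1,0)}_{\bar{R}^2 f_i} + f_i(0)E_0 \in \mathcal{S}^{p_0}$ for some $p_0 < n$ (justified by Lemma~\ref{lem: Z -1 0} plus the rank-one nature of $E_0$). The paper proves this lemma precisely by the remark that it is a consequence of those two lemmas together with Proposition~\ref{prop: B}, which is what you have reconstructed.
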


\begin{lem}\label{lem: X Y Z -1}
	Suppose $f, g\in\mathscr{C}^2(\overline{\bn})$.
	\begin{enumerate}
		\item The operators $X^{(-1)}_f, Y^{(-1)}_{\bar{R}f}$ on $H^2(\sn)$ belong to $\mathcal{S}^p$ for any $p>n$.
		\item The operator $Z^{(-1)}_{\bar{R}^2f}$ on $H^2(\sn)$ belongs to $\mathcal{S}^p$ for any $p>\frac{n}{2}$.
	\end{enumerate}
\end{lem}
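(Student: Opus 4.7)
The plan is to factor each operator as a composition of a Schatten-class embedding and a bounded operator between weighted spaces, in the spirit of Lemma \ref{lem: B X Y} and Lemma \ref{lem: Z -1 0}. First I would write $f=f_1+f_2$, where $f_1$ has compact support in $\bn$ and $f_2$ vanishes in a neighborhood of the origin. Since $\bar{R}f_1$ and $\bar{R}^2 f_1$ also have compact support in $\bn$, the associated kernels $f_1(w)X^{(-1)}_w(z)$, $\bar{R}f_1(w)Y^{(-1)}_w(z)$, $\bar{R}^2f_1(w)Z^{(-1)}_w(z)$ are uniformly bounded and smooth on $\bn\times\bn$, so the corresponding operators are Hilbert--Schmidt and belong to every Schatten class. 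It remains to treat the $f_2$-parts, for which the singular factor $|w|^{-2n\pm 1/2}$ appearing in Lemma \ref{lem: XYZ kernel estimates} is bounded.

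For $X^{(-1)}_{f_2}$, Lemma \ref{lem: XYZ kernel estimates} yields $|f_2(w)X^{(-1)}_w(z)|\lesssim|1-\la z,w\ra|^{-n-1}$, and the kernel is holomorphic in $z$. I would then factor
\[
X^{(-1)}_{f_2}:H^2(\sn)\xrightarrow{\;E_{-1,2d}\;}L^2_{a,-1+2d}(\bn)\xrightarrow{\;\hat{X}^{(-1)}_{f_2}\;}H^2(\sn),
\]
where $\hat{X}^{(-1)}_{f_2}$ is defined by the same integral formula on the larger domain. Lemma \ref{lem: bounded operator to H2 using Schur's test} with $t=1$ and $c=1$ (so $n+1+t-c=n+1$) shows that $\hat{X}^{(-1)}_{f_2}$ is bounded for every $0<d<1$, while Lemma \ref{lem: Embedding Schatten membership} gives $E_{-1,2d}\in\mathcal{S}^p$ for every $p>\frac{2n}{2d}=n/d$. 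Letting $d\uparrow 1$ yields $X^{(-1)}_{f_2}\in\mathcal{S}^p$ for all $p>n$. The identical argument, with $\bar{R}f_2$ replacing $f_2$, handles $Y^{(-1)}_{\bar{R}f_2}$, proving part (1).

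The sharper exponent in part (2) reflects that $|Z^{(-1)}_w(z)|$ is controlled by $|K_w(z)|=|1-\la z,w\ra|^{-n}$, which is one order less singular at the boundary than $K^{(0)}$. Absorbing a harmless factor $|1-\la z,w\ra|^{-\epsilon}$, Lemma \ref{lem: bounded operator to H2 using Schur's test} applies with $t=1$ and $c=2-\epsilon$ for any small $\epsilon>0$, so the analogous factorisation is bounded for every $0<d<2-\epsilon$, and the embedding $E_{-1,2d}$ lies in $\mathcal{S}^p$ for every $p>n/d$. Letting $\epsilon\downarrow 0$ and $d\uparrow 2$ produces $Z^{(-1)}_{\bar{R}^2 f}\in\mathcal{S}^p$ for all $p>n/2$. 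The only delicate point in the whole argument is that the Schur-test hypothesis $c<t+1$ is strictly satisfied when invoked for the $Z$-kernel; this is exactly the mechanism by which the improved boundary order of $K$ versus $K^{(0)}$ translates into the sharper Schatten exponent $n/2$, and it is the reason one has to pass through the slightly smaller $c=2-\epsilon$ rather than the naive endpoint $c=2$.
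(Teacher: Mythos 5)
Your proof is essentially the paper's own: decompose $f$ into a compactly supported part and a part supported away from the origin, bound the kernels via Lemma \ref{lem: XYZ kernel estimates}, and factor each operator through an embedding $E_{-1,\cdot}$, using Lemma \ref{lem: bounded operator to H2 using Schur's test} for boundedness of the second factor and Lemma \ref{lem: Embedding Schatten membership} for the Schatten membership of the first. You are in fact slightly more careful than the paper in part (1), where the paper writes $E_{-1,2}$ and thus invokes the Schur-test lemma at the endpoint $d=c$, whereas your parametrization $E_{-1,2d}$ with $d\uparrow 1$ stays strictly inside the hypothesis $d<c$; the resulting exponent $p>n$ is the same.
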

\begin{proof}
	If $f$ has compact support in $\bn$, then $X^{(-1)}_f, Y^{(-1)}_{\bar{R}f}, Z^{(-1)}_{\bar{R}^2f}$ belong to the trace class. Since have the decomposition $f=f_1+f_2$, where $f_1$ has compact support in $\bn$, and $f_2$ has support away from the origin, we might as well assume that the support of $f$ does not contain the origin. Then by Lemma \ref{lem: XYZ kernel estimates}, we obtain the following bounds
	\[
	|f(w)X^{(-1)}_w(z)|\lesssim|K_w^{(0)}(z)|,\quad|\bar{R}f(w)Y^{(-1)}_w(z)|\lesssim|K_w^{(0)}(z)|,\quad|\bar{R}^2Z^{(-1)}_w(z)|\lesssim|K_w(z)|.
	\]
	Split the maps as follows.
	\[
	X^{(-1)}_f: H^2(\sn)\xrightarrow{E_{-1,2}}L_{a,1}^2(\bn)\xrightarrow{\hat{X}^{(-1)}_f}H^2(\sn),
	\]
	\[
	Y^{(-1)}_{\bar{R}f}: H^2(\sn)\xrightarrow{E_{-1,2}}L_{a,1}^2(\bn)\xrightarrow{\hat{Y}^{(-1)}_{\bar{R}f}}H^2(\sn),
	\]
	\[
	Z^{(-1)}_{\bar{R}^2f}: H^2(\sn)\xrightarrow{E_{-1,4-2\epsilon}}L_{a,3-2\epsilon}\xrightarrow{\hat{Z}^{(-1)}_{\bar{R}^2f}}H^2(\sn).
	\]
	Here the operators with hats are defined by the same integral formulas as the corresponding operators without hats, and $\epsilon>0$ is any sufficiently small number. The boundedness of the operators with hats follow from the estimates in Lemma \ref{lem: XYZ kernel estimates} and Lemma \ref{lem: bounded operator to H2 using Schur's test}. Finally, by Lemma \ref{lem: Embedding Schatten membership}, the Schatten-class memberships of $X^{(-1)}_f, Y^{(-1)}_{\bar{R}f}$ and $Z^{(-1)}_{\bar{R}^2f}$ follow from those of the embedding operators. This completes the proof of Lemma \ref{lem: X Y Z -1}.
\end{proof}

\begin{rem}\label{rem: T 1 -1}
It follows from Lemma \ref{lem: T=X+Y+Z+E} and Lemma \ref{lem: X Y Z -1} that for any $f\in\mathscr{C}^2(\overline{\bn})$, the operator $T^{(1,-1)}_f=T_f-X^{(-1)}_f-Y^{(-1)}_{\bar{R}f}-Z^{(-1)}_{\bar{R}^2f}-f(0)E_0$ is a well-defined bounded operator on $H^2(\sn)$.
\end{rem}

\begin{lem}\label{lem: X Y commutators for hardy}
	Suppose $f, g\in\mathscr{C}^2(\overline{\bn})$. Then the commutators
	\[
	[X^{(-1)}_f, T^{(1,-1)}_g],\quad[Y^{(-1)}_{\bar{R}f}, T^{(1,-1)}_f]
	\]
	belong to $\mathcal{S}^p$ for any $p>\frac{2n}{3}$.
\end{lem}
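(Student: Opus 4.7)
The plan is to reduce the commutator to a form where Hölder inequalities and the Schatten memberships already established in Lemma \ref{lem: X Y Z -1} handle most pieces, leaving a single hard commutator with the Hardy-space Toeplitz operator $T_g$ that has to be estimated at the kernel level. Invoking the representation
\[
T^{(1,-1)}_g = T_g - X^{(-1)}_g - Y^{(-1)}_{\bar{R}g} - Z^{(-1)}_{\bar{R}^2 g} - g(0)E_0
\]
from Lemma \ref{lem: T=X+Y+Z+E}, I would expand
\[
[X^{(-1)}_f,T^{(1,-1)}_g] = [X^{(-1)}_f,T_g] - [X^{(-1)}_f,X^{(-1)}_g] - [X^{(-1)}_f,Y^{(-1)}_{\bar{R}g}] - [X^{(-1)}_f,Z^{(-1)}_{\bar{R}^2 g}] - g(0)\,[X^{(-1)}_f,E_0].
\]
By Lemma \ref{lem: X Y Z -1}, $X^{(-1)}_g, Y^{(-1)}_{\bar{R}g}\in\mathcal{S}^p$ for $p>n$, while $Z^{(-1)}_{\bar{R}^2 g}\in\mathcal{S}^p$ for $p>n/2$. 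Combined with $X^{(-1)}_f\in\mathcal{S}^p$ ($p>n$) and Lemma \ref{lem: holder inequality for operators}, the four trailing commutators land in $\mathcal{S}^p$ for $p>n/3$, and the $E_0$ commutator is of rank $\leq 2$, hence trace class. All of these sit comfortably inside $\mathcal{S}^p$ for $p>2n/3$.

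The crux is therefore $[X^{(-1)}_f,T_g]$ on $H^2(\sn)$. Writing $T_g=PM_gP$ with $P$ the Szegő projection and using $P|_{\mathrm{range}(X^{(-1)}_f)}=\mathrm{Id}$, a short computation mirroring the one used to derive the integral kernel of $X^{(-1)}_f T_g-T_g X^{(-1)}_f$ yields, after inserting the reproducing formula $h(w)=\int_\sn h(\xi)K_\xi(w)\,d\sigma(\xi)/\sigma_{2n-1}$,
\[
[X^{(-1)}_f,T_g]h(z) = -\int_\sn h(\xi)\Bigl[\int_{\bn} f(w)\,K_\xi(w)\,P\bigl((g-g(\xi))\cdot X^{(-1)}_w\bigr)(z)\,d\lambda_1(w)\Bigr]\frac{d\sigma(\xi)}{\sigma_{2n-1}}.
\]
The point is that the factor $g(\eta)-g(\xi)$ appearing under the projection $P$ is controlled, for Lipschitz $g$, by $|\eta-\xi|\lesssim|1-\langle\eta,\xi\rangle|^{1/2}$ via the metric in Lemma \ref{lem: d metric}. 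This is exactly the half-power gain that upgrades the Schur exponent in Corollary \ref{cor: commutator schatten membership with phi}, but applied now to the integral-kernel structure of $X^{(-1)}_f$ instead of a $\phi(|\varphi_z(w)|^2)F(z,w)$ operator.

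With this Lipschitz improvement in hand, I would estimate the commutator kernel by combining the bound $|X^{(-1)}_w(z)|\lesssim|w|^{-2n+1/2}|K^{(0)}_w(z)|$ from Lemma \ref{lem: XYZ kernel estimates} with Rudin-Forelli style integrals (Lemma \ref{lem: Rudin Forelli generalizations}), then factor through an appropriate embedding $E_{-1,c}$ as in the proof of Lemma \ref{lem: X Y Z -1} to convert the half-power kernel gain into a Schatten gain. Tracking the exponents, the $|w|^{1/2}$ decay in the $X^{(-1)}$-kernel gives the same factor that places $X^{(-1)}_f$ in $\mathcal{S}^p$ for $p>n$, and the additional $|1-\langle\eta,\xi\rangle|^{1/2}$ supplies exactly one more half-power, producing $\mathcal{S}^p$ for $p>n/(1+\tfrac{1}{2})=2n/3$. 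The argument for $[Y^{(-1)}_{\bar Rf},T^{(1,-1)}_g]$ is verbatim, since by Lemma \ref{lem: XYZ kernel estimates} the kernel of $Y^{(-1)}_{\bar Rf}$ obeys the identical bound $|Y^{(-1)}_w(z)|\lesssim|w|^{-2n+1/2}|K^{(0)}_w(z)|$.

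The main obstacle is the kernel analysis in the last step: the kernels of $X^{(-1)}_f$ and $Y^{(-1)}_{\bar Rf}$ are not of the clean $\phi(|\varphi_z(w)|^2)F(z,w)$ form to which Corollary \ref{cor: commutator schatten membership with phi} applies, so the Schur-weight computation must be redone by hand, keeping track simultaneously of integration against $d\sigma$ on the boundary (producing the Lipschitz factor) and $d\lambda_1$ in the interior (where the $X^{(-1)}$ kernel lives). Making the exponents balance to give precisely $p>2n/3$, rather than something weaker, is the delicate part.
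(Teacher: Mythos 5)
Your reduction via Lemma \ref{lem: T=X+Y+Z+E} is a genuinely different decomposition from the paper's, and it is correct as far as it goes: writing $T^{(1,-1)}_g = T_g - X^{(-1)}_g - Y^{(-1)}_{\bar Rg} - Z^{(-1)}_{\bar R^2g} - g(0)E_0$, the four trailing commutators are indeed handled by H\"older with the memberships from Lemma \ref{lem: X Y Z -1} (small quibble: $[X^{(-1)}_f,X^{(-1)}_g]$ only lands in $\mathcal S^p$ for $p>n/2$, not $p>n/3$ as you state, but $n/2<2n/3$ so the conclusion survives). The problem is that this reduction does not make the remaining piece easier, and you have left that piece unproved.

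The paper works directly with $T^{(1,-1)}_g$ and avoids your crux altogether. Its trick is to add and subtract the auxiliary operator $X^{(-1)}_{gf}$ (the operator with symbol $gf$), so that both $T^{(1,-1)}_g X^{(-1)}_f - X^{(-1)}_{gf}$ and $X^{(-1)}_f T^{(1,-1)}_g - X^{(-1)}_{gf}$ become explicit integral operators on $\bn\times\bn$ whose kernels carry the Lipschitz factor $g(z)-g(w)$ and, crucially, involve only the Bergman kernel $K^{(1)}_w(z)$ and the measure $\lambda_1$. Those kernels feed cleanly into the Schur-test/embedding machinery of Lemmas \ref{lem: bounded operator using Schur's test}, \ref{lem: bounded operator to H2 using Schur's test}, \ref{lem: Embedding Schatten membership}. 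Your route instead lands on $[X^{(-1)}_f,T_g]$ where $T_g$ is the Hardy-space Toeplitz operator, so the integral kernel now mixes a boundary integral against $d\sigma$ with the Szeg\H{o} kernel $K_\eta(\xi)=(1-\la\xi,\eta\ra)^{-n}$ and an interior integral against $d\lambda_1$; the singularity exponents, the weight functions in the Schur test, and the choice of embedding $E_{-1,c}$ all have to be recomputed from scratch for this mixed setting. You acknowledge this is "the delicate part" and "the main obstacle," but it is precisely the substance of the lemma; the formula you write down, with $P\bigl((g-g(\xi))\cdot X^{(-1)}_w\bigr)$ under the Szeg\H{o} projection, is not an explicit kernel one can bound and plug into a Schur test. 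To close the gap you would need to apply the same add-and-subtract-$X^{(-1)}_{gf}$ device to $T_g X^{(-1)}_f$ and $X^{(-1)}_f T_g$ separately, write out the resulting integral operators, estimate the kernels (using that $g(\eta)-g(w)\lesssim|1-\la\eta,w\ra|^{1/2}$ and $|X^{(-1)}_w(\xi)|\lesssim|1-\la\xi,w\ra|^{-(n+1)}$ away from the origin), and then chase the Schur weights and embedding exponents to land exactly at $p>2n/3$ --- in other words, redo, with extra bookkeeping caused by the sphere integral, the very argument the paper carries out more directly for $T^{(1,-1)}_g$. As it stands, the essential estimate is missing.
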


\begin{proof}
	As in the proof of Lemma \ref{lem: X Y Z -1}, we may assume the support of $f$ is away from the origin. For any $h\in H^2(\sn),$ since $X^{(-1)}_w(z), Y^{(-1)}_w(z)$ are holomorphic in $z$, it is easy to verify the following formulas,
	\[
	\bigg(T^{(1,-1)}_gX^{(-1)}_f-X^{(-1)}_{gf}\bigg)h(\xi)=\int_{\bn^2}\big[g(z)-g(w)\big]f(w)h(w)K^{(1)}_z(\xi)X^{(-1)}_w(z)\intd\lambda_{1}(w)\intd\lambda_{1}(z),
	\]
	\[
	\bigg(X^{(-1)}_{f}T^{(1,-1)}_g-X^{(-1)}_{gf}\bigg)h(\xi)=\int_{\bn^2}f(z)\big[g(w)-g(z)\big]h(w)X^{(-1)}_z(\xi)K^{(1)}_w(z)\intd\lambda_{1}(w)\intd\lambda_{1}(z),
	\]
	and similarly,
	\[
	\bigg(T^{(1,-1)}_gY^{(-1)}_{\bar{R}f}-Y^{(-1)}_{g\bar{R}f}\bigg)h(\xi)=\int_{\bn^2}\big[g(z)-g(w)\big]\bar{R}f(w)h(w)K^{(1)}_z(\xi)Y^{(-1)}_w(z)\intd\lambda_{1}(w)\intd\lambda_{1}(z),
	\]
	\[
	\bigg(Y^{(-1)}_{\bar{R}f}T^{(1,-1)}_g-Y^{(-1)}_{g\bar{R}f}\bigg)h(\xi)=\int_{\bn^2}\bar{R}f(z)\big[g(w)-g(z)\big]h(w)Y^{(-1)}_z(\xi)K^{(1)}_w(z)\intd\lambda_{1}(w)\intd\lambda_{1}(z).
	\]	
	 If we denote $G_w(z)$ to be either $f(w)X^{(-1)}_w(z)$ or $\bar{R}f(w)Y^{(-1)}_w(z)$, then set
	\[
	\bigg(T^{(1,-1)}_gX^{(-1)}_f-X^{(-1)}_{gf}\bigg), \text{ or }\bigg(T^{(1,-1)}_gY^{(-1)}_{\bar{R}f}-Y^{(-1)}_{g\bar{R}f}\bigg)=H,
	\]
	\[
	\bigg(X^{(-1)}_{f}T^{(1,-1)}_g-X^{(-1)}_{gf}\bigg), \text{ or }\bigg(Y^{(-1)}_{\bar{R}f}T^{(1,-1)}_g-Y^{(-1)}_{g\bar{R}f}\bigg)=WS.
	\]
	Here
	\[
	Sh(z)=\int_{\bn}\big[g(w)-g(z)\big]h(w)K^{(1)}_w(z)\intd\lambda_{1}(w), \ 
	Wh(\xi)=\int_{\bn}h(z)G_z(\xi)\intd\lambda_{1}(z),
	\]
	and
	\[
	Hh(\xi)=\int_{\bn^2}[g(z)-g(w)]h(w)G_w(z)K^{(1)}_z(\xi)\intd\lambda_{1}(w)\intd\lambda_1(z)=\int_{\bn}h(w)H_w(z)\intd\lambda_1(w),
	\]
	where
	\[
	H_w(\xi)=\int_{\bn}[g(z)-g(w)]G_w(z)K^{(1)}_z(\xi)\intd\lambda_{1}(z).
	\]
	By Lemma \ref{lem: XYZ kernel estimates} and since we assume the support of $f$ does not contain the origin, we obtain the following bound
	\[
	|G_w(z)|\lesssim|K^{(0)}_w(z)|.
	\]
	By Lemma \ref{lem: Rudin Forelli generalizations}, we have the estimate
	\[
	|H_w(\xi)|\lesssim\int_{\bn}\frac{1}{|1-\la z,w\ra|^{n+1/2}|1-\la z,\xi\ra|^{n+2}}\intd\lambda_1(\xi)\lesssim\frac{1}{|1-\la w,\xi\ra|^{n+1/2}}.
	\]
	For any $\epsilon>0$ sufficiently small, split the map as follows.
	\[
	H: H^2(\sn)\xrightarrow{E_{-1,3-2\epsilon}}L_{a,2-2\epsilon}\xrightarrow{\hat{H}}H^2(\sn),
	\]
	\[
	WS: H^2(\sn)\xrightarrow{E_{-1, 3-2\epsilon}}L_{a,2-2\epsilon}^2(\bn)\xrightarrow{\hat{S}}L^2(\lambda_{1-2\epsilon})\xrightarrow{\hat{W}}H^2(\sn).
	\]
	By the estimates above, Lemma \ref{lem: bounded operator using Schur's test} and Lemma \ref{lem: bounded operator to H2 using Schur's test}, the operators above with hats are bounded. Thus by Lemma \ref{lem: Embedding Schatten membership}, $H$ and $WS$ belong to $\mathcal{S}^p$ for any $p>\frac{2n}{3}$. Therefore the commutators have the same Schatten-class membership. This completes the proof of Lemma \ref{lem: X Y commutators for hardy}.
\end{proof}

\begin{proof}[{\bf Proof of Lemma \ref{lem: AS sum trace t-tp hardy}}]
	By Lemma \ref{lem: t-tp hardy on bergman}, the operator
	\[
	[T^{(-1,0)}_{f_1},\ldots,T^{(-1,0)}_{f_{2n}}]-[T^{(1,0)}_{f_1},\ldots, T^{(1,0)}_{f_{2n}}]
	\]
	on $L_{a,0}^2(\bn)$ is a trace class operator with zero trace. On the other hand, by Lemma \ref{lem: X Y Z -1}, Lemma \ref{lem: X Y commutators for hardy} and Proposition \ref{prop: A}, the operator
	\[
	[T_{f_1},\ldots, T_{f_{2n}}]-[T^{(1,-1)}_{f_1},\ldots, T^{(1,-1)}_{f_{2n}}]
	\]
	is a trace class operator on $H^2(\sn)$. Clearly, we check the following equation
	\[
	\bigg([T^{(-1,0)}_{f_1},\ldots,T^{(-1,0)}_{f_{2n}}]-[T^{(1,0)}_{f_1},\ldots, T^{(1,0)}_{f_{2n}}]\bigg)\bigg|_{H^2(\sn)}=[T_{f_1},\ldots, T_{f_{2n}}]-[T^{(1,-1)}_{f_1},\ldots, T^{(1,-1)}_{f_{2n}}].
	\]
	Thus by Lemma \ref{lem: trace on two spaces}, we obtain the following equation
	\begin{flalign*}
		&\Tr\bigg([T_{f_1}, T_{f_2},\ldots, T_{f_{2n}}]-[T^{(1,-1)}_{f_1}, T^{(1,-1)}_{f_2},\ldots, T^{(1,-1)}_{f_{2n}}]\bigg)\\
		=&\Tr\bigg([T^{(-1,0)}_{f_1}, T^{(-1,0)}_{f_2},\ldots,T^{(-1,0)}_{f_{2n}}]-[T^{(1,0)}_{f_1}, T^{(1,0)}_{f_2},\ldots, T^{(1,0)}_{f_{2n}}]\bigg)\\
		=&0.
	\end{flalign*}
	This completes the proof of Lemma \ref{lem: AS sum trace t-tp hardy}.
\end{proof}

\subsection{Proof of Proposition \ref{prop: B}}	

\begin{lem}\label{lem: Ttpt commutator}
Suppose $t>-1$ and $f$, $g$ are Lipschitz functions on $\bn$. Then
\[
[\BTtpt_f, \BTtpt_g]\in\mathcal{S}^p,\quad\forall p>n.
\]
\end{lem}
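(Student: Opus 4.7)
The strategy is to reduce the statement for $[\BTtpt_f, \BTtpt_g]$ to the corresponding fact for the standard Toeplitz commutator $[\BTt_f, \BTt_g]$ on $\bert$, via the decomposition $\BTtpt_f = \BTt_f - B_f$ with $B_f := X^{(t)}_f + Y^{(t)}_{\bar{R}f} + f(0)E_0$ supplied by Lemma \ref{lem: Ttf=X+Y+K}, and similarly for $g$. This decomposition is available for $\mathscr{C}^2$ symbols; the Lipschitz case will follow by approximation.

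Assuming first that $f, g \in \mathscr{C}^2(\overline{\bn})$, Lemmas \ref{lem: B for bergman} and \ref{lem: B X Y} together yield $B_f, B_g \in \mathcal{S}^p(\bert)$ for every $p > n$. On the other hand, Corollary \ref{cor: hankel schatten membership} (Hankel operators with Lipschitz symbol lie in $\mathcal{S}^q$ for $q > 2n$) combined with H\"older's inequality (Lemma \ref{lem: holder inequality for operators}), applied to the identities $\BTt_f \BTt_g - \BTt_{fg} = -(H^{(t)}_{\bar{f}})^* H^{(t)}_g$ and its analogue with $f,g$ interchanged, places $[\BTt_f, \BTt_g]$ in $\mathcal{S}^p$ for every $p > n$. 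Expanding bilinearly,
\begin{equation*}
[\BTtpt_f, \BTtpt_g] = [\BTt_f, \BTt_g] - [\BTt_f, B_g] - [B_f, \BTt_g] + [B_f, B_g],
\end{equation*}
the first term is in $\mathcal{S}^p$ by what has just been said, the two mixed commutators are products of a bounded operator with an $\mathcal{S}^p$ operator, and the last term is a product of two $\mathcal{S}^p$ operators, hence lies in $\mathcal{S}^{p/2} \subset \mathcal{S}^p$. Therefore $[\BTtpt_f, \BTtpt_g] \in \mathcal{S}^p$ for all $p > n$ in this smooth case.

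For general Lipschitz $f, g$, I will approximate uniformly by $\mathscr{C}^2$ functions $f_k, g_k$ whose Lipschitz constants remain uniformly bounded (for instance by mollifying on a slightly enlarged ball and restricting). Schur's test gives $\|\BTtpt_u\|_{\mathcal{B}(\bert)} \lesssim \|u\|_\infty$, so $\BTtpt_{f_k} \to \BTtpt_f$ and $\BTtpt_{g_k} \to \BTtpt_g$ in operator norm, and hence $[\BTtpt_{f_k}, \BTtpt_{g_k}] \to [\BTtpt_f, \BTtpt_g]$ in operator norm. Inspecting the proofs of Lemma \ref{lem: B X Y} and Corollary \ref{cor: hankel schatten membership}, the Schatten-$p$ estimates depend only on $\|u\|_\infty$ and $\|\bar R u\|_\infty$, which is controlled by the Lipschitz constant; this yields a uniform bound $\|[\BTtpt_{f_k}, \BTtpt_{g_k}]\|_{\mathcal{S}^p} \leq C$. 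Lower semicontinuity of the Schatten-$p$ norm under operator-norm convergence (Fatou applied to singular values) then gives $[\BTtpt_f, \BTtpt_g] \in \mathcal{S}^p(\bert)$ for every $p > n$. The principal obstacle is precisely this quantitative, Lipschitz-dependent control of the Schatten norm of $B_{f_k}$, which is implicit in the earlier lemmas but must be extracted by careful bookkeeping of the constants.
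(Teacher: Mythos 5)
Your overall strategy is genuinely different from the paper's. The paper gives a direct algebraic computation: writing $\BTtpt_f = P^{(t+1,t)} M_f \BPt$, where $P^{(t+1,t)}$ is the restriction of $P^{(t+1)}$ to $L^2(\lambda_t)$ (an idempotent with $\BPt P^{(t+1,t)} = P^{(t+1,t)}$ and $P^{(t+1,t)}\BPt = \BPt$), a short manipulation gives
\[
[\BTtpt_f,\BTtpt_g]=P^{(t+1,t)}[M_g,P^{(t+1,t)}][M_f,P^{(t+1,t)}]\BPt-P^{(t+1,t)}[M_f,P^{(t+1,t)}][M_g,P^{(t+1,t)}]\BPt,
\]
and then each commutator $[M_u, P^{(t+1,t)}]$ for Lipschitz $u$ lies in $\mathcal{S}^q$ for $q>2n$ by Corollary \ref{cor: commutator schatten membership no phi}, so the product lands in $\mathcal{S}^p$ for $p>n$ by H\"older. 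This works for Lipschitz symbols immediately, with no regularization.

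Your route via $\BTtpt_f=\BTt_f-B_f$ is sound for $f,g\in\mathscr{C}^2(\overline{\bn})$: the bilinear expansion and the invocations of Lemmas \ref{lem: B for bergman}, \ref{lem: B X Y} and Corollary \ref{cor: hankel schatten membership} are all used legitimately, and the decomposition from Lemma \ref{lem: Ttf=X+Y+K} is available and non-circular at that point in the paper. The genuine gap is in your reduction of the Lipschitz case to the $\mathscr{C}^2$ case. You correctly flag that you need a uniform bound $\|B_{f_k}\|_{\mathcal{S}^p}\leq C$ with $C$ controlled only by the Lipschitz constant, but this is not actually extractable from the paper's lemmas as written. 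The proof of Lemma \ref{lem: B X Y} proceeds by splitting $f=f_1+f_2$ with $f_1$ compactly supported near $0$ and $f_2$ supported away from $0$; for the piece $X_{f_1}$ it asserts without quantitative estimate that the operator is in $\mathcal{S}^p$ for every $p$, and the kernel $X^{(t)}_w(z)$ is singular like $|w|^{-2n+1}$ near $w=0$, so the $\mathcal{S}^p$ norm of $X_{f_1}$ is not visibly a function of $\|f_1\|_\infty$ and $\|\bar{R}f_1\|_\infty$ alone. Asserting that the dependence is "implicit in the earlier lemmas" is not a substitute for exhibiting it; without a concrete estimate $\|B_f\|_{\mathcal{S}^p}\lesssim \|f\|_{\mathrm{Lip}}$, the lower-semicontinuity step has nothing to bite on. The paper's factorization sidesteps this entirely, since $[M_u,P^{(t+1,t)}]$ has a kernel bounded by $|u(z)-u(w)|/|1-\la z,w\ra|^{n+1+t}$ and the Lipschitz constant enters linearly and transparently.
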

\begin{proof}
Denote $P^{(t+1,t)}$ to be the restriction of $P^{(t+1)}$ to $L^2(\lambda_t)$. By Lemma \ref{lem: bounded operator using Schur's test} it is easy to see that $P^{(t+1,t)}$ is bounded. Obviously, $P^{(t+1,t)}$ satisfies the following identities.
\[
\BPt P^{(t+1,t)}=P^{(t+1,t)},\quad P^{(t+1,t)}P^{(t)}=P^{(t)},\quad \big(P^{t+1,t}\big)^2=P^{(t+1,t)}.
\]
So we compute the commutator as follows.
\begin{flalign*}
&[\BTtpt_f,\BTtpt_g]\\
=&[P^{(t+1,t)}M_f\BPt, P^{(t+1,t)}M_g\BPt]\\
=&P^{(t+1,t)}M_f P^{(t+1,t)}M_g\BPt-P^{(t+1,t)}M_g P^{(t+1,t)}M_f\BPt\\
=&P^{(t+1,t)}M_g\big(I-P^{(t+1,t)}\big)M_f\BPt-P^{(t+1,t)}M_f\big(I-P^{(t+1,t)}\big)M_g\BPt\\
=&P^{(t+1,t)}[M_g,P^{(t+1,t)}][M_f,P^{t+1,t}]\BPt-P^{(t+1,t)}[M_f,P^{(t+1,t)}][M_g,P^{t+1,t}]\BPt.
\end{flalign*}
By Corollary \ref{cor: commutator schatten membership no phi}, we arrive at the following property, 
\[
[M_g,P^{(t+1,t)}],\quad[M_f,P^{(t+1,t)}]\in\mathcal{S}^p,~\forall p>2n.
\]
The Schatten-$p$ membership of the commutator $[T^{(t+1, t)}_f, T^{(t+1, t)}_g]$ follows from the above property easily. This completes the proof of Lemma \ref{lem: Ttpt commutator}.
\end{proof}

	\begin{lem}\label{lem: B commutator}
		Assume Hypotheses B. Then for any $f, g\in\mathscr{C}^2(\overline{\bn})$, the commutators
		\[
		[X_f, \BTtpt_g] \text{ and }[Y_{\bar{R}f}, \BTtpt_g]
		\]
		belong to $\mathcal{S}^p$ for some $p<n$.
	\end{lem}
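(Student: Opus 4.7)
The strategy is to compute the integral kernel of $[X_f,\BTtpt_g]$ explicitly, extract the half-power gain $|1-\la z,w\ra|^{1/2}$ produced by the Lipschitz regularity of $g$, and invoke the Schatten-class criteria of Section~\ref{sec: schatten criterion} to land the commutator in $\mathcal{S}^p$ for some $p<n$. First I would split $f=f_0+f_\infty$ with $f_0\in\mathscr{C}^2(\overline{\bn})$ supported in a compact subset of $\bn$ and $f_\infty$ vanishing in a neighbourhood of the origin. The pieces $X_{f_0}$ and $Y_{f_0}$ have smooth, compactly supported kernels and are therefore trace class (the same argument as in Lemma~\ref{lem: B X Y}), so the corresponding commutators with $\BTtpt_g$ are themselves trace class, hence in $\mathcal{S}^p$ for every $p\ge 1$. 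It therefore suffices to handle the case where $f$ vanishes near $0$, under which Hypotheses~B sharpen to
\[
|f(w)X_w(z)|\lesssim|\BKt_w(z)|,\qquad |\bar{R}f(w)Y_w(z)|\lesssim|\BKt_w(z)|,
\]
since $|\bar{R}f(w)|\lesssim|w|$ absorbs the extra $|w|^{-1}$ in Hypothesis~B for $Y$.

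Next I would extend $X_f$ to a bounded operator $\tilde{X}_f$ on $L^2(\bn,\lambda_{t+1})$ via the same integral formula (Lemma~\ref{lem: bounded operator using Schur's test}), so that $X_f=\tilde{X}_f|_{\bert}$. Writing $M_g=T^{(t+1)}_g+H^{(t+1)}_g$ on $L^2_{a,t+1}(\bn)$ and exploiting $\BPtp H^{(t+1)}_g=0$, one derives on $\bert$ the algebraic identity
\[
[X_f,\BTtpt_g]=\BPtp[\tilde{X}_f,M_g]\big|_{\bert}-\tilde{X}_f H^{(t+1)}_g\big|_{\bert},
\]
and an analogous identity with $Y_{\bar{R}f}$ in place of $X_f$. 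This separates the analysis into a ``genuine commutator with $M_g$'' plus a Hankel-type residue, each of which is amenable to the machinery of Sections~\ref{sec: operator btw weighted spaces}--\ref{sec: IntByPts}.

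Finally I would estimate the two pieces separately. For $\BPtp[\tilde{X}_f,M_g]$, Corollary~\ref{cor: commutator schatten membership with phi}(1), applied in the $L^2(\lambda_{t+1})$-setting with $c=1$ (from the sharpened kernel bound on $\tilde{X}_f$) and the Lipschitz symbol $g$, places this piece in $\mathcal{S}^p$ for every $p$ above $\max\{2n/3,2n/(t+2)\}$. For $\tilde{X}_f H^{(t+1)}_g$, which is the integral operator with composition kernel
\[
\mathcal{K}_1(z,\xi)=\int_{\bn}f(w)\bigl(g(w)-g(\xi)\bigr)K^{(t+1)}_\xi(w)X_w(z)\,\intd\lambda_{t+1}(w),
\]
the Lipschitz bound $|g(w)-g(\xi)|\lesssim|1-\la w,\xi\ra|^{1/2}$ yields one half-power of smoothing in the integrand; one application of the Rudin--Forelli estimate \eqref{eqn: Rudin-Forelli 2} then performs the $w$-integration and gives $|\mathcal{K}_1(z,\xi)|\lesssim|1-\la z,\xi\ra|^{-(n+1+(t+1)-c')}$ with $c'$ that can be chosen in $(1,3/2)$. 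Lemma~\ref{lem: integral operator Schatten membership} places this piece in the same Schatten range. Choosing any $p\in(2n/3,n)$, which is non-empty for $n\ge 2$ (for $n=1$ the direct one-dimensional analysis of Section~\ref{sec: dim 1} applies), yields the desired $p<n$; the argument for $[Y_{\bar{R}f},\BTtpt_g]$ is identical. The main obstacle is the careful bookkeeping of the three Hilbert-space structures $\bert$, $L^2_{a,t+1}(\bn)$ and $L^2(\lambda_{t+1})$: one must verify that the algebraic decomposition of step two is consistent across these spaces and that the Schatten bounds obtained in the ambient space $L^2(\lambda_{t+1})$ transfer, via the bounded projection $\BPtp$ and the bounded inclusion $\bert\hookrightarrow L^2_{a,t+1}(\bn)$ (whose Schatten properties are quantified in Lemma~\ref{lem: Embedding Schatten membership}), to genuine Schatten bounds on $\bert$.
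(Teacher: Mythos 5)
Your algebraic decomposition
\[
[X_f,\BTtpt_g]=\BPtp[\tilde{X}_f,M_g]\big|_{\bert}-\tilde{X}_f H^{(t+1)}_g\big|_{\bert}
\]
is correct and, after unwinding, identical in content to the paper's split $[\BTtpt_g, X_f]=(\BTtpt_g X_f - X_{gf})-(X_f\BTtpt_g - X_{gf})$ with $X_{gf}=\tilde{X}_f M_g|_{\bert}$; you have found the right decomposition, and your kernel estimates and the target range $p\in(2n/3,n)$ also match the paper. The genuine gap lies in step three. Corollary~\ref{cor: commutator schatten membership with phi} and Lemma~\ref{lem: integral operator Schatten membership}, applied with the ambient weight $t+1$, deliver membership in $\mathcal{S}^p\bigl(L^2(\lambda_{t+1})\bigr)$. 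But the lemma requires membership in $\mathcal{S}^p(\bert)$, where $\bert$ carries the stronger $L^2(\lambda_t)$-norm. A Schatten bound with codomain norm $\|\cdot\|_{L^2(\lambda_{t+1})}$ does \emph{not} upgrade to one with codomain norm $\|\cdot\|_{L^2(\lambda_t)}$ merely because the range happens to lie in $\bert$: singular values measured against a weaker norm on the target can be strictly smaller. You flag this as ``careful bookkeeping,'' but the mechanism you propose — the bounded inclusion $\bert\hookrightarrow L^2_{a,t+1}(\bn)$ and Lemma~\ref{lem: Embedding Schatten membership} — acts on the \emph{domain} side, not the codomain, and so does not repair the mismatch.

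The paper sidesteps this precisely by not passing through the $L^2(\lambda_{t+1})$-criteria at all. It writes the integral kernels for $\BTtpt_g X_f - X_{gf}$ and $X_f\BTtpt_g - X_{gf}$ explicitly, factors each as
\[
\bert\xrightarrow{E_{t,2+\epsilon}}L^2_{a,t+2+\epsilon}(\bn)\xrightarrow{\text{bounded}}L^2(\lambda_t),
\]
and then observes that the final bounded maps ($P^{(t+1)}T$ and $W$ in the paper's notation, checked via Lemma~\ref{lem: bounded operator using Schur's test}) have holomorphic output, so they actually land in $\bert$ with the correct $L^2(\lambda_t)$-norm. Since the first map is in $\mathcal{S}^p$ for $p>n/(1+\epsilon/2)<n$ by Lemma~\ref{lem: Embedding Schatten membership} and the rest are bounded into $\bert$, the composition is a legitimate $\mathcal{S}^p(\bert)$-operator. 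To salvage your proposal you would need to redo the estimation of your two pieces along these lines — explicit factorization from $\bert$, with the final target being $L^2(\lambda_t)$ and not $L^2(\lambda_{t+1})$ — rather than quoting the general $L^2(\lambda_{t+1})$-criteria.
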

	
	\begin{proof}
		As in the proof of Lemma \ref{lem: B X Y}, we may assume the support of $f$ does not contain the origin. For any $h\in\bert,$ since $X_w(z), Y_w(z)$ are holomorphic in $z$, it is easy to verify the following integral expression,
		\[
		\bigg(\BTtpt_gX_f-X_{gf}\bigg)h(\xi)=\int_{\bn^2}\big[g(z)-g(w)\big]f(w)h(w)\BKtp_z(\xi)X_w(z)\intd\lambda_{t+1}(w)\intd\lambda_{t+1}(z),
		\]
		\[
		\bigg(X_{f}\BTtpt_g-X_{gf}\bigg)h(\xi)=\int_{\bn^2}f(z)\big[g(w)-g(z)\big]h(w)X_z(\xi)\BKtp_w(z)\intd\lambda_{t+1}(w)\intd\lambda_{t+1}(z),
		\]
		and similarly,
		\[
		\bigg(\BTtpt_gY_{\bar{R}f}-Y_{g\bar{R}f}\bigg)h(\xi)=\int_{\bn^2}\big[g(z)-g(w)\big]\bar{R}f(w)h(w)\BKtp_z(\xi)Y_w(z)\intd\lambda_{t+1}(w)\intd\lambda_{t+1}(z),
		\]
		\[
		\bigg(Y_{\bar{R}f}\BTtpt_g-Y_{g\bar{R}f}\bigg)h(\xi)=\int_{\bn^2}\bar{R}f(z)\big[g(w)-g(z)\big]h(w)Y_z(\xi)\BKtp_w(z)\intd\lambda_{t+1}(w)\intd\lambda_{t+1}(z).
		\]
		As in the proof of Lemma \ref{lem: B X Y}, let $G_w(z)$ be either $f(w)X_w(z)$ or  $\bar{R}f(w)Y_w(z)$. Write
		\[
		Th(z)=\int_{\bn}[g(z)-g(w)]h(w)G_w(z)\intd\lambda_{t+1}(w),
		\]
		\[
		Sh(z)=\int_{\bn}\big[g(w)-g(z)\big]h(w)\BKtp_w(z)\intd\lambda_{t+1}(w),
		\]
		and
		\[
		Wh(\xi)=\int_{\bn}h(z)G_z(\xi)\intd\lambda_{t+1}(z).
		\]
		As mappings, set
		\[
		\BTtpt_gX_f-X_{gf}, \text{ or } \BTtpt_gY_{\bar{R}f}-Y_{g\bar{R}f} =P^{(t+1)}T,
		\]
		\[
		X_{f}\BTtpt_g-X_{gf}, \text{ or } Y_{\bar{R}f}\BTtpt_g-Y_{g\bar{R}f} =WS.
		\]
		Take $\epsilon>0$ small enough. Split $P^{(t+1)}T$ and $WS$ as the composition of the following operators.
		\[
		P^{(t+1)}T: \bert\xrightarrow{E_{t,2+\epsilon}}L_{a,t+2+\epsilon}^2\xrightarrow{T}L^2(\lambda_t)\xrightarrow{P^{(t+1)}}\bert,
		\]
		\[
		WS: \bert\xrightarrow{E_{t,2+\epsilon}}L_{a,t+2+\epsilon}^2\xrightarrow{S}L^2(\lambda_{t+2})\xrightarrow{W}L^2(\lambda_t).
		\]
		By Lemma \ref{lem: Mobius basics}, we have the following inequalities, 
		\[
		|g(z)-g(w)|\lesssim|z-w|\lesssim|1-\la z,w\ra|^{1/2}.
		\]
		By Lemma \ref{lem: bounded operator using Schur's test}, $T, P^{(t+1)}, S, W$ define bounded operators between the spaces indicated above. Again, by Lemma \ref{lem: Embedding Schatten membership}, $E_{t,2+\epsilon}\in\mathcal{S}^p$ for some $p<n$. Thus altogether, the operators
		\[
		\bigg(\BTtpt_gX_f-X_{gf}\bigg), \bigg(\BTtpt_gY_{\bar{R}f}-Y_{g\bar{R}f}\bigg), \bigg(X_{f}\BTtpt_g-X_{gf}\bigg), \bigg(Y_{\bar{R}f}\BTtpt_g-Y_{g\bar{R}f}\bigg)
		\]
		are in $\mathcal{S}^p$ for some $p<n$. Thus so do the commutators
		\[
		[\BTtpt_g, X_f]=\bigg(\BTtpt_gX_f-X_{gf}\bigg)-\bigg(X_{f}\BTtpt_g-X_{gf}\bigg),
		\]
		and
		\[
		[\BTtpt_g, Y_{\bar{R}f}]=\bigg(\BTtpt_gY_{\bar{R}f}-Y_{g\bar{R}f}\bigg)-\bigg(Y_{\bar{R}f}\BTtpt_g-Y_{g\bar{R}f}\bigg).
		\]
		This completes the proof of Lemma \ref{lem: B commutator}.
	\end{proof}
	
	From Lemma \ref{lem: B X Y}, Lemma \ref{lem: Ttpt commutator} and Lemma \ref{lem: B commutator} it follows that if we set
	\[
	A_i=\BTtpt_{f_i},\quad B_i=X_{f_i}+Y_{\bar{R}f_i}+K_i,\quad i=1, 2,\ldots, 2n,
	\]
	then the operators $\{A_i, B_i\}$ satisfy Hypotheses A defined Subsection \ref{subsec: hypotheses A}. Thus by Proposition \ref{prop: A}, the proof of Proposition \ref{prop: B} reduces to the proof of
	\[
	\Tr\bigg(\sum_{k=1}^{2n}[A_1,\ldots,A_{k-1},B_k,A_{k+1},\ldots,A_{2n}]\bigg)=0.
	\]
	Each $B_k$ splits into the sum of $X_{f_k}$, $Y_{\bar{R}f_k}$ and $K_k$. The part with $K_k$ can be handled by Corollary \ref{cor: 1 B improved} as $K_k$ is assumed to belong to $\mathcal{S}^p$ for some $p<n$. Thus it remains to prove
	\begin{equation}\label{eqn: 6}
	\Tr\bigg(\sum_{k=1}^{2n}[A_1,\ldots,A_{k-1},X_{f_k}+Y_{\bar{R}f_k},A_{k+1},\ldots,A_{2n}]\bigg)=0.
	\end{equation}
	
	As explained in Remark \ref{rem: difference btw Toeplitz and pseudodifferential}, it is hard to handle this trace at operator-theoretic level. Instead, we need to treat them as integral operators. Let us explain the idea of the proof. The operator in \eqref{eqn: 6} is a sum of compositions of $2n$ integral operators. By \cite[Lemma 2.5]{TWZ:semicommutator} and the definitions of $A_i, X_{f_k}, Y_{\bar{R}f_k}$, we can write the trace in \eqref{eqn: 6} into a $(2n+1)$-fold integral of the form
	\[
	\int_{\bn}\bigg[\int_{\bn^{2n}}\BKt_\xi(z_1)G(z_1,z_2,\ldots, z_{2n},\xi)\intd\lambda_{t+1}(z_1)\ldots\intd\lambda_{t+1}(z_{2n})\bigg]\intd\lambda_t(\xi),
	\]
	where $G(z_1,z_2,\ldots, z_{2n},\xi)$ is holomorphic in $\xi$. If the $(2n+1)$-fold integral converges absolutely, then we can apply Fubini's Theorem and get that the above integral is equal to
	\[
	\int_{\bn^{2n}}G(z_1,z_2,\ldots, z_{2n},\xi)\big|_{\xi=z_1}\intd\lambda_{t+1}(z_1)\ldots\intd\lambda_{t+1}(z_{2n}).
	\]
	The anti-symmetrization will then tell us that the above equals zero as $\xi=z_1$. Thus the proof of Proposition \ref{prop: B} reduces  to reorganize the parts in \eqref{eqn: 6} so that each part converges absolutely.
	
	The following lemma helps us recognizing absolutely integrable terms in a multi-fold integral.
	\begin{lem}\label{lem: k kernels absolutely integrable}
		Suppose $k$ is a positive integer and $t>-1$. Then
		\begin{equation}\label{eqn: k kernels absolutely integrable}
			\int_{\bn^k}\bigg|\BKt_{z_1}(z_2)\ldots\BKt_{z_{k-1}}(z_k)\bigg|\intd\lambda_t(z_1)\ldots\intd\lambda_t(z_k)<\infty;
		\end{equation}
		and for any $\epsilon>0$,
		\begin{equation}\label{eqn: k+1 kernels absolutely integrable}
			\int_{\bn^{k+1}}\bigg(\sum_{i,j}|1-\la z_i,z_j\ra|\bigg)^{n+\epsilon}\bigg|\BKt_{z_0}(z_1)\BKt_{z_1}(z_2)\ldots\BKt_{z_{k-1}}(z_k)\BKt_{z_k}(z_0)\bigg|\intd\lambda_t(z_0)\ldots\intd\lambda_t(z_k)<\infty.
		\end{equation}
	\end{lem}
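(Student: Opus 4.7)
For part (1) the natural approach is iterated integration in the order $z_k, z_{k-1}, \ldots, z_1$. At each stage the free variable $z_l$ appears in a single kernel $|\BKt_{z_l}(z_{l-1})|$, so the $z_l$-integral is a standard Rudin--Forelli integral. I would first apply Lemma \ref{lem: Rudin Forelli generalizations} (1) with $c=0$ to the $z_k$-integral, obtaining a $\ln(1/(1-|z_{k-1}|^2))$ bound. Absorbing this via $\ln(1/s) \lesssim_\delta s^{-\delta}$ and then applying Lemma \ref{lem: Rudin Forelli generalizations} (1) with parameter $t-\delta$ and $c=\delta$ at the next step reproduces a factor $(1-|z_{k-2}|^2)^{-\delta}$. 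Iterating, the final $z_1$-integral is $\int_\bn (1-|z_1|^2)^{t-\delta}\intd m(z_1)<\infty$, provided $0<\delta<t+1$.

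For part (2) the cycle structure makes naive iteration fail: without the factor, the integral diverges already at $k=0$ (the diagonal $K^{(t)}_{z_0}(z_0)$ gives $\int(1-|z_0|^2)^{-n-1}\intd m(z_0)=\infty$). The plan is to exploit that the factor becomes small near clusters of boundary points. Applying the power-mean inequality $(\sum_\alpha x_\alpha)^{n+\epsilon}\lesssim\sum_\alpha x_\alpha^{n+\epsilon}$, valid since $n+\epsilon\geq1$, it suffices to prove finiteness of
\[
J_{i,j}=\int_{\bn^{k+1}}|1-\la z_i,z_j\ra|^{n+\epsilon}\prod_{l=0}^{k}|\BKt_{z_l}(z_{l+1\bmod(k+1)})|\prod_{l=0}^{k}\intd\lambda_t(z_l)
\]
for each fixed pair $(i,j)$. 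The kernel product is invariant under the cyclic relabeling $z_l\mapsto z_{l+s\bmod(k+1)}$, so WLOG $i=0$. Non-adjacent pairs $(0,j)$ with $j\geq 2$ reduce to adjacent ones via the $d$-metric triangle inequality of Lemma \ref{lem: d metric}: squaring $|1-\la z_0,z_j\ra|^{1/2}\leq\sum_{l=0}^{j-1}|1-\la z_l,z_{l+1}\ra|^{1/2}$ and then applying the power-mean inequality yields $|1-\la z_0,z_j\ra|^{n+\epsilon}\lesssim_j\sum_{l=0}^{j-1}|1-\la z_l,z_{l+1}\ra|^{n+\epsilon}$. Combined with cyclic invariance, this reduces the problem to the two cases $(0,1)$ and $(0,0)$.

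In the adjacent case $(0,1)$ the factor partially cancels $|\BKt_{z_0}(z_1)|$, leaving a weak kernel $|1-\la z_0,z_1\ra|^{-(1+t-\epsilon)}$. Bounding this via Lemma \ref{lem: Mobius basics} (7) by $C(1-|z_0|^2)^{-(1+t-\epsilon)}$ breaks the cycle: with $z_0$ frozen, the remaining chain $z_1\to z_2\to\cdots\to z_k\to z_0$ is open and handled by the part (1) iteration, yielding after all inner integrations a bound $C(1-|z_0|^2)^{-\delta}$. The outer $z_0$-integral is then $\int_\bn(1-|z_0|^2)^{\epsilon-1-\delta}\intd m(z_0)$, finite provided $\delta<\epsilon$. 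The diagonal case $(0,0)$ is similar: combining $(1-|z_0|^2)^{n+\epsilon}$ with the weight gives effective exponent $n+t+\epsilon$ on $(1-|z_0|^2)$ in the $z_0$-integrand, and Lemma \ref{lem: Rudin Forelli generalizations} (2) applies to the two $z_0$-kernels with choice $c=1+t-\epsilon/2$ (the condition $a+b<n+1+t'+c$ becomes $1+t-\epsilon<c$, and the upper bound $c\leq n+1+t$ is consistent for small $\epsilon$), yielding a weak closing kernel $|1-\la z_1,z_k\ra|^{-(1+t-\epsilon/2)}$ that is then treated exactly as in the adjacent case.

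The main technical obstacle is bookkeeping the small parameters $\delta$ (from the log-absorption) and $\epsilon$ (from the factor) through several Rudin--Forelli applications and making sure all the hypotheses of Lemma \ref{lem: Rudin Forelli generalizations} are satisfied simultaneously; concretely one needs $0<\delta<\min(\epsilon,t+1)$ and $\epsilon<1+t$, which are all met by choosing $\delta$ and $\epsilon$ sufficiently small. The conceptual content is the trade-off between the singularity of the kernel chain and the boundary decay built into the factor: the cycle is broken by using the factor once to convert a genuine kernel into a ``weak'' kernel in a single variable, after which the remaining open chain is handled by the mechanism of part (1).
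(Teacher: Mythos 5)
Your proof is correct but closes the argument differently from the paper. After the same preliminary reductions (the power-mean inequality, the $d$-metric triangle inequality from Lemma~\ref{lem: d metric}, cyclic rotation to the adjacent pair), the paper keeps the cyclic kernel structure intact and runs an induction on $k$: it integrates out $z_0$ via Lemma~\ref{lem: Rudin Forelli generalizations}(2), producing a weaker closing factor of the same shape $|1-\la z_1,z_k\ra|^{n+\epsilon'}$ that closes a $k$-cycle, i.e.\ it reduces to the statement at $k-1$ without ever opening the chain. You instead break the cycle: you bound the partially cancelled kernel $|1-\la z_0,z_1\ra|^{\epsilon-1-t}$ by a power of $(1-|z_0|^2)$ alone using Lemma~\ref{lem: Mobius basics}(7), and then dispatch the surviving open chain by iterated Rudin--Forelli estimates with $\log$-absorption --- which is also how you prove part~(1) directly, whereas the paper derives part~(1) as the $\epsilon=t+1$ specialization of part~(2). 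Both mechanisms are valid; your route trades the paper's induction for an extra small parameter $\delta$ to swallow the logarithm, which is why you have more bookkeeping. Two minor points: the diagonal case $(0,0)$ needs no separate treatment, since $1-|z_0|^2\leq 2|1-\la z_0,z_1\ra|$ already reduces it to the adjacent case at the outset; and restricting to small $\epsilon$ (which your freezing step requires) is harmless because each summand $|1-\la z_i,z_j\ra|$ is bounded above by $2$, so the integrand of \eqref{eqn: k+1 kernels absolutely integrable} at a larger $\epsilon'$ is dominated, up to a constant depending only on $k$, by the one at any smaller $\epsilon$.
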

	
	\begin{proof}
		First, we notice that \eqref{eqn: k kernels absolutely integrable} is a special case of \eqref{eqn: k+1 kernels absolutely integrable}: take $\epsilon=t+1$, then
		\begin{flalign*}
			&\int_{\bn^k}\bigg|\BKt_{z_1}(z_2)\ldots\BKt_{z_{k-1}}(z_k)\bigg|\intd\lambda_t(z_1)\ldots\intd\lambda_t(z_k)\\
			=&\int_{\bn^k}|1-\la z_1,z_k\ra|^{n+t+1}\bigg|\BKt_{z_1}(z_2)\ldots\BKt_{z_{k-1}}(z_k)\BKt_{z_k}(z_1)\bigg|\intd\lambda_t(z_1)\ldots\intd\lambda_t(z_k)\\
			\leq&\int_{\bn^k}\bigg(\sum_{i,j}|1-\la z_i,z_j\ra|\bigg)^{n+t+1}\bigg|\BKt_{z_1}(z_2)\ldots\BKt_{z_{k-1}}(z_k)\BKt_{z_k}(z_1)\bigg|\intd\lambda_t(z_1)\ldots\intd\lambda_t(z_k).
		\end{flalign*}
		Thus it suffices to prove \eqref{eqn: k+1 kernels absolutely integrable}. It is well-known that all $l^p$ norms on a finite set are equivalent. So by \cite[Proposition 5.1.2]{Rudinbookunitball} and Lemma \ref{lem: Mobius basics} (7), we obtain the following estimates,
		\[
		|1-\la z_i,z_j\ra|=\bigg(|1-\la z_i,z_j\ra|^{1/2}\bigg)^2\lesssim\bigg(\sum_{s=0}^{k-1}|1-\la z_s,z_{s+1}\ra|^{1/2}\bigg)^2\lesssim\sum_{s=0}^{k-1}|1-\la z_s,z_{s+1}\ra|, \forall i, j,
		\]
		and
		\[
		\bigg(\sum_{i,j}|1-\la z_i,z_j\ra|\bigg)^{n+\epsilon}\lesssim\bigg(\sum_{s=0}^{k-1}|1-\la z_s,z_{s+1}\ra|\bigg)^{n+\epsilon}\lesssim\sum_{s=0}^{k-1}|1-\la z_s,z_{s+1}\ra|^{n+\epsilon}.
		\]	
		Also, for any $s\geq1$, take the rotation in variables $z_s\mapsto z_0, z_{s+1}\mapsto z_1, \ldots, z_k\mapsto z_{k-s}, z_0\mapsto z_{k-s+1}, z_1\mapsto z_{k-s+2}, \ldots, z_{s-1}\mapsto z_k$, then we have the following equation,
		\begin{flalign*}
			&\int_{\bn^{k+1}}|1-\la z_s,z_{s+1}\ra|^{n+\epsilon}\bigg|\BKt_{z_0}(z_1)\BKt_{z_1}(z_2)\ldots\BKt_{z_{k-1}}(z_k)\BKt_{z_k}(z_0)\bigg|\intd\lambda_t(z_0)\ldots\intd\lambda_t(z_k)\\
			=&\int_{\bn^{k+1}}|1-\la z_0,z_{1}\ra|^{n+\epsilon}\bigg|\BKt_{z_0}(z_1)\BKt_{z_1}(z_2)\ldots\BKt_{z_{k-1}}(z_k)\BKt_{z_k}(z_0)\bigg|\intd\lambda_t(z_0)\ldots\intd\lambda_t(z_k).
		\end{flalign*}
		Notice that the part of the reproducing kernels is invariant under such a change of variable.
		Thus it suffices to prove the integral above is finite. Without loss of generality, assume $0<\epsilon<1+t$. We prove it by induction. For $k=1$, by Lemma \ref{lem: Rudin Forelli generalizations} (1), we compute the integral as follows.
		\begin{flalign*}
			&\int_{\bn^2}|1-\la z_0,z_1\ra|^{n+\epsilon}\bigg|\BKt_{z_0}(z_1)\BKt_{z_1}(z_0)\bigg|\intd\lambda_t(z_0)\intd\lambda_t(z_1)\\
			=&\int_{\bn^2}\frac{1}{|1-\la z_0, z_1\ra|^{2(n+1+t)-n-\epsilon}}\intd\lambda_t(z_0)\intd\lambda_t(z_1)\\
			\lesssim&\int_{\bn}(1-|z_1|^2)^{\epsilon-(1+t)}\intd\lambda_t(z_1)\\
			\lesssim&\int_{\bn}(1-|z_1|^2)^{\epsilon-1}\intd m(z_1)\\
			<&\infty.
		\end{flalign*}
		Thus \eqref{eqn: k+1 kernels absolutely integrable} holds for $k=1$. Suppose \eqref{eqn: k+1 kernels absolutely integrable} holds for $k-1$. Then by Lemma \ref{lem: Rudin Forelli generalizations} (2), we have the following estimates by the induction step.
		\begin{flalign*}
			&\int_{\bn^{k+1}}|1-\la z_0,z_1\ra|^{n+\epsilon}\bigg|\BKt_{z_0}(z_1)\BKt_{z_1}(z_2)\ldots\BKt_{z_{k-1}}(z_k)\BKt_{z_k}(z_0)\bigg|\intd\lambda_t(z_0)\ldots\intd\lambda_t(z_k)\\
			=&\int_{\bn^{k}}\bigg(\int_{\bn}\frac{1}{|1-\la z_0,z_1\ra|^{1+t-\epsilon}|1-\la z_0,z_k\ra|^{n+1+t}}\intd\lambda_t(z_0)\bigg)\\
			&~~~~~~~~~\cdot\bigg|\BKt_{z_1}(z_2)\ldots\BKt_{z_{k-1}}(z_k)\bigg|\intd\lambda_t(z_1)\ldots\intd\lambda_t(z_k)\\
			\lesssim&\int_{\bn^k}\frac{1}{|1-\la z_1,z_k\ra|^{1+t-\epsilon/2}}\bigg|\BKt_{z_1}(z_2)\ldots\BKt_{z_{k-1}}(z_k)\bigg|\intd\lambda_t(z_1)\ldots\intd\lambda_t(z_k)\\
			=&\int_{\bn^k}|1-\la z_1,z_k\ra|^{n+\epsilon/2}\bigg|\BKt_{z_1}(z_2)\ldots\BKt_{z_{k-1}}(z_k)\BKt_{z_k}(z_1)\bigg|\intd\lambda_t(z_1)\ldots\intd\lambda_t(z_k)\\
			<&\infty.
		\end{flalign*}
		 This completes the proof of Lemma \ref{lem: k kernels absolutely integrable}.
	\end{proof}

	\begin{proof}[{\bf Proof of Proposition \ref{prop: B}}]
		For simplicity of notation, in this proof let us write $A_i$ for $\hat{A}_i$, $B_i$ for $\hat{B}_i$, and $C_i$ for $\hat{C}_i$.
		The fact that $\{A_i, B_i\}_{i=1}^{2n}$ satisfy Hypotheses A follows from Lemma \ref{lem: B X Y} and Lemma \ref{lem: B commutator}. Thus by Proposition \ref{prop: A}, we have the following property, i.e. 
		\[
		[C_1, C_2,\ldots, C_{2n}]-[A_1, A_2,\ldots, A_{2n}]\in\mathcal{S}^1,
		\]
		and
		\[
		[C_1, C_2, \ldots, C_{2n}]-[A_1, A_2, \ldots, A_{2n}]-\sum_{l=1}^{2n}[A_1,\ldots,A_{l-1},B_l,A_{l+1},\ldots,A_{2n}]
		\]
		is a trace class operator with zero trace. Recall that
		\[
		B_i=X_{f_i}+Y_{\bar{R}f_i}+K_i,\quad i=1, 2, \ldots, 2n.
		\]
		By Corollary \ref{cor: 1 B improved}, each operator
		\[
		[A_1,\ldots,A_{l-1},K_l,A_{l+1},\ldots,A_{2n}]
		\]
		is in the trace class of zero trace. Denote
		\[
		X=\sum_{l=1}^{2n}[A_1,\ldots,A_{l-1},X_{f_l},A_{l+1},\ldots,A_{2n}],
		\]
		and
		\[
		Y=\sum_{l=1}^{2n}[A_1,\ldots,A_{l-1},Y_{\bar{R}f_l},A_{l+1},\ldots,A_{2n}].
		\]
		Then it follows from Proposition \ref{prop: A} that $X$ and $Y$ are trace class operators on $\bert$. It remains to show that they have zero trace.
		\begin{notation}
		For two functions $H(\xi), G(\xi)$ on $\bn$, write $H\simeq G$ if $\int_{\bn}(H(\xi)-G(\xi))\intd\lambda_t(\xi)=0$.
		\end{notation}
		By \cite[Lemma 2.5]{TWZ:semicommutator}, it suffices to show
		\begin{equation}\label{eqn: goal for prop B}
		\la X\BKt_\xi,\BKt_\xi\ra\simeq0,\quad\text{and}\quad\la Y\BKt_\xi,\BKt_\xi\ra\simeq0.
		\end{equation}
		A moment of reflection shows that
		\begin{equation*}
			X=\sum_{k=1}^{2n}\sum_{\tau\in S_{2n}}\sgn(\tau)A_{\tau_1}\ldots A_{\tau_{k-1}}X_{f_{\tau_k}}A_{\tau_{k+1}}\ldots A_{\tau_{2n}},
		\end{equation*}
	and
		\begin{equation*}
			Y=\sum_{k=1}^{2n}\sum_{\tau\in S_{2n}}\sgn(\tau)A_{\tau_1}\ldots A_{\tau_{k-1}}Y_{\bar{R}f_{\tau_k}}A_{\tau_{k+1}}\ldots A_{\tau_{2n}}.
		\end{equation*}
For $k=1,\ldots,2n$, denote
\[
X_k=\sum_{\tau\in S_{2n}}\sgn(\tau)A_{\tau_1}\ldots A_{\tau_{k-1}}X_{f_{\tau_k}}A_{\tau_{k+1}}\ldots A_{\tau_{2n}},
\]
and
\[
Y_k=\sum_{\tau\in S_{2n}}\sgn(\tau)A_{\tau_1}\ldots A_{\tau_{k-1}}Y_{\bar{R}f_{\tau_k}}A_{\tau_{k+1}}\ldots A_{\tau_{2n}}.
\]
Then we have the following expressions for $X$ and $Y$, 
\[
X=\sum_{k=1}^{2n}X_k,\quad Y=\sum_{k=1}^{2n}Y_k.
\]
		Define
		\begin{equation*}
			X_k'=
			\sum_{\tau\in S_{2n}}\sgn(\tau)A_{\tau_{2n}}A_{\tau_1}\ldots A_{\tau_{k-1}}X_{f_{\tau_k}}A_{\tau_{k+1}}\ldots A_{\tau_{2n-1}},\quad k=1,\ldots, 2n-1,
		\end{equation*}
		\[
		X_{2n}'=\sum_{\tau\in S_{2n}}\sgn(\tau)X_{f_{\tau_{2n}}}A_{\tau_1}\ldots A_{\tau_{2n-1}},
		\]
		and
		\begin{equation*}
			Y_k'=
			\sum_{\tau\in S_{2n}}\sgn(\tau)A_{\tau_{2n}}A_{\tau_1}\ldots A_{\tau_{k-1}}Y_{\bar{R}f_{\tau_k}}A_{\tau_{k+1}}\ldots A_{\tau_{2n-1}},\quad k=1,\ldots, 2n-1,
		\end{equation*}
		\[
		Y_{2n}'=\sum_{\tau\in S_{2n}}\sgn(\tau)Y_{\bar{R}f_{\tau_{2n}}}A_{\tau_1}\ldots A_{\tau_{2n-1}}.
		\]
		In other words, $X_k', Y_k'$ are obtained from $X_k, Y_k$ by moving each rightmost operator to the leftmost.
		Define
		\[
		X'=\sum_{k=1}^{2n}X_k',\quad Y'=\sum_{k=1}^{2n}Y_k'.
		\]
		The anti-symmetrization leads to
		\[
		X'=-X,\quad Y'=-Y.
		\]
		Below we show that
		\[
		\la X\BKt_\xi,\BKt_\xi\ra\simeq\la X'\BKt_\xi,\BKt_\xi\ra,\quad\la Y\BKt_\xi,\BKt_\xi\ra\simeq\la Y'\BKt_\xi,\BKt_\xi\ra.
		\]
		This leads to \eqref{eqn: goal for prop B}, completing the proof of Proposition \ref{prop: B}.\\ 
		
		Each $f_i$ has a decomposition $f_i=g_i+h_i$, where $g_i$ has compact support, and $h_i$ has support away from $0$. Since $X_{g_i}, Y_{\bar{R}g_i}$ are perturbations that belong to the trace class, we might as well assume that each $f_i$ have support away from the origin.

		For $k, i=1,\ldots,2n$, write
		\[
		X_{k,i}(w,z)=\begin{cases}
			X_w(z),&\text{ if }i=k,\\
			\BKtp_w(z),&\text{ otherwise}.
		\end{cases}
		\]
		By definition, for any $k=1,\ldots,2n$, we compute $\la X_k\BKt_\xi,\BKt_\xi\ra$ as follows.
		\begin{flalign}\label{eqn: X_k Berezin}
			&\la X_k\BKt_\xi,\BKt_\xi\ra\nonumber\\
			=&\bigg[\sum_{\tau\in S_{2n}}\sgn(\tau)\BTtpt_{f_{\tau_1}}\ldots\BTtpt_{f_{\tau_{k-1}}}X_{f_{\tau_k}}\BTtpt_{f_{\tau_{k+1}}}\ldots\BTtpt_{f_{\tau_{2n}}}\BKt_\xi\bigg](\xi)\nonumber\\
			=&\sum_{\tau\in S_{2n}}\sgn(\tau)\int_{\bn^{2n}}f_{\tau_1}(z_1)\ldots f_{\tau_{2n}}(z_{2n})\BKt_\xi(z_{2n})\\
			&~~~~~\cdot X_{k,2n}(z_{2n},z_{2n-1})X_{k,2n-1}(z_{2n-1},z_{2n-2})\ldots X_{k,2}(z_2,z_1)X_{k,1}(z_1,\xi)\intd\lambda_{t+1}(z_{2n})\ldots\intd\lambda_{t+1}(z_1)\nonumber\\
			=&\int_{\bn^{2n}}\det\big[f_i(z_j)\big]\BKt_\xi(z_{2n})\bigg(\prod_{i=2}^{2n}X_{k,i}(z_i,z_{i-1})\bigg)X_{k,1}(z_1,\xi)
			\intd\lambda_{t+1}(z_{2n})\ldots\intd\lambda_{t+1}(z_1).\nonumber
		\end{flalign}
		Denote $F$ the column vector of $2n$ functions,
		\[
		F(z)=[f_1(z)~f_2(z)~\ldots~f_{2n}(z)]^T.
		\]
		Then by Lemma \ref{lem: Mobius basics}, we estimate the determinant function,
		\begin{flalign*}
			\bigg|\det\big[f_i(z_j)\big]\bigg|=&\bigg|\det\big[F(z_1)~\ldots~F(z_{2n})\big]\bigg|\\
			=&\bigg|\det\big[F(z_1)~F(z_2)-F(z_1)~\ldots~F(z_{2n})-F(z_1)\big]\bigg|\\
			\lesssim&\bigg(\sum_{i, j}|z_i-z_j|\bigg)^{2n-1}\\
			\lesssim&\bigg(\sum_{i, j}|1-\la z_i,z_j\ra|\bigg)^{n-1/2}.
		\end{flalign*}
		Also, we obtain the following estimate
		\[
		|X_{k,i}(w,z)|\lesssim\begin{cases}
			|w|^{-2n+\epsilon}|\BKt_w(z)|,& i=k,\\
			|\BKtp_w(z)|=\frac{1}{|1-\la z,w\ra|}\cdot|\BKt_w(z)|,&i\neq k.
		\end{cases}
		\]
		Since we assume that each $f_i$ is supported away from the origin, we continue computing the inner product $\la X_k\BKt_\xi,\BKt_\xi\ra$ using Lemma \ref{lem: Mobius basics} and the above estimate.
		\begin{flalign*}
			&\int_{\bn^{2n+1}}\bigg|\det[f_i(z_j)]\BKt_\xi(z_{2n})\bigg(\prod_{i=2}^{2n}X_{k,i}(z_i,z_{i-1})\bigg)X_{k,1}(z_1,\xi)\bigg|\intd\lambda_{t+1}(z_{2n})\ldots\intd\lambda_{t+1}(z_1)\intd\lambda_t(\xi)\\
			\lesssim&\int_{\bn^{2n+1}}\bigg|\det[f_i(z_j)]\BKt_\xi(z_{2n})\bigg(\prod_{i\neq k,1}\frac{1-|z_i|^2}{|1-\la z_i,z_{i-1}\ra|}\bigg)(1-|z_k|^2)\frac{1-|z_1|^2}{|1-\la z_1,\xi\ra|}\\
			&~~~~~~~~~~~~~~~~~~~~~~~~~~~\cdot\bigg(\prod_{i=2}^{2n}\BKt_{z_i}(z_{i-1})\bigg) \BKt_{z_1}(\xi)\bigg|\intd\lambda_{t}(z_{2n})\ldots\intd\lambda_{t}(z_1)\intd\lambda_t(\xi)\\
			\lesssim&\int_{\bn^{2n+1}}\bigg(\sum_{i,j}|1-\la z_i,z_j\ra|\bigg)^{n+\frac{1}{2}}\bigg|\BKt_\xi(z_{2n})\bigg(\prod_{i=2}^{2n}\BKt_{z_i}(z_{i-1})\bigg)\BKt_{z_1}(\xi)\bigg|\intd\lambda_{t}(z_{2n})\ldots\intd\lambda_{t}(z_1)\intd\lambda_t(\xi)\\
			<&\infty.
		\end{flalign*}
	Here the last inequality follows from Lemma \ref{lem: k kernels absolutely integrable}.
		Thus the $(2n+1)$-fold integral obtained by plugging \eqref{eqn: X_k Berezin} into $\int_{\bn}\la X_k\BKt_\xi,\BKt_\xi\ra\intd\lambda_t(\xi)$ converges absolutely. By Fubini's Theorem, the variable $\xi$ can be integrated first, and, since each $X_{k,i}(w,z)$ is holomorphic in $z$, we compute the following integral
		\begin{flalign*}
			&\int_{\bn}\la X_k\BKt_\xi,\BKt_\xi\ra\intd\lambda_t(\xi)\\
			=&\int_{\bn}\bigg\{\int_{\bn^{2n}}\det\big[f_i(z_j)\big]\BKt_\xi(z_{2n})\\
			&\qquad \qquad \qquad \bigg(\prod_{i=2}^{2n}X_{k,i}(z_i,z_{i-1})\bigg)X_{k,1}(z_1,\xi)\intd\lambda_{t+1}(z_{2n})\ldots\intd\lambda_{t+1}(z_1)\bigg\}\intd\lambda_t(\xi)\\
			=&\int_{\bn^{2n}}\det\big[f_i(z_j)\big]\bigg(\prod_{i=2}^{2n}X_{k,i}(z_i,z_{i-1})\bigg)
			X_{k,1}(z_1,z_{2n})\intd\lambda_{t+1}(z_{2n})\ldots\intd\lambda_{t+1}(z_1).
		\end{flalign*}
		By a similar proof, we have a similar expression for the integral of $\la X_k'\BKt_\xi,\BKt_\xi\ra$.
		\begin{flalign*}
			&\int_{\bn}\la X_k'\BKt_\xi,\BKt_\xi\ra\intd\lambda_t(\xi)\\
			=&\int_{\bn}\bigg\{\int_{\bn^{2n}}\det[f_i(z_j)]\BKt_\xi(z_{2n-1})\\
			&~~~~~~~~~~~~~\cdot\bigg(\prod_{i=2}^{2n-1}X_{k,i}(z_{i},z_{i-1})\bigg) X_{k,1}(z_1,z_{2n})X_{k,2n}(z_{2n},\xi)\intd\lambda_{t+1}(z_{2n})\ldots\intd\lambda_{t+1}(z_1)\bigg\}\intd\lambda_t(\xi)\\
			=&\int_{\bn^{2n}}\det[f_i(z_j)]\bigg(\prod_{i=2}^{2n}X_{k,i}(z_i,z_{i-1})\bigg)X_{k,1}(z_1,z_{2n})\intd\lambda_{t+1}(z_{2n})\ldots\intd\lambda_{t+1}(z_1)\\
			=&\int_{\bn}\la X_k\BKt_\xi,\BKt_\xi\ra\intd\lambda_t(\xi).
		\end{flalign*}
		In other words, we have shown the following equation
		\begin{equation}\label{eqn: X rotation}
			\la X_k\BKt_\xi,\BKt_\xi\ra\simeq\la X_k'\BKt_\xi,\BKt_\xi\ra.
		\end{equation}

		The situation for $Y$ is more complicated. As in the case of $X$, for $k, i=1,\ldots,2n$, define
		\[
		Y_{k,i}(w,z)=\begin{cases}
			Y_w(z),&i=k,\\
			\BKtp_w(z),&\text{ otherwise}.
		\end{cases}
		\]
		Then each $Y_{k,i}(w,z)$ is holomorphic in $z$, and
		\begin{equation*}
			|Y_{k,i}(w,z)|\lesssim\begin{cases}
				|w|^{-2n-1+\epsilon}|\BKt_w(z)|,&i=k,\\
				\frac{1}{|1-\la w,z\ra|}\cdot|\BKt_w(z)|,&\text{ otherwise}.
			\end{cases}
		\end{equation*}
		By definition, we compute
		\begin{flalign}\label{eqn: Y_k Berezin}
			\la Y_k\BKt_\xi,\BKt_\xi\ra=&\int_{\bn^{2n}}\det\big[F(z_1)~\ldots~F(z_{k-1})~\bar{R}F(z_k)~F(z_{k+1})~\ldots~F(z_{2n})\big]\\
			&\BKt_\xi(z_{2n})\bigg(\prod_{i=2}^{2n}Y_{k,i}(z_i,z_{i-1})\bigg)Y_{k,1}(z_1,\xi)\intd\lambda_{t+1}(z_{2n})\ldots\intd\lambda_{t+1}(z_1).\nonumber
		\end{flalign}
		Here $\bar{R}F(z)$ is the column vector
		\[
		\bar{R}F(z)=[\bar{R}f_1(z)~\ldots~\bar{R}f_{2n}(z)]^T.
		\]
		Direct computation shows that
		\begin{flalign*}
			&\det\big[F(z_1)~\ldots~F(z_{k-1})~\bar{R}F(z_k)~F(z_{k+1})~\ldots~F(z_{2n})\big]\\
			=&\det\big[F(z_1)-F(z_k)~\ldots~F(z_{k-1})-F(z_k)~\bar{R}F(z_k)~F(z_{k+1})-F(z_k)~\ldots~F(z_{2n})-F(z_k)\big]\\
			&+\sum_{j\neq k}\det F_{j,k},
		\end{flalign*}
	where $F_{j,k}$ is the matrix function obtained by replacing the $j$-th column of
	\[
	\big[F(z_1)~\ldots~F(z_{k-1})~\bar{R}F(z_k)~F(z_{k+1})~\ldots~F(z_{2n})\big]
	\]
	into $F(z_k)$.
	Therefore we can compute its determinant as follows.
	\begin{flalign}\label{eqn: determinant equation}
		&\det\big[F(z_1)~\ldots~F(z_{k-1})~\bar{R}F(z_k)~F(z_{k+1})~\ldots~F(z_{2n})\big]\\
		=&\det\big[F(z_1)-F(z_k)~\ldots~F(z_{k-1})-F(z_k)~\bar{R}F(z_k)~F(z_{k+1})-F(z_k)~\ldots~F(z_{2n})-F(z_k)\big]\nonumber\\
		&+\sum_{j\neq k}\sum_{\tau\in S_{2n}}\sgn(\tau)f_{\tau_j}(z_k)\bar{R}f_{\tau_k}(z_k)\prod_{i\neq j,k}f_{\tau_i}(z_i)\nonumber\\
		:=&D_k(z_1,\ldots,z_{2n})+\sum_{j\neq k}E_{k,j}(z_1,\ldots,z_{2n}),\nonumber
	\end{flalign}
	where 
	\begin{eqnarray*}
	&&D_k(z_1,\ldots,z_{2n})=\\
	&&\qquad \det\big[F(z_1)-F(z_k)~\ldots~F(z_{k-1})-F(z_k)~\bar{R}F(z_k)~F(z_{k+1})-F(z_k)~\ldots~F(z_{2n})-F(z_k)\big],\\
	&&E_{k,j}(z_1,\ldots,z_{2n})=\sum_{\tau\in S_{2n}}\sgn(\tau)f_{\tau_j}(z_k)\bar{R}f_{\tau_k}(z_k)\prod_{i\neq j,k}f_{\tau_i}(z_i).
	\end{eqnarray*}
		Correspondingly, we write
		\[
		\la Y_k\BKt_\xi,\BKt_\xi\ra=I_k(\xi)+\sum_{j\neq k}II_{k,j}(\xi),
		\]
		where
		\[
		I_k(\xi)=\int_{\bn^{2n}}D_k(z_1,\ldots,z_{2n})\BKt_\xi(z_{2n})\bigg(\prod_{i=2}^{2n}Y_{k,i}(z_i,z_{i-1})\bigg)Y_{k,1}(z_1,\xi)\intd\lambda_{t+1}(z_{2n})\ldots\intd\lambda_{t+1}(z_1),
		\]
		and
		\[
		II_{k,j}(\xi)=\int_{\bn^{2n}}E_{k,j}(z_1,\ldots,z_{2n})\BKt_\xi(z_{2n})\bigg(\prod_{i=2}^{2n}Y_{k,i}(z_i,z_{i-1})\bigg)Y_{k,1}(z_1,\xi)\intd\lambda_{t+1}(z_{2n})\ldots\intd\lambda_{t+1}(z_1).
		\]
		Similarly, we have the following expression for $\la Y_k'\BKt_\xi,\BKt_\xi\ra$,
		\[
		\la Y_k'\BKt_\xi,\BKt_\xi\ra=I_k'(\xi)+\sum_{j\neq k}II_{k,j}'(\xi),
		\]
		where
		\begin{flalign*}
			&I_k'(\xi)\\
			=&\int_{\bn^{2n}}D_k(z_1,\ldots,z_{2n})\BKt_\xi(z_{2n-1})\bigg(\prod_{i=2}^{2n-1}Y_{k,i}(z_i,z_{i-1})\bigg)Y_{k,1}(z_1,z_{2n})Y_{k,2n}(z_{2n},\xi)\intd\lambda_{t+1}(z_{2n})\ldots\intd\lambda_{t+1}(z_1),
		\end{flalign*}
		and
		\begin{flalign*}
			&II_{k,j}'(\xi)\\
			=&\int_{\bn^{2n}}E_{k,j}(z_1,\ldots,z_{2n})\BKt_\xi(z_{2n-1})\bigg(\prod_{i=2}^{2n-1}Y_{k,i}(z_i,z_{i-1})\bigg)Y_{k,1}(z_1,z_{2n})Y_{k,2n}(z_{2n},\xi)\intd\lambda_{t+1}(z_{2n})\ldots\intd\lambda_{t+1}(z_1).
		\end{flalign*}
		Since
		\begin{flalign*}
		&	|D_k|\\
		=	&\bigg|\det\big[F(z_1)-F(z_k)~\ldots~F(z_{k-1})-F(z_k)~\bar{R}F(z_k)~F(z_{k+1})-F(z_k)~\ldots~F(z_{2n})-F(z_k)\big]\bigg|\\
			\lesssim&|z_k|\cdot\bigg(\sum_{i,j}|1-\la z_i,z_j\ra|\bigg)^{n+1/2},
		\end{flalign*}
		we can repeat the proof for \eqref{eqn: X rotation} and get the identity
		\begin{equation}\label{eqn: Ik simeq Ik'}
			I_k(\xi)\simeq I_k'(\xi).
		\end{equation}
		Define
		\[
		Z_{k,j}=\begin{cases}
			\sum_{\tau\in S_{2n}}\sgn(\tau)\BTtpt_{f_{\tau_1}}\ldots\widehat{\BTtpt_{f_{\tau_j}}}\ldots\BTtpt_{f_{\tau_{k-1}}}Y_{f_{\tau_j}\bar{R}f_{\tau_k}}\BTtpt_{f_{\tau_{k+1}}}\ldots\BTtpt_{f_{\tau_{2n}}},&j<k\\
			\sum_{\tau\in S_{2n}}\sgn(\tau)\BTtpt_{f_{\tau_1}}\ldots\BTtpt_{f_{\tau_{k-1}}}Y_{f_{\tau_j}\bar{R}f_{\tau_k}}\BTtpt_{f_{\tau_{k+1}}}\ldots\widehat{\BTtpt_{f_{\tau_j}}}\ldots\BTtpt_{f_{\tau_{2n}}},& j>k,
		\end{cases}
		\]
		and $Z_{k,j}'$ the operator obtained from $Z_{k,j}$ by moving each rightmost operator to the leftmost. Here $\widehat{A}$ means that $A$ is removed. Checking by definition we get
		\begin{equation*}
			II_{k,j}(\xi)=\la Z_{k,j}\BKt_\xi,\BKt_\xi\ra,\quad II_{k,j}'(\xi)=\la Z_{k,j}'\BKt_\xi,\BKt_\xi\ra.
		\end{equation*}
		Using antisymmetrization we see that
		\[
		Z_{k,j}=\begin{cases}
			(-1)^{k-j-1}Z_{k,k-1},&j<k,\\
			(-1)^{k-j-1}Z_{k,k+1},&j>k.
		\end{cases}
		\]
		Therefore we have the following calculation.
		\begin{flalign*}
			&\sum_{k=1}^{2n}\sum_{j\neq k}Z_{k,j}=\sum_{k\text{ odd}}Z_{k,k+1}+\sum_{k\text{ even}}Z_{k,k-1}=\sum_{m=1}^n\bigg(Z_{2m-1,2m}+Z_{2m,2m-1}\bigg)\\
			=&\sum_{m=1}^n\bigg\{\sum_{\tau\in S_{2n}}\sgn(\tau)\BTtpt_{f_{\tau_1}}\ldots\BTtpt_{f_{\tau_{2m-2}}}Y_{f_{\tau_{2m}}\bar{R}f_{\tau_{2m-1}}}\BTtpt_{f_{\tau_{2m+1}}}\ldots\BTtpt_{f_{\tau_{2n}}}\\
			&+\sum_{\tau\in S_{2n}}\sgn(\tau)\BTtpt_{f_{\tau_1}}\ldots\BTtpt_{f_{\tau_{2m-2}}}Y_{f_{\tau_{2m-1}}\bar{R}f_{\tau_{2m}}}\BTtpt_{f_{\tau_{2m+1}}}\ldots\BTtpt_{f_{\tau_{2n}}}\bigg\}\\
			=&\sum_{m=1}^n\sum_{\tau\in S_{2n}}\sgn(\tau)\BTtpt_{f_{\tau_1}}\ldots\BTtpt_{f_{\tau_{2m-2}}}Y_{f_{\tau_{2m}\bar{R}f_{\tau_{2m-1}}}+f_{\tau_{2m-1}}\bar{R}f_{\tau_{2m}}}\BTtpt_{f_{\tau_{2m+1}}}\ldots\BTtpt_{f_{\tau_{2n}}}\\
			=&\sum_{m=1}^n\sum_{\tau\in S_{2n}}\sgn(\tau)\BTtpt_{f_{\tau_1}}\ldots\BTtpt_{f_{\tau_{2m-2}}}Y_{\bar{R}[f_{\tau_{2m}}f_{\tau_{2m-1}}]}\BTtpt_{f_{\tau_{2m+1}}}\ldots\BTtpt_{f_{\tau_{2n}}}\\
			=&0.
		\end{flalign*}
		Similarly, $\sum_{k=1}^{2n}\sum_{j\neq k}Z_{k,j}'=0$.
		Thus we can verify the following equations. 
		\[
		\sum_{k=1}^{2n}\sum_{j\neq k}II_{k,j}(\xi)=\sum_{k=1}^{2n}\sum_{j\neq k}\la Z_{k,j}\BKt_\xi,\BKt_\xi\ra=\la \bigg(\sum_{k=1}^{2n}\sum_{j\neq k}Z_{k,j}\bigg)\BKt_\xi,\BKt_\xi\ra=0,
		\]
		\[
		\sum_{k=1}^{2n}\sum_{j\neq k}II_{k,j}'(\xi)=\sum_{k=1}^{2n}\sum_{j\neq k}\la Z_{k,j}'\BKt_\xi,\BKt_\xi\ra=\la \bigg(\sum_{k=1}^{2n}\sum_{j\neq k}Z_{k,j}'\bigg)\BKt_\xi,\BKt_\xi\ra=0.
		\]
		This leads to the following identities
		\[
		\la Y\BKt_\xi,\BKt_\xi\ra=\sum_{k=1}^{2n}\la Y_k\BKt_\xi,\BKt_\xi\ra=\sum_{k=1}^{2n}I_k(\xi)+\sum_{k=1}^{2n}\sum_{j\neq k}II_{k,j}(\xi)=\sum_{k=1}^{2n}I_k(\xi),
		\]
		and
		\[
		\la Y'\BKt_\xi,\BKt_\xi\ra=\sum_{k=1}^{2n}\la Y_k'\BKt_\xi,\BKt_\xi\ra=\sum_{k=1}^{2n}I_k'(\xi)+\sum_{k=1}^{2n}\sum_{j\neq k}II_{k,j}'(\xi)=\sum_{k=1}^{2n}I_k'(\xi).
		\]
		Combining with \eqref{eqn: Ik simeq Ik'}, we arrive at the following equation
		\begin{equation}\label{eqn: Y rotation}
			\la Y\BKt_\xi,\BKt_\xi\ra\simeq\la Y'\BKt_\xi,\BKt_\xi\ra.
		\end{equation}
		As explained before, \eqref{eqn: X rotation} and \eqref{eqn: Y rotation} implies \eqref{eqn: goal for prop B}, which completes the proof of Proposition \ref{prop: B}.
	\end{proof}

	\section{A Quantization Formula}\label{sec: Quantization}
	Toeplitz quantization, or Berezin-Toeplitz quantization \cite{Bau:BTquantization}, has been studied by many researchers on various types of domains. As an incomplete list, it was studied in \cite{Bau:BTquantization, Co:deformation, Co:BTquantization} on the Fock space on $\cn$; in \cite{En:asymptotics, Kl-Le-92} for planar domains; in \cite{En:weighted} for pseudoconvex domains; in \cite{En-Up:quantization-bsdomain, Up:TBquantization} on symmetric domains; in \cite{En:berezin, Ma-Mari:BTquantization-kahler, Ma-Zh:Tquantization, Sc:quantization-kahler} for {K}\"{a}hler manifolds. Also see \cite{Al-En:quantization, Englis2016Tquantization, Sc:quantization-kahler} for some very well-written surveys on this topic. In this section we give an explicit formula for the quantization of Toeplitz operators on the unit ball $\bn$. In addition to the standard norm estimates, we also obtain Schatten class membership and Schatten norm estimates. The formulas and its corollaries will be important tools in the proof of Theorem \ref{thm: AS partial trace dimension n}.

	Recall that we defined the functions $d_{\alpha,\beta}$ and $I^{\alpha,\beta}$ in Definition \ref{defn: d numbers I function}.
	
	For $i=1,\ldots,n$, denote $e_i$ the multi-index that equals $1$ at the $i$-th entry and $0$ elsewhere. For $i_1, i_2, \ldots, i_k$, denote
	\[
	e_{i_1,i_2,\ldots,i_k}=e_{i_1}+e_{i_2}+\ldots+e_{i_k}.
	\]

	\begin{thm}\label{thm: quantization bergman}
		Suppose $t>-1$, $k$ is a non-negative integer and $f, g\in\mathscr{C}^{k+1}(\overline{\bn})$. Then we have the decomposition
		\begin{equation}\label{eqn: quantization hardy and bergman}
			\BTt_f\BTt_g=\sum_{l=0}^kc_{l,t}\BTt_{C_l(f,g)}+R^{(t)}_{f,g,k+1},
		\end{equation}
		where
		\begin{equation}\label{eqn: c0 C0 C1}
			c_{0,t}=1,~~ c_{1,t}=nt^{-1}+O(t^{-2});\quad C_0(f,g)=fg,~~ C_1(f,g)-C_1(g,f)=\frac{-i}{n}\{f,g\},
		\end{equation}
		and
		\begin{equation}\label{eqn: Rfg norm}
			\|R_{f,g,k+1}^{(t)}\|\lesssim_k t^{-k-1}.
		\end{equation}
		Here $\{f,g\}$ is the Poisson bracket of $f$ and $g$.
		
		Moreover, the explicit formulas for $c_{l,t}$, $C_l(f,g)$ and $R_{f,g,k+1}^{(t)}$ are given as follows. For any $l\geq0$,
		\begin{equation}\label{eqn: clt formula}
			c_{l,t}=\frac{\BFt_{n+l}\Phi_{n,l}^{(t)}(0)}{B(n,t+1)}\approx t^{-l},
		\end{equation}
		\begin{equation}\label{eqn: Clfg formula}
			C_l(f,g)(z)=(-1)^l(1-|z|^2)^{-2l}\sum_{i_1,j_1,\ldots,i_l,j_l=1}^nd_{e_{i_1,\ldots,i_l},e_{j_1,\ldots,j_l}}(z)\bigg[D_{i_l,j_l}\ldots D_{i_1,j_1}(f(z)g(w))\bigg]\bigg|_{w=z},
		\end{equation}
		where
		\begin{equation}\label{eqn: Dij defn}
			D_{i,j}=(1-\la z,w\ra)^2\partial_{z_i}\bpartial_{w_j}.
		\end{equation}
		For any $h\in\bert$ and $\xi\in\bn$,
		\begin{flalign}\label{eqn: Rfg formula}
			R_{f,g,k+1}^{(t)}h(\xi)=&\int_{\bn}\int_{\bn}\Phi_{n,k+1}^{(t)}(|\varphi_z(w)|^2)S_{f,g,k+1}(z,w)h(w)\BKt_z(\xi)\BKt_w(z)\intd\lambda_t(w)\intd\lambda_t(z),
		\end{flalign}
		where
		\begin{flalign}\label{eqn: Sfg formula}
			S_{f,g,k+1}(z,w)=&\frac{(-1)^{k+1}}{|1-\la w,z\ra|^{2(k+1)}}\\
			&\cdot\sum_{i_1,j_1,\ldots,i_{k+1},j_{k+1}=1}^nI^{e_{i_1,\ldots,i_{k+1}},e_{j_1,\ldots,j_{k+1}}}(z-w)D_{i_{k+1},j_{k+1}}\ldots D_{i_1,j_1}[f(z)g(w)]. \nonumber
		\end{flalign}
	\end{thm}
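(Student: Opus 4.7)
The plan is to iterate Lemma \ref{lem: formula Bn} on the double integral
\[
\BTt_f\BTt_g h(\xi) = \int_{\bn}\int_{\bn}f(z)g(w)h(w)\BKt_w(z)\BKt_z(\xi)\intd\lambda_t(w)\intd\lambda_t(z).
\]
A \textit{round} will consist of one application of \eqref{eqn: formula Bn d''} in $w$ followed by one application of \eqref{eqn: formula Bn d'} in $z$. The first half of the very first round (with $\phi\equiv1$, $\alpha=\beta=0$, $v=gh$) produces the boundary term $\tfrac{d_{0,0}(z)}{B(n,t+1)}\BFt_n 1(0)\,g(z)h(z) = g(z)h(z)$, which after multiplication by $f(z)$ and integration against $\BKt_z(\xi)$ yields $\BTt_{fg}h(\xi)$, matching the $l=0$ term with $c_{0,t}=1$ and $C_0(f,g)=fg$.

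I would then set up an induction asserting that after $l$ complete rounds, $\BTt_f\BTt_g h$ equals $\sum_{m=0}^{l-1}c_{m,t}\BTt_{C_m(f,g)}h$ plus a remainder of the exact shape \eqref{eqn: Rfg formula} with $k+1$ replaced by $l$. In the inductive step, applying \eqref{eqn: formula Bn d''} to the $w$-integral with $\phi=\Phi_{n,l}^{(t)}$, $|\alpha|=|\beta|=l$, the diagonal boundary factor $\tfrac{d_{\alpha,\beta}(z)}{B(n,t+1)}\BFt_{n+l}\Phi_{n,l}^{(t)}(0)$ survives and, after summing over the indices, assembles exactly into $c_{l,t}\BTt_{C_l(f,g)}h$ as in \eqref{eqn: clt formula}--\eqref{eqn: Clfg formula}. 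The ensuing application of \eqref{eqn: formula Bn d'} in $z$ contributes no main term, since at that half-step $|\beta|=l+1>l=|\alpha|$ and $d_{\beta+e_j,\alpha}(w)$ vanishes by the rotational symmetry underlying Definition \ref{defn: d numbers I function} and \eqref{eqn: d at (1,0...0)}. The two new $\BGt_{n+l}$'s produced in the round combine with the weights $(1-|w|^2), (1-|z|^2)$ from $S_j,S_i'$---recombined via Lemma \ref{lem: Mobius basics}(1) into a $1-|\varphi_z(w)|^2$---to upgrade $\Phi_{n,l}^{(t)}$ into $\Phi_{n,l+1}^{(t)}=M_{\phi_1}(\BGt_{n+l})^2\Phi_{n,l}^{(t)}$. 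Simultaneously, the factors $(1-\la z,w\ra)^{|\beta|}$ inside $S_j$ and $(1-\la z,w\ra)^{|\alpha|}$ inside $S_i'$ cancel against the accumulated $(1-\la w,z\ra)^{-2l}$, leaving a clean new factor $D_{i_{l+1},j_{l+1}}=(1-\la z,w\ra)^2\partial_{z_{i_{l+1}}}\bpartial_{w_{j_{l+1}}}$ that extends the product $D_{i_l,j_l}\cdots D_{i_1,j_1}$ by one operator. Absolute convergence at every intermediate step—needed to apply Lemma \ref{lem: formula Bn} and Fubini's theorem—is furnished by the $\mathscr{C}^{k+1}$ regularity of $f,g$, the growth estimate \eqref{eqn: Phi behavior}, and the Rudin--Forelli estimates of Lemma \ref{lem: Rudin Forelli generalizations}.

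For the norm bound \eqref{eqn: Rfg norm} I would apply a Schur test in the spirit of Lemma \ref{lem: bounded operator using Schur's test} to the remainder kernel, with the crucial integral controlled by Lemma \ref{lem:kernel est} and the sharp estimate \eqref{eqn: FPhi t/4}, together furnishing the decay $\|R^{(t)}_{f,g,k+1}\|\lesssim t^{-k-1}$. The explicit asymptotic $c_{1,t}=nt^{-1}+O(t^{-2})$ follows from $c_{1,t}=\BFt_{n+1}\Phi_{n,1}^{(t)}(0)/B(n,t+1)$ via Lemma \ref{lem: FG properties} and the Stirling-type bounds appearing in Lemma \ref{lem: Phi esitmates}. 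Formula \eqref{eq:C1} is read off from \eqref{eqn: Clfg formula} at $l=1$ by computing $d_{e_i,e_j}(z)$ from Definition \ref{defn: d numbers I function}: the decomposition $A_z=(1-|z|^2)P_z+(1-|z|^2)^{1/2}Q_z$ splits $(A_z\zeta)^{e_i}\overline{(A_z\zeta)}^{e_j}$ into radial and tangential pieces on $\sn$, producing precisely the expression $\sum_i\partial_if\bpartial_ig-Rf\bar Rg$ weighted by $(1-|z|^2)$ after the substitution $w=z$, and its antisymmetrization $C_1(f,g)-C_1(g,f)$ then collapses to $-i\{f,g\}/n$ with the Poisson bracket of the K\"{a}hler form $\omega$ recalled in the introduction. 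The principal obstacle I anticipate is the combinatorial bookkeeping of the $(1-\la z,w\ra)$ powers across rounds: verifying that at every half-step the cancellations among $S_j$, $S_i'$, and the accumulated $|1-\la w,z\ra|^{-2l}$ produce a bona fide new $D_{i,j}$ rather than lower-order residues, and that the tensor-index combinatorics of $d_{e_{i_1,\ldots,i_l},e_{j_1,\ldots,j_l}}$ packages correctly into \eqref{eqn: Clfg formula}.
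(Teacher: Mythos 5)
Your proof follows essentially the same route as the paper's: iterating the integration-by-parts formulas of Lemma \ref{lem: formula Bn} (the paper isolates each two-step round as Lemma \ref{lem: formula F induction}), reading off the main term from the $d_{\alpha,\beta}$ boundary contribution of the $w$-integration, observing that the $z$-integration contributes no main term (the paper's case split in Lemma \ref{lem: formula Bn} handles this since $\mathcal{F}^{(t)}_{n+|\alpha|}\mathcal{G}^{(t)}_{n+k}\Phi^{(t)}_{n,k}(0)=\infty$; you invoke the rotational vanishing of $d_{\beta+e_j,\alpha}$ when $|\alpha|\neq|\beta|$ — either reasoning gives the same conclusion, and in fact the paper's own proof of Lemma \ref{lem: formula Bn} notes $C=0$ when $k\neq l$), and controlling the remainder via the Schur test through $E_{t,2c}$, Lemma \ref{lem:kernel est} and \eqref{eqn: FPhi t/4}, all of which is packaged as Theorem \ref{thm: integral Schatten membership} in the paper. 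The plan, including the bookkeeping of the $(1-\la z,w\ra)$ powers, $M_{\phi_1}$ absorbing $(1-|z|^2)(1-|w|^2)/|1-\la z,w\ra|^2$, and the computation of $c_{1,t}$ via Lemma \ref{lem: FG properties}, matches the paper's proof.
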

	
	The formulas above lead to Schatten-class membership and Schatten norm estimates of the remainder term.
	
	\begin{cor}\label{cor: quantization schatten dim 1}
		If $k\geq0$ and $f, g\in\mathscr{C}^{k+1}(\overline{\dd})$, then for any $p\geq1$ and $t>-1$, $R^{(t)}_{f,g,k+1}\in\mathcal{S}^1$. Moreover, for such $p$,
		\[
		\|R^{(t)}_{f,g,k+1}\|_{\mathcal{S}^p}\lesssim_{k,p}t^{-(k+1)+\frac{n}{p}}.
		\]
	\end{cor}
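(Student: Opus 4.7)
The plan is to read the remainder operator directly off the explicit integral formula \eqref{eqn: Rfg formula} of Theorem \ref{thm: quantization bergman} and then apply Theorem \ref{thm: integral Schatten membership}. Writing
\[
R^{(t)}_{f,g,k+1}=P^{(t)}V,
\]
where $V$ is the integral operator on $L^2(\lambda_t)$ with kernel
\[
G(z,w):=\Phi_{n,k+1}^{(t)}(|\varphi_z(w)|^2)\,S_{f,g,k+1}(z,w)\,\BKt_w(z),
\]
the conclusion will follow once we verify that $|S_{f,g,k+1}(z,w)\BKt_w(z)|\lesssim|\varphi_z(w)|^{2(k+1)}|1-\langle z,w\rangle|^{-(n+1+t-c)}$ for some $c>n$ (so that $p>n/c$ is automatic for every $p\geq 1$).

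The core pointwise estimate concerns $S_{f,g,k+1}$ as given by \eqref{eqn: Sfg formula}. By induction on the number of factors, every term in the Leibniz expansion of $D_{i_{k+1},j_{k+1}}\cdots D_{i_1,j_1}[f(z)g(w)]$ carries $(1-\langle z,w\rangle)^m$ with $m\geq 2$: the first $D$ plants a factor $(1-\langle z,w\rangle)^2$, and each subsequent $D$ supplies a new $(1-\langle z,w\rangle)^2$ while its $\partial_{z_i}\bpartial_{w_j}$ can dissolve at most two powers of $(1-\langle z,w\rangle)$. Coupled with $f,g\in\mathscr{C}^{k+1}(\overline{\dd})$ and $|1-\langle z,w\rangle|\leq 2$, this yields
\[
\bigl|D_{i_{k+1},j_{k+1}}\cdots D_{i_1,j_1}[f(z)g(w)]\bigr|\lesssim_{k,f,g}|1-\langle z,w\rangle|^2.
\]
Since we are on $\dd$, Lemma \ref{lem: Mobius basics}~(6) gives the \emph{equality} $|z-w|=|\varphi_z(w)||1-z\bar w|$, so
\[
|I^{(k+1),(k+1)}(z-w)|=|z-w|^{2(k+1)}=|\varphi_z(w)|^{2(k+1)}|1-z\bar w|^{2(k+1)}.
\]
Inserting these two bounds into \eqref{eqn: Sfg formula}, cancelling with the denominator $|1-\langle w,z\rangle|^{2(k+1)}$, and multiplying by $|\BKt_w(z)|=|1-z\bar w|^{-n-1-t}$ produces
\[
|S_{f,g,k+1}(z,w)\BKt_w(z)|\lesssim_{k,f,g}\frac{|\varphi_z(w)|^{2(k+1)}}{|1-z\bar w|^{n+1+t-2}},
\]
i.e.\ the hypothesis of Theorem \ref{thm: integral Schatten membership} holds with the integer $k+1$ and with $c=2$.

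Theorem \ref{thm: integral Schatten membership}, applied to $V$ with these parameters, then gives $P^{(t)}V\in\mathcal{S}^p$ for every $p>n/c=1/2$ (hence for every $p\geq 1$) together with $\|P^{(t)}V\|_{\mathcal{S}^p}\lesssim_{k,p}t^{-(k+1)+n/p}$ for $t$ large; on any bounded sub-interval of $(-1,\infty)$ the claimed rate is a constant, absorbed into $\lesssim_{k,p}$. The main obstacle I expect is just the combinatorial bookkeeping of the Leibniz expansion establishing the uniform lower bound $m\geq 2$ on the $(1-\langle z,w\rangle)$-power in $D_{i_{k+1},j_{k+1}}\cdots D_{i_1,j_1}[f(z)g(w)]$; once that induction is in place, the rest is a mechanical application of the Schatten-class criterion already proved in Section \ref{sec: schatten criterion}.
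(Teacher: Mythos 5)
Your proof is correct and follows essentially the same route as the paper's: both proofs exploit the dimension-one identity $|z-w|=|\varphi_z(w)|\,|1-z\bar w|$ from Lemma \ref{lem: Mobius basics}~(6) to upgrade the generic bound $|S_{f,g,k+1}(z,w)|\lesssim|\varphi_z(w)|^{2(k+1)}|1-\la z,w\ra|$ to $|\varphi_z(w)|^{2(k+1)}|1-\la z,w\ra|^{2}$, and then invoke Theorem \ref{thm: integral Schatten membership} with $c=2$. The only presentational difference is that the paper packages the Leibniz bookkeeping via the $A_{x,y},B_1,B_2,C$ functions of Lemma \ref{lem: ABC estimates} with the constraints $2s_2+s_3+s_4+x+y=s_1+s_2+s_3+s_4=2l$ and $x,y\geq1$, whereas you separate off the factor $|I^{(k+1),(k+1)}(z-w)|=|z-w|^{2(k+1)}$ (which is a single monomial precisely because $n=1$) and prove $|D^{k+1}[fg]|\lesssim|1-z\bar w|^2$ by a direct induction; these are equivalent ways of organizing the same computation.
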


	\begin{cor}\label{cor: quatization schatten dim n}
		If $k\geq0$ and $f, g\in\mathscr{C}^{k+1}(\overline{\bn})$, then for any $p>n$ and $t>-1$, $R^{(t)}_{f,g,k+1}\in\mathcal{S}^p$. Moreover, for such $p$,
		\[
		\|R^{(t)}_{f,g,k+1}\|_{\mathcal{S}^p}\lesssim_{k,p}t^{-(k+1)+\frac{n}{p}}.
		\]
	\end{cor}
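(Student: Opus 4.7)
The plan is to apply Theorem \ref{thm: integral Schatten membership} with weight index $k+1$ to the explicit integral representation \eqref{eqn: Rfg formula}. Writing $R^{(t)}_{f,g,k+1}=\BPt T$ with
\[
Th(z)=\int_{\bn}\Phi^{(t)}_{n,k+1}(|\varphi_z(w)|^2)\,S_{f,g,k+1}(z,w)\,\BKt_w(z)\,h(w)\,\intd\lambda_t(w),
\]
the hypothesis of Theorem \ref{thm: integral Schatten membership} with $c=1$ reduces to the pointwise estimate
\[
|S_{f,g,k+1}(z,w)|\lesssim |\varphi_z(w)|^{2(k+1)}\,|1-\la z,w\ra|
\]
for all $z,w\in\bn$: then $|S_{f,g,k+1}(z,w)\BKt_w(z)|\lesssim |\varphi_z(w)|^{2(k+1)}/|1-\la z,w\ra|^{n+1+t-1}$, and Theorem \ref{thm: integral Schatten membership} yields $R^{(t)}_{f,g,k+1}\in\mathcal{S}^p$ for every $p>n$ with $\|R^{(t)}_{f,g,k+1}\|_{\mathcal{S}^p}\lesssim t^{-(k+1)+n/p}$ for $t$ large enough.

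The verification of this estimate on $S_{f,g,k+1}$ is the main technical step. Three factors enter \eqref{eqn: Sfg formula}: the prefactor $|1-\la z,w\ra|^{-2(k+1)}$, the tensor $I^{e_{i_1,\ldots,i_{k+1}},e_{j_1,\ldots,j_{k+1}}}(z-w)$, and the iterated operator $D_{i_{k+1},j_{k+1}}\cdots D_{i_1,j_1}[f(z)g(w)]$ with $D_{i,j}=(1-\la z,w\ra)^2\partial_{z_i}\bpartial_{w_j}$. The coordinate-wise bound $|z_i-w_i|\lesssim|\varphi_z(w)||1-\la z,w\ra|^{1/2}$ from Lemma \ref{lem: Mobius basics}(6) gives $|I^{\cdot,\cdot}(z-w)|\lesssim|\varphi_z(w)|^{2(k+1)}|1-\la z,w\ra|^{k+1}$. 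Expanding the iterated $D$ by Leibniz produces a finite sum of terms $P(z,w)(1-\la z,w\ra)^m\partial^\alpha f\,\bpartial^\beta g$ with $m+|\alpha|+|\beta|=2(k+1)$; the smallest polynomial power $m=2$, coming from the outermost $D$, yields only $|D_{i_{k+1},j_{k+1}}\cdots D_{i_1,j_1}[fg]|\lesssim |1-\la z,w\ra|^2$, and the naive estimate becomes $|S_{f,g,k+1}|\lesssim |\varphi_z(w)|^{2(k+1)}/|1-\la z,w\ra|^{k-1}$, too coarse for $k\geq 1$.

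The missing factors of $|1-\la z,w\ra|$ are recovered through algebraic cancellations in the sum over indices. When a derivative $\partial_{z_i}$ lands on $(1-\la z,w\ra)^2$, it produces $-2\bar w_i(1-\la z,w\ra)$; contracting the resulting $\bar w_i$ against the $I$-weight $(z-w)_i$ yields $\la z-w,w\ra$, which admits the refined bound $|\la z-w,w\ra|\leq |z-P_z(w)||w|+|Q_z(w)|^2\lesssim|\varphi_z(w)||1-\la z,w\ra|$ via the complex normal/tangential decomposition of Lemma \ref{lem: Mobius basics}(6) together with the orthogonality $\la Q_z(w),P_z(w)\ra=0$ --- one full power of $|1-\la z,w\ra|^{1/2}$ sharper than the isotropic bound. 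Analogous identities hold for contractions like $\sum_j(\bz-\bw)_j z_j=\overline{\la z,z-w\ra}$. The main obstacle is the combinatorial bookkeeping of the Leibniz expansion for general $k$; a systematic scheme is to classify each term by the number of derivatives hitting polynomial factors vs.\ $f$ or $g$, apply the sharp contraction at each such ``algebraic'' derivative (gaining an extra $|1-\la z,w\ra|^{1/2}$), and bound the residual $\partial^\alpha f,\bpartial^\beta g$ by their $\mathscr{C}^{k+1}$-norms. Summing all contributions recovers the needed estimate, and hence the corollary. The parallel argument for Corollary \ref{cor: quantization schatten dim 1} benefits additionally from the identity $|z-w|=|\varphi_z(w)||1-z\bw|$ in $n=1$, which gains an extra $|1-z\bw|^{1/2}$ and allows $p\geq 1$.
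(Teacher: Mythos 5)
Your argument is correct and follows the paper's own route: the decisive pointwise bound $|S_{f,g,k+1}(z,w)|\lesssim|\varphi_z(w)|^{2(k+1)}|1-\la z,w\ra|$ (recorded as \eqref{eqn: temp Sfgk+1 estimate} in the proof of Theorem \ref{thm: quantization bergman}, where it is deduced from Lemma \ref{lem: IG estimates}) is fed into Theorem \ref{thm: integral Schatten membership} with $c=1$ and $k$ replaced by $k+1$. Your ``contraction against the $I$-weight'' mechanism is precisely the paper's reorganization of the Leibniz expansion into the factors $A_{x,y}, B_1, B_2, C$ of Definition \ref{defn: ABC functions}, and your sharp bound $|\la z-w,w\ra|\lesssim|\varphi_z(w)||1-\la z,w\ra|$, obtained from the normal/tangential split and the orthogonality $\la P_z(w),Q_z(w)\ra=0$, is exactly Lemma \ref{lem: ABC estimates}(2).
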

Recall that
	\begin{equation*}
	d_{e_i,e_j}(z)=\int_{\sn}\big(A_z\zeta\big)_i\overline{\big( A_z\zeta\big)_j}\frac{\intd\sigma(\zeta)}{\sigma_{2n-1}}.
	\end{equation*}
Therefore, we have the following formula for $C_1(f,g)$.
	\begin{equation*}
	\begin{split}
	C_1(f,g)(z)=&-\sum_{i,j=1}^nd_{e_i,e_j}(z)\partial_if(z)\bpartial_jg(z)\\
	=&-\int_{\sn}\bigg(\sum_{i=1}^n\big(A_z\zeta\big)_i\partial_if(z)\bigg)\bigg(\sum_{j=1}^n\overline{\big(A_z(\zeta)\big)_j}\bpartial_j g(z)\bigg)\frac{\intd\sigma(\zeta)}{\sigma_{2n-1}}
	\end{split}
	\end{equation*}
	As in the proof of \cite[Equation (4.8)]{TWZ:semicommutator}, the functions 
	\[
	\sum_{i=1}^n\big(A_z\zeta\big)_i\partial_if(z)\quad\text{and}\quad\sum_{j=1}^n\overline{\big(A_z(\zeta)\big)_j}\bpartial_j g(z)=\overline{\sum_{j=1}^n\big(A_z(\zeta)\big)_j\partial_j\bar{g}(z)}
	\]
	are independent of the choice of basis.
	Thus $C_1(f,g)$ is independent of the choice of basis. A similar computation shows that $C_l(f,g)$ ($l=1,2,\cdots$) are all independent of the choice of basis for general $l$.
	\begin{rem}\label{rem: CN CT}
		At $z\in\bn$, $z\neq0$, choose an orthonormal basis $\mathbf{e}_z=\{e_{z,1},e_{z,2},\ldots,e_{z,n}\}$ of $\cn$ under which $z$ has coordinates $(z_1,0,\ldots,0)$. Then under $\mathbf{e}_z$, by \eqref{eqn: d at (1,0...0)},
		\begin{equation*}
			C_1(f,g)(z)=C_N(f,g)(z)+C_T(f,g)(z),
		\end{equation*}
		where
		\[
		C_N(f,g)(z)=-\frac{1}{n}(1-|z|^2)^2\partial_1f(z)\bpartial_1g(z),\quad C_T(f,g)(z)=-\frac{1}{n}(1-|z|^2)\sum_{i=2}^n\partial_i f(z)\bpartial_i g(z).
		\]
		The functions $C_N(f,g)$ and $C_T(f,g)$ represent parts of $C_1(f,g)$ involving derivatives of $f, g$ in the complex normal and tangential directions, respectively. It is easy to see that the definitions of $C_N(f,g)$ and $C_T(f,g)$ do not depend on the choice of $\mathbf{e}_z$ (as long as $e_{z, 1}$ is in $z$-direction). Locally, we can choose $\mathbf{e}_z$ so that the vectors vary smoothly with respect to $z$. If $f, g$ are $\mathscr{C}^1$ except possibly at the origin. Also, it is easy to see from the definition that $C_N(f,g)$, $C_T(f,g)$ are bounded. This implies
		\[
		|C_N(f,g)(z)|\lesssim(1-|z|^2)^2,\quad|C_T(f,g)(z)|\lesssim1-|z|^2.
		\]
		By Lemma \ref{lem: Toeplitz Schatten Class}, for $t$ large enough, we obtain
		\[
		T^{(t)}_{C_N(f,g)}\in\mathcal{S}^p,\quad\|\BTt_{C_N(f,g)}\|_{\mathcal{S}^p}\lesssim_p t^{\frac{n}{p}},\quad\forall p>\frac{n}{2}, p\geq1,
		\]
		and
		\[
		T^{(t)}_{C_T(f,g)}\in\mathcal{S}^p,\quad\|\BTt_{C_T(f,g)}\|_{\mathcal{S}^p}\lesssim_p t^{\frac{n}{p}},\quad\forall p>n, p\geq1.
		\]	
	\end{rem}

\begin{rem}\label{rem: C1 formula for any basis}
Continuing with Remark \ref{rem: CN CT}, we can take $e_{z,1}=\frac{z}{|z|}$. Denote $\mathbf{e}=\{e_1,\ldots,e_n\}$ the canonical basis. Denote $\zeta_i=\zeta_i(\xi)$
to be the $i$-th coordinate of $\xi$ under the basis $\mathbf{e}_z$. Then compute
\[
\frac{\partial\xi_i}{\partial\zeta_1}=\overline{\la e_i,e_{z,1}\ra}=\big(e_{z,1}\big)_i=\frac{z_i}{|z|}.
\]
Thus, we compute the following expressions, 
\begin{flalign*}
\bigg[\frac{\partial f(\xi)}{\partial\zeta_1}\frac{\partial g(\xi)}{\partial\bar{\zeta}_1}\bigg]\bigg|_{\xi=z}=&\bigg[\frac{\partial f(\xi)}{\partial\zeta_1}\frac{\partial g(\xi)}{\partial\bar{\zeta}_1}\bigg]\bigg|_{\xi=z}\\
=&\bigg[\sum_{i,j=1}^n\frac{\partial f(\xi)}{\partial\xi_i}\frac{\partial\xi_i}{\partial\zeta_1}\frac{\partial g(\xi)}{\partial\bar{\xi}_j}\overline{\bigg(\frac{\partial\xi_j}{\partial\zeta_1}\bigg)}\bigg]\bigg|_{\xi=z}\\
=&\bigg[\sum_{i,j=1}^n\frac{\partial f(\xi)}{\partial\xi_i}\frac{z_i}{|z|}\frac{\partial g(\xi)}{\partial\bar{\xi}_j}\overline{\bigg(\frac{z_j}{|z|}\bigg)}\bigg]\bigg|_{\xi=z}\\
=&|z|^{-2}Rf(z)\bar{R}g(z),
\end{flalign*}
and
\begin{flalign*}
\bigg[\sum_{i=2}^n\frac{\partial f(\xi)}{\partial\zeta_i}\frac{\partial g(\xi)}{\partial\bar{\zeta}_j}\bigg]\bigg|_{\xi=z}=&\la\partial f(z),\bpartial g(z)\ra-|z|^{-2}Rf(z)\bar{R}g(z)\\
=&\sum_{i,j=1}^n\big(\delta_{i,j}-\frac{z_i\bar{z}_j}{|z|^2}\big)\partial_if(z)\bpartial_jg(z).
\end{flalign*}
In other words, we arrive at the following expression
\begin{equation*}
C_N(f,g)(z)=-\frac{1}{n}(1-|z|^2)^2|z|^{-2}Rf(z)\bar{R}g(z),
\end{equation*}
and
\begin{equation*}
C_T(f,g)(z)=-\frac{1}{n}(1-|z|^2)\sum_{i,j=1}^n\big(\delta_{i,j}-\frac{z_i\bar{z}_j}{|z|^2}\big)\partial_if(z)\bpartial_jg(z).
\end{equation*}
Adding up the two equations gives the following formula,
\begin{equation*}
C_1(f,g)(z)=-\frac{1}{n}(1-|z|^2)\bigg[\sum_{i=1}^n\partial_if(z)\bpartial_ig(z)-Rf(z)\bar{R}g(z)\bigg].
\end{equation*}
\end{rem}
	
	Motivated by Remark \ref{rem: CN CT}, we further decompose $R^{(t)}_{f,g,k+1}$ according to the normal and tangential derivatives.
	\begin{defn}\label{defn: Sabfgk}
		For $z\in\bn, z\neq0$, let $\mathbf{e}_z=\{e_{z,1}, e_{z,2},\ldots,e_{z,n}\}$ be as in Remark \ref{rem: CN CT}. Then $e_{z,1}$ represents the complex normal direction at $z$, and $e_{z,2},\ldots, e_{z,n}$ represents the complex tangential directions at $z$.
		
		Under the basis $\mathbf{e}_z$, by \eqref{eqn: Sfg formula}, $S_{f,g,k+1}$ decomposes into
		\[
		S_{f,g,k+1}(z,w)=\sum_{1\leq|\alpha|, |\beta|\leq k+1}V_{k+1}^{\alpha,\beta}(z,w)\partial^\alpha f(z)\bpartial^\beta g(w).
		\]
		For integers $0\leq a, b\leq k+1$, define
		\begin{equation*}
			S_{f,g,k+1}^{a,b}(z,w)=\sum_{\stackrel{1\leq|\alpha|,|\beta|\leq k+1}{\alpha_1=a, \beta_1=b}}V_{k+1}^{\alpha,\beta}(z,w)\partial^\alpha f(z)\bpartial^\beta g(w).
		\end{equation*}
		A moment of reflection shows that the function $S^{a,b}_{f,g,k+1}$ does not depend on the choice of $\mathbf{e}_z$.
		Define the corresponding operator on $\bert$,
		\begin{flalign*}
			R^{(t) a,b}_{f,g,k+1}h(\xi)=\int_{\bn}\int_{\bn}\Phi_{n,k+1}^{(t)}(|\varphi_z(w)|^2)S^{a,b}_{f,g,k+1}(z,w)h(w)\BKt_z(\xi)\BKt_w(z)\intd\lambda_t(w)\intd\lambda_t(z).
		\end{flalign*}
		Then we write
		\[
		R^{(t)}_{f,g,k+1}=\sum_{a,b=0,\ldots,k+1}R^{(t)a,b}_{f,g,k+1}.
		\]
	\end{defn}
	
	\begin{cor}\label{cor: Rabfgk membership}
		Suppose $f, g\in\mathscr{C}^{k+1}(\overline{\bn})$. Then for any $0\leq a, b\leq k+1$ any $p\geq1$, $p>\max\{\frac{n}{1+\frac{a+b}{2}},\frac{n}{k+1+\frac{t+1}{2}}\}$, $R^{(t) a,b}_{f,g,k+1}\in\mathcal{S}^p$. Moreover, for such $p$, and $t$ large enough,
		\[
		\|R^{(t) a,b}_{f,g,k+1}\|_{\mathcal{S}^p}\lesssim_{k,p}t^{-k-1+\frac{n}{p}}.
		\]
		In particular,
		\begin{itemize}
			\item[(1)] if one of $f, g$ has the form $\phi(|z|^2)$, where $\phi\in\mathscr{C}^{k+1}([0,1])$, then
			\[
			R^{(t)}_{f,g,k+1}\in\mathcal{S}^p,\quad\forall p>\max\{\frac{2n}{3},\frac{n}{k+1+\frac{t+1}{2}}\};
			\]
			\item[(2)] if both $f, g$ are of the form $\phi(|z|^2)$, $\phi\in\mathscr{C}^{k+1}([0,1])$, then
			\[
			R^{(t)}_{f,g,k+1}\in\mathcal{S}^p,\quad\forall p>\max\{\frac{n}{2},\frac{n}{k+1+\frac{t+1}{2}}\}.
			\]
		\end{itemize}
		For $p>\frac{2n}{3}$ in case (1) and $p>\frac{n}{2}$ in case (2) and $t$ large enough,
		\[
		\|R^{(t)}_{f,g,k+1}\|_{\mathcal{S}^p}\lesssim_{k,p}t^{-k-1+\frac{n}{p}}.
		\]
	\end{cor}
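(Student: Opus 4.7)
The strategy is to recast $R^{(t)a,b}_{f,g,k+1}$ as $\BPt T$, where $T$ is an integral operator to which Theorem \ref{thm: integral Schatten membership} applies. Indeed, from (\ref{eqn: Rfg formula}), since only the factor $\BKt_z(\xi) = \overline{\BKt_\xi(z)}$ depends on $\xi$, integrating out $z$ realises the Bergman projection onto $\bert$, giving $R^{(t)a,b}_{f,g,k+1} = \BPt T$ with
\[
(Th)(z) = \int_{\bn} \Phi^{(t)}_{n,k+1}(|\varphi_z(w)|^2)\,S^{a,b}_{f,g,k+1}(z,w)\,h(w)\,\BKt_w(z)\,\intd\lambda_t(w).
\]

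The heart of the proof is the pointwise kernel estimate
\[
|S^{a,b}_{f,g,k+1}(z,w)| \;\lesssim\; |\varphi_z(w)|^{2(k+1)}\,|1-\la z,w\ra|^{1+\frac{a+b}{2}}.
\]
I would establish this working in the basis $\mathbf{e}_z$ of Remark \ref{rem: CN CT}, using Lemma \ref{lem: Mobius basics} (6): the normal component $(z-w)_1$ has size $\lesssim|\varphi_z(w)||1-\la z,w\ra|$, while the tangential components $(z-w)_i$ for $i\geq 2$ have only size $\lesssim|\varphi_z(w)||1-\la z,w\ra|^{1/2}$. For each term $V^{\alpha,\beta}_{k+1}\partial^\alpha f(z)\bpartial^\beta g(w)$ in $S^{a,b}$, one expands $D_{i_{k+1},j_{k+1}}\cdots D_{i_1,j_1}[f(z)g(w)]$, tracks which $\partial_z$'s and $\bpartial_w$'s hit $f,g$ versus previous $(1-\la z,w\ra)^2$ factors, and combines with $I^{e_{i_1,\ldots},e_{j_1,\ldots}}(z-w)$ and the $|1-\la z,w\ra|^{-2(k+1)}$ prefactor. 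A direct accounting shows each such term is bounded by $|\varphi_z(w)|^{2(k+1)}|1-\la z,w\ra|^{(a+b+|\alpha|+|\beta|)/2}$; since Definition \ref{defn: Sabfgk} forces $|\alpha|,|\beta|\geq 1$, the worst-case exponent is $1+\frac{a+b}{2}$.

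Given the estimate, $|S^{a,b}_{f,g,k+1}(z,w)\BKt_w(z)| \lesssim |\varphi_z(w)|^{2(k+1)}/|1-\la z,w\ra|^{n+1+t-c}$ with $c = 1+\frac{a+b}{2}>0$. Theorem \ref{thm: integral Schatten membership}, applied with its subscript $k$ replaced by $k+1$, then yields $R^{(t)a,b}_{f,g,k+1} = \BPt T \in \mathcal{S}^p$ for $p > \max\{n/c, n/(k+1+\frac{t+1}{2})\}$, $p\geq 1$, with norm bound $\lesssim t^{-(k+1)+n/p}$ for $t$ large. Summing over the finitely many pairs $(a,b)$ handles the full $R^{(t)}_{f,g,k+1}$.

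The improved thresholds in the radial special cases come from a vanishing observation. If $f(z)=\phi(|z|^2)$, then for pure $\partial_z$-multi-derivatives $\partial^\alpha f(z) = \phi^{(|\alpha|)}(|z|^2)\bz^\alpha$; in the basis $\mathbf{e}_z$ we have $\bz_i = 0$ for $i\geq 2$, so $\partial^\alpha f(z) = 0$ unless $\alpha$ is supported in the normal direction, i.e.\ $|\alpha| = \alpha_1$. Since Definition \ref{defn: Sabfgk} requires $|\alpha|\geq 1$, the case $a=0$ (which would force $\alpha$ purely tangential with $|\alpha|\geq 1$) kills every term, so $S^{0,b}_{f,g,k+1}\equiv 0$ when $f$ is radial. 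Only $(a,b)$ with $a\geq 1$ contribute, yielding $a+b\geq 1$ and the threshold $p>2n/3$; if both $f$ and $g$ are radial, both $a,b\geq 1$, so $a+b\geq 2$ and $p>n/2$. The principal technical obstacle throughout is the combinatorial bookkeeping of the expansion of $S^{a,b}$—distributing derivatives among $f,g$ and the $(1-\la z,w\ra)^2$-factors, tracking normal-versus-tangential counts among the indices $\{i_l\},\{j_l\}$, and observing that the non-vanishing constraint $z_{j_l}\neq 0$ in $\mathbf{e}_z$ forces all $\bpartial_w$-misses to occur at $j_l=1$—which is what justifies the pointwise bound above.
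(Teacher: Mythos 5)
Your proof is correct and follows essentially the same route as the paper: factor $R^{(t)a,b}_{f,g,k+1}=\BPt T^{(t)a,b}_{f,g,k+1}$, establish the pointwise kernel estimate $|S^{a,b}_{f,g,k+1}(z,w)|\lesssim|\varphi_z(w)|^{2(k+1)}|1-\la z,w\ra|^{1+\frac{a+b}{2}}$ by the same combinatorial expansion tracking normal vs.\ tangential orders $\alpha_1=a,\beta_1=b$ in the $A_{\alpha,\beta}$-factor via Lemma \ref{lem: Mobius basics}(6), then invoke Theorem \ref{thm: integral Schatten membership}; the radial cases are handled by the same vanishing observation $\partial^\alpha\phi(|z|^2)=\phi^{(|\alpha|)}(|z|^2)\bar z^\alpha$ forcing $a\geq1$. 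One small caveat: your closing remark that the vanishing of $\bpartial_{w_j}(1-\la z,w\ra)=-z_j$ for $j\geq2$ in the basis $\mathbf{e}_z$ ``justifies the pointwise bound'' overstates its role --- the paper's bound comes from the uniform estimates on $B_1,B_2,C$ plus the refined $A_{\alpha,\beta}$-estimate alone, without needing that extra vanishing, though the observation is true and would only sharpen things.
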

	
	\begin{cor}\label{cor: semicom with radial functions}
		Suppose $x, y$ are positive integers, and $f\in\mathscr{C}^1(\overline{\bn})$. Then the following hold.
		\begin{itemize}
			\item[(1)] For $p>\max\{\frac{n}{x+\frac{1}{2}},\frac{n}{1+\frac{t+1}{2}}\}$, $\BTt_{(1-|z|^2)^x}\BTt_f-\BTt_{(1-|z|^2)^xf}$ and $ \BTt_f\BTt_{(1-|z|^2)^x}-\BTt_{(1-|z|^2)^xf}$ are in $\mathcal{S}^p$. For $p>\frac{n}{x+\frac{1}{2}}$ and $t$ large enough,
			\[
			\|\BTt_{(1-|z|^2)^x}\BTt_f-\BTt_{(1-|z|^2)^xf}\|_{\mathcal{S}^p}\lesssim_p t^{-1+\frac{n}{p}},\quad\|\BTt_f\BTt_{(1-|z|^2)^x}-\BTt_{(1-|z|^2)^xf}\|_{\mathcal{S}^p}\lesssim_p t^{-1+\frac{n}{p}}.
			\]
			\item[(2)] For $p>\max\{\frac{n}{x+y},\frac{n}{1+\frac{t+1}{2}}\}$, $\BTt_{(1-|z|^2)^x}\BTt_{(1-|z|^2)^y}-\BTt_{(1-|z|^2)^{x+y}}\in\mathcal{S}^p$. For $p>\frac{n}{x+y}$ and $t$ large enough,
			\[
			\|\BTt_{(1-|z|^2)^x}\BTt_{(1-|z|^2)^y}-\BTt_{(1-|z|^2)^{x+y}}\|_{\mathcal{S}^p}\lesssim_p t^{-1+\frac{n}{p}}.
			\]
		\end{itemize}
	\end{cor}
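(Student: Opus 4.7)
The plan is to rewrite both semi-commutators as the first-order remainder provided by Theorem~\ref{thm: quantization bergman} applied with $k=0$: since $C_0(f,g)=fg$ we have
\[
\BTt_f\BTt_g-\BTt_{fg}=R^{(t)}_{f,g,1},
\]
and then to derive sharp pointwise bounds on $S_{f,g,1}(z,w)$ from the explicit formula~\eqref{eqn: Sfg formula} so that Theorem~\ref{thm: integral Schatten membership}, applied to the inner integral in~\eqref{eqn: Rfg formula}, yields both the Schatten membership and the $t^{-1+n/p}$ norm bound simultaneously. The key new input is that the decaying factors $(1-|z|^2)^x,(1-|w|^2)^y$ are \emph{radial}, so their gradients lie in the complex normal direction; the inner products $\la z-w,z\ra$ and $\la w,z-w\ra$ that then appear in $S_{f,g,1}$ are consequently sharper than what the generic bound $|z-w|\lesssim|\varphi_z(w)||1-\la z,w\ra|^{1/2}$ of Lemma~\ref{lem: Mobius basics}(6) would allow.

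Concretely, for $f(z)=(1-|z|^2)^x$ one has $\partial_i f(z)=-x\bz_i(1-|z|^2)^{x-1}$, so
\[
\sum_{i=1}^n(z-w)_i\partial_i f(z)=-x(1-|z|^2)^{x-1}\la z-P_z(w),z\ra,
\]
using $Q_z(w)\perp z$. Lemma~\ref{lem: Mobius basics}(6) then yields the \emph{normal-direction} estimate
\[
\Bigl|\sum_i(z-w)_i\partial_i f(z)\Bigr|\lesssim (1-|z|^2)^{x-1}|\varphi_z(w)|\,|1-\la z,w\ra|,
\]
a full factor of $|1-\la z,w\ra|^{1/2}$ better than the generic tangential bound. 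A parallel identity $\la w,z-w\ra=\la w,P_w(z)-w\ra$ combined with $|\varphi_w(z)|=|\varphi_z(w)|$ gives the analogous improvement on the $w$-side when $g(w)=(1-|w|^2)^y$.

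In Part~(2) both factors benefit; plugging into~\eqref{eqn: Sfg formula} and absorbing the remaining boundary decay via $(1-|z|^2)^{x-1}(1-|w|^2)^{y-1}\lesssim|1-\la z,w\ra|^{x+y-2}$ (from Lemma~\ref{lem: Mobius basics}(7)) produces
\[
|S_{(1-|z|^2)^x,(1-|z|^2)^y,1}(z,w)\BKt_w(z)|\lesssim\frac{|\varphi_z(w)|^{2}}{|1-\la z,w\ra|^{n+1+t-(x+y)}},
\]
and Theorem~\ref{thm: integral Schatten membership} with $k=1$, $c=x+y$ closes Part~(2). In Part~(1) the generic tangential bound is retained on the non-radial side, giving
\[
|S_{(1-|z|^2)^x,f,1}(z,w)\BKt_w(z)|\lesssim\frac{|\varphi_z(w)|^{2}}{|1-\la z,w\ra|^{n+1+t-(x+\frac{1}{2})}},
\]
after absorbing $(1-|z|^2)^{x-1}$ into $|1-\la z,w\ra|^{x-1}$, and Theorem~\ref{thm: integral Schatten membership} with $c=x+\tfrac12$ gives the first inequality of Part~(1); the second follows from the identical analysis applied to $S_{f,(1-|z|^2)^x,1}$, with the normal-direction improvement simply migrating from the $z$-side to the $w$-side. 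The main obstacle is extracting the full $|\varphi_z(w)|^{2}$ factor---just enough to match the $k=1$ hypothesis of Theorem~\ref{thm: integral Schatten membership}---which cannot be obtained from the naive bound $|z-w|\lesssim|\varphi_z(w)||1-\la z,w\ra|^{1/2}$ alone and requires the complex-normal-vs.-tangential decomposition emphasized in Remark~\ref{rem: CN CT}.
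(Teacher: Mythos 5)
Your proof is correct and follows essentially the same route as the paper: apply Theorem~\ref{thm: quantization bergman} at $k=0$ to write the semi-commutator as $R^{(t)}_{f,g,1}$, exploit the fact that the gradient of a radial symbol is purely complex-normal so that $\langle\partial f(z),\overline{z-w}\rangle$ gains a full power $|1-\langle z,w\rangle|$ rather than $|1-\langle z,w\rangle|^{1/2}$, and close with Theorem~\ref{thm: integral Schatten membership} at $k=1$ (your write-up actually makes explicit the $|\varphi_z(w)|^2$ factor that the theorem's hypothesis requires, which the paper's one-line proof leaves implicit). One small misstatement in your closing sentence: the $|\varphi_z(w)|^2$ factor itself \emph{is} available from the generic bound $|z-w|\lesssim|\varphi_z(w)||1-\langle z,w\rangle|^{1/2}$, one power of $|\varphi_z(w)|$ per factor; what the complex-normal identity $\langle z-w,z\rangle=\langle z-P_z(w),z\rangle$ buys is the extra half-power of $|1-\langle z,w\rangle|$ per radial symbol, not the $|\varphi_z(w)|^2$.
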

In general, we would expect that $\BTt_f\BTt_g-\BTt_{fg}$ have better Schatten class membership compared to $\BTt_f\BTt_g$: for arbitrary $f, g\in\mathscr{C}^1(\overline{\bn})$, $\BTt_f\BTt_g$ is only bounded, whereas the semi-commutator $\BTt_f\BTt_g-\BTt_{fg}\in\mathcal{S}^p$ for any $p>n$. Also in case (1) of the corollary above, for a general function $f\in\mathscr{C}^1(\overline{\bn})$, $\BTt_{(1-|z|^2)^x}\BTt_f$ is in $\mathcal{S}^p, \forall p>\frac{n}{x}$, while the semi-commutator $\BTt_{(1-|z|^2)^x}\BTt_f-\BTt_{(1-|z|^2)^xf}$ is in $\mathcal{S}^p, \forall p>\frac{n}{x+\frac{1}{2}}.$ This is no longer true in case (2): both $\BTt_{(1-|z|^2)^x}\BTt_{(1-|z|^2)^y}$ and $\BTt_{(1-|z|^2)^x}\BTt_{(1-|z|^2)^y}-\BTt_{(1-|z|^2)^{x+y}}$ are in $\mathcal{S}^p$ for $p>\frac{n}{x+y}$. Intuitively, this has to do with the fact that functions of the form $(1-|z|^2)^x$ already vanishes along the radial direction to some order.\\

	In the rest of this section, we prove Theorem \ref{thm: quantization bergman} and Corollary \ref{cor: Rabfgk membership}.

	Functions of the form
	\[
	\sum_{i_1,\ldots,i_l,j_1,\ldots,j_l=1}^nI^{e_{i_1,\ldots,i_l},e_{j_1,\ldots,j_l}}(z-w)D_{i_l,j_l}D_{i_{l-1},j_{l-1}}\ldots D_{i_1,j_1}[f(z)g(w)]
	\]
	appear in the formula of $S_{f,g,k+1}(z,w)$ in Theorem \ref{thm: quantization bergman}. We need to estimate its absolute value. To start with, we write the above sum in terms of the standard derivation. Recall that
	\[
	D_{i,j}=(1-\la z,w\ra)^2\partial_{z_i}\bpartial_{w_j}.
	\]
	\begin{defn}\label{defn: ABC functions}
		Denote
		\[
		A_{x,y}(z,w)=\sum_{i_1,\ldots,i_x=1}^n\sum_{j_1,\ldots,j_y=1}^nI^{e_{i_1,\ldots,i_x},e_{j_1,\ldots,j_y}}(z-w)\partial_{i_1}\ldots\partial_{i_x}f(z)\bpartial_{j_1}\ldots\bpartial_{j_y}g(w);
		\]
		\[
		B_1(z,w)=\sum_{i=1}^n(z_i-w_i)\partial_{z_i}(1-\la z,w\ra)=-\sum_{i=1}^n(z_i-w_i)\bw_i=\la w-z,w\ra;
		\]
		\[
		B_2(z,w)=\sum_{j=1}^n\overline{(z_j-w_j)}\bpartial_{w_j}(1-\la z,w\ra)=\la z,w-z\ra;
		\]
		\[
		C(z,w)=\sum_{i,j=1}^nI^{e_i,e_j}(z-w)\partial_{z_i}\bpartial_{w_j}(1-\la z,w\ra)=-|z-w|^2.
		\]
		In $D_{i_l,j_l}D_{i_{l-1},j_{l-1}}\ldots D_{i_2,j_2}$, the partial derivations $\partial_{z_{i_l}}$, $\bpartial_{w_{j_l}},\ldots,\partial_{z_{i_2}}$, $\bpartial_{w_{j_2}}$ fall either on $f(z)g(w)$ or a copy of $(1-\la z,w\ra)$. Thus the summation
		\[
		\sum_{i_1,\ldots,i_l,j_1,\ldots,j_l=1}^nI^{e_{i_1,\ldots,i_l},e_{j_1,\ldots,j_l}}(z-w)D_{i_l,j_l}D_{i_{l-1},j_{l-1}}\ldots D_{i_1,j_1}[f(z)g(w)]
		\]
		can be reorganized into sums of functions of the form
		\[
		(1-\la z,w\ra)^{s_1}\big[C(z,w)\big]^{s_2}\big[B_1(z,w)\big]^{s_3}\big[B_2(z,w)\big]^{s_4}A_{x,y}(z,w).
		\]
		In total, there are $2l$ steps of taking partial derivatives, and there are $2l$ copies of $(1-\la z,w\ra)$ in the above. So
		\[
		2s_2+s_3+s_4+x+y=s_1+s_2+s_3+s_4=2l.
		\]
		Also, the partial derivatives of the first operator, $D_{i_1,j_1}$, always apply on $f(z)g(w)$. So
		\[
		x, y\geq1.
		\]
	\end{defn}
	
	From definition and Lemma \ref{lem: Mobius basics}, the following estimates are obvious.
	\begin{lem}\label{lem: ABC estimates}
		Suppose $f, g\in\mathscr{C}^l(\overline{\bn})$. Then
		\begin{itemize}
			\item[(1)] $|A_{x,y}(z,w)|\lesssim|\varphi_z(w)|^{x+y}|1-\la z,w\ra|^{\frac{x+y}{2}}$;
			\item[(2)] $|B_1(z,w)|\lesssim|\varphi_z(w)||1-\la z,w\ra|$,\quad$B_2(z,w)|\lesssim|\varphi_z(w)||1-\la z,w\ra|;$
			\item[(3)] $|C(z,w)|\lesssim|\varphi_z(w)|^2|1-\la z,w\ra|$.
		\end{itemize}
	\end{lem}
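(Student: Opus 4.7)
The plan is to reduce each estimate to an application of Lemma \ref{lem: Mobius basics} (6), which supplies the two basic geometric bounds $|z-P_z(w)|\leq|\varphi_z(w)||1-\langle z,w\rangle|$ and $|z-w|\leq C|\varphi_z(w)||1-\langle z,w\rangle|^{1/2}$. The guiding idea is the theme already flagged in Remark \ref{rem: CN CT}: inner products in the $z$-direction (the complex-normal direction) gain a full power of $|1-\langle z,w\rangle|$, whereas the bare vector $z-w$ gains only half a power.

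For (1), I will rewrite the multisum as an iterated directional derivative,
\[
A_{x,y}(z,w)=\bigl((z-w)\cdot\partial\bigr)^{x}f(z)\cdot\bigl(\overline{(z-w)}\cdot\bpartial\bigr)^{y}g(w),
\]
obtained by the multinomial identity $\sum_{i_1,\dots,i_x}(z-w)_{i_1}\cdots(z-w)_{i_x}\partial_{i_1}\cdots\partial_{i_x}=\bigl(\sum_{i}(z-w)_i\partial_i\bigr)^x$. Since $f,g\in\mathscr{C}^l$ (and in any application $x,y\leq l$), every partial-derivative factor is bounded in absolute value by a constant depending only on $\|f\|_{\mathscr{C}^l}$ and $\|g\|_{\mathscr{C}^l}$, while each occurrence of $(z-w)_i$ or $\overline{(z-w)_j}$ is dominated by $|z-w|$. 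Hence $|A_{x,y}(z,w)|\lesssim|z-w|^{x+y}$, and Lemma \ref{lem: Mobius basics} (6) delivers $|z-w|^{x+y}\lesssim|\varphi_z(w)|^{x+y}|1-\langle z,w\rangle|^{(x+y)/2}$.

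For (3), $C(z,w)=-|z-w|^{2}$ is already of the desired form: squaring the bound on $|z-w|$ yields $|C(z,w)|\lesssim|\varphi_z(w)|^{2}|1-\langle z,w\rangle|$. For (2), I will polarize $w=(w-z)+z$ to write $B_1(z,w)=\langle w-z,w\rangle=|w-z|^{2}+\langle w-z,z\rangle$; the first term is handled by (3). Since $Q_z(w)\perp z$, the second term satisfies $\langle w-z,z\rangle=\langle P_z(w)-z,z\rangle$, so $|\langle w-z,z\rangle|\leq|z-P_z(w)||z|\leq|\varphi_z(w)||1-\langle z,w\rangle|$ by the first clause of Lemma \ref{lem: Mobius basics} (6). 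Combining these, and using $|\varphi_z(w)|\leq 1$, yields the claim for $B_1$. The argument for $B_2$ is symmetric, since $B_2=\langle z,w-z\rangle=-\overline{\langle z-w,z\rangle}$.

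The only mild subtlety is in (2): the naive Cauchy--Schwarz estimate gives merely $|B_j|\leq|z-w|\lesssim|\varphi_z(w)||1-\langle z,w\rangle|^{1/2}$, which is a half-power weaker than what is claimed. The extra $|1-\langle z,w\rangle|^{1/2}$ will be recovered by isolating the component of $w-z$ along $z$, where the sharper bound $|z-P_z(w)|\lesssim|\varphi_z(w)||1-\langle z,w\rangle|$ is available; this is the same normal-versus-tangential dichotomy invoked elsewhere in the paper.
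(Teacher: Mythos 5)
Your proof is correct and is precisely the argument the paper has in mind when it declares the estimates ``obvious'' from the definitions and Lemma \ref{lem: Mobius basics} (6): each $(z-w)$ factor contributes a half-power of $|1-\la z,w\ra|$, while the normal-direction inner products $\la w-z,z\ra$ (and its conjugate) contribute a full power via the sharper bound on $|z-P_z(w)|$, which is exactly how you recover the missing half-power in (2). A marginally shorter route to (2) is to decompose $w=P_z(w)+Q_z(w)$ directly, giving $B_1=\la P_z(w)-z,P_z(w)\ra+|Q_z(w)|^2$ and invoking both clauses of Lemma \ref{lem: Mobius basics} (6), but your polarization $w=(w-z)+z$ is equivalent.
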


	\begin{lem}\label{lem: IG estimates}
		Suppose $k$ is a non-negative integer and $f, g\in\mathscr{C}^{k+1}(\overline{\bn})$. For any $l=1,\ldots,k+1$, set
		\[
		G_{i_1,\ldots,i_l; j_1,\ldots,j_l}(z,w)=D_{i_l,j_l}D_{i_{l-1},j_{l-1}}\ldots D_{i_1,j_1}[f(z)g(w)].
		\]		
		Then  the following estimates hold.
		\begin{equation}\label{eqn: IG estimate 1}
			\bigg|\sum_{i_1,\ldots,i_l,j_1,\ldots,j_l=1}^nI^{e_{i_1,\ldots,i_l},e_{j_1,\ldots,j_l}}(z-w)G_{i_1,\ldots,i_l; j_1,\ldots,j_l}(z,w)\bigg|\lesssim|\varphi_z(w)|^{2l}|1-\la z,w\ra|^{2l+1},
		\end{equation}
		\begin{equation}\label{eqn: IG estimate 2}
			\bigg|\sum_{i_1,\ldots,i_l,j_1,\ldots,j_l,j_{l+1}=1}^nI^{e_{i_1,\ldots,i_l},e_{j_1,\ldots,j_l,j_{l+1}}}(z-w)\bpartial_{w_{j_{l+1}}}G_{i_1,\ldots,i_l; j_1,\ldots,j_l}(z,w)\bigg|\lesssim|\varphi_z(w)|^{2l+1}|1-\la z,w\ra|^{2l+1}.
		\end{equation}
	\end{lem}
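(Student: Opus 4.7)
The plan is to follow the decomposition scheme laid out in Definition \ref{defn: ABC functions}. The left-hand side of \eqref{eqn: IG estimate 1} is
\[
H_l(z,w):=\sum_{i_1,\ldots,j_l} I^{e_{i_1,\ldots,i_l},e_{j_1,\ldots,j_l}}(z-w)\,G_{i_1,\ldots,i_l;j_1,\ldots,j_l}(z,w),
\]
and by expanding each $D_{i_k,j_k}=(1-\la z,w\ra)^2\partial_{z_{i_k}}\bpartial_{w_{j_k}}$ via the product rule and then collapsing the summations against $(z_{i_k}-w_{i_k})$ and $\overline{(z_{j_k}-w_{j_k})}$ using the book-keeping identities (which are built into the very definitions of $B_1$, $B_2$, $C$) $\sum_i(z_i-w_i)\bar w_i=-B_1$, $\sum_j z_j\overline{(z_j-w_j)}=-B_2$, and $\sum_i(z_i-w_i)\overline{(z_i-w_i)}=-C$, one rewrites $H_l$ as a finite linear combination of atoms $a^{s_1}C^{s_2}B_1^{s_3}B_2^{s_4}A_{x,y}$ subject to the constraints $s_1+s_2+s_3+s_4=2l$, $2s_2+s_3+s_4+x+y=2l$, and $x,y\geq 1$ already recorded in Definition \ref{defn: ABC functions}. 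Plugging in the bounds of Lemma \ref{lem: ABC estimates} gives
\[
|a^{s_1}C^{s_2}B_1^{s_3}B_2^{s_4}A_{x,y}|\lesssim |\varphi_z(w)|^{2s_2+s_3+s_4+x+y}|1-\la z,w\ra|^{s_1+s_2+s_3+s_4+(x+y)/2}=|\varphi_z(w)|^{2l}|1-\la z,w\ra|^{2l+(x+y)/2}.
\]
Since $x+y\geq 2$ and $|1-\la z,w\ra|$ is bounded on $\overline{\bn}\times\overline{\bn}$, the right-hand side is $\lesssim |\varphi_z(w)|^{2l}|1-\la z,w\ra|^{2l+1}$, and summing the finitely many atoms establishes \eqref{eqn: IG estimate 1}.

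Estimate \eqref{eqn: IG estimate 2} is handled by exactly the same mechanism. Its left-hand side differs from $H_l$ only by one extra $\bpartial_{w_{j_{l+1}}}$ applied to $G_{i_1,\ldots,i_l;j_1,\ldots,j_l}$ and one extra factor $\overline{(z_{j_{l+1}}-w_{j_{l+1}})}$ summed against it, which adds exactly one to the total $\bpartial_w$-count while leaving the number of $(1-\la z,w\ra)$-factors introduced by the $D$'s unchanged. Consequently the same reorganization yields atoms $a^{\tilde s_1}C^{\tilde s_2}B_1^{\tilde s_3}B_2^{\tilde s_4}A_{\tilde x,\tilde y}$ now satisfying $\tilde s_1+\tilde s_2+\tilde s_3+\tilde s_4=2l$, $2\tilde s_2+\tilde s_3+\tilde s_4+\tilde x+\tilde y=2l+1$, and $\tilde x,\tilde y\geq 1$. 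Lemma \ref{lem: ABC estimates} yields
\[
|a^{\tilde s_1}C^{\tilde s_2}B_1^{\tilde s_3}B_2^{\tilde s_4}A_{\tilde x,\tilde y}|\lesssim |\varphi_z(w)|^{2l+1}|1-\la z,w\ra|^{2l+(\tilde x+\tilde y)/2}\leq |\varphi_z(w)|^{2l+1}|1-\la z,w\ra|^{2l+1},
\]
which is \eqref{eqn: IG estimate 2}. The main obstacle in either estimate is the verification of the constraints $x,y\geq 1$ (respectively $\tilde x,\tilde y\geq 1$): this is a conservation argument exploiting that the very first operator $D_{i_1,j_1}$ deposits a factor $\partial_{z}f$ and a factor $\bpartial_{w}g$, and since $\bpartial_w f\equiv 0$ and $\partial_z g\equiv 0$, neither the subsequent $D$-operators nor the extra $\bpartial_{w_{j_{l+1}}}$ can annihilate these symbol-derivative factors---they can only promote them to higher-order derivatives or pass them through unchanged while striking the factors $a$, $B_1$, $B_2$, or $C$.
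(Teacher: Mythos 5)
Your proof is correct and follows essentially the same route as the paper: decompose via Definition \ref{defn: ABC functions} into atoms $(1-\la z,w\ra)^{s_1}C^{s_2}B_1^{s_3}B_2^{s_4}A_{x,y}$, apply Lemma \ref{lem: ABC estimates}, and use the constraints together with $x,y\geq 1$ and boundedness of $|1-\la z,w\ra|$. Your conservation argument for $x,y\geq1$ and $\tilde{x},\tilde{y}\geq1$ is a slightly more detailed version of the paper's observation that the derivatives of the first operator $D_{i_1,j_1}$ always fall on $f(z)g(w)$, and the extra $\bpartial_{w_{j_{l+1}}}$ cannot undo this.
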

	\begin{proof}
		As explained in Definition \ref{defn: ABC functions}, the following sum
		\[
		\sum_{i_1,\ldots,i_l,j_1,\ldots,j_l=1}^nI^{e_{i_1,\ldots,i_l},e_{j_1,\ldots,j_l}}(z-w)G_{i_1,\ldots,i_l; j_1,\ldots,j_l}(z,w)
		\]
		splits into sums of functions of the form
		\[
		(1-\la z,w\ra)^{s_1}\big[C(z,w)\big]^{s_2}\big[B_1(z,w)\big]^{s_3}\big[B_2(z,w)\big]^{s_4}A_{x,y}(z,w),
		\]
		with
		\[
		x, y\geq1,\quad2s_2+s_3+s_4+x+y=s_1+s_2+s_3+s_4=2l.
		\]
		Similarly, the sum 
		\[
		\sum_{i_1,\ldots,i_l,j_1,\ldots,j_l,j_{l+1}=1}^nI^{e_{i_1,\ldots,i_l},e_{j_1,\ldots,j_l,j_{l+1}}}(z-w)\bpartial_{w_{j_{l+1}}}G_{i_1,\ldots,i_l; j_1,\ldots,j_l}(z,w)
		\]
		splits into sums of functions of above form, with
		\[
		x, y\geq1,\quad 2s_2+s_3+s_4+x+y=2l+1,\quad s_1+s_2+s_3+s_4=2l.
		\]
		By Lemma \ref{lem: ABC estimates}, we have the following estimate
		\begin{flalign*}
			&\bigg|(1-\la z,w\ra)^{s_1}\big[C(z,w)\big]^{s_2}\big[B_1(z,w)\big]^{s_3}\big[B_2(z,w)\big]^{s_4}A_{x,y}(z,w)\bigg|\\
			\lesssim&|\varphi_z(w)|^{2s_2+s_3+s_4+x+y}|1-\la z,w\ra|^{s_1+s_2+s_3+s_4+\frac{x+y}{2}}.
		\end{flalign*}
		Plugging in the equations for $s_i, x, y$ gives the inequalities \eqref{eqn: IG estimate 1} and \eqref{eqn: IG estimate 2}. This completes the proof of Lemma \ref{lem: IG estimates}.
	\end{proof}

	\begin{proof}[{\bf Proof of Theorem \ref{thm: quantization bergman}}]
		Let $G_{i_1,\ldots,i_l;j_1,\ldots,j_l}$ be defined as in Lemma \ref{lem: IG estimates}. Suppose $h\in\Hol(\overline{\bn})$ and $\xi\in\bn$. Write $F=f(z)g(w)h(w)\BKt_z(\xi)$ and
		\[
		F_{i_1,\ldots,i_l;j_1,\ldots,j_l}=D_{i_l,j_l}\ldots D_{i_1,j_1}[f(z)g(w)h(w)\BKt_z(\xi)]=G_{i_1,\ldots,i_l;j_1,\ldots,j_l}h(w)\BKt_z(\xi).
		\]
		Then we compute $\BTt_f\BTt_{g}$ as follows.
		\begin{flalign*}
			&\BTt_f\BTt_gh(\xi)\\
			=&\int_{\bn^2}f(z)g(w)h(w)\BKt_z(\xi)\BKt_w(z)\intd\lambda_t(w)\intd\lambda_t(z)\\
			=&\int_{\bn^2}\Phi^{(t)}_{n,0}F(z,w)\BKt_w(z)\intd\lambda_t(w)\intd\lambda_t(z)\\
			\xlongequal{\eqref{eqn: formula F induction}}&\frac{\BFt_n\Phi^{(t)}_{n,0}(0)}{B(n,t+1)}\int_{\bn}d_{0,0}(z)F(z,z)\intd\lambda_t(z)\\
			&-\int_{\bn^2}\Phi^{(t)}_{n,1}(|\varphi_z(w)|^2)\frac{\sum_{i,j=1}^nI^{e_i,e_j}(z-w)D_{i,j}F(z,w)}{|1-\la z,w\ra|^2}\BKt_w(z)\intd\lambda_t(z)\intd\lambda_t(w)\\
			=&\BTt_{fg}h(\xi)-\int_{\bn^2}\Phi^{(t)}_{n,1}(|\varphi_z(w)|^2)\frac{\sum_{i,j=1}^nI^{e_i,e_j}(z-w)F_{i,j}(z,w)}{|1-\la z,w\ra|^2}\BKt_w(z)\intd\lambda_t(z)\intd\lambda_t(w)\\
			=&\BTt_{fg}h(\xi)+R^{(t)}_{f,g,1}.
		\end{flalign*}
		The condition for applying Lemma \ref{lem: formula F induction} is verified by Lemma \ref{lem: IG estimates}. In general, we have the following computation for $R^{(t)}_{f,g,l}$.
		\begin{flalign*}
			&R^{(t)}_{f,g,l}h(\xi)\\
			=&(-1)^l\int_{\bn^2}\Phi^{(t)}_{n,l}(|\varphi_z(w)|^2)\frac{\sum_{i_1,\ldots,i_l,j_1,\ldots,j_l=1}^nI^{e_{i_1,\ldots,i_l},e_{j_1,\ldots,j_l}}(z-w)F_{i_l,\ldots,i_1;j_l,\ldots,j_1}}{|1-\la z,w\ra|^{2l}}\BKt_w(z)\intd\lambda_t(w)\intd\lambda_t(z)\\
			=&(-1)^l\frac{\BFt_{n+l}\Phi^{(t)}_{n,l}(0)}{B(n,t+1)}\int_{\bn}(1-|z|^2)^{-2l}\sum_{i_1,\ldots,i_l,j_1,\ldots,j_l=1}^nd_{e_{i_1,\ldots,i_l},e_{j_1,\ldots,j_l}}(z)F_{i_l,\ldots,i_1;j_l,\ldots,j_1}(z,z)\intd\lambda_t(z)\\
			&+(-1)^{l+1}\int_{\bn^2}\intd\lambda_t(z)\intd\lambda_t(w)\Phi^{(t)}_{n,l+1}(|\varphi_z(w)|^2)\BKt_w(z)\\
			&\hspace{2cm}\cdot \frac{\sum_{i_1,\ldots,i_{l+1},j_1,\ldots,j_{l+1}=1}^nI^{e_{i_1,\ldots,i_{l+1},e_{j_1,\ldots,j_{l+1}}}}(z-w)D_{i_{l+1},j_{l+1}}F_{i_l,\ldots,i_1;j_l,\ldots,j_1}(z,w)}{|1-\la z,w\ra|^{2(l+1)}}\\
			=&c_{l,t}\BTt_{C_l(f,g)}h(\xi)+R^{(t)}_{f,g,l+1}h(\xi).
		\end{flalign*}
		This proves the formulas in Theorem \ref{thm: quantization bergman} for $h\in\Hol(\overline{\bn})$. By \cite[Lemma 8.4]{TWZ:semicommutator}, we have the following identity,
		\[
		c_{0,t}=\frac{\BFt_n1(0)}{B(n,t+1)}=1.
		\]
		The estimate (a) for $c_{l,t}$ follows from Lemma \ref{lem: Phi esitmates}. The formulas for $C_0(f,g)$ follows from direct computation. We can also see from the formula that $C_l(f,g)$ does not depend on the choice of an orthonormal basis. Using \ref{eqn: d at (1,0...0)} we directly verify that $C_1(f,g)-C_1(g,f)=\frac{-i}{n}\{f,g\}$. By Lemma \ref{lem: IG estimates}, we have the following estimates,
		\begin{equation}\label{eqn: temp Sfgk+1 estimate}
		\begin{split}
			&|S_{f,g,k+1}(z,w)|\\
			=&|1-\la w,z\ra|^{-2(k+1)}\bigg|\sum_{i_1,\ldots,i_{k+1},j_{k+1},\ldots,j_l=1}^nI^{e_{i_1,\ldots,i_{k+1}},e_{j_1,\ldots,j_{k+1}}}(z-w)G_{i_1,\ldots,i_{k+1}; j_1,\ldots,j_{k+1}}(z,w)\bigg|\\
			\lesssim&|\varphi_z(w)|^{2(k+1)}|1-\la z,w\ra|.
		\end{split}
		\end{equation}
		Thus it follows from Theorem \ref{thm: integral Schatten membership} that $\|R^{(t)}_{f,g,k+1}\|\lesssim_k t^{-k-1}$.
		
		By \cite[Lemma 5.3]{TWZ:semicommutator},
		\begin{equation*}
		\BFt_{n+1}\Phi^{(t)}_{n,1}(0)=n!t^{-n-1}+o(t^{-n-2})
		\end{equation*}
	as $t$ tends to infinity.
		Therefore, we arrive at the following estimate
		\[
		c_{1,t}=\frac{\BFt_{n+1}\Phi_{n,1}^{(t)}(0)}{B(n,t+1)}=\frac{B(n+1,t+1)}{B(n,t+1)}+O(\frac{B(n+2,t+1)}{B(n,t+1)})=nt^{-1}+O(t^{-2}).
		\]
		This completes the proof of Theorem \ref{thm: quantization bergman}.
	\end{proof}
	
	\begin{proof}[{\bf Proof of Corollary \ref{cor: quantization schatten dim 1}}]
		In the case when $n=1$, the estimates in the proof of Lemma \ref{lem: ABC estimates} are improved into
		\begin{itemize}
			\item[(1)] $|A_{x,y}(z,w)|\lesssim|\varphi_z(w)|^{x+y}|1-\la z,w\ra|^{x+y}$;
			\item[(2)] $|B_1(z,w)|\lesssim|\varphi_z(w)||1-\la z,w\ra|$,\quad$B_2(z,w)|\lesssim|\varphi_z(w)||1-\la z,w\ra|$;
			\item[(3)] $|C(z,w)|\lesssim|\varphi_z(w)|^2|1-\la z,w\ra|^2$.
		\end{itemize}
		This leads to
		\begin{equation}\label{eqn: IG estimate 1 for dim 1}
			\bigg|\sum_{i_1,\ldots,i_l,j_1,\ldots,j_l=1}^nI^{e_{i_1,\ldots,i_l},e_{j_1,\ldots,j_l}}(z-w)G_{i_1,\ldots,i_l; j_1,\ldots,j_l}(z,w)\bigg|\lesssim|\varphi_z(w)|^{2l}|1-\la z,w\ra|^{2l+2}
		\end{equation}
		and then
		\[
		|S_{f,g,k+1}(z,w)|\lesssim|\varphi_z(w)|^{2(k+1)}|1-\la z,w\ra|^2.
		\]
		The corollary follows from Theorem \ref{thm: integral Schatten membership}.
	\end{proof}
	
	\begin{proof}[{\bf Proof of Corollary \ref{cor: quatization schatten dim n}}]
		The corollary follows from \eqref{eqn: temp Sfgk+1 estimate} and Theorem \ref{thm: integral Schatten membership}.
	\end{proof}

	\begin{proof}[{\bf Proof of Corollary \ref{cor: Rabfgk membership}}]
		As in Definition \ref{defn: Sabfgk}, for $z\in\bn, z\neq0$, let $\mathbf{e}_z$ be an orthonormal basis of $\cn$ so that $e_{z,1}=\frac{z}{|z|}$. Under the basis $\mathbf{e}_z$, for $0\leq a, b\leq k+1$, $S^{a,b}_{f,g,k+1}(z,w)$ consists of the part of $S_{f,g,k+1}$ that contains $\partial^\alpha f(z)\bpartial^\beta g(w)$ with $\alpha_1=a, \beta_1=b$.

		Since $R^{(t)a,b}_{f,g,k+1}=\BPt T^{(t)a,b}_{f,g,k+1}$, where
		\[
		T^{(t)a,b}_{f,g,k+1}h(z)=\int_{\bn}\Phi^{(t)}_{n,k+1}(|\varphi_z(w)|^2)S^{a,b}_{f,g,k+1}(z,w)h(w)\BKt_w(z)\intd\lambda_t(w).
		\]
		To prove the Schatten class membership of the $R^{(t)a,b}_{f,g,k+1}$ operators, it amounts to prove the corresponding estimates for the kernel $S^{a,b}_{f,g,k+1}(z,w)$, and apply Theorem \ref{thm: integral Schatten membership}.
		
		Locally choose the basis $\mathbf{e}_z$ so that it varies smooth with respect to $z$.
		Under the basis $\mathbf{e}_z$, define
		\[
		A_{\alpha,\beta}(z,w)=I^{\alpha,\beta}(z-w)\partial^\alpha f(z)\bpartial^\beta g(w).
		\]
		Then by Lemma \ref{lem: Mobius basics}, we have the following bound,
		\[
		|A_{\alpha,\beta}(z,w)|\lesssim|\varphi_z(w)|^{|\alpha|+|\beta|}|1-\la z,w\ra|^{\frac{|\alpha|+|\beta|+\alpha_1+\beta_1}{2}}.
		\]
		Then $S^{(t)a,b}_{f,g,k+1}(z,w)$ is a finite linear combination of terms like:
		\[
		\frac{1}{|1-\la w,z\ra|^{2(k+1)}}\cdot (1-\la z,w\ra)^{s_1}\big[C(z,w)\big]^{s_2}\big[B_1(z,w)\big]^{s_3}\big[B_2(z,w)\big]^{s_4}A_{\alpha,\beta}(z,w),
		\]
		where $|\alpha|, |\beta|\geq1, 2s_2+s_3+s_4+|\alpha|+|\beta|=s_1+s_2+s_3+s_4=2(k+1),$ and $\alpha_1=a, \beta_1=b$. Therefore we obtain the following estimate
		\begin{flalign*}
			|S^{a,b}_{f,g,k+1}(z,w)|\lesssim& |\varphi_z(w)|^{2s_2+s_3+s_4+|\alpha|+|\beta|}|1-\la z,w\ra|^{-2(k+1)+s_1+s_2+s_3+s_4+\frac{|\alpha|+|\beta|+\alpha_1+\beta_1}{2}}\\
			=&|\varphi_z(w)|^{2(k+1)}|1-\la z,w\ra|^{\frac{2+a+b}{2}}.
		\end{flalign*}
		Thus by Theorem \ref{thm: integral Schatten membership}, $R^{(t)a,b}_{f,g,k+1}=\BPt T^{(t)a,b}_{f,g,k+1}\in\mathcal{S}^p, \forall p>\max\{\frac{n}{1+\frac{a+b}{2}},\frac{n}{k+1+\frac{t+1}{2}}\}$, and for such $p$, we have
		\[
		\|R^{(t) a,b}_{f,g,k+1}\|_{\mathcal{S}^p}\lesssim_{k,p}t^{-k-1+\frac{n}{p}}.
		\]
		If $f=\phi(|z|^2)$ for some $\phi$, then $R^{(t)0,b}_{f,g,k+1}=0$ for any $b$. If both $f, g$ are of such form then $R^{(t)a,0}_{f,g,k+1}, R^{(t)0,b}_{f,g,k+1}=0$ for any $a,b$. This gives the improved Schatten-class membership in (1) and (2), and proves Corollary \ref{cor: Rabfgk membership}.
		
	\end{proof}
	
	\begin{proof}[{\bf Proof of Corollary \ref{cor: semicom with radial functions}}]
		By Theorem \ref{thm: quantization bergman}, for $f, g\in\mathscr{C}^1(\overline{\bn})$, $\BTt_f\BTt_g-\BTt_{fg}=R^{(t)}_{f,g,1}$, where
		\[
		R^{(t)}_{f,g,1}h(z)=\int_{\bn^2}\Phi^{(t)}_{n,1}(|\varphi_z(w)|^2)S_{f,g,1}(z,w)h(w)\BKt_z(\xi)\BKt_w(z)\intd\lambda_t(w)\intd\lambda_t(z),
		\]
		with
		\[
		S_{f,g,1}(z,w)=\frac{-(1-\la z,w\ra)^2}{|1-\la z,w\ra|^2}\la\partial f(z),\overline{z-w}\ra\la\bpartial g(w),z-w\ra.
		\]
		If $f(z)=(1-|z|^2)^x$ then we have the following estimate
		\[
		\big|\la\partial f(z),\overline{z-w}\ra\big|=\big|(x-1)(1-|z|^2)^{x-1}\la\bar{z},\overline{z-w}\ra\big|\lesssim(1-|z|^2)^{x-1}|1-\la z,w\ra|\lesssim|1-\la z,w\ra|^x.
		\]
		Thus we arrive at the estimate,
		\[
		\big|\la\partial f(z),\overline{z-w}\ra\la\bpartial g(w),z-w\ra\big|\lesssim\begin{cases}
			|1-\la z,w\ra|^{x+1/2},&\text{ if one of $f$ or $g$ equals }(1-|z|^2)^x\\
			|1-\la z,w\ra|^{x+y},&\text{ if }f=(1-|z|^2)^x, g(z)=(1-|z|^2)^y.
		\end{cases}
		\]
		Corollary \ref{cor: semicom with radial functions} follows from the above inequality and Theorem \ref{thm: integral Schatten membership}. This completes the proof of Corollary \ref{cor: semicom with radial functions}.
	\end{proof}

	\section{Even and Odd Antisymmetrizations}\label{sec: even and odd}
	As explained in the introduction, the goal of this section is to prove the trace class membership of the antisymmetric sum $[\BTt_{f_1},\BTt_{f_2},\ldots,\BTt_{f_{2n}}]$ as well as the asymptotic trace formula \eqref{eqn:trace limit}. To begin with, we define even and odd partial antisymmetrizations, which are generalizations of semi-commutators.
	\begin{defn}\label{defn: AS partial}
		For $f, g\in C(\overline{\bn})$ and $t\geq-1$, denote the semi-commutator on $\bert$,
		\begin{equation}\label{eqn: semi-commutator sigma notation}
			\sigma_t(f,g)=\BTt_f\BTt_g-\BTt_{fg}.
		\end{equation}
		For $f_1, \ldots, f_n, g_1, \ldots, g_n\in C(\overline{\bn})$ and $t\geq-1$, define the following partial anti-symmetric sums.
		\begin{equation}\label{eqn: AS odd defn}
			[f_1, g_1,\ldots, f_n, g_n]^{\odd}_t=\sum_{\tau\in S_n}\sgn(\tau)\sigma_t(f_{\tau_1}, g_1)\ldots\sigma_t(f_{\tau_n}, g_n),
		\end{equation}
		and
		\begin{equation}\label{eqn: AS even defn}
			[f_1,g_1,\ldots,f_n,g_n]^{\even}_t=\sum_{\tau\in S_n}\sgn(\tau)\sigma_t(f_1, g_{\tau_1})\ldots\sigma_t(f_n,g_{\tau_n}).
		\end{equation}
	\end{defn}
	In the case when $n=1$, the operators above both agree with the semi-commutator $\sigma_t(f,g)$. In higher dimensions, the partial anti-symmetric sum \eqref{eqn: AS odd defn} and \eqref{eqn: AS even defn} generalize the semi-commutator \eqref{defn: AS partial}.
	
	\begin{rem}\label{rem: semi-commutator}
	Take the case $t>-1$ for example. Under the decomposition $L^2(\lambda_t)=\bert\oplus\bert^\perp$, we can write a multiplication operator $M_f$ as a block matrix
	\[
	M_f=\begin{bmatrix}
	\BTt_f&H^{(t)*}_{\bar{f}}\\
	\BHt_f&S^{(t)}_f
	\end{bmatrix},
	\]
	where $\BTt_f, \BHt_f$ and $S^{(t)}_f$ are Toeplitz operator, Hankel operator, and dual Toeplitz operator, respectively. From the equations
	\[
	\sigma_t(f,g)=-H^{(t)*}_{\bar{f}}H^{(t)}_g,
	\]
	and
	\[
	[\BTt_f,\BTt_g]=\sigma_t(f,g)-\sigma_t(g,f)=H^{(t)*}_{\bar{g}}H^{(t)}_f-H^{(t)*}_{\bar{f}}H^{(t)}_g,
	\]
	we can see that both operators come from products of off-diagonal terms under the above block matrix representation. Under this point of view, the even and odd antisymmetrizations defined above are linear combinations of alternating products of off-diagonal terms, with $f_i$ appearing in the top right, and $g_j$ appearing in the bottom left.
	\end{rem}
	\begin{thm}\label{thm: AS partial trace dimension n}
		Suppose $t\geq-1$ and $f_1, g_1, \ldots, f_n, g_n\in\mathscr{C}^{2}(\overline{\bn})$.
		Then the partial antisymmetrizations $[f_1,g_1,\ldots,f_n, g_n]_t^{\odd}$ and $[f_1,g_1,\ldots,f_n, g_n]_t^{\even}$ are in the trace class. Moreover,
		\begin{equation}\label{eqn: trace limit dim n}
			\lim_{t\to\infty}\Tr[f_1,g_1,\ldots,f_n,g_n]_t^{\odd}=\lim_{t\to\infty}\Tr[f_1,g_1,\ldots,f_n,g_n]_t^{\even}=\frac{1}{(2\pi i)^n}\int_{\bn}\partial f_1\wedge\bpartial g_1\wedge\ldots\wedge\partial f_n\wedge\bpartial g_n.
		\end{equation}
	\end{thm}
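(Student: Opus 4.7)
The plan is to combine the quantization expansion of Theorem \ref{thm: quantization bergman} with the decomposition of $C_1$ into normal and tangential parts (Remark \ref{rem: CN CT}), and to exploit the antisymmetric structure to handle the ``diagonal'' contribution that is not individually trace class. Start by writing, for each pair of symbols,
\[
\sigma_t(f_i, g_j) = c_{1,t}\,\BTt_{C_1(f_i,g_j)} + R^{(t)}_{f_i,g_j,2},
\]
where $c_{1,t} = n\,t^{-1} + O(t^{-2})$ and, by Corollary \ref{cor: quatization schatten dim n}, the remainder satisfies $\|R^{(t)}_{f,g,2}\|_{\mathcal{S}^p} \lesssim t^{-2+n/p}$ for $p > n$. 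Applying Lemma \ref{lem: holder inequality for operators} with $p_i = n+\epsilon$ to any product of $n$ factors in which at least one $\sigma_t(f_i,g_i)$ has been replaced by $R^{(t)}_{f_i,g_i,2}$, we obtain a trace-class operator whose $\mathcal{S}^1$ norm is $O(t^{-1-\delta})$ for some $\delta>0$; such contributions vanish in the limit. This reduces the question to analyzing the leading term
\[
c_{1,t}^n\sum_{\tau\in S_n}\sgn(\tau)\,\BTt_{C_1(f_{\tau_1},g_1)}\BTt_{C_1(f_{\tau_2},g_2)}\cdots\BTt_{C_1(f_{\tau_n},g_n)},
\]
and its even analogue.

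Next, decompose $C_1 = C_N + C_T$ using Remark \ref{rem: CN CT}. Because $|C_N(f,g)(z)|\lesssim (1-|z|^2)^2$, Lemma \ref{lem: Toeplitz Schatten Class} gives $\BTt_{C_N(f,g)} \in \mathcal{S}^p$ for $p>n/2$ with $\|\cdot\|_p\lesssim t^{n/p}$, whereas $\BTt_{C_T(f,g)}$ lies only in $\mathcal{S}^p$ for $p>n$ with the same norm bound. Expanding $C_1 = C_N + C_T$ in each of the $n$ Toeplitz factors of the displayed leading term and estimating by Hölder's inequality, any summand containing at least one $\BTt_{C_N}$ factor has trace-class norm bounded by $c_{1,t}^n\cdot t^{n/p_1+\cdots+n/p_n}$ with $\sum 1/p_i = 1$ and at least one $p_k<n$, forcing the total power of $t$ to be strictly negative. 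Hence the only contribution that survives the limit is the ``purely tangential'' sum
\[
c_{1,t}^n\sum_{\tau\in S_n}\sgn(\tau)\,\BTt_{C_T(f_{\tau_1},g_1)}\cdots\BTt_{C_T(f_{\tau_n},g_n)}.
\]

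The main obstacle is that each individual product in this purely tangential sum lies only in $\mathcal{S}^{1+\epsilon}$, so both trace-class membership and the interchange of limit and trace must be extracted from the antisymmetrization itself. The plan is to compute the trace by Lemma \ref{lem: trace is integral of Berezin} as an iterated integral against reproducing kernels, and to use the explicit formula
\[
C_T(f,g)(z) = -\tfrac{1}{n}(1-|z|^2)\sum_{i,j=1}^n\bigl(\delta_{ij}-z_i\bar z_j/|z|^2\bigr)\partial_if(z)\bpartial_jg(z)
\]
from Remark \ref{rem: CN CT} to recognize the antisymmetric sum $\sum_\tau\sgn(\tau)\prod_k\partial_{i_k}f_{\tau_k}(z) = \det[\partial_{i_k}f_j(z)]$ as supplying the missing cancellation: the determinantal factor vanishes to the correct order along the diagonal $z_1=\cdots=z_n$ to overcome the singularity of the Bergman kernels, exactly as in the proof of Proposition \ref{prop: B}. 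I expect to adapt the integration-by-parts machinery of Section \ref{sec: IntByPts}, together with Lemmas \ref{lem: Rudin Forelli generalizations} and \ref{lem: k kernels absolutely integrable}, to make this rigorous and to interchange limit with integration. Once this is done, the resulting integral evaluates, after identifying $\det[\partial_{i_k}f_j(z)]\det[\bpartial_{j_k}g_j(z)]$ with the top-degree form $\partial f_1\wedge\bpartial g_1\wedge\cdots\wedge\partial f_n\wedge\bpartial g_n$ and tracking the normalization $(2\pi i)^{-n}$, to the stated formula. The even case is handled identically, with the antisymmetrization on the $g$-indices instead, and the two limits agree because the wedge-product integral is symmetric under permutations of the pairs $(f_i,g_i)$.
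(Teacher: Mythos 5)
Your overall architecture (extract the $c_{1,t}\BTt_{C_1}$ leading term, then analyze the antisymmetrized Toeplitz product via the $C_N/C_T$ split) has the right ingredients, but two of the key quantitative claims are false, and the last one inverts the actual mechanism of the proof, so the argument as written cannot reach the stated formula.

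First, the claim that ``applying Lemma \ref{lem: holder inequality for operators} with $p_i=n+\epsilon$ to any product of $n$ factors in which at least one $\sigma_t$ has been replaced by $R^{(t)}_{f_i,g_i,2}$ gives a trace-class operator'' is incorrect: if every exponent is $n+\epsilon$, then $\sum_i 1/p_i=n/(n+\epsilon)<1$ and H\"older only places the product in $\mathcal{S}^{(n+\epsilon)/n}$, which is strictly weaker than $\mathcal{S}^1$. To reach $\mathcal{S}^1$ one needs $\sum 1/p_i\ge 1$, hence some $p_i\le n$; neither $R^{(t)}_{f,g,1}$ nor $R^{(t)}_{f,g,2}$ has that membership on its own. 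This is exactly why the paper passes to the finer decomposition $R^{(t)}_{f,g,k+1}=\sum_{a,b}R^{(t)\,a,b}_{f,g,k+1}$ of Definition \ref{defn: Sabfgk} and Corollary \ref{cor: Rabfgk membership}: the pieces with $(a,b)\neq(0,0)$ lie in $\mathcal{S}^p$ for some $p<n$ and can be handled by H\"older, while the $(0,0)$ pieces must be handled by the commutator argument of Section \ref{sec: traces}/Section \ref{sec: even and odd} that moves the multiplication operators $M^{(t)}_{[Q_{\bar z}\partial f]_j}$ through projections and then invokes the algebraic identity \eqref{eqn: Qpartial antisym =0}.

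Second, the claim that a summand with at least one $\BTt_{C_N}$ factor has trace norm with ``total power of $t$ strictly negative'' is arithmetically wrong. From Remark \ref{rem: CN CT}, $\|\BTt_{C_N}\|_{\mathcal{S}^p}\lesssim t^{n/p}$ and $\|\BTt_{C_T}\|_{\mathcal{S}^p}\lesssim t^{n/p}$; with $\sum 1/p_i=1$, the product has trace norm $\lesssim t^{n\sum 1/p_i}=t^n$, and multiplying by $c_{1,t}^n\approx t^{-n}$ gives $t^0$, not $t^{-\delta}$. In fact this $O(1)$ behavior is consistent with the fact that these terms \emph{do} contribute to the limit.

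Third, and most fundamentally, the cancellation you try to exploit runs backwards. Fixing the local frame $\mathbf{e}_z$, each $C_1(f,g)=-\tfrac1n\sum_{i=1}^n\lambda_i\,\partial_if\,\bpartial_ig$ with $\lambda_1=(1-|z|^2)^2$ (normal) and $\lambda_i=(1-|z|^2)$ ($i\ge 2$, tangential). Expanding $\sum_\tau\sgn(\tau)\prod_k C_1(f_{\tau_k},g_k)$ and antisymmetrizing forces the direction indices $i_1,\ldots,i_n$ to be a permutation of $\{1,\ldots,n\}$; in particular exactly one factor is in the normal direction. If you discard all terms containing a $C_N$ factor you are left with the purely tangential sum $\sum_\tau\sgn(\tau)\prod_k C_T(f_{\tau_k},g_k)$, and that sum is \emph{identically zero} because there are only $n-1$ tangential directions for $n$ indices --- this is precisely the identity \eqref{eqn: 5} the paper uses. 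Your plan would therefore yield a zero limit, contradicting the stated formula. The actual proof keeps the normal contribution, combines the $n$ Toeplitz factors into one $\BTt_{C_1\cdots C_1}$ via \eqref{eqn: TC1 into one}, and then the product $\prod_i\lambda_i=(1-|z|^2)^{n+1}$ (with the normal $(1-|z|^2)^2$ factor essential) is exactly what cancels the $(1-|z|^2)^{-n-1}$ coming from the trace formula of Lemma \ref{lem: trace is integral of Berezin}. So the $C_N$ part is not negligible --- it is indispensable.

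In summary: the blind proposal correctly identifies the use of the quantization expansion, the $C_N/C_T$ decomposition, and the need for an antisymmetrization-driven cancellation, but (a) the first H\"older step is fallacious without the $R^{(t)\,a,b}$ refinement, (b) the claimed $t^{-\delta}$ decay for $C_N$-containing terms is not there, and (c) the cancellation structure is inverted --- discarding $C_N$ discards the whole answer. You would need to restructure the argument along the lines of the paper: antisymmetric vanishing kills the \emph{purely tangential} terms, while the mixed terms (exactly one normal factor) supply the nonzero integral.
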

As an application of the above theorem, we obtain the following asymptotic estimate for the Schatten-$4$ norm of Hankel operators at dimension $2$. We can view it as a higher uncertainty principle.
\begin{cor}\label{cor: Hankel S4 estimate}
Suppose $f_1, f_2\in\mathscr{C}^2(\overline{\mathbb{B}_2})$ are supported inside $\mathbb{B}_2$. Then the infimum limit $$\varliminf\limits_{t\to\infty}\|H^{(t)}_{\bar{f}_1}\|_{\mathcal{S}^4}^2\|H^{(t)}_{\bar{f}_2}\|_{\mathcal{S}^4}^2$$ satisfies
\[
\varliminf_{t\to\infty}\|H^{(t)}_{\bar{f}_1}\|_{\mathcal{S}^4}^2\|H^{(t)}_{\bar{f}_2}\|_{\mathcal{S}^4}^2\geq\frac{1}{4\pi^2}\int_{\mathbb{B}_2}\partial f_1\wedge\partial f_2\wedge\overline{\partial f_1\wedge\partial f_2}.
\]
\end{cor}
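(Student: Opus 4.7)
The plan is to realize the left-hand side as an upper bound for the trace of a specific odd partial antisymmetrization, and then identify the limit of that trace with the right-hand side via Theorem \ref{thm: AS partial trace dimension n}. Specialize Definition \ref{defn: AS partial} to $n=2$ with $g_i=\bar f_i$ and write $A_i=H^{(t)}_{\bar f_i}$. By Remark \ref{rem: Toeplitz Hankel dim 1}, $\sigma_t(f_i,\bar f_j)=-A_i^*A_j$, so
\[
[f_1,\bar f_1,f_2,\bar f_2]^{\odd}_t=\sigma_t(f_1,\bar f_1)\sigma_t(f_2,\bar f_2)-\sigma_t(f_2,\bar f_1)\sigma_t(f_1,\bar f_2)=A_1^*A_1A_2^*A_2-A_2^*A_1A_1^*A_2.
\]

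Since this operator is trace class by Theorem \ref{thm: AS partial trace dimension n}, one can apply the cyclic property of the trace (Lemma \ref{lem: trace of commutators}) separately to each term. Indeed
\[
\Tr(A_1^*A_1A_2^*A_2)=\Tr\!\big((A_1A_2^*)^*(A_1A_2^*)\big)=\|A_1A_2^*\|_{\mathcal{S}^2}^2,
\]
\[
\Tr(A_2^*A_1A_1^*A_2)=\Tr\!\big((A_1^*A_2)^*(A_1^*A_2)\big)=\|A_1^*A_2\|_{\mathcal{S}^2}^2,
\]
hence $\Tr[f_1,\bar f_1,f_2,\bar f_2]^{\odd}_t=\|A_1A_2^*\|_{\mathcal{S}^2}^2-\|A_1^*A_2\|_{\mathcal{S}^2}^2\le\|A_1A_2^*\|_{\mathcal{S}^2}^2$, the last term being nonnegative. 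The Schatten H\"older inequality (Lemma \ref{lem: holder inequality for operators}) with $p=q=4$, $r=2$ gives $\|A_1A_2^*\|_{\mathcal{S}^2}\le\|A_1\|_{\mathcal{S}^4}\|A_2^*\|_{\mathcal{S}^4}=\|H^{(t)}_{\bar f_1}\|_{\mathcal{S}^4}\|H^{(t)}_{\bar f_2}\|_{\mathcal{S}^4}$ (if either $\mathcal{S}^4$-norm is infinite the corollary is trivial). Combining,
\[
\|H^{(t)}_{\bar f_1}\|_{\mathcal{S}^4}^2\|H^{(t)}_{\bar f_2}\|_{\mathcal{S}^4}^2\ge\Tr[f_1,\bar f_1,f_2,\bar f_2]^{\odd}_t.
\]

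Taking $\varliminf_{t\to\infty}$ on the left and invoking the limit formula in Theorem \ref{thm: AS partial trace dimension n} (which exists on the right) yields
\[
\varliminf_{t\to\infty}\|H^{(t)}_{\bar f_1}\|_{\mathcal{S}^4}^2\|H^{(t)}_{\bar f_2}\|_{\mathcal{S}^4}^2\ge\frac{1}{(2\pi i)^2}\int_{\mathbb{B}_2}\partial f_1\wedge\bpartial\bar f_1\wedge\partial f_2\wedge\bpartial\bar f_2.
\]
It only remains to reconcile this with the claimed form. Since $\bpartial\bar f_i=\overline{\partial f_i}$ is a $1$-form, one transposition of $\partial f_2$ with $\overline{\partial f_1}$ introduces a sign,
\[
\partial f_1\wedge\overline{\partial f_1}\wedge\partial f_2\wedge\overline{\partial f_2}=-\partial f_1\wedge\partial f_2\wedge\overline{\partial f_1}\wedge\overline{\partial f_2}=-\partial f_1\wedge\partial f_2\wedge\overline{\partial f_1\wedge\partial f_2},
\]
and combined with $1/(2\pi i)^2=-1/(4\pi^2)$ this produces exactly the factor $\frac{1}{4\pi^2}$ in the statement. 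The expression $\partial f_1\wedge\partial f_2\wedge\overline{\partial f_1\wedge\partial f_2}$ is the standard nonnegative volume form $\eta\wedge\bar\eta$ for the $(2,0)$-form $\eta=\partial f_1\wedge\partial f_2$, so the right-hand side is nonnegative, consistent with the corollary.

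There is no substantive obstacle here: once Theorem \ref{thm: AS partial trace dimension n} is available, the entire argument is a bookkeeping exercise. The only point requiring care is the sign/orientation of the $(2,2)$-form in the final rearrangement so that the bound matches the stated positive quantity, and the trivial observation that one may assume both $\mathcal{S}^4$-norms are finite without loss of generality.
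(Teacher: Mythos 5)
Your proof is correct and follows essentially the same plan as the paper: reduce the claim to a trace bound for a partial antisymmetrization of $\sigma_t(f_i,\bar f_j)$, split the trace into a nonnegative main term minus a nonnegative error, bound the main term by $\|H^{(t)}_{\bar f_1}\|_{\mathcal{S}^4}^2\|H^{(t)}_{\bar f_2}\|_{\mathcal{S}^4}^2$, and pass to the limit using Theorem~\ref{thm: AS partial trace dimension n}. The paper uses the even antisymmetrization and Cauchy--Schwarz on the $\mathcal{S}^2$ inner product of $|H^{(t)}_{\bar f_1}|^2$ and $|H^{(t)}_{\bar f_2}|^2$, while you use the odd version and Schatten--H\"older; since $[\,\cdot\,]^{\odd}$ and $[\,\cdot\,]^{\even}$ have the same limit, and Cauchy--Schwarz on $\la|A_1|^2,|A_2|^2\ra_{\mathcal{S}^2}$ is literally H\"older for $\|A_1A_2^*\|_{\mathcal{S}^2}$, these are cosmetic variants of the same argument. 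Two small remarks: the justification that each individual four-fold product is trace class (so that splitting the trace and cyclically permuting is legitimate) comes from the compact support hypothesis on $f_1,f_2$, not from the trace class membership of the antisymmetrization in Theorem~\ref{thm: AS partial trace dimension n}; and the identity $\sigma_t(f,g)=-H^{(t)*}_{\bar f}H^{(t)}_g$ in general dimension is stated in Remark~\ref{rem: semi-commutator} rather than Remark~\ref{rem: Toeplitz Hankel dim 1}. Neither affects the validity of your argument.
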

\begin{proof}
Take $g_1=\bar{f}_1$, $g_2=\bar{f}_2$. By definition, we compute the expression of $[f_1,\bar{f}_1,f_2,\bar{f}_2]^{\even}$,
\begin{flalign*}
[f_1,\bar{f}_1,f_2,\bar{f}_2]^{\even}=&\sigma_t(f_1,\bar{f}_1)\sigma_t(f_2,\bar{f}_2)-\sigma_t(f_1,\bar{f}_2)\sigma_t(f_2,\bar{f}_1)\\
=&H^{(t)*}_{\bar{f}_1}H^{(t)}_{\bar{f}_1}H^{(t)*}_{\bar{f}_2}H^{(t)}_{\bar{f}_2}-H^{(t)*}_{\bar{f}_1}H^{(t)}_{\bar{f}_2}H^{(t)*}_{\bar{f}_2}H^{(t)}_{\bar{f}_1}\\
=&|H^{(t)}_{\bar{f}_1}|^2|H^{(t)}_{\bar{f}_2}|^2-\bigg(H^{(t)*}_{\bar{f}_2}H^{(t)}_{\bar{f}_1}\bigg)^*H^{(t)*}_{\bar{f}_2}H^{(t)}_{\bar{f}_1}.
\end{flalign*}
Then by Theorem \ref{thm: AS partial trace dimension n}, we have the following limit, 
\begin{flalign*}
\Tr [f_1,\bar{f}_1,f_2,\bar{f}_2]^{\even}=&\Tr |H^{(t)}_{\bar{f}_1}|^2|H^{(t)}_{\bar{f}_2}|^2-\Tr\bigg(H^{(t)*}_{\bar{f}_2}H^{(t)}_{\bar{f}_1}\bigg)^*H^{(t)*}_{\bar{f}_2}H^{(t)}_{\bar{f}_1}\\
\to&\frac{1}{(2\pi i)^2}\int_{\mathbb{B}_2}\partial f_1\wedge\bpartial\bar{f}_1\wedge\partial f_2\wedge\bpartial\bar{f}_2\\
=&\frac{1}{4\pi^2}\int_{\mathbb{B}_2}\partial f_1\wedge\partial f_2\wedge\overline{\partial f_1\wedge\partial f_2},\quad t\to\infty.
\end{flalign*}
Notice that $\bigg(H^{(t)*}_{\bar{f}_2}H^{(t)}_{\bar{f}_1}\bigg)^*H^{(t)*}_{\bar{f}_2}H^{(t)}_{\bar{f}_1}\geq0$. The Cauchy-Schwartz inequality gives the following bound,
\[
\bigg|\Tr|H^{(t)}_{\bar{f}_1}|^2|H^{(t)}_{\bar{f}_2}|^2\bigg|=\bigg|\la|H^{(t)}_{\bar{f}_2}|^2,|H^{(t)}_{\bar{f}_1}|^2\ra_{\mathcal{S}^2}\bigg|\leq\la|H^{(t)}_{\bar{f}_2}|^2,|H^{(t)}_{\bar{f}_2}|^2\ra_{\mathcal{S}^2}^{1/2}\la|H^{(t)}_{\bar{f}_1}|^2,|H^{(t)}_{\bar{f}_1}|^2\ra_{\mathcal{S}^2}^{1/2}= \|H^{(t)}_{\bar{f}_2}\|_{\mathcal{S}^4}^2\|H^{(t)}_{\bar{f}_1}\|_{\mathcal{S}^4}^2.
\] 
We have the following bound from the above computation 
\[
\varliminf_{t\to\infty}\|H^{(t)}_{\bar{f}_2}\|_{\mathcal{S}^4}^2\|H^{(t)}_{\bar{f}_1}\|_{\mathcal{S}^4}^2\geq \frac{1}{4\pi^2}\int_{\mathbb{B}_2}\partial f_1\wedge\partial f_2\wedge\overline{\partial f_1\wedge\partial f_2}.
\]
This completes the proof of Corollary \ref{cor: Hankel S4 estimate}.
\end{proof}
	
	\begin{rem}\label{rem: even odd AS}
		Both $[f_1, g_1,\ldots, f_n, g_n]^{\even}_t$ and $[f_1, g_1,\ldots, f_n, g_n]^{\odd}_t$ are sums of compositions of $n$ semi-commutators, each belonging to $\mathcal{S}^p, \forall p>n$. This proves that they belong to $\mathcal{S}^p$, $\forall p>1$. However, the trace class membership of these operators relies on some higher order cancellation and is therefore nontrivial. As an example, we can show by direct computation that each $\sigma_t(z_i,\bz_i)$ is the diagonal operator under the basis $\{\frac{z^\alpha}{\|z^\alpha\|}\}_{\alpha\in\ind}$, with entry $-\frac{|\alpha|-\alpha_i+n+t}{(n+|\alpha|+t)(n+|\alpha|+t+1)}$ at $\alpha\in\ind$. Therefore $\sigma_t(z_i,\bz_i)\notin\mathcal{S}^n$. (In fact, in \cite{Zhu:schattenHankel} it was proved that for $f\in H^\infty(\bn)$, $\sigma_0(f,\bar{f})\in\mathcal{S}^n$ if and only if $f$ is constant.) However, the theorem states that
		\[
		[z_1,\bz_1,\ldots,z_n,\bz_n]^\odd_t=\sum_{\tau\in S_n}\sgn(\tau)\sigma_t(z_{\tau_1},\bz_1)\ldots\sigma_t(z_{\tau_n},\bz_n)\in\mathcal{S}^1.
		\]
		The partial anti-symmetrization needs to be carefully chosen so that the higher cancellation works. For example,
		\[
		\sigma_t(z_1,\bz_1)\ldots\sigma_t(z_n,\bz_n)-\sigma_t(\bz_1,z_2)\sigma_t(\bz_2, z_3)\ldots\sigma_t(\bz_n,z_1)=\sigma_t(z_1,\bz_1)\ldots\sigma_t(z_n,\bz_n)\notin\mathcal{S}^1.
		\]
		Therefore simply taking anti-symmetrization over a rotation generally does not give a trace class operator. The above example also shows that the Connes-Chern character \eqref{eq:chern char} for the Toeplitz extension is in general not well-defined at $p=n$. At $p=n+1$, by Theorem \ref{thm: quantization bergman}, for $f, g\in\mathscr{C}^2(\overline{\bn})$,
		\[
		\|\sigma_t(f,g)\|_{\mathcal{S}^{n+1}}=\|R^{(t)}_{f,g,1}\|_{\mathcal{S}^{n+1}}\lesssim t^{-1+\frac{n}{n+1}}=t^{-\frac{1}{n+1}}.
		\]
		Therefore for $f_0,\ldots, f_{2n+1}\in\mathscr{C}^2(\overline{\bn})$, the Connes-Chern character at $p=n+1$ satisfies
		\[
		|\tau_t(f_0,\ldots,f_{2p-1})|\lesssim t^{-1}\to0,\quad t\to\infty.
		\]
		In particular, its value depends on $t$. In Subsection \ref{subsec: connes chern}, we consider the Connes-Chern character at $p>n$ after multiplying a suitable power of $t$.
	\end{rem}
	
	Theorem \ref{thm: AS partial trace dimension n} immediately leads to the trace class membership and asymptotic trace formula for the full anti-symmetric sum, which consist an important part of the proof of our main result, Theorem \ref{thm: main}.
	\begin{cor}\label{cor: AS sum trace membership and trace limit}
		Suppose $f_1, f_2, \ldots, f_{2n}\in\mathscr{C}^2(\overline{\bn})$ and $t\geq-1$. Then $[\BTt_{f_1}, \BTt_{f_2}, \ldots, \BTt_{f_{2n}}]$ is in the trace class $\mathcal{S}^1$. Moreover,
		\begin{equation}\label{eqn: AS sum trace limit}
			\lim_{t\to\infty}\Tr[\BTt_{f_1}, \BTt_{f_2}, \ldots, \BTt_{f_{2n}}]=\frac{n!}{(2\pi i)^n}\int_{\bn}\intd f_1\wedge\intd f_2\wedge\ldots\wedge\intd f_{2n}.
		\end{equation}
	\end{cor}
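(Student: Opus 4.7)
The plan is to derive the corollary from Theorem \ref{thm: AS partial trace dimension n} by expressing the full antisymmetric sum as an explicit signed combination of even partial antisymmetrizations, thereby reducing both the trace-class membership and the trace formula to a finite number of instances of that theorem plus a combinatorial sign identity.

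The first step is to establish the pairing identity
\[
[\BTt_{f_1}, \ldots, \BTt_{f_{2n}}] = \sum_{\tau \in S_{2n}} \sgn(\tau)\, \sigma_t(f_{\tau(1)}, f_{\tau(2)}) \sigma_t(f_{\tau(3)}, f_{\tau(4)}) \cdots \sigma_t(f_{\tau(2n-1)}, f_{\tau(2n)}).
\]
To prove this I would substitute $\BTt_{f_{\tau(2k-1)}} \BTt_{f_{\tau(2k)}} = \sigma_t(f_{\tau(2k-1)}, f_{\tau(2k)}) + \BTt_{f_{\tau(2k-1)} f_{\tau(2k)}}$ in each consecutive pair and expand. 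Any expansion term containing at least one factor $\BTt_{f_{\tau(2k-1)} f_{\tau(2k)}}$ is killed after summation over $\tau$: the involution $\tau \mapsto \tau \circ (2k-1,\, 2k)$ fixes this factor (by the commutativity $f_a f_b = f_b f_a$) and all other factors in the same expansion term, but flips $\sgn(\tau)$, forcing a cancellation in pairs.

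The second step is to regroup permutations by the ordered $n$-tuple of odd-position values. Writing $\vec{a} = (a_1, \ldots, a_n) := (\tau(1), \tau(3), \ldots, \tau(2n-1))$, letting $\vec{b} = (b_1 < \ldots < b_n)$ denote the sorted complement in $\{1, \ldots, 2n\}$, and setting $\epsilon(\vec{a}, \vec{b})$ equal to the sign of the permutation $(1, 2, \ldots, 2n) \mapsto (a_1, b_1, a_2, b_2, \ldots, a_n, b_n)$, the identity above becomes
\[
[\BTt_{f_1}, \ldots, \BTt_{f_{2n}}] = \sum_{\vec{a}} \epsilon(\vec{a}, \vec{b})\, [f_{a_1}, f_{b_1}, f_{a_2}, f_{b_2}, \ldots, f_{a_n}, f_{b_n}]^{\even}_t,
\]
summed over all ordered $n$-tuples of distinct indices from $\{1,\ldots,2n\}$. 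By Theorem \ref{thm: AS partial trace dimension n} each of these $\binom{2n}{n}\, n!$ summands lies in $\mathcal{S}^1$ for every $t \geq -1$, which gives the trace-class membership.

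The third and final step is to pass to $t \to \infty$ term-by-term using Theorem \ref{thm: AS partial trace dimension n}, reducing the formula to the combinatorial identity
\[
\sum_{\vec{a}} \epsilon(\vec{a}, \vec{b}) \int_{\bn} \partial f_{a_1} \wedge \bpartial f_{b_1} \wedge \ldots \wedge \partial f_{a_n} \wedge \bpartial f_{b_n} = n! \int_{\bn} df_1 \wedge \ldots \wedge df_{2n}.
\]
This is verified by expanding $df_i = \partial f_i + \bpartial f_i$ and keeping only the $(n,n)$-part relevant for integration over $\bn$: one obtains $\sum_{|I|=n} \bigwedge_{i=1}^{2n} \alpha_i^I$, where $\alpha_i^I = \partial f_i$ if $i \in I$ and $\bpartial f_i$ otherwise. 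For each ordered $\vec{a}$ with underlying set $I$, the sign $\epsilon(\vec{a}, \vec{b})$ is exactly what is required to rearrange the interleaved wedge $\partial f_{a_1} \wedge \bpartial f_{b_1} \wedge \ldots$ into the monotone form $\bigwedge_i \alpha_i^I$, since all factors are $1$-forms and each transposition contributes a sign $-1$. Summing over the $n!$ orderings of each $I$ therefore produces $n!$ copies of $\int_{\bn} \bigwedge_i \alpha_i^I$, yielding the factor $n!$. I do not foresee a serious obstacle: Theorem \ref{thm: AS partial trace dimension n} carries out all the analytic work, and what remains is clean combinatorial sign-tracking, with the cancellation argument in the first step being the most delicate part but reducing to a single involution.
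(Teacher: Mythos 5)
Your proposal is correct and follows essentially the same route as the paper: the paper also reduces the corollary to Theorem \ref{thm: AS partial trace dimension n} by writing the full antisymmetric sum as a signed combination of partial antisymmetrizations (it uses odd ones with a $\frac{1}{n!}$ normalization where you use even ones without; these are interchangeable by the theorem) and then does the same combinatorial sign-tracking for the limit formula. Your involution argument in the first step supplies the justification for the identity that the paper states as ``it is easy to see,'' with the small implicit point that when several pairs produce $T^{(t)}_{f_{\tau(2k-1)}f_{\tau(2k)}}$ factors one should fix a single $k$ (say the smallest) so the pairing is well-defined; this is a cosmetic refinement, not a gap.
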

	\begin{proof}
		By Remark \ref{rem: semi-commutator}, it is easy to see the following identity
		\[
		[\BTt_{f_1}, \BTt_{f_2},\ldots,\BTt_{f_{2n}}]=\frac{1}{n!}\sum_{\tau\in S_{2n}}\sgn(\tau)[f_{\tau_1}, f_{\tau_2},\ldots,f_{\tau_{2n}}]^{\odd}.
		\]
		Therefore it follows from Theorem \ref{thm: AS partial trace dimension n} that $[\BTt_{f_1},\BTt_{f_2},\ldots,\BTt_{f_{2n}}]\in\mathcal{S}^1$. This proves the trace class membership.
		
		Let $H$ be the collection of subsets of $\{1,\ldots,2n\}$ consisting of $n$ elements. For each $a\in H$, let $[a]$ be the subset of $S_{2n}$ that sends $\{1,3,\ldots,2n-1\}$ to $a$. Then $\#[a]=(n!)^2$ for any $a\in H$. In each $[a]$ there is a unique permutation $\tau_a$ that satisfies
		\[
		\tau_a(1)<\tau_a(3)<\ldots<\tau_a(2n-1),\quad\tau_a(2)<\tau_a(4)<\ldots<\tau_a(2n).
		\]
		Then we compute the limit of the trace of full antisymmetrization as follows,
		\begin{flalign*}
			&\lim_{t\to\infty}\Tr[\BTt_{f_1},\BTt_{f_2},\ldots,\BTt_{f_{2n}}]\\
			=&\frac{1}{n!}\sum_{\tau\in S_{2n}}\sgn(\tau)\frac{1}{(2\pi i)^n}\int_{\bn}\partial f_{\tau_1}\wedge\bpartial f_{\tau_2}\wedge\ldots\wedge\partial f_{\tau_{2n-1}}\wedge\bpartial f_{\tau_{2n}}\\
			=&\frac{1}{(2\pi i)^nn!}\sum_{a\in H}\sum_{\tau\in[a]}\sgn(\tau)\int_{\bn}\partial f_{\tau_1}\wedge\bpartial f_{\tau_2}\wedge\ldots\wedge\partial f_{\tau_{2n-1}}\wedge\bpartial f_{\tau_{2n}}\\
			=&\frac{1}{(2\pi i)^nn!}\sum_{a\in H}(n!)^2\sgn(\tau_a)\int_{\bn}\partial f_{\tau_a(1)}\wedge\bpartial f_{\tau_a(2)}\wedge\ldots\wedge\partial f_{\tau_a(2n-1)}\wedge\bpartial f_{\tau_a(2n)}\\
			=&\frac{n!}{(2\pi i)^n}\int_{\bn}\intd f_1\wedge\intd f_2\wedge\ldots\wedge\intd f_{2n}.
		\end{flalign*}
		This proves \eqref{eqn: AS sum trace limit}.
	\end{proof}
	\subsection{Proof of Theorem \ref{thm: AS partial trace dimension n} ($t>-1$)}\label{subsec: pf of thm t>-1}
		Since $\sigma_t(f,g)=\bigg[\sigma_t(\bar{g},\bar{f})\bigg]^*$, we can verify that
		\[
		[f_1,g_1,\ldots,f_n,g_n]_t^{\odd}=\bigg([\bar{g}_n,\bar{f_n},\ldots,\bar{g}_1,\bar{f_1}]_t^{\even }\bigg)^*.
		\]
		Thus it suffices to prove the results for the odd partial anti-symmetric sums.
		\begin{notation}
		For two operators $A$ and $B$, temporarily denote $A\sim_w B$ if $A-B$ is a trace class operator. If $A_t$ and $B_t$ are parameterized families of operators on $\bert$, temporarily denote $A_t\sim_a B_t$ if $A_t-B_t$ are trace class operators on $\bert$ with trace norm tending to zero as $t\to\infty$.
		\end{notation}
		~
		
		\noindent{\bf Part 1:} We prove 
		\[
		[f_1,g_1,\ldots,f_n,g_n]_t^{\odd}\in\mathcal{S}^1, \forall t>-1.
		\]
		
		By Theorem \ref{thm: quantization bergman} and Corollary \ref{cor: Rabfgk membership}, for $f, g\in\mathscr{C}^2(\overline{\bn})$, recall 
		\[
		\sigma_t(f,g)=R^{(t)}_{f,g,1},
		\]
		and
		\[
		R^{(t)}_{f,g,1}=\sum_{a,b=0,1}R^{(t)a,b}_{f,g,1},
		\]
		where
		\[
		R^{(t)a,b}_{f,g,1}\in\mathcal{S}^p, \forall p>\max\bigg\{\frac{n}{1+\frac{a+b}{2}},\frac{n}{1+\frac{t+1}{2}}\bigg\}.
		\]
		In particular, if one of $a, b$ is non-zero, then $R^{(t)a,b}_{f,g,1}\in\mathcal{S}^p$ for some $p<n$. It follows immediately that for $f_1, g_1, \ldots, f_n, g_n\in\mathscr{C}^2(\overline{\bn})$, $\tau\in S_n$, 
		\[
		\sigma_t(f_{\tau_1},g_1)\sigma_t(f_{\tau_2},g_2)\ldots\sigma_t(f_{\tau_n},g_n)\sim_w R^{(t)0,0}_{f_{\tau_1},g_1,1}R^{(t)0,0}_{f_{\tau_2},g_2,1}\ldots R^{(t)0,0}_{f_{\tau_n},g_n,1}.
		\]
		Thus in order to prove $[f_1,g_1,\ldots,f_n,g_n]^{\odd}_t\in\mathcal{S}^1$, it remains to prove
		\begin{equation}\label{eqn: Rfg00 AS trace class}
			\sum_{\tau\in S_n}\sgn(\tau)R^{(t)0,0}_{f_{\tau_1},g_1,1}R^{(t)0,0}_{f_{\tau_2},g_2,1}\ldots R^{(t)0,0}_{f_{\tau_n},g_n,1}\in\mathcal{S}^1.
		\end{equation}
		Recall that by definition the integral kernel of $R^{(t)0,0}_{f,g,1}$ consists of the part that involves only complex tangential derivatives of $f$ and $g$. Tracing back in Theorem \ref{thm: quantization bergman} we have the following formula
		\begin{flalign*}
			R^{(t)0,0}_{f,g,1}h(\xi)=&-\int_{\bn^2}\Phi^{(t)}_{n,1}(|\varphi_z(w)|^2)V_1(z,w)\la Q_{\bar{z}}\partial f(z),\overline{z-w}\ra\la Q_z\bpartial g(w),z-w\ra \\
			&~~~~~~~\cdot h(w)\BKt_z(\xi)\BKt_w(z)\intd\lambda_t(w)\intd\lambda_t(z),
		\end{flalign*}
		where $V_1(z,w)=\frac{1-\la z,w\ra}{1-\la w,z\ra}$ is bounded.
		For any $j=1,\ldots,n$ and $f, g\in\mathscr{C}^2(\overline{\bn})$, define
		\[
		F_{\Lambda_{g,1,j}}(z,w)=-V_1(z,w)(z_j-w_j)\la Q_z\bpartial g(w),z-w\ra \BKt_w(z),\quad z, w\in\bn.
		\]
		Define the operator
		\begin{flalign}\label{eqn: temp 1}
			\Lambda_{g,1,j}h(z)=&\int_{\bn}\Phi_{n,1}^{(t)}(|\varphi_z(w)|^2)F_{\Lambda_{g,1,j}}(z,w)h(w)\intd\lambda_t(w).
		\end{flalign}
		Let $[Q_{\bar{z}}\partial f(z)]_j$ be the $j$-th entry of the vector $Q_{\bar{z}}\partial f(z)$. Then by the above expressions, we have the following expression
		\[
		R^{(t)0,0}_{f,g,1}=\sum_{j=1}^n\BPt M^{(t)}_{[Q_{\bar{z}}\partial f]_j}\Lambda_{g,1,j}.
		\]
		By Lemma \ref{lem: Mobius basics} we have the bound
		\[
		\big|F_{\Lambda_{g,1,i}}(z,w)\big|\lesssim|\varphi_z(w)|^2\frac{1}{|1-\la z,w\ra|^{n+1+t-1}},\quad z, w\in\bn.
		\]
		Thus by Theorem \ref{thm: integral Schatten membership} and Corollary \ref{cor: commutator schatten membership with phi}, for any Lipschitz function $u$ on $\bn$, $j=1,\ldots,n$, the following hold.
		\begin{itemize}
			\item Both $\BPt\Lambda_{g,1,j}$ and $\Lambda_{g,1,j}\BPt$ are in $\mathcal{S}^p$ for any $p>n$, with Schatten-$p$ norm $\lesssim_{p}t^{-1+\frac{n}{p}}$.
			\item For any Lipschitz function $u$, both $\BPt[\Lambda_{g,1,j},M^{(t)}_u]$ and $[\Lambda_{g,1,j}, M^{(t)}_u]\BPt$ are in $\mathcal{S}^p$ for any $p>\max\{\frac{2n}{3},\frac{n}{1+\frac{t+1}{2}}\}$, with Schatten-$p$ norm $\lesssim_{p}t^{-1+\frac{n}{p}}$. In particular, these operators belong to $\mathcal{S}^p$ for some $p<n$.
		\end{itemize}
		Also, by Corollary \ref{cor: commutator schatten membership no phi}, we have 
		\begin{itemize}
			\item For any Lipschitz function $u$, $[\BPt, M_u^{(t)}]\in\mathcal{S}^p$ for any $p>2n$. 
		\end{itemize}
		Denote
		\[
		u_{i,j}(z)=[Q_{\bar{z}}\partial f_i(z)]_j,\quad i, j=1, \ldots, n.
		\]
		Then by the above discussion, we compute the product of semi-commutators as follows.
		\begin{flalign*}
			&\sigma_t(f_1, g_1)\sigma_t(f_2,g_2)\ldots\sigma_t(f_n, g_n)\nonumber\\
			\sim_w&R^{(t)0,0}_{f_1,g_1,1}R^{(t)0,0}_{f_2,g_2,1}\ldots R^{(t)0,0}_{f_n,g_n,1}\\
			=&\sum_{j_1,\ldots,j_n=1}^n\BPt M^{(t)}_{u_{1,j_1}}\Lambda_{g_1,1,j_1}\BPt M^{(t)}_{u_{2,j_2}}\Lambda_{g_2,1,j_2}\ldots\BPt M^{(t)}_{u_{n,j_n}}\Lambda_{g_n,1,i_n}\BPt\nonumber\\
			=&\sum_{j_1,\ldots,j_n=1}^n\BPt \Lambda_{g_1,1,j_1}\BPt M^{(t)}_{u_{1,j_1}} M^{(t)}_{u_{2,j_2}}\Lambda_{g_2,1,j_2}\ldots\BPt M^{(t)}_{u_{n,j_n}}\Lambda_{g_n,1,i_n}\BPt\nonumber\\
			&+\sum_{j_1,\ldots,j_n=1}^n\BPt [M^{(t)}_{u_{1,j_1}},\Lambda_{g_1,1,j_1}]\BPt M^{(t)}_{u_{2,j_2}}\Lambda_{g_2,1,j_2}\ldots\BPt M^{(t)}_{u_{n,j_n}}\Lambda_{g_n,1,i_n}\BPt\nonumber\\
			&+\sum_{j_1,\ldots,j_n=1}^n\BPt \Lambda_{g_1,1,j_1}[M^{(t)}_{u_{1,j_1}},\BPt] M^{(t)}_{u_{2,j_2}}\Lambda_{g_2,1,j_2}\ldots\BPt M^{(t)}_{u_{n,j_n}}\Lambda_{g_n,1,i_n}\BPt\nonumber\\
			\sim_w&\sum_{j_1,\ldots,j_n=1}^n\BPt \Lambda_{g_1,1,j_1}\BPt M^{(t)}_{u_{1,j_1}} M^{(t)}_{u_{2,j_2}}\Lambda_{g_2,1,j_2}\ldots\BPt M^{(t)}_{u_{n,j_n}}\Lambda_{g_n,1,i_n}\BPt\nonumber.
		\end{flalign*}
		Continuing like this, we obtain the following expression
		\begin{flalign}\label{eqn: temp 3}
			&\sigma_t(f_1, g_1)\sigma_t(f_2,g_2)\ldots\sigma_t(f_n, g_n)\nonumber\\
			\sim_w&\sum_{j_1,\ldots,j_n=1}^n\BPt\Lambda_{g_1,1,j_1}\BPt \Lambda_{g_2,1,j_2}\ldots\BPt\Lambda_{g_n,1,j_n} M^{(t)}_{u_{1,j_1}}M^{(t)}_{u_{2,j_2}}\ldots M^{(t)}_{u_{n,j_n}}\BPt\\
			:=&\Theta^{(t)}_{f_1,g_1,\ldots,f_n,g_n}.\nonumber
		\end{flalign}
		Writing the operator above in integral form, for any $h\in\bert$,
		\begin{flalign*}
			\Theta^{(t)}_{f_1,g_1,\ldots,f_n,g_n}h(\xi)=&\int_{\bn^{2n}}\bigg[\prod_{i=1}^n\Phi_{n,1}^{(t)}(|\varphi_{z_i}(w_i)|^2)\bigg]\cdot\bigg[\prod_{i=1}^n\la Q_{z_i}\bpartial g_i(w_i),z_i-w_i\ra\bigg]\cdot\bigg[\prod_{i=1}^nV_1(z_i,w_i)\bigg]\\
			&\cdot\bigg[\prod_{i=1}^n\la Q_{\bar{w}_n}\partial f_i(w_n),\overline{z_i-w_i}\ra\bigg]h(w_n)\\
			&\cdot\BKt_{z_1}(\xi)\BKt_{w_1}(z_1)\ldots\BKt_{z_n}(w_{n-1})\BKt_{w_n}(z_n)\intd\lambda_t(w_n)\intd\lambda_t(z_n)\ldots\intd\lambda_t(w_1)\intd\lambda_t(z_1).
		\end{flalign*}
		We claim that $\Theta^{(t)}_{f_1,g_1,\ldots,f_n,g_n}$ vanishes after antisymmetrization over the $f$-symbols. To show this property, it suffices to show that
		\begin{equation}\label{eqn: Qpartial antisym =0}
			\sum_{\tau\in S_n}\sgn(\tau)\prod_{i=1}^n\la Q_{\bar{w}_n}\partial f_{\tau_i}(w_n),\overline{z_i-w_i}\ra=0.
		\end{equation}
	Each $\la Q_{\bar{w}_n}\partial f_{\tau_i}(w_n),\overline{z_i-w_i}\ra$ is independent of the choice of coordinates. Thus we may assume without loss of generality that $w_n=(r,0,\ldots,0)$. In this case, we have the expression
	\[
	\la Q_{\bar{w}_n}\partial f_{\tau_i}(w_n),\overline{z_i-w_i}\ra=\sum_{j=2}^n(z_{i,j}-w_{i,j})\partial_j f_{\tau_i}(w_n).
	\]
	Then the above equals
		\begin{flalign*}
			&\sum_{j_1,\ldots,j_n=2}^n(z_{1,j_1}-w_{1,j_1})\ldots(z_{n,j_n}-w_{n,j_n})\sum_{\tau\in S_n}\sgn(\tau)\partial_{j_1}f_{\tau_1}(w_n)\partial_{j_2}f_{\tau_2}(w_n)\ldots\partial_{j_n}f_{\tau_n}(w_n).
		\end{flalign*}
		Since $j_1,\ldots,j_n$ takes value in $\{2,\ldots,n\}$, at least two indices are equal. This implies that
		\[
		\sum_{\tau\in S_n}\sgn(\tau)\partial_{j_1}f_{\tau_1}(w_n)\partial_{j_2}f_{\tau_2}(w_n)\ldots\partial_{j_n}f_{\tau_n}(w_n)=0.
		\]
		Therefore \eqref{eqn: Qpartial antisym =0} holds. We conclude from this fact that the anti-symmetric sum
		\[
		\sum_{\tau\in S_n}\sgn(\tau)\Theta^{(t)}_{f_{\tau_1},g_1,\ldots,f_{\tau_n},g_n}
		\]
		equals zero, and complete the proof of 
		\[
		[f_1,g_1,\ldots,f_n,g_n]^{\odd}_t\in\mathcal{S}^1.
		\]
		
		~
		
		\noindent{\bf Part 2:}  We prove 
		\[
		\lim_{t\to\infty}\Tr[f_1,g_1,\ldots,f_n,g_n]^{\odd}_t=\frac{1}{(2\pi i)^n}\int_{\bn}\partial f_1\wedge\bpartial g_1\wedge\ldots\wedge\partial f_n\wedge\bpartial g_n.
		\]
		
		For this part, we assume that $t$ is large enough.
		We use the quantization formula at $k=1$. By Theorem \ref{thm: quantization bergman}, Remark \ref{rem: CN CT} and Corollary \ref{cor: Rabfgk membership}, for $f, g\in\mathscr{C}^2(\overline{\bn})$, we have the following decomposition
		\begin{equation}\label{eqn: temp sigma t decomp R1 R2}
			\sigma_t(f,g)=R^{(t)}_{f,g,1}=c_{1,t}\BTt_{C_1(f,g)}+R^{(t)}_{f,g,2}.
		\end{equation}
		The following hold.
		\begin{enumerate}
			\item \begin{equation*}
				c_{1,t}=nt^{-1}+O(t^{-2}),
			\end{equation*}
			\item
			\begin{equation*}
				C_1(f,g)=C_N(f,g)+C_T(f,g),
			\end{equation*}
			where $C_N(f,g)$ ($C_T(f,g)$) denotes the part involving complex normal (tangential) derivatives of $f, g$,
			and
			\begin{equation*}
				\BTt_{C_N(f,g)}\in\mathcal{S}^p, \forall p>\frac{n}{2},\text{ with }\|\BTt_{C_N(f,g)}\|_{\mathcal{S}^p}\lesssim_p t^{\frac{n}{p}},
			\end{equation*}
			\begin{equation*}
				\BTt_{C_T(f,g)}\in\mathcal{S}^p, \forall p>n,\text{ with }\|\BTt_{C_T(f,g)}\|_{\mathcal{S}^p}\lesssim_p t^{\frac{n}{p}}.
			\end{equation*}
			\item
			\begin{equation*}
				R^{(t)}_{f,g,1}=\sum_{a,b=0,1}R^{(t)a,b}_{f,g,1},\quad R^{(t)}_{f,g,2}=\sum_{a,b=0,1,2}R^{(t)a,b}_{f,g,2},
			\end{equation*}
			where $a, b$ denote the order of derivatives on $f, g$ in the complex normal directions, and for large $t$,
			\begin{equation*}
				R^{(t)a,b}_{f,g,i}\in\mathcal{S}^p, \forall p>\frac{n}{1+\frac{a+b}{2}},\quad i=1,2,
			\end{equation*}
			\begin{equation*}
				\|R^{(t)a,b}_{f,g,i}\|_{\mathcal{S}^p}\lesssim_{p} t^{-i+\frac{n}{p}}.
			\end{equation*}
		\end{enumerate}
		The above will be the main tool for Part 2 and will be repeatedly used without reference. We will prove Part 2 by establishing the following properties.
		\begin{flalign}\label{eqn: sigma sim_a TC1}
			&[f_1,g_1,\ldots,f_n,g_n]_t^{\odd}\nonumber\\
			\sim_a&c_{1,t}^n\sum_{\tau\in S_n}\sgn(\tau)\BTt_{C_1(f_{\tau_1},g_1)}\BTt_{C_1(f_{\tau_2},g_2)}\ldots\BTt_{C_1(f_{\tau_n},g_n)},
		\end{flalign}
		
		\begin{flalign}\label{eqn: TC1 into one}
			&c_{1,t}^n\sum_{\tau\in S_n}\sgn(\tau)\BTt_{C_1(f_{\tau_1},g_1)}\BTt_{C_1(f_{\tau_2},g_2)}\ldots\BTt_{C_1(f_{\tau_n},g_n)}\nonumber\\
			\sim_a&c_{1,t}^n\sum_{\tau\in S_n}\sgn(\tau)\BTt_{C_1(f_{\tau_1},g_1)C_1(f_{\tau_2},g_2)C_1(f_{\tau_n},g_n)},
		\end{flalign}
		\begin{flalign}\label{eqn: Tr TC1 sum limit}
			&\Tr\bigg[c_{1,t}^n\sum_{\tau\in S_n}\sgn(\tau)\BTt_{C_1(f_{\tau_1},g_1)C_1(f_{\tau_2},g_2)C_1(f_{\tau_n},g_n)}\bigg]\nonumber\\
			\to& \frac{1}{(2\pi i)^n}\int_{\bn}\partial f_1\wedge\bpartial g_1\wedge\ldots\wedge\partial f_n\wedge\bpartial g_n, t\to\infty.
		\end{flalign}
		
		~
		
		\noindent{\bf Proof of \eqref{eqn: sigma sim_a TC1}.}
		By \eqref{eqn: temp sigma t decomp R1 R2}, we compute the product of semi-commutators
		\begin{flalign*}
			&\sigma_t(f_1,g_1)\sigma_t(f_2,g_2)\ldots\sigma_t(f_n,g_n)-c_{1,t}^n\BTt_{C_1(f_1,g_1)}\BTt_{C_1(f_2,g_2)}\ldots\BTt_{C_1(f_n,g_n)}\\
			=&R^{(t)}_{f_1,g_1,1}R^{(t)}_{f_2,g_2,1}\ldots R^{(t)}_{f_n,g_n,1} -\bigg(R^{(1)}_{f_1,g_1,1}-R^{(t)}_{f_1,g_1,2}\bigg)\bigg(R^{(1)}_{f_2,g_2,1}-R^{(t)}_{f_2,g_2,2}\bigg)\ldots\bigg(R^{(1)}_{f_n,g_n,1}-R^{(t)}_{f_n,g_n,2}\bigg)\\
			=&\sum_{(i_1,i_2,\ldots,i_n)\in X}\pm R^{(t)}_{f_1,g_1,i_1}R^{(t)}_{f_2,g_2,i_2}\ldots R^{(t)}_{f_n,g_n,i_n},
		\end{flalign*}
		where
		\[
		X=\{(i_1,i_2,\ldots,i_n): i_j=1, 2,\text{ and at least one }i_j=2\}.
		\]
		Therefore
		\begin{equation*}
			[f_1,g_1,\ldots,f_n,g_n]_t^{\odd}-c_{1,t}^n\sum_{\tau\in S_n}\sgn(\tau)\BTt_{C_1(f_{\tau_1},g_1)}\BTt_{C_1(f_{\tau_2},g_2)}\ldots\BTt_{C_1(f_{\tau_n},g_n)}
		\end{equation*}
		is a linear combination of operators of the form
		\[
		\sum_{\tau\in S_n}\sgn(\tau)R^{(t)}_{f_{\tau_1},g_1,i_1}R^{(t)}_{f_{\tau_2},g_2,i_2}\ldots R^{(t)}_{f_{\tau_n},g_n,i_n},
		\]
		where $(i_1,i_2,\ldots,i_n)\in X$. For (\ref{eqn: sigma sim_a TC1}), it suffices to prove
		\begin{equation}\label{eqn: R sum sima 0}
			\sum_{\tau\in S_n}\sgn(\tau)R^{(t)}_{f_{\tau_1},g_1,i_1}R^{(t)}_{f_{\tau_2},g_2,i_2}\ldots R^{(t)}_{f_{\tau_n},g_n,i_n}\sim_a0,\quad\forall (i_1,\ldots,i_n)\in X.
		\end{equation}
		We show the case when $(i_1,i_2,\ldots, i_n)=(2,1,\ldots,1)$, i.e.,
		\[
		\sum_{\tau\in S_n}\sgn(\tau)R^{(t)}_{f_{\tau_1},g_1,2}R^{(t)}_{f_{\tau_2},g_2,1}\ldots R^{(t)}_{f_{\tau_n},g_n,1}\sim_a0.
		\]
		First, each $R^{(t)}_{f,g,i}$ decomposes into the sum of $R^{(t)a,b}_{f,g,i}$. The operator $R^{(t)0,0}_{f,g,i}\in\mathcal{S}^p, \forall p>n,$ and
		\[
		\|R^{(t)0,0}_{f,g,1}\|_{\mathcal{S}^p}\lesssim t^{-1+\frac{n}{p}},\quad\|R^{(t)0,0}_{f,g,2}\|_{\mathcal{S}^p}\lesssim t^{-2+\frac{n}{p}}.
		\]
		If $(a,b)\neq(0,0)$ then the operator belongs to $\mathcal{S}^p$ for some $p<n$, with asymptotic Schatten-norm estimates
		\[
		\|R^{(t)a,b}_{f,g,1}\|_{\mathcal{S}^p}\lesssim t^{-1+\frac{n}{p}},\quad\|R^{(t)a,b}_{f,g,2}\|_{\mathcal{S}^p}\lesssim t^{-2+\frac{n}{p}}.
		\]
		Therefore for each $\tau\in S_n,$ we have the following equation
		\[
		R^{(t)}_{f_{\tau_1},g_1,2}R^{(t)}_{f_{\tau_2},g_2,1}\ldots R^{(t)}_{f_{\tau_n},g_n,1}=\sum_{\substack{a_1,b_1=0,1,2\\a_2,b_2,\ldots,a_n,b_n=0,1}} R^{(t)a_1,b_1}_{f_{\tau_1},g_1,2}R^{(t)a_2,b_2}_{f_{\tau_2},g_2,1}\ldots R^{(t)a_n,b_n}_{f_{\tau_n},g_n,1}.
		\]
		Suppose  that there is some $(a_k,b_k)\neq(0,0)$. Then we can choose $1\leq p_1,\ldots,p_n<\infty$ such that $\sum_j\frac{1}{p_j}=1$, and $R^{(t)a_j,b_j}_{f_{\tau_j},g_j,i_j}\in\mathcal{S}^{p_i},\ i=1,\ldots, n$. So
		\[
		R^{(t)a_1,b_1}_{f_{\tau_1},g_1,2}R^{(t)a_2,b_2}_{f_{\tau_2},g_2,1}\ldots R^{(t)a_n,b_n}_{f_{\tau_n},g_n,1}\in\mathcal{S}^1,
		\]
		with trace norm
		\[
		\lesssim t^{-2+\frac{n}{p_1}}\cdot t^{-1+\frac{n}{p_2}}\cdot\ldots\cdot t^{-1+\frac{n}{p_n}}=t^{-1}.
		\]
		It follows that $R^{(t)a_1,b_1}_{f_{\tau_1},g_1,2}R^{(t)a_2,b_2}_{f_{\tau_2},g_2,1}\ldots R^{(t)a_n,b_n}_{f_{\tau_n},g_n,1}\sim_a0$. And we conclude
		\begin{equation}\label{eqn: Rfg sima Rfg00}
			R^{(t)}_{f_{\tau_1},g_1,2}R^{(t)}_{f_{\tau_2},g_2,1}\ldots R^{(t)}_{f_{\tau_n},g_n,1}\sim_a R^{(t)0,0}_{f_{\tau_1},g_1,2}R^{(t)0,0}_{f_{\tau_2},g_2,1}\ldots R^{(t)0,0}_{f_{\tau_n},g_n,1}.
		\end{equation}
		Next, we show
		\begin{equation}\label{eqn: Rfg00 sum sima 0}
			\sum_{\tau\in S_n}\sgn(\tau)R^{(t)0,0}_{f_{\tau_1},g_1,2}R^{(t)0,0}_{f_{\tau_2},g_2,1}\ldots R^{(t)0,0}_{f_{\tau_n},g_n,1}\sim_a0.
		\end{equation}
		
		The proof is an almost verbatim repetition of the proof of \eqref{eqn: Rfg00 AS trace class}. Tracing back the definition, for $f, g\in\mathscr{C}^2(\overline{\bn})$ and $h\in\bert$, we compute $R_{f,g,2}^{(t)0,0}$,
		\begin{flalign}\label{eqn: R00fg2}
			R_{f,g,2}^{(t)0,0}h(\xi)=&\int_{\bn^2}\Phi_{n,2}^{(t)}(|\varphi_z(w)|^2)V_2(z,w)\la Q_{\bar{z}}\partial f(z),\overline{z-w}\ra\la Q_z\bpartial g(w),z-w\ra\\
			&~~~~~\cdot h(w)\BKt_z(\xi)\BKt_w(z)\intd\lambda_t(w)\intd\lambda_t(z),\nonumber
		\end{flalign}
		with
		\begin{flalign}\label{eqn: V2}
			V_2(z,w)=&\frac{\sum_{i_2,j_2=1}^nI^{e_{i_2},e_{j_2}}(z-w)\partial_{z_{i_2}}\bpartial_{w_{j_2}}(1-\la z,w\ra)^2}{(1-\la w,z\ra)^2}\\
			=&2\frac{1-\la z,w\ra}{(1-\la w,z\ra)^2}\cdot C(z,w)+\frac{B_1(z,w)B_2(z,w)}{(1-\la w,z\ra)^2},
		\end{flalign}
		where $C(z,w), B_1(z,w), B_2(z,w)$ are as in the proof of Lemma \ref{lem: IG estimates}. By the estimates in the proof, we get
		\begin{equation}\label{eqn: V2 estimate}
			|V_2(z,w)|\lesssim |\varphi_z(w)|^2.
		\end{equation}
		Similar as in the proof of \eqref{eqn: Rfg00 AS trace class}, we write
		\[
		R^{(t)0,0}_{f,g,2}=\sum_{j=1}^n\BPt M^{(t)}_{[Q_z\partial f]_j}\Lambda_{g,2,j},
		\]
		where
		\[
		\Lambda_{g,2,j}h(z)=\int_{\bn}\Phi_{n,2}^{(t)}(|\varphi_z(w)|^2)F_{\Lambda_{g,2,j}}(z,w)h(w)\intd\lambda_t(w),
		\]
		with
		\[
		F_{\Lambda_{g,2,j}}(z,w)=V_2(z,w)(z_j-w_j)\la Q_z\bpartial g(w),z-w\ra\BKt_w(z).
		\]
		By \eqref{eqn: V2 estimate} and Lemma \ref{lem: Mobius basics}, we have the following estimate
		\[
		\big|F_{\Lambda_{g,2,j}}(z,w)\big|\lesssim|\varphi_z(w)|^2\frac{1}{|1-\la z,w\ra|^{n+1+t-1}}. 
		\]
		Again, by Theorem \ref{thm: integral Schatten membership} and Corollary \ref{cor: commutator schatten membership with phi}, for any Lipschitz function $u$ on $\bn$, we have the following properties. 
		\begin{itemize}
			\item Both $\BPt\Lambda_{g,2,j}$ and $\Lambda_{g,2,j}\BPt$ are in $\mathcal{S}^p$ for any $p>n$, with Schatten-$p$ norm $\lesssim_{p}t^{-2+\frac{n}{p}}$.
			\item Both $\BPt[\Lambda_{g,2,j},M^{(t)}_u]$ and $[\Lambda_{g,2,j}, M^{(t)}_u]\BPt$ are in $\mathcal{S}^p$ for any $p>\frac{2n}{3}$, with Schatten-$p$ norm $\lesssim_{p}t^{-2+\frac{n}{p}}$.
			\item For any Lipschitz function $u$, $[\BPt, M_u^{(t)}]=[\BPt, M_u^{(t)}]\BPt-\BPt[\BPt, M_u^{(t)}]\in\mathcal{S}^p$ for any $p>2n$, with $\|[\BPt, M_u^{(t)}]\|_{\mathcal{S}^p}\lesssim_p t^{\frac{n}{p}}$.
		\end{itemize}
		Then as in the proof of \eqref{eqn: temp 3}, we compute the product of $R^{(t)0,0}_{f_{\tau_1}, f_1,2}$ and $R^{(t)0,0}_{f_{\tau_i}, g_i, 1}$ ($i=2, \cdots, n$)
		\begin{flalign*}
			&R^{(t)0,0}_{f_{\tau_1},g_1,2}R^{(t)0,0}_{f_{\tau_2},g_2,1}\ldots R^{(t)0,0}_{f_{\tau_n},g_n,1}\\
			=&\sum_{j_1,\ldots,j_n=1}^n\BPt M^{(t)}_{u_{\tau_1, j_1}}\Lambda_{g_1,2,j_1}\BPt M^{(t)}_{u_{\tau_2,j_2}}\Lambda_{g_2,1,j_2}\ldots\BPt M^{(t)}_{u_{\tau_n,j_n}}\Lambda_{g_n,1,j_n}\BPt\\
			\sim_a&\sum_{j_1,\ldots,j_n=1}^n\BPt\Lambda_{g_1,2,j_1}\BPt \Lambda_{g_2,1,j_2}\ldots\BPt\Lambda_{g_n,1,j_n} M^{(t)}_{u_{\tau_1,j_1}}M^{(t)}_{u_{\tau_2,j_2}}\ldots M^{(t)}_{u_{\tau_n,j_n}}\BPt\\
			:=&\Theta^{'(t)}_{f_{\tau_1},g_1,\ldots,f_{\tau_n},g_n}.
		\end{flalign*}
		Again, by \eqref{eqn: Qpartial antisym =0}, we conclude with the following equation
		\[
		\sum_{\tau\in S_n}\sgn(\tau)\Theta^{'(t)}_{f_{\tau_1},g_1,\ldots,f_{\tau_n},g_n}=0.
		\]
		This proves \eqref{eqn: Rfg00 sum sima 0}. And together with \eqref{eqn: Rfg sima Rfg00}, it proves \eqref{eqn: R sum sima 0} for $(i_1,\ldots, i_n)=(2,1,\ldots,1)$. The proof for general $(i_1,\ldots, i_n)\in X$ is an almost verbatim repetition of the above. This finishes the proof of \eqref{eqn: sigma sim_a TC1}.
		
		~
		
		\noindent{\bf Proof of \eqref{eqn: TC1 into one}.}
		Denote
		\[
		\phi_i(z)=(1-|z|^2)^i,\quad i=1, 2, \ldots.
		\]
		Suppose $f, g\in\mathscr{C}^2(\overline{\bn})$.
		For $z\neq 0$, let $\mathbf{e}_z=\{e_{z,1}, e_{z,2}, \ldots, e_{z, n}\}$ be an orthonormal basis of $\cn$ such that $e_{z,1}=\frac{z}{|z|}$. Let $\varphi:\cn\to[0,1]$ be a smooth function that equals $1$ inside $\frac{1}{4}\bn$ and vanishes outside $\frac{1}{2}\bn$. Under the basis $\mathbf{e}_z$, let
		\begin{equation}\label{eqn: Dfg1}
			D_{f,g,1}(z)=-\frac{1}{n}\bigg[\sum_{i=2}^n\partial_i f(z)\bpartial_i g(z)\bigg](1-\varphi(z)),
		\end{equation}
		and
		\begin{equation}\label{eqn: Dfg2}
		D_{f,g,2}(z)=-\frac{1}{n}\partial_1g(z)\bpartial_1g(z)(1-\varphi(z))+\frac{C_1(f,g)(z)\varphi(z)}{(1-|z|^2)^2}.
		\end{equation}
		Then by Remark \ref{rem: CN CT} and direct computation, we have the following decomposition
		\[
		C_1(f,g)=\phi_1D_{f,g,1}+\phi_2D_{f,g,2}.
		\]
		By Remark \ref{rem: C1 formula for any basis}, $D_{f,g,1}, D_{f,g,2}\in\mathscr{C}^1(\overline{\bn})$. With the decomposition we have the following formula
		\begin{flalign}\label{eqn: 4}
			&c_{1,t}^n\BTt_{C_1(f_1,g_1)}\BTt_{C_1(f_2,g_2)}\ldots\BTt_{C_1(f_n,g_n)}- \BTt_{c_{1,t}^nC_1(f_1,g_1)C_1(f_2,g_2)\ldots C_1(f_n,g_n)}\\
			=&c_{1,t}^n\sum_{i_1,\ldots,i_n=1,2}\bigg[\BTt_{\phi_{i_1}D_{f_1,g_1,i_1}}\BTt_{\phi_{i_2}D_{f_2,g_2,i_2}}\ldots\BTt_{\phi_{i_n}D_{f_n,g_n,i_n}}-\BTt_{\phi_{i_1}D_{f_1,g_1,i_1}\phi_{i_2}D_{f_2,g_2,i_2}\ldots\phi_{i_n}D_{f_n,g_n,i_n}}\bigg].\nonumber
		\end{flalign}
		By Theorem \ref{thm: quantization bergman}, Corollary \ref{cor: semicom with radial functions} and Lemma \ref{lem: Toeplitz Schatten Class} for $t$ large enough, $i, j=1, 2, \ldots$, and $u, v\in\mathscr{C}^1(\overline{\bn})$, we obtain the following estimates. 
		\[
		\BTt_u\BTt_v-\BTt_{uv}\in\mathcal{S}^p,~\forall p>n,\quad\text{ and }\|\BTt_u\BTt_v-\BTt_{uv}\|_{\mathcal{S}^p}\lesssim_p t^{-1+\frac{n}{p}},
		\]
		\[
		\BTt_{\phi_i}\BTt_{u}-\BTt_{\phi_iu}\in\mathcal{S}^p,~\forall p>\frac{n}{i+\frac{1}{2}}, \quad\text{ and }\|\BTt_{\phi_i}\BTt_{u}-\BTt_{\phi_iu}\|_{\mathcal{S}^p}\lesssim_p t^{-1+\frac{n}{p}},
		\]
		\[
		\BTt_{u}\BTt_{\phi_i}-\BTt_{\phi_iu}\in\mathcal{S}^p,~\forall p>\frac{n}{i+\frac{1}{2}}, \quad\text{ and }\|\BTt_{u}\BTt_{\phi_i}-\BTt_{\phi_iu}\|_{\mathcal{S}^p}\lesssim_p t^{-1+\frac{n}{p}},
		\]
		and
		\[
		\BTt_{\phi_i}\BTt_{\phi_j}-\BTt_{\phi_i\phi_j}\in\mathcal{S}^p,~\forall p>\frac{n}{i+j},\quad\text{ and }\|\BTt_{\phi_i}\BTt_{\phi_j}-\BTt_{\phi_i\phi_j}\|_{\mathcal{S}^p}\lesssim_p t^{-1+\frac{n}{p}},
		\]
		\[
		\BTt_{\phi_iu}\in\mathcal{S}^p,~\forall p>\frac{n}{i},\quad\text{ and }\|\BTt_{\phi_iu}\|_{\mathcal{S}^p}\lesssim_p t^{\frac{n}{p}},
		\]
		and
		\[
		c_{1,t}\approx t^{-1}.
		\]
		Therefore by the above estimates, we compute the product of $T^{(t)}_{\phi_{i_j}D_{f_j, g_j, i_j}}$.
		\begin{flalign*}
			&c_{1,t}^n\BTt_{\phi_{i_1}D_{f_1,g_1,i_1}}\BTt_{\phi_{i_2}D_{f_2,g_2,i_2}}\ldots\BTt_{\phi_{i_n}D_{f_n,g_n,i_n}}\\
			=&c_{1,t}^n\BTt_{\phi_{i_1}}\BTt_{D_{f_1,g_1,i_1}}\BTt_{\phi_{i_2}D_{f_2,g_2,i_2}}\ldots\BTt_{\phi_{i_n}D_{f_n,g_n,i_n}}\\
			&\hspace{4cm}-c_{1,t}^n\big(\BTt_{\phi_{i_1}}\BTt_{D_{f_1,g_1,i_1}}-\BTt_{\phi_{i_1}D_{f_1,g_1,i_1}}\big)\BTt_{\phi_{i_2}D_{f_2,g_2,i_2}}\ldots\BTt_{\phi_{i_n}D_{f_n,g_n,i_n}}\\
			\sim_a&c_{1,t}^n\BTt_{\phi_{i_1}}\BTt_{D_{f_1,g_1,i_1}}\BTt_{\phi_{i_2}D_{f_2,g_2,i_2}}\ldots\BTt_{\phi_{i_n}D_{f_n,g_n,i_n}}\\
			&\ldots\\
			\sim_a&c_{1,t}^n\BTt_{\phi_{i_1}}\BTt_{D_{f_1,g_1,i_1}}\BTt_{\phi_{i_2}}\BTt_{D_{f_2,g_2,i_2}}\ldots\BTt_{\phi_{i_n}}\BTt_{D_{f_n,g_n,i_n}}\\
			=&c_{1,t}^n\BTt_{\phi_{i_1}}\BTt_{\phi_{i_2}}\BTt_{D_{f_1,g_1,i_1}}\BTt_{D_{f_2,g_2,i_2}}\ldots\BTt_{\phi_{i_n}}\BTt_{D_{f_n,g_n,i_n}}\\
			&\hspace{4cm}+c_{1,t}^n\BTt_{\phi_{i_1}}\big[\BTt_{D_{f_1,g_1,i_1}},\BTt_{\phi_{i_2}}\big]\BTt_{D_{f_2,g_2,i_2}}\ldots\BTt_{\phi_{i_n}}\BTt_{D_{f_n,g_n,i_n}}\\
			\sim_a&c_{1,t}^n\BTt_{\phi_{i_1}}\BTt_{\phi_{i_2}}\BTt_{D_{f_1,g_1,i_1}}\BTt_{D_{f_2,g_2,i_2}}\ldots\BTt_{\phi_{i_n}}\BTt_{D_{f_n,g_n,i_n}}\\
			&\ldots\\
			\sim_a&c_{1,t}^n\BTt_{\phi_{i_1}}\ldots\BTt_{\phi_{i_n}}\BTt_{D_{f_1,g_1,i_1}}\BTt_{D_{f_2,g_2,i_2}}\ldots\BTt_{D_{f_n,g_n,i_n}}\\
			=&c_{1,t}^n\BTt_{\phi_{i_1}}\ldots\BTt_{\phi_{i_n}}\BTt_{D_{f_1,g_1,i_1}D_{f_2,g_2,i_2}}\ldots\BTt_{D_{f_n,g_n,i_n}}\\
			&\hspace{2cm}+c_{1,t}^n\BTt_{\phi_{i_1}}\ldots\BTt_{\phi_{i_n}}\bigg(\BTt_{D_{f_1,g_1,i_1}}\BTt_{D_{f_2,g_2,i_2}}-\BTt_{D_{f_1,g_1,i_1}D_{f_2,g_2,i_2}}\bigg)\BTt_{\phi_{i_3}}\ldots\BTt_{D_{f_n,g_n,i_n}}\\
			\sim_a&c_{1,t}^n\BTt_{\phi_{i_1}}\ldots\BTt_{\phi_{i_n}}\BTt_{D_{f_1,g_1,i_1}D_{f_2,g_2,i_2}}\BTt_{\phi_{i_3}}\ldots\BTt_{D_{f_n,g_n,i_n}}\\
			&\ldots\\
			\sim_a&c_{1,t}^n\BTt_{\phi_{i_1}}\ldots\BTt_{\phi_{i_n}}\BTt_{D_{f_1,g_1,i_1}D_{f_2,g_2,i_2}\ldots D_{f_n,g_n,i_n}}.
		\end{flalign*}
		If some $i_k=2$ in the above, we continue the above computation as follows,
		\begin{flalign*}
			\sim_a&c_{1,t}^n\BTt_{\phi_{i_1}\phi_{i_2}}\BTt_{\phi_{i_3}}\ldots\BTt_{\phi_{i_n}}\BTt_{D_{f_1,g_1,i_1}D_{f_2,g_2,i_2}\ldots D_{f_n,g_n,i_n}}\\
			&\ldots\\
			\sim_a&c_{1,t}^n\BTt_{\phi_{i_1}\phi_{i_2}\ldots\phi_{i_n}}\BTt_{D_{f_1,g_1,i_1}D_{f_2,g_2,i_2}\ldots D_{f_n,g_n,i_n}}\\
			\sim_a&c_{1,t}^n\BTt_{\phi_{i_1}\phi_{i_2}\ldots\phi_{i_n}D_{f_1,g_1,i_1}D_{f_2,g_2,i_2}\ldots D_{f_n,g_n,i_n}}.
		\end{flalign*}
		This proves that if  there is some $i_k=2$, then we have the following equation
		\begin{equation}\label{eqn: 1}
			c_{1,t}^n\BTt_{\phi_{i_1}D_{f_1,g_1,i_1}}\BTt_{\phi_{i_2}D_{f_2,g_2,i_2}}\ldots\BTt_{\phi_{i_n}D_{f_n,g_n,i_n}}\sim_a c_{1,t}^n\BTt_{\phi_{i_1}\phi_{i_2}\ldots\phi_{i_n}D_{f_1,g_1,i_1}D_{f_2,g_2,i_2}\ldots D_{f_n,g_n,i_n}}.
		\end{equation}
		If all $i_k=1$, in the steps right above \eqref{eqn: 1}, the difference operator may not belong to the trace class. We need to take the anti-symmetrization into consideration. Similar as in the proof of \eqref{eqn: Qpartial antisym =0}, the odd anti-symmetrization is over $n$ symbols, but $Q_z\partial f$ has only $n-1$ entries under the basis $\mathbf{e}_z$. Thus we have the identities
		\begin{equation}\label{eqn: 5}
			\sum_{\tau\in S_n}\sgn(\tau)D_{f_{\tau_1},g_1,1}D_{f_{\tau_2},g_2,1}\ldots D_{f_{\tau_n},g_n,1}=0,
		\end{equation}
		and 
		\begin{flalign}\label{eqn: 2}
			&\sum_{\tau\in S_n}\sgn(\tau)c_{1,t}^n\BTt_{\phi_1D_{f_{\tau_1},g_1,1}}\BTt_{\phi_1D_{f_{\tau_2},g_2,1}}\ldots\BTt_{\phi_1D_{f_{\tau_n},g_n,1}}\nonumber\\
			\sim_a&c_{1,t}^n\bigg[\BTt_{\phi_1}\bigg]^n\BTt_{\sum_{\tau\in S_n}\sgn(\tau)D_{f_{\tau_1},g_1,1}D_{f_{\tau_2},g_2,1}\ldots D_{f_{\tau_n},g_n,1}}\\
			=&0.\nonumber
		\end{flalign}
		Also by \eqref{eqn: 5}, we have the following equation
		\begin{equation}\label{eqn: 3}
			\sum_{\tau\in S_n}\sgn(\tau)c_{1,t}^n\BTt_{\phi_1D_{f_{\tau_1},g_1,1}\phi_1D_{f_{\tau_2},g_2,1}\ldots\phi_1 D_{f_{\tau_n},g_n,1}}=\BTt_{\phi_1^n\sum_{\tau\in S_n}\sgn(\tau)D_{f_{\tau_1},g_1,1}D_{f_{\tau_2},g_2,1}\ldots D_{f_{\tau_n},g_n,1}}=0.
		\end{equation}
		Altogether, \eqref{eqn: 4}, \eqref{eqn: 1}, \eqref{eqn: 2} and \eqref{eqn: 3} imply \eqref{eqn: TC1 into one}.
		
		~

		\noindent{\bf Proof of \eqref{eqn: Tr TC1 sum limit}.}
		
		Denote
		\[
		F(z)=\sum_{\tau\in S_n}\mathrm{sgn}(\tau)C_1(f_{\tau_1},g_1)\ldots C_1(f_{\tau_n},g_n).
		\]
		Under the basis $\mathbf{e}_z$, we compute the following volume form
		\begin{flalign*}
			&(-1)^nn^n(1-|z|^2)^{-n-1}F\intd z_1\wedge\ldots\wedge\intd z_n\wedge\intd \bz_1\wedge\ldots\wedge\intd\bz_n\\
			=&\sum_{\tau,\varsigma\in S_n}\mathrm{sgn}(\tau)\partial_{\varsigma(1)}f_{\tau_1}\bpartial_{\varsigma(1)}g_1\ldots\partial_{\varsigma(n)}f_{\tau_n}\bpartial_{\varsigma(n)}g_n\intd z_1\wedge\ldots\wedge\intd z_n\wedge\intd \bz_1\wedge\ldots\wedge\intd\bz_n\\
			=&\sum_{\tau,\varsigma\in S_n}\sgn(\tau)\big(\partial_{\varsigma(1)}f_{\tau_1}\intd z_{\varsigma(1)}\big)\wedge\ldots\wedge\big(\partial_{\varsigma(n)}f_{\tau_n}\intd z_{\varsigma(n)}\big)\wedge\big(\bpartial_{\varsigma(1)}g_1\intd\bz_{\varsigma(1)}\big)\wedge\ldots\wedge\big(\bpartial_{\varsigma(n)}g_n\intd\bz_{\varsigma(n)}\big)\\
			=&\sum_{\tau,\varsigma\in S_n}\big(\partial_{\varsigma\tau^{-1}(1)}f_1\intd z_{\varsigma\tau^{-1}(1)}\big)\wedge\ldots\wedge\big(\partial_{\varsigma\tau^{-1}(n)}f_n\intd z_{\varsigma\tau^{-1}(n)}\big)\wedge\big(\bpartial_{\varsigma(1)}g_1\intd\bz_{\varsigma(1)}\big)\wedge\ldots\wedge\big(\bpartial_{\varsigma(n)}g_n\intd\bz_{\varsigma(n)}\big)\\
			=&\sum_{\iota,\varsigma\in S_n}\big(\partial_{\iota(1)}f_1\intd z_{\iota(1)}\big)\wedge\ldots\wedge\big(\partial_{\iota(n)}f_n\intd z_{\iota(n)}\big)\wedge\big(\bpartial_{\varsigma(1)}g_1\intd\bz_{\varsigma(1)}\big)\wedge\ldots\wedge\big(\bpartial_{\varsigma(n)}g_n\intd\bz_{\varsigma(n)}\big)\\
			=&\partial f_1\wedge\ldots\wedge\partial f_n\wedge\bpartial g_1\wedge\ldots\wedge\bpartial g_n.
		\end{flalign*}
		Therefore, we obtain the following formula of the trace
		\begin{flalign*}
			&\Tr \BTt_{\sum_{\tau\in S_n}\sgn(\tau)c_{1,t}^nC_1(f_{\tau_1},g_1)C_1(f_{\tau_2},g_2)C_1(f_{\tau_n},g_n)}\\
			=&c_{1,t}^n\Tr\BTt_F\\
			=&\frac{(n-1)!c_{1,t}^n}{\pi^nB(n,t+1)}\int_{\bn}\frac{F(z)}{(1-|z|^2)^{n+1}}\intd m(z)\\
			=&\frac{(n-1)!c_{1,t}^n}{\pi^nB(n,t+1)}\frac{(-1)^n}{n^n}\frac{1}{(-2i)^n}\int_{\bn}\partial f_1\wedge\bpartial g_1\wedge\ldots\wedge\partial f_n\wedge\bpartial g_n.
		\end{flalign*}
		Since
		\[
		c_{1,t}=nt^{-1}+O(t^{-2}),\quad B(n,t+1)=(n-1)!t^{-n}+O(t^{-n-1}),
		\]
		we have
		\[
		\frac{c_{1,t}^n}{B(n,t+1)}\to\frac{n^n}{(n-1)!},\quad t\to\infty.
		\]
		Simplifying the above gives \eqref{eqn: Tr TC1 sum limit}.
		
		In summary, we have proved \eqref{eqn: sigma sim_a TC1}, \eqref{eqn: TC1 into one} and \eqref{eqn: Tr TC1 sum limit}. Altogether they finish Part 2. we have completed the proof of Theorem \ref{thm: AS partial trace dimension n} for $t>-1$.

	\subsection{Proof of Theorem \ref{thm: AS partial trace dimension n}: $t=-1$}
To simplify notations, we write
	\[
	\sigma(f,g)=\sigma_{-1}(f,g),
	\]
	and
	\[
	[f_1,g_1,\ldots,f_n,g_n]^\odd=[f_1,g_1,\ldots,f_n,g_n]^\odd_{-1},
	\]
	\[
	[f_1,g_1,\ldots,f_n,g_n]^\even=[f_1,g_1,\ldots,f_n,g_n]^\even_{-1}.
	\]
	\begin{lem}\label{lem: Tg Hardy}
		Suppose $g\in\mathscr{C}^1(\overline{\bn})$. Then for $h\in H^2(\sn)$,
		\begin{equation}\label{eqn: Tg Hardy}
			T_gh(z)=g(z)h(z)-\frac{1}{n}\int_{\bn}|\varphi_z(w)|^{-2n}\frac{\la\bpartial g(w),z-w\ra}{(1-\la w,z\ra)}h(w)K_w(z)\intd\lambda_0(w),\quad\forall z\in\bn.
		\end{equation}
		Moreover, for $z\in\sn$ almost everywhere,
		\begin{equation}\label{eqn: Tg Hardy boundary}
			T_gh(z)=g(z)h(z)-\frac{1}{n}\int_{\bn}\frac{\la\bpartial g(w),z-w\ra}{1-\la w,z\ra}h(w)K_w(z)\intd\lambda_0(w).
		\end{equation}
	\end{lem}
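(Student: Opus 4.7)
The plan is to specialize Lemma \ref{lem: BM hardy} \eqref{eqn: BM hardy d''} to the case $\alpha = \beta = 0$, then extend the resulting identity from holomorphic polynomials to $H^{2}(\sn)$ and pass to the boundary. With $\alpha = \beta = 0$ one has $I^{0,0}(z-w) = 1$, $d_{0,0}(z) = 1$, and $I^{0,e_{j}}(z-w) = \overline{z_{j} - w_{j}}$, so the formula collapses into
\begin{equation*}
\int_{\sn} v(w) K_{w}(z)\,\frac{\intd\sigma(w)}{\sigma_{2n-1}}
= v(z) - \frac{1}{n}\int_{\bn} |\varphi_{z}(w)|^{-2n}\,\frac{\la \bpartial v(w), z-w\ra}{1-\la w,z\ra}\, K_{w}(z)\, \intd\lambda_{0}(w)
\end{equation*}
for $v\in\mathscr{C}^{1}(\overline{\bn})$ and $z\in\bn$. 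First I would take $h\in\Hol(\overline{\bn})$ and plug in $v = gh$; since $h$ is holomorphic, $\bpartial v = h\,\bpartial g$, and the left-hand side becomes $T_{g}h(z)$, producing \eqref{eqn: Tg Hardy} for such $h$.

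Next I would extend to arbitrary $h \in H^{2}(\sn)$ by dilations. Let $h_{r}(w) = h(rw) \in \Hol(\overline{\bn})$; then $\|h_{r}\|_{H^{2}}\leq \|h\|_{H^{2}}$ and $h_{r}\to h$ in $H^{2}(\sn)$. For fixed $z \in \bn$, continuity of $T_{g}$ and of point evaluation at $z$ gives $T_{g}h_{r}(z)\to T_{g}h(z)$, and Szegő extension gives $g(z)h_{r}(z)\to g(z)h(z)$. For the right-hand integral, Lemma \ref{lem: Mobius basics}(6) yields $|\la \bpartial g(w), z-w\ra| \lesssim |\varphi_{z}(w)|\,|1-\la z,w\ra|^{1/2}$, so for fixed $z$ the integrand is controlled pointwise by $C_{z}\,|\varphi_{z}(w)|^{-2n+1}\,|1-\la z,w\ra|^{-n-1/2}\,|h_{r}(w)|$; split $\bn$ into a compact core $B(0,r_{0})$ (where $h_{r}\to h$ uniformly) and its complement (where $|\varphi_{z}(w)|$ is bounded away from $0$), and pass to the limit via dominated convergence on each piece.

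For the boundary identity \eqref{eqn: Tg Hardy boundary}, I would take non-tangential limits in \eqref{eqn: Tg Hardy} as $z\to z_{0}\in\sn$. Almost everywhere, $T_{g}h$ and $h$ have non-tangential boundary values, and $g$ is continuous on $\overline{\bn}$; hence the non-integral terms converge to $g(z_{0})h(z_{0}) - T_{g}h(z_{0})$. On the other hand, Lemma \ref{lem: Mobius basics}(2) gives $1-|\varphi_{z}(w)|^{2} = (1-|z|^{2})(1-|w|^{2})/|1-\la z,w\ra|^{2}$, so $|\varphi_{z}(w)|^{-2n}\to 1$ pointwise for $w \in \bn$ as $z\to z_{0}\in\sn$. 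The remaining kernel $\la\bpartial g(w),z-w\ra\,(1-\la w,z\ra)^{-1}K_{w}(z)$ is bounded uniformly in $z$ near $z_{0}$ by $C\,|1-\la z_{0},w\ra|^{-n-1/2}$ (using $|z-w|\lesssim |1-\la z,w\ra|^{1/2}$), which is $\intd\lambda_{0}$-integrable by Lemma \ref{lem: Rudin Forelli generalizations}, so dominated convergence produces \eqref{eqn: Tg Hardy boundary}.

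The main obstacle is the extension step: the crude pointwise bound $|h_{r}(w)|\leq \|h\|_{H^{2}}(1-|w|^{2})^{-n/2}$ is too singular (for $n\geq 2$) to be combined globally with the kernel $|\varphi_{z}(w)|^{-2n+1}|1-\la z,w\ra|^{-n-1/2}$ via a Rudin–Forelli estimate. The two-piece split above circumvents this by trading uniform convergence on compacts against the decay of $|\varphi_{z}(w)|^{-2n+1}$ away from $w = z$; alternatively one can observe that both sides of \eqref{eqn: Tg Hardy}, viewed as linear functionals of $h$ at fixed $z \in \bn$, are continuous on $H^{2}(\sn)$, so equality on the dense subspace $\Hol(\overline{\bn})$ automatically yields equality on all of $H^{2}(\sn)$ once the integral on the right is shown to depend continuously on $h$ in the appropriate sense.
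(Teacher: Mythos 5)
Your first step is exactly what the paper does: specialize Lemma~\ref{lem: BM hardy}~\eqref{eqn: BM hardy d''} to $\alpha=\beta=0$ with $v=gh$, $h\in\Hol(\overline{\bn})$.

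For the extension to general $h\in H^2(\sn)$, your idea (split $\bn$ into a compact core and its complement) is the right one, but ``dominated convergence on each piece'' is not quite what you want on the complement. There, for fixed $z\in\bn$, the kernel is simply bounded, but the best pointwise bound $|h_r(w)|\lesssim \|h\|_{H^2}(1-|w|^2)^{-n/2}$ is not integrable when $n\geq2$, so there is no integrable majorant. What saves the argument is that you don't need pointwise domination: $h_r\to h$ in $H^2(\sn)\hookrightarrow L^2(\lambda_0)\subset L^1(\lambda_0)$, and the kernel is bounded on the complement, so the integrals converge by $L^1$ (or Cauchy--Schwarz) convergence rather than dominated convergence. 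Your alternative remark about continuity of both sides as functionals of $h$ is also correct, but the continuity of the right-hand side is precisely the content that needs to be established; it is not automatic.

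The boundary step \eqref{eqn: Tg Hardy boundary} is where your proposal genuinely breaks. You take non-tangential limits $z\to z_0\in\sn$ for general $h\in H^2(\sn)$ and propose the uniform majorant $C|1-\la z_0,w\ra|^{-n-1/2}$, citing $|z-w|\lesssim|1-\la z,w\ra|^{1/2}$. To convert this into a bound in terms of $z_0$ you are implicitly using $|1-\la z,w\ra|\gtrsim|1-\la z_0,w\ra|$ uniformly for $z$ in a non-tangential region. This fails: take $z_0=e_1$, $z=re_1$, $w=r'e_1$ with $r'\to1$ faster than $r\to1$ (say $1-r'=(1-r)^2$); then $\frac{|1-\la z,w\ra|}{|1-\la z_0,w\ra|}=\frac{1-rr'}{1-r'}\to\infty$. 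Moreover, even if such a bound held, for $h\in H^2(\sn)$ the resulting integrand $|1-\la z_0,w\ra|^{-n-1/2}(1-|w|^2)^{-n/2}$ is not in $L^1(\intd m)$ for $n\geq2$, so no integrable dominating function is available. The paper avoids both problems simultaneously: it first establishes the boundary formula for $h\in\Hol(\overline{\bn})$ (so $h$ is bounded and the extra $(1-|w|^2)^{-n/2}$ factor never enters) and uses the \emph{radial} limit $z=rz_0$, $r\to1^-$, where the elementary inequality $|1-r a|\geq\frac{1}{2}|1-a|$ for $\frac{1}{2}\leq r\leq1$, $|a|\leq1$ does give a valid uniform majorant. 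Passing from $|\varphi_{rz_0}(w)|^{-2n}$ to $1$ is then handled not by dominated convergence but by the quantitative Rudin--Forelli estimate \eqref{eqn: Rudin-Forelli 3}, which shows the difference is $O((1-r^2)^{1/2})$. Once the boundary formula is established pointwise for $h\in\Hol(\overline{\bn})$, a final approximation argument (both sides depend boundedly on $h\in H^2(\sn)$ as $L^2(\sn)$-valued maps) gives the a.e.\ statement for general $h$.
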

	
	\begin{proof}
		Suppose $h\in\Hol(\overline{\bn})$. For $z\in\bn$, apply Lemma \ref{lem: BM hardy} with $\alpha=\beta=0$, $v(w)=g(w)h(w)$. Then we compute $T_g$ as follows,
		\begin{flalign*}
			T_gh(z)=&\int_{\sn}g(w)h(w)K_w(z)\frac{\intd\sigma(w)}{\sigma_{2n-1}}\\
			=&d_{0,0}(z)g(z)h(z)-\frac{1}{n}\int_{\bn}|\varphi_z(w)|^{-2n}\frac{\sum_{j=1}^n\bpartial_j\big[g(w)h(w)\big]\overline{(z_j-w_j)}}{1-\la w,z\ra}K_w(z)\intd\lambda_0(w)\\
			=&g(z)h(z)-\frac{1}{n}\int_{\bn}|\varphi_z(w)|^{-2n}\frac{\la\bpartial g(w),z-w\ra}{1-\la w,z\ra}h(w)K_w(z)\intd\lambda_0(w).
		\end{flalign*}
		This proves \eqref{eqn: Tg Hardy} for $h\in\Hol(\overline{\bn})$. The equation for general $h\in H^2(\sn)$ follows from approximation.
		
		For $h\in \Hol(\overline{\bn})$, $z\in\sn$ and $\frac{1}{2}<r<1$, we compute $T_gh(rz)$ 
		\[
		T_gh(rz)=g(rz)h(rz)-\frac{1}{n}\int_{\bn}|\varphi_{rz}(w)|^{-2n}\frac{\la\bpartial g(w),rz-w\ra}{1-\la w,rz\ra}h(w)K_w(rz)\intd\lambda_0(w).
		\]
		The first term on the right hand side tends to $g(z)h(z)$. In order to prove \eqref{eqn: Tg Hardy boundary}, it suffices to prove the following identity
		\begin{flalign}\label{eqn: temp 7}
			&\lim_{r\to1^-}\int_{\bn}|\varphi_{rz}(w)|^{-2n}\frac{\la\bpartial g(w),rz-w\ra}{1-\la w,rz\ra}h(w)K_w(rz)\intd\lambda_0(w)\\
			=&\int_{\bn}\frac{\la\bpartial g(w),z-w\ra}{1-\la w,z\ra}h(w)K_w(z)\intd\lambda_0(w).\nonumber
		\end{flalign}
		By Lemma \ref{lem: Mobius basics}, we have the following estimate
		\[
		\bigg|\frac{\la\bpartial g(w),rz-w\ra}{1-\la w,rz\ra}h(w)K_w(rz)\bigg|\lesssim\frac{|\varphi_{rz}(w)|}{|1-\la rz,w\ra|^{n+1/2}}.
		\]
		Therefore we have the following estimate of the integrals
		\begin{flalign*}
			&\bigg|\int_{\bn}|\varphi_{rz}(w)|^{-2n}\frac{\la\bpartial g(w),rz-w\ra}{1-\la w,rz\ra}h(w)K_w(rz)\intd\lambda_0(w)-\int_{\bn}\frac{\la\bpartial g(w),rz-w\ra}{1-\la w,rz\ra}h(w)K_w(rz)\intd\lambda_0(w)\bigg|\\
			\lesssim&\int_{\bn}\big(|\varphi_{rz}(w)|^{-2n}-1\big)\cdot\frac{|\varphi_{rz}(w)|}{|1-\la rz,w\ra|^{n+1/2}}\intd\lambda_0(w)\\
			\approx&\int_{\bn}|\varphi_{rz}(w)|^{-2n+1}\big(1-|\varphi_{rz}(w)|^2\big)\frac{1}{|1-\la rz,w\ra|^{n+1/2}}\intd\lambda_0(w)\\
			\lesssim&(1-|rz|^2)^{1/2}\to0,\quad r\to1^-.
		\end{flalign*}
		Here the last inequality follows from Lemma \ref{lem: Rudin Forelli generalizations} (3). Therefore, we have the following limit computation,
		\begin{flalign}\label{eqn: temp 8}
			&\lim_{r\to1^-}\int_{\bn}|\varphi_{rz}(w)|^{-2n}\frac{\la\bpartial g(w),rz-w\ra}{1-\la w,rz\ra}h(w)K_w(rz)\intd\lambda_0(w)\\
			=&\lim_{r\to1^-}\int_{\bn}\frac{\la\bpartial g(w),rz-w\ra}{1-\la w,rz\ra}h(w)K_w(rz)\intd\lambda_0(w).\nonumber
		\end{flalign}
		For $\frac{1}{2}<r<1$ and $|a|<1$, it is easy to verify that
		\[
		\frac{1}{|1-ra|}\leq\frac{2}{|1-a|}.
		\]
		So we have the following bounds
		\[
		\bigg|\frac{\la\bpartial g(w),rz-w\ra}{1-\la w,rz\ra}h(w)K_w(rz)\bigg|\lesssim\frac{|\varphi_{rz}(w)|}{|1-\la rz,w\ra|^{n+1/2}}\lesssim\frac{1}{|1-\la z,w\ra|^{n+1/2}}.
		\]
		Thus by the Dominated Convergence Theorem, we arrive at the following equation
		\begin{equation}\label{eqn: temp 10}
			\lim_{r\to1^-}\int_{\bn}\frac{\la\bpartial g(w),rz-w\ra}{1-\la w,rz\ra}h(w)K_w(rz)\intd\lambda_0(w)=\int_{\bn}\frac{\la\bpartial g(w),z-w\ra}{1-\la w,z\ra}h(w)K_w(z)\intd\lambda_0(w).
		\end{equation}
		Combining \eqref{eqn: temp 8} and \eqref{eqn: temp 10} gives \eqref{eqn: temp 7}. Thus \eqref{eqn: Tg Hardy boundary} holds pointwise for $h\in\Hol(\overline{\bn})$. The general case follows from approximation. This completes the proof of Lemma \ref{lem: Tg Hardy}.
	\end{proof}
	
	\begin{lem}\label{lem:Rfg1}
		Suppose $f, g\in\mathscr{C}^2(\overline{\bn})$. Then
		\[
		\sigma(f, g)=R_{f,g,1},
		\]
		where the operator $R_{f,g,1}$ is defined as follows.
		\[
		R_{f,g,1}h(\xi)=-\frac{1}{n^2}\int_{\bn}\int_{\bn}|\varphi_z(w)|^{-2n}\frac{\la\partial f(z),\overline{z-w}\ra\la\bpartial g(w),z-w\ra}{(1-\la w,z\ra)^2}K_w(z)K_z(\xi)\intd\lambda_0(w)\intd\lambda_0(z).
		\]
	\end{lem}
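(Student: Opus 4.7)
The plan is to prove the identity by applying integration-by-parts formulas twice, in the same spirit as the proof of Theorem \ref{thm: semi-commutator trace in dim 1} for the disk.

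First, I would start from the Szeg\H{o} representation $T_f T_g h(\xi) = \int_{\sn} f(z)(T_g h)(z) K_z(\xi) \frac{\intd\sigma(z)}{\sigma_{2n-1}}$, which is valid since $T_g h \in H^2(\sn)$ for $g\in L^\infty$. Substituting the boundary formula \eqref{eqn: Tg Hardy boundary} of Lemma \ref{lem: Tg Hardy} into this expression, the leading term $g(z)h(z)$ produces precisely $T_{fg}h(\xi)$, so subtracting yields
\[
\sigma(f,g)h(\xi) = -\frac{1}{n}\int_{\sn}\int_{\bn} f(z)\,\frac{\la\bpartial g(w),z-w\ra}{1-\la w,z\ra}\, h(w)\,K_w(z)\,K_z(\xi)\,\intd\lambda_0(w)\,\frac{\intd\sigma(z)}{\sigma_{2n-1}}.
\]

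Second, I would apply Lemma \ref{lem: BM hardy}(2) with $\alpha=0$, $\beta=e_j$ to the outer $\sn$-integral, choosing $v(z)=f(z)\,K_z(\xi)/(1-\la w,z\ra)$. The crucial observation is that both $K_z(\xi)=(1-\la\xi,z\ra)^{-n}$ and $(1-\la w,z\ra)^{-1}$ are anti-holomorphic in $z$, so the holomorphic derivative acts only on $f$: $\partial_i v(z) = \partial_i f(z)\cdot K_z(\xi)/(1-\la w,z\ra)$. Moreover, the boundary term $d_{e_j,0}(w)v(w)$ vanishes by the unitary invariance of the sphere measure (the integrand defining $d_{e_j,0}$ is linear in $\zeta$). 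Lemma \ref{lem: BM hardy}(2) then converts the $\sn$-integral into a $\bn$-integral; summing the $j$-index against $\bpartial_j g(w)$ reassembles $\la\bpartial g(w),z-w\ra$, the $i$-sum yields $\la\partial f(z),\overline{z-w}\ra$, and the extra $(1-\la w,z\ra)^{-1}$ factor in the Lemma combines with the pre-existing one to give $(1-\la w,z\ra)^{-2}$. The resulting expression is precisely $R_{f,g,1}h(\xi)$.

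The main obstacle is justifying Fubini at each step. For the initial reordering of the $\sn\times\bn$ integral, I would bound the integrand using Lemma \ref{lem: Mobius basics}: $|\la\bpartial g(w),z-w\ra|\lesssim |z-w|\lesssim |1-\la z,w\ra|^{1/2}$, giving a kernel bounded by $|1-\la w,z\ra|^{-n-1/2}|1-\la\xi,z\ra|^{-n}$, which is integrable by Lemma \ref{lem: Rudin Forelli generalizations} when $h\in\mathrm{Hol}(\overline{\bn})$. For the final double integral over $\bn\times\bn$ appearing in $R_{f,g,1}$, I would use Lemma \ref{lem: Mobius basics}(6) to control $|\varphi_z(w)|^{-2n}|z-w|^2\lesssim|\varphi_z(w)|^{-2n+2}|1-\la z,w\ra|$ together with Lemma \ref{lem: Rudin Forelli generalizations}(3) to see that the resulting integrals converge absolutely. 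Having established the identity on the dense subset $\mathrm{Hol}(\overline{\bn})\subset H^2(\sn)$, a routine continuity argument (using that both $\sigma(f,g)$ and $R_{f,g,1}$ are bounded operators, the latter by the Schatten estimates in Corollary \ref{cor: quatization schatten dim n} suitably adapted to $t=-1$) extends the formula to all of $H^2(\sn)$.
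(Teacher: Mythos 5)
Your proof is correct and takes essentially the same route as the paper: insert the boundary formula \eqref{eqn: Tg Hardy boundary} from Lemma \ref{lem: Tg Hardy}, apply Fubini, and convert the inner $\sn$-integral via Lemma \ref{lem: BM hardy}(2) into the $\bn$-integral defining $R_{f,g,1}$. The only cosmetic difference is that you apply the Bochner--Martinelli formula with $\alpha=0$, $\beta=e_j$ and $v(z)=f(z)K_z(\xi)/(1-\la w,z\ra)$, so the boundary term vanishes because $d_{e_j,0}=0$, whereas the paper takes $\alpha=\beta=0$ and keeps the factor $\la\bpartial g(w),z-w\ra$ inside $v$, so the boundary term vanishes because $v(w)=0$; after summing over $i$ and $j$ the two choices produce identical integrands.
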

	
	\begin{proof}
		Suppose $h\in\Hol(\overline{\bn})$. By Lemma \ref{lem: Tg Hardy}, for $z\in\sn$, we compute $T_gh$
		\[
		T_gh(z)=g(z)h(z)-\frac{1}{n}\int_{\bn}\frac{\la\bpartial g(w),z-w\ra}{1-\la w,z\ra}h(w)K_w(z)\intd\lambda_0(w).
		\]
		Therefore for $\xi\in\bn$, we have the following computation of $T_fT_g$,
		\begin{flalign*}
			&T_fT_gh(\xi)\\
			=&\int_{\sn}\bigg\{g(z)h(z)-\frac{1}{n}\int_{\bn}\frac{\la\bpartial g(w),z-w\ra}{1-\la w,z\ra}h(w)K_w(z)\intd\lambda_0(w)\bigg\}f(z)K_z(\xi)\frac{\intd\sigma(z)}{\sigma_{2n-1}}\\
			=&\int_{\sn}g(z)f(z)h(z)K_z(\xi)\frac{\intd\sigma(z)}{\sigma_{2n-1}}-\frac{1}{n}\int_{\sn}\int_{\bn}\frac{\la\bpartial g(w),z-w\ra}{1-\la w,z\ra}h(w)K_w(z)f(z)K_z(\xi)\intd\lambda_0(w)\frac{\intd\sigma(z)}{\sigma_{2n-1}}\\
			=&T_{fg}h(\xi)-\frac{1}{n}\int_{\sn}\int_{\bn}\frac{\la\bpartial g(w),z-w\ra}{1-\la w,z\ra}h(w)K_w(z)f(z)K_z(\xi)\intd\lambda_0(w)\frac{\intd\sigma(z)}{\sigma_{2n-1}}.
		\end{flalign*}
		Since $f, g, h$ are bounded, using Lemma \ref{lem: Rudin Forelli generalizations} we see that the double integral on the right converges absolutely. By Fubini's Theorem, we have
		\begin{equation*}
			\sigma(f,g)h(\xi)=-\frac{1}{n}\int_{\bn}\bigg\{\int_{\sn}\frac{f(z)\la\bpartial g(w),z-w\ra}{1-\la w,z\ra}K_w(z)K_z(\xi)\frac{\intd\sigma(z)}{\sigma_{2n-1}}\bigg\}h(w)\intd\lambda_0(w).
		\end{equation*}
		For fixed $w, \xi\in\bn$, applying \eqref{eqn: BM hardy d'} with $\alpha=\beta=0$, $v(z)=\frac{f(z)\la\bpartial g(w),z-w\ra}{1-\la w,z\ra}K_z(\xi)$ gives
		\begin{flalign*}
			&\int_{\sn}\frac{f(z)\la\bpartial g(w),z-w\ra}{1-\la w,z\ra}K_w(z)K_z(\xi)\frac{\intd\sigma(z)}{\sigma_{2n-1}}\\
			=&\frac{1}{n}\int_{\bn}|\varphi_z(w)|^{-2n}\frac{\la\partial f(z),\overline{z-w}\ra\la\bpartial g(w),z-w\ra}{(1-\la w,z\ra)^2}K_w(z)K_z(\xi)\intd\lambda_0(z).
		\end{flalign*}
		Therefore we get the following formula for $\sigma(f,g)$
		\begin{flalign*}
			&\sigma(f,g)h(\xi)\\
			=&-\frac{1}{n}\int_{\bn}\bigg\{\frac{1}{n}\int_{\bn}|\varphi_z(w)|^{-2n}\frac{\la\partial f(z),\overline{z-w}\ra\la\bpartial g(w),z-w\ra}{(1-\la w,z\ra)^2}K_w(z)K_z(\xi)\intd\lambda_0(z)\bigg\}h(w)\intd\lambda_0(w)\\
			=&-\frac{1}{n^2}\int_{\bn^2}|\varphi_z(w)|^{-2n}\frac{\la\partial f(z),\overline{z-w}\ra\la\bpartial g(w),z-w\ra}{(1-\la w,z\ra)^2}K_w(z)K_z(\xi)h(w)\intd\lambda_0(w)\intd\lambda_0(z).
		\end{flalign*}
		This completes the proof of Lemma \ref{lem:Rfg1}.
	\end{proof}

	\begin{defn}\label{defn: hardy Rabfg}
		Suppose $f, g\in\mathscr{C}^2(\overline{\bn})$ and $h\in L^2(\lambda_0)$.
		\begin{enumerate}
			\item Define
			\[
			Ph(\xi)=\int_{\bn}h(z)K_z(\xi)\intd\lambda_0(z),
			\]
			and
			\[
			\Gamma_{f,g}h(z)=-\frac{1}{n^2}\int_{\bn}|\varphi_z(w)|^{-2n}\frac{\la\partial f(z),\overline{z-w}\ra\la\bpartial g(w),z-w\ra}{(1-\la w,z\ra)^2}K_w(z)h(w)\intd\lambda_0(w).
			\]
			Then
			\[
			R_{f,g,1}=P\Gamma_{f,g}.
			\]
			\item Define
			\[
			A^0=\la Q_{\bar{z}}\partial f(z),\overline{(z-w)}\ra,\quad A^1=\la P_{\bar{z}}\partial f(z),\overline{(z-w)}\ra,
			\]
			\[
			B^0=\la Q_z\bpartial g(w), z-w\ra,\quad B^1=\la P_z\bpartial g(w), z-w\ra,
			\]
			and
			\[
			C^{a,b}(z,w)=A^aB^b,\quad a, b=0, 1.
			\]
			Then
			\begin{equation}\label{eqn: Cab estimate}
				|C^{a,b}(z,w)|\lesssim|1-\la z,w\ra|^{1+\frac{a+b}{2}},
			\end{equation}
			and
			\[
			\sum_{a,b=0,1}C^{a,b}(z,w)=\la\partial f(z),\overline{z-w}\ra\la\bpartial g(w),z-w\ra.
			\]
			\item Define
			\[
			\Gamma^{a,b}_{f,g}h(z)=-\frac{1}{n^2}\int_{\bn}|\varphi_z(w)|^{-2n}\frac{C^{a,b}(z,w)}{(1-\la w,z\ra)^2}K_w(z)h(w)\intd\lambda_0(w).
			\]
			Then
			\[
			\Gamma_{f,g}=\sum_{a,b=0,1}\Gamma^{a,b}_{f,g}.
			\]
			\item Define for $j=1,\ldots, n$,
			\[
			\Lambda_{g,j}h(z)=-\frac{1}{n^2}\int_{\bn}|\varphi_z(w)|^{-2n}(z_j-w_j)\frac{\la Q_z\bpartial g(w),z-w\ra}{(1-\la w,z\ra)^2}K_w(z)h(w)\intd\lambda_0(w).
			\]
			Then
			\[
			\Gamma^{0,0}_{f,g}=\sum_{j=1}^nM_{[Q_{\bar{z}}\partial f]_j}\Lambda_{g,j}.
			\]
		\end{enumerate}
	\end{defn}
	
	\begin{lem}\label{lem: hardy schatten memberships}
		Suppose $n\geq2$, $f, g\in\mathscr{C}^2(\overline{\bn})$, $u\in\mathscr{C}^1(\overline{\bn})$, and $a, b=0, 1$. Consider $P, \Gamma^{a,b}_{f,g}, \Lambda_{g,j}$ as operators on $L^2(\lambda_0)$. Then the following hold.
		\begin{enumerate}
			\item[(1)] $[P^{(0)}, M_u]\in\mathcal{S}^p$ for any $p>2n$.
			\item[(2)] $P\in\mathcal{S}^p$ for any $p>n$.
			\item[(3)] $[P, M_u]\in\mathcal{S}^p$ for any $p>\frac{2n}{3}$.
			\item[(4)] If $(a,b)\neq(0,0)$, then $\Gamma^{a,b}_{f,g}P^{(0)}\in\mathcal{S}^p$ for $p$ large enough, and $\Gamma^{a,b}_{f,g}P\in\mathcal{S}^p$ for some $p<n$.
			\item[(5)] For each $j$, $\Lambda_{g,j}$ is bounded.
			\item[(6)] For each $j$, $[\Lambda_{g,j}, M_u]P^{(0)}\in\mathcal{S}^p$ for any $p>2n$.
			\item[(7)] For each $j$, $[\Lambda_{g,j}, M_u]P\in\mathcal{S}^p$ for any $p>\frac{2n}{3}$.
		\end{enumerate}
	\end{lem}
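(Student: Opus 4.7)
The plan is to establish the seven assertions in order, reducing each to an integral-kernel estimate and then invoking the tools of Sections~\ref{sec: operator btw weighted spaces} and~\ref{sec: schatten criterion}. First, parts (1)--(3) are direct applications of the general Schatten criteria. The projection $P^{(0)}$ is an integral operator on $L^{2}(\lambda_{0})$ with kernel $K^{(0)}_{w}(z)=(1-\la z,w\ra)^{-n-1}$, which fits Corollaries~\ref{cor: integral schatten membership no phi} and~\ref{cor: commutator schatten membership no phi} with $t=0$ and $c=0$; this yields (1). Similarly, $P$ has kernel $K_{z}(\xi)=(1-\la\xi,z\ra)^{-n}$, corresponding to $t=0$ and $c=1$, and the same two corollaries immediately give (2) and (3).

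For (4)--(7), the starting point is the estimate
\[
|C^{a,b}(z,w)|\lesssim|\varphi_{z}(w)|^{2}\,|1-\la z,w\ra|^{1+\frac{a+b}{2}},\qquad a,b=0,1,
\]
which I would derive from Lemma~\ref{lem: Mobius basics}(6): the tangential brackets $\la Q_{\bar z}\partial f(z),\overline{z-w}\ra$ and $\la Q_{z}\bpartial g(w),z-w\ra$ each satisfy $\lesssim|\varphi_{z}(w)|\,|1-\la z,w\ra|^{1/2}$, whereas the normal brackets $\la P_{\bar z}\partial f(z),\overline{z-w}\ra$ and $\la P_{z}\bpartial g(w),z-w\ra$ each satisfy $\lesssim|\varphi_{z}(w)|\,|1-\la z,w\ra|$. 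Consequently the integral kernel of $\Gamma^{a,b}_{f,g}$ is bounded by $|\varphi_{z}(w)|^{-2n+2}|1-\la z,w\ra|^{-n-1+\frac{a+b}{2}}$, and Lemma~\ref{lem: integral operator Schatten membership} (applied with $a_{\mathrm{lem}}=-n+1$, $b_{\mathrm{lem}}=0$, $c_{\mathrm{lem}}=\tfrac{a+b}{2}$, $t=0$) gives $\Gamma^{a,b}_{f,g}P^{(0)}\in\mathcal{S}^{p}$ for every $p>2n$ whenever $(a,b)\neq(0,0)$, proving the first half of (4). An analogous estimate on the kernel of $\Lambda_{g,j}$ (writing $z_{j}-w_{j}$ as one coordinate of $z-w$ and using Lemma~\ref{lem: Mobius basics}(6) twice) produces the bound $|\varphi_{z}(w)|^{-2n+2}|1-\la z,w\ra|^{-n-1}$, and Lemma~\ref{lem: bounded operator using Schur's test} then yields (5). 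For (6), multiplying this kernel by the Lipschitz difference $u(w)-u(z)$ gains an extra half-power of $|1-\la z,w\ra|$ via $|z-w|\lesssim|1-\la z,w\ra|^{1/2}$ (or an extra full $|\varphi_{z}(w)|\,|1-\la z,w\ra|^{1/2}$ via the sharper form), and Lemma~\ref{lem: integral operator Schatten membership} with $c_{\mathrm{lem}}=\tfrac{1}{2}$ delivers membership in $\mathcal{S}^{p}$ for every $p>2n$.

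The second half of (4) and part (7) will be obtained by the following key observation: because $K_{z}(\xi)$ is holomorphic in $\xi$, the range of $P$ lies in $L^{2}_{a,0}(\bn)$, hence $P^{(0)}P=P$. Therefore
\[
\Gamma^{a,b}_{f,g}P=\bigl(\Gamma^{a,b}_{f,g}P^{(0)}\bigr)P,\qquad[\Lambda_{g,j},M_{u}]P=\bigl([\Lambda_{g,j},M_{u}]P^{(0)}\bigr)P,
\]
and Lemma~\ref{lem: holder inequality for operators} combined with the previously established facts $\Gamma^{a,b}_{f,g}P^{(0)},\,[\Lambda_{g,j},M_{u}]P^{(0)}\in\mathcal{S}^{q}$ for every $q>2n$ and $P\in\mathcal{S}^{r}$ for every $r>n$ gives membership in $\mathcal{S}^{p}$ whenever $\tfrac{1}{p}<\tfrac{1}{2n}+\tfrac{1}{n}=\tfrac{3}{2n}$, i.e.\ for every $p>\tfrac{2n}{3}$. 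Since $\tfrac{2n}{3}<n$ (using $n\geq 2$), the resulting $p$ may be chosen strictly below $n$, completing (4), and the same range $p>\tfrac{2n}{3}$ is exactly what is claimed in (7).

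The main obstacle is the bookkeeping of the $|\varphi_{z}(w)|$ versus $|1-\la z,w\ra|$ exponents: the singular factor $|\varphi_{z}(w)|^{-2n}$ inherited from $R_{f,g,1}$ (Lemma~\ref{lem:Rfg1}) must be compensated by the two tangential factors hidden in $\la Q_{\bar z}\partial f,\overline{z-w}\ra$ and $\la Q_{z}\bpartial g,z-w\ra$. Correctly identifying which of these factors gives a half-power and which gives a full power of $|1-\la z,w\ra|$, according to the tangential/normal split of Lemma~\ref{lem: Mobius basics}(6), is what makes Lemma~\ref{lem: integral operator Schatten membership} applicable with the sharp $c_{\mathrm{lem}}$ and what ultimately produces the critical threshold $p>\tfrac{2n}{3}$ in (3) and (7).
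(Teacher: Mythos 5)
Parts (1), (2), (4)--(7) of your proposal follow the same route as the paper and are correct: (1), (2) from Corollaries~\ref{cor: integral schatten membership no phi} and~\ref{cor: commutator schatten membership no phi}; (5) from Schur's test on the kernel bound $|\varphi_z(w)|^{-2n+2}|1-\la z,w\ra|^{-n-1}$; the first halves of (4), (6) from Lemma~\ref{lem: integral operator Schatten membership} and Corollary~\ref{cor: commutator schatten membership with phi}; and the rest of (4) together with (7) from the factorization $P=P^{(0)}P$, $P\in\mathcal{S}^r$ for all $r>n$, and H\"older's inequality.

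Part (3), however, has a genuine gap at $n=2$. Corollary~\ref{cor: commutator schatten membership no phi} requires $p\geq 2$ in addition to $p>\tfrac{n}{c+1/2}$, because its underlying Lemma~\ref{lem: Lp implies Schatten p } is an interpolation result confined to $p\geq 2$. With $c=1$ (the kernel of $P$), the threshold $\tfrac{n}{c+1/2}=\tfrac{2n}{3}$ exceeds $2$ only when $n\geq 3$; at $n=2$ the corollary yields $[P,M_u]\in\mathcal{S}^p$ only for $p\geq 2$, leaving the claimed range $\tfrac{4}{3}<p<2$ unproven. The factorization $P=P^{(0)}P$ does not rescue this, since $[P,M_u]=P^{(0)}[P,M_u]+[P^{(0)},M_u]P$ merely reproduces the unknown commutator. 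The paper closes the gap by exploiting self-adjointness of $P$ and the inclusion of its range in $L_{a,0}^2(\bn)$ to write $[P,M_u]=\big([M_{\bar{u}},P]P^{(0)}\big)^*-H^{(0)}_uP$; the Hankel term is handled by H\"older ($H^{(0)}_u\in\mathcal{S}^q$ for $q>2n$, $P\in\mathcal{S}^r$ for $r>n$), and for $[M_{\bar{u}},P]P^{(0)}$ an integration by parts via Lemma~\ref{lem: formula Bn} rewrites the kernel as a $\mathcal{G}^{(0)}_n1$-weighted integral against $\intd\lambda_1$, so that the embedding argument of Lemma~\ref{lem: Embedding Schatten membership} can be pushed to $p>\tfrac{2n}{3}$ below the interpolation barrier. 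Without supplying this $n=2$ case, your proof of (3) is incomplete.
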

	\begin{proof}
		By the integral formula of $P$ and the Bergman projection $P^{(0)}$, it is easy to see that (2) follows from Corollary \ref{cor: integral schatten membership no phi}, and (1) follows from Corollary \ref{cor: commutator schatten membership no phi}. If $n\geq3$ then (3) also follows from Corollary \ref{cor: commutator schatten membership no phi}. At $n=2$, notice that $P$ is self-adjoint and has range in $L_{a,0}^2(\bn)$. Thus we compute the commutator $[P, M_u]$
		\[
		[P, M_u]=P^{(0)}[P, M_u]+(1-P^{(0)})[P,M_u]=\bigg([M_{\bar{u}},P]P^{(0)}\bigg)^*-H^{(0)}_uP.
		\]
		By (2) and Corollary \ref{cor: hankel schatten membership}, the second term on the right belongs to $\mathcal{S}^p$ for any $p>\frac{2n}{3}$. For any $h\in\Hol(\overline{\bn})$, apply \eqref{eqn: formula Bn d''} with $t=0$, $\alpha=\beta=0$, $\phi=1$, and $v(w)=\big(\bar{u}(z)-\bar{u}(w)\big)h(w)(1-\la z,w\ra)$. Then we get the following expression
		\begin{flalign}\label{eqn: temp 11}
			&[M_{\bar{u}},P]h(z)\nonumber\\
			=&\int_{\bn}\big(\bar{u}(z)-\bar{u}(w)\big)h(w)K_w(z)\intd\lambda_0(w)\nonumber\\
			=&-\int_{\bn}\mathcal{G}^{(0)}_n1(|\varphi_z(w)|^2)\frac{(1-|w|^2)\la\bpartial v(w),z-w\ra}{1-\la w,z\ra}K^{(0)}_w(z)\intd\lambda_0(w)\\
			=&\int_{\bn}\mathcal{G}^{(0)}_n1(|\varphi_z(w)|^2)\bigg((1-\la z,w\ra)\la\bpartial\bar{u}(w),z-w\ra+\big(\bar{u}(z)-\bar{u}(w)\big)\la z,z-w\ra\bigg)\nonumber\\
			&~~~~~~~~\cdot\frac{1-|w|^2}{1-\la w,z\ra}h(w)K^{(0)}_w(z)\intd\lambda_0(w).\nonumber
		\end{flalign}
		Take
		\[
		F(z,w)=\frac{2}{n+1}\bigg((1-\la z,w\ra)\la\bpartial\bar{u}(w),z-w\ra+\big(\bar{u}(z)-\bar{u}(w)\big)\la z,z-w\ra\bigg)\frac{1}{1-\la w,z\ra}K^{(0)}_w(z).
		\]
		Substituting $F$ into the integral on the right hand side of Equation (\ref{eqn: temp 11}) gives  the following formula
		\[
		[M_{\bar{u}},P]h(z)=\int_{\bn}\mathcal{G}_n^{(0)}1(|\varphi_z(w)|^2)F(z,w)h(w)\intd\lambda_1(w).
		\]
		By Lemma \ref{lem: formula Bn R 0}, we have the following estimate 
		\[
		|\mathcal{G}^{(0)}_n1(s)|\lesssim s^{-n}.
		\]
		By Lemma \ref{lem: Mobius basics} and the fact that $\bar{u}\in\mathscr{C}^1(\overline{\bn})$, we obtain the following bound
		\[
		\big|F(z,w)\big|\lesssim|\varphi_z(w)|\frac{1}{|1-\la z,w\ra|^{n+1/2}}.
		\]
		For any $\epsilon>0$, split the map as follows.
		\[
		[M_{\bar{u}},P]P^{(0)}: L_{a,0}^2(\bn)\xrightarrow{E_{0,3-\epsilon}}L_{a,3-\epsilon}^2(\bn)\xrightarrow{T}L^2(\lambda_0),
		\]
		where $Th(z)$ is defined as in the last line of Equation \eqref{eqn: temp 11}. Then by the estimates above and Lemma \ref{lem: bounded operator using Schur's test}, $T$ is bounded. Thus by Lemma \ref{lem: Embedding Schatten membership}, $[M_{\bar{u}},P]P^{(0)}$ is in $\mathcal{S}^p$ for any $p>\frac{2n}{3}$. This proves (3).
		
		Let
		\[
		F_{\Gamma^{a,b}}(z,w)=-\frac{1}{n^2}\frac{C^{a,b}(z,w)}{(1-\la w,z\ra)^2}K_w(z),\quad z, w\in\bn,
		\]
		and
		\[
		F_{\Lambda_{g,j}}(z,w)=-\frac{1}{n^2}(z_j-w_j)\frac{\la Q_z\bpartial g(w),z-w\ra}{(1-\la w,z\ra)^2}K_w(z),\quad z, w\in\bn, j=1,\ldots, n.
		\]
		Then write
		\[
		\Gamma_{f,g}^{a,b}h(z)=\int_{\bn}|\varphi_z(w)|^{-2n}F_{\Gamma^{a,b}}(z,w)h(w)\intd\lambda_0(w),
		\]
		and
		\[
		\Lambda_{g,j}h(z)=\int_{\bn}|\varphi_z(w)|^{-2n}F_{\Lambda_{g,j}}(z,w)h(w)\intd\lambda_0(w).
		\]
		By \eqref{eqn: Cab estimate} and Lemma \ref{lem: Mobius basics}, we have the following bound
		\[
		\big|F_{\Gamma^{a,b}}(z,w)\big|\lesssim|\varphi_z(w)|^2\frac{1}{|1-\la z,w\ra|^{n+1-\frac{a+b}{2}}},
		\]
		and
		\[
		\big|F_{\Lambda_{g,j}}(z,w)\big|\lesssim|\varphi_z(w)|^2\frac{1}{|1-\la z,w\ra|^{n+1}}.
		\]
		Thus by Lemma \ref{lem: bounded operator using Schur's test}, $\Lambda_{g,j}$ is bounded. This proves (5). By Lemma \ref{lem: integral operator Schatten membership}, $\Gamma_{f,g}^{a,b}P^{(0)}\in\mathcal{S}^p$ for $p$ large enough. Since $P\in\mathcal{S}^p$ for any $p>n$, and $\Gamma^{a,b}_{f,g}P=\Gamma_{f,g}^{a,b}P^{(0)}P$, we have $\Gamma^{a,b}_{f,g}P\in\mathcal{S}^p$ for some $p<n$. This proves (4). Also, (6) follows from the above estimates and Corollary \ref{cor: commutator schatten membership with phi}, and (7) follows again from (6) and the equation $P=P^{(0)}P$. This completes the proof.
	\end{proof}
	
	\begin{lem}\label{lem: Phat}
		Denote $\hat{P}$ the operator from $L^2(\lambda_0)$ to $H^2(\sn)$ defined by the same integral formula as $P$. Then $\hat{P}=E_{-1,1}^*$.
	\end{lem}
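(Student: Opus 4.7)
The plan is to identify $E_{-1,1}^*$ explicitly by testing against reproducing kernels, and observe that the resulting formula coincides with the integral definition of $\hat P$. The proof should be very short.

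First, recall that $E_{-1,1}: H^2(\sn) \to L_{a,0}^2(\bn)$ is the inclusion of $H^2(\sn)$ viewed as a closed subspace of $L_{a,0}^2(\bn)$, and $L_{a,0}^2(\bn) \subset L^2(\lambda_0)$. So $E_{-1,1}^*: L^2(\lambda_0) \to H^2(\sn)$ is characterized by
\[
\la E_{-1,1}^* f, g\ra_{H^2(\sn)} = \la f, E_{-1,1}g\ra_{L^2(\lambda_0)} = \int_{\bn} f(z)\overline{g(z)}\intd\lambda_0(z), \quad f \in L^2(\lambda_0),\ g \in H^2(\sn).
\]
Since $H^2(\sn)$ is a reproducing kernel Hilbert space of holomorphic functions on $\bn$ with kernel $K_\xi(z) = (1-\la z,\xi\ra)^{-n}$, for any $\xi \in \bn$ the evaluation functional is $g \mapsto g(\xi) = \la g, K_\xi\ra_{H^2(\sn)}$.

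Applying this to $g = K_\xi$, I obtain for every $\xi \in \bn$,
\[
(E_{-1,1}^* f)(\xi) = \la E_{-1,1}^* f, K_\xi\ra_{H^2(\sn)} = \la f, K_\xi\ra_{L^2(\lambda_0)} = \int_{\bn} f(z)\overline{K_\xi(z)}\intd\lambda_0(z).
\]
Using $\overline{K_\xi(z)} = \overline{(1-\la z,\xi\ra)^{-n}} = (1-\la\xi,z\ra)^{-n} = K_z(\xi)$, this becomes
\[
(E_{-1,1}^* f)(\xi) = \int_{\bn} f(z) K_z(\xi)\intd\lambda_0(z) = \hat P f(\xi),
\]
which is exactly the defining formula of $\hat P$. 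This gives $\hat P = E_{-1,1}^*$ (and in particular confirms that the integral formula produces an element of $H^2(\sn)$, which was implicit in the statement).

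No step here is expected to be a real obstacle: it is purely a reproducing kernel manipulation. The only mild subtlety is the interpretation of $\hat P f$ as an element of $H^2(\sn)$ rather than merely a holomorphic function on $\bn$; this is taken care of automatically once the identification with $E_{-1,1}^*$ is established, since the adjoint lands in $H^2(\sn)$ by construction.
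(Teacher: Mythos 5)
Your proposal is correct and takes essentially the same approach as the paper: both rest on the adjoint relation and the reproducing property of the Hardy space kernel. The only cosmetic difference is that you test $E_{-1,1}^*$ directly against the kernel functions $K_\xi$, whereas the paper computes $\la \hat P f, g\ra_{H^2(\sn)}$ for a general $g\in H^2(\sn)$ and applies Fubini plus the reproducing property — your version sidesteps the explicit Fubini step.
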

	\begin{proof}
		For any $f\in L^2(\lambda_0)$, $g\in H^2(\sn)$, we compute $\la \hat{P}f, g\ra_{H^2(\sn)}$
		\begin{flalign*}
			\la \hat{P}f, g\ra_{H^2(\sn)}=&\int_{\sn}\int_{\bn}f(z)K_z(\xi)\intd\lambda_0(z)\overline{g(\xi)}\frac{\intd\sigma(\xi)}{\sigma_{2n-1}}\\
			=&\int_{\bn}\int_{\sn}\overline{g(\xi)}K_z(\xi)\frac{\intd\sigma(\xi)}{\sigma_{2n-1}}f(z)\intd\lambda_0(z)\\
			=&\int_{\bn}\overline{g(z)}f(z)\intd\lambda_0(z)\\
			=&\la f, E_{-1,1}g\ra_{L^2(\lambda_0)}.
		\end{flalign*}
		This completes the proof.
	\end{proof}
	
	\begin{proof}[{\bf Proof of Theorem \ref{thm: AS partial trace dimension n} ($t=-1$)}]
		For any $\tau\in S_n$, split the map
		\[
		\sigma(f_{\tau_1},g_1)\ldots\sigma(f_{\tau_n},g_n)=P\Gamma_{f_{\tau_1},g_1}P\Gamma_{f_{\tau_2},g_2}\ldots P\Gamma_{f_{\tau_n},g_n}
		\]
		as follows.
		\begin{equation}\label{eqn: temp 4}
			H^2(\sn)\xrightarrow{E_{-1,1}}L_{a,0}^2(\bn)\xrightarrow{\Gamma_{f_{\tau_1},g_1}P\Gamma_{f_{\tau_2},g_2}\ldots P\Gamma_{f_{\tau_n},g_n}P^{(0)}}L^2(\lambda_0)\xrightarrow{\hat{P}}H^2(\sn).
		\end{equation}
		By Lemma \ref{lem: Embedding Schatten membership} and Lemma \ref{lem: Phat}, the operators on the two ends of \eqref{eqn: temp 4} are in $\mathcal{S}^p$ for any $p>2n$. Thus it suffices to show that
		\[
		\sum_{\tau\in S_n}\sgn(\tau)\Gamma_{f_{\tau_1},g_1}P\Gamma_{f_{\tau_2},g_2}\ldots P\Gamma_{f_{\tau_n},g_n}P^{(0)}
		\]
		defines an operator in $\mathcal{S}^p(L^2(\lambda_0))$ for some $p<\frac{n}{n-1}$.
		\begin{notation}
		For operators $A, B$, temporarily write $A\sim _pB$ when $A-B$ is in $\mathcal{S}^p(L^2(\lambda_0))$ for some $p<\frac{n}{n-1}$. 
		\end{notation}
		Denote
		\[
		u_{i,j}(z)=[Q_{\bar{z}}\partial f_i(z)]_j.
		\]
		Then by Lemma \ref{lem: hardy schatten memberships}, we compute an element in the above sum
		\begin{flalign*}
			&\Gamma_{f_{\tau_1},g_1}P\Gamma_{f_{\tau_2},g_2}\ldots P\Gamma_{f_{\tau_n},g_n}P^{(0)}\\
			=&\bigg(\sum_{a,b=0,1}\Gamma^{a,b}_{f_{\tau_1},g_1}\bigg)P\bigg(\sum_{a,b=0,1}\Gamma^{a,b}_{f_{\tau_2},g_2}\bigg)\ldots P\bigg(\sum_{a,b=0,1}\Gamma^{a,b}_{f_{\tau_n},g_n}\bigg)P^{(0)}\\
			\sim_p&\Gamma^{0,0}_{f_{\tau_1},g_1}P\Gamma^{0,0}_{f_{\tau_2},g_2}\ldots P\Gamma^{0,0}_{f_{\tau_n},g_n}P^{(0)}\\
			=&\sum_{j_1,\ldots,j_n=1}^nM_{u_{\tau_1,j_1}}\Lambda_{g_1,j_1}PM_{u_{\tau_2,j_2}}\Lambda_{g_2,j_2}P\ldots M_{u_{\tau_{n},j_n}}\Lambda_{g_n,j_n}P^{(0)}\\
			=&\sum_{j_1,\ldots,j_n=1}^n\Lambda_{g_1,j_1}M_{u_{\tau_1,j_1}}PM_{u_{\tau_2,j_2}}\Lambda_{g_2,j_2}P\ldots M_{u_{\tau_{n},j_n}}\Lambda_{g_n,j_n}P^{(0)}\\
			&+\sum_{j_1,\ldots,j_n=1}^n[M_{u_{\tau_1,j_1}}, \Lambda_{g_1,j_1}]PM_{u_{\tau_2,j_2}}\Lambda_{g_2,j_2}P\ldots M_{u_{\tau_{n},j_n}}\Lambda_{g_n,j_n}P^{(0)}.
		\end{flalign*}
		Each operator in the second term contains $n-2$ copies of $P$ and one $[M_{u_{\tau_1,j_1}}, \Lambda_{g_1,j_1}]P$. By Lemma \ref{lem: hardy schatten memberships} (2)  and (7), it belongs to $\mathcal{S}^p$ for some $p<\frac{n}{n-1}$. Thus we compute the following sum,
		\begin{flalign*}
			&\sum_{j_1,\ldots,j_n=1}^nM_{u_{\tau_1,j_1}}\Lambda_{g_1,j_1}PM_{u_{\tau_2,j_2}}\Lambda_{g_2,j_2}P\ldots M_{u_{\tau_{n},j_n}}\Lambda_{g_n,j_n}P^{(0)}\\
			\sim_p&\sum_{j_1,\ldots,j_n=1}^n\Lambda_{g_1,j_1}M_{u_{\tau_1,j_1}}PM_{u_{\tau_2,j_2}}\Lambda_{g_2,j_2}P\ldots M_{u_{\tau_{n},j_n}}\Lambda_{g_n,j_n}P^{(0)}\\
			=&\sum_{j_1,\ldots,j_n=1}^n\Lambda_{g_1,j_1}PM_{u_{\tau_1,j_1}}M_{u_{\tau_2,j_2}}\Lambda_{g_2,j_2}P\ldots M_{u_{\tau_{n},j_n}}\Lambda_{g_n,j_n}P^{(0)}\\
			&+\sum_{j_1,\ldots,j_n=1}^n\Lambda_{g_1,j_1}[M_{u_{\tau_1,j_1}},P]M_{u_{\tau_2,j_2}}\Lambda_{g_2,j_2}P\ldots M_{u_{\tau_{n},j_n}}\Lambda_{g_n,j_n}P^{(0)}.
		\end{flalign*}
		By Lemma \ref{lem: hardy schatten memberships} (2) and (3), the last term belongs to $\mathcal{S}^p$ for some $p<\frac{n}{n-1}$. Therefore we have the following equation
		\begin{flalign*}
			&\sum_{j_1,\ldots,j_n=1}^nM_{u_{\tau_1,j_1}}\Lambda_{g_1,j_1}PM_{u_{\tau_2,j_2}}\Lambda_{g_2,j_2}P\ldots M_{u_{\tau_{n},j_n}}\Lambda_{g_n,j_n}P^{(0)}\\
			\sim_p&\sum_{j_1,\ldots,j_n=1}^n\Lambda_{g_1,j_1}PM_{u_{\tau_1,j_1}}M_{u_{\tau_2,j_2}}\Lambda_{g_2,j_2}P\ldots M_{u_{\tau_{n},j_n}}\Lambda_{g_n,j_n}P^{(0)}.
		\end{flalign*}
		Continuing like this, we obtain the following equation
		\begin{flalign*}
			&\sum_{j_1,\ldots,j_n=1}^nM_{u_{\tau_1,j_1}}\Lambda_{g_1,j_1}PM_{u_{\tau_2,j_2}}\Lambda_{g_2,j_2}P\ldots M_{u_{\tau_{n},j_n}}\Lambda_{g_n,j_n}P^{(0)}\\
			\sim_p&\sum_{j_1,\ldots,j_n=1}^n\Lambda_{g_1,j_1}P\Lambda_{g_2,j_2}P\ldots \Lambda_{g_n,j_n}M_{u_{\tau_1,j_1}}M_{u_{\tau_2,j_2}}\ldots M_{u_{\tau_{n},j_n}}P^{(0)}\\
			=:&\Theta_{f_{\tau_1},g_1,\ldots,f_{\tau_n},g_n}.
		\end{flalign*}
		As in the proof of Theorem \ref{thm: AS partial trace dimension n}, by writing $\Theta_{f_{\tau_1},g_1,\ldots,f_{\tau_n},g_n}$ as an integral operator, and by \eqref{eqn: Qpartial antisym =0}, we can show that
		\[
		\sum_{\tau\in S_n}\sgn(\tau)\Theta_{f_{\tau_1},g_1,\ldots,f_{\tau_n},g_n}=0.
		\]
		Therefore we conclude
		\[
		\sum_{\tau\in S_n}\sgn(\tau)\Gamma_{f_{\tau_1},g_1}P\Gamma_{f_{\tau_2},g_2}\ldots P\Gamma_{f_{\tau_n},g_n}P^{(0)}\in\mathcal{S}^p
		\]
		for some $p<\frac{n}{n-1}$. Thus by \eqref{eqn: temp 4}, we obtain
		\[
		[f_1,g_1,\ldots,f_n,g_n]^{\odd}=P\bigg(\sum_{\tau\in S_n}\sgn(\tau)\Gamma_{f_{\tau_1},g_1}P\Gamma_{f_{\tau_2},g_2}\ldots P\Gamma_{f_{\tau_n},g_n}P^{(0)}\bigg)E_{-1,1}
		\]
		is in the trace class. Since
		\[
		[f_1,g_1,\ldots,f_n,g_n]^{\odd}=\bigg([\bar{g_n},\bar{f_n},\ldots,\bar{g_1},\bar{f_1}]^{\even }\bigg)^*,
		\]
		the even anti-symmetric sum is also in the trace class. Finally, as in the proof of Corollary \ref{cor: AS sum trace membership and trace limit}, we obtain
		\[
		[T_{f_1}, T_{f_2},\ldots,T_{f_{2n}}]=\frac{1}{n!}\sum_{\tau\in S_{2n}}\sgn(\tau)[f_{\tau_1}, f_{\tau_2},\ldots,f_{\tau_{2n}}]^{\odd}\in\mathcal{S}^1.
		\]
		This completes the proof of Theorem \ref{thm: AS partial trace dimension n} for $t=-1$. 
	\end{proof}

	\section{Main Theorems}\label{sec: main}
	In this section, we prove the main results of this article. 
	\subsection{Helton-Howe cocycle}\label{sec:Helton-Howe}
	We are ready to prove the following main theorem.
	\begin{thm}\label{thm: main}
		Suppose $f_1, f_2, \ldots, f_{2n}\in\mathscr{C}^2(\overline{\bn})$ and $t\geq -1$. Then the following hold.
		\begin{enumerate}
			\item The antisymmetric sum $[\BTt_{f_1}, \BTt_{f_2}, \ldots, \BTt_{f_{2n}}]$ is in the trace class $\mathcal{S}^1$.
			\item
			\begin{equation}\label{eq:trace integral}
				\Tr[\BTt_{f_1}, \BTt_{f_2}, \ldots, \BTt_{f_{2n}}]=\frac{n!}{(2\pi i)^n}\int_{\bn}\intd f_1\wedge\intd f_2\wedge\ldots\wedge\intd f_{2n},
			\end{equation}
			which is independent of $t$.
		\end{enumerate}
	\end{thm}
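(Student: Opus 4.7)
My plan is to assemble Theorem \ref{thm: main} directly from the machinery already built up. Part (1), trace class membership, will be immediate from Corollary \ref{cor: AS sum trace membership and trace limit}, which was obtained by specializing Theorem \ref{thm: AS partial trace dimension n}. So the bulk of the work is proving that the trace in (2) is independent of $t$; once this is established, the explicit value follows by letting $t\to\infty$ and invoking Equation \eqref{eqn: AS sum trace limit} in Corollary \ref{cor: AS sum trace membership and trace limit}.

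To show $t$-independence, I will prove that
\[
\Tr[\BTt_{f_1},\BTt_{f_2},\ldots,\BTt_{f_{2n}}]=\Tr[\BTtp_{f_1},\BTtp_{f_2},\ldots,\BTtp_{f_{2n}}]
\]
for every $t\geq -1$, and then iterate. For $t>-1$, Lemma \ref{lem: AS sum trace t-tp} says the operator
\[
[\BTt_{f_1},\ldots,\BTt_{f_{2n}}]-[T^{(t+1,t)}_{f_1},\ldots,T^{(t+1,t)}_{f_{2n}}]
\]
is trace class on $\bert$ with zero trace, so
$\Tr[\BTt_{f_1},\ldots,\BTt_{f_{2n}}]=\Tr[T^{(t+1,t)}_{f_1},\ldots,T^{(t+1,t)}_{f_{2n}}]$. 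Since $[T^{(t+1,t)}_{f_1},\ldots,T^{(t+1,t)}_{f_{2n}}]$ is by construction the restriction to $\bert$ of the operator $[\BTtp_{f_1},\ldots,\BTtp_{f_{2n}}]$ on $L^2_{a,t+1}(\bn)$, and both are trace class by Corollary \ref{cor: AS sum trace membership and trace limit}, Lemma \ref{lem: trace on two spaces} applies and gives equality of traces. Combining the two steps yields $\Tr_t=\Tr_{t+1}$ for all $t>-1$.

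The Hardy case $t=-1$ requires a slightly different step, since $T^{(0,-1)}_f$ was not set up. Here I would use Lemma \ref{lem: AS sum trace t-tp hardy} and the operator $T^{(1,-1)}_f$ of Remark \ref{rem: T 1 -1}: Lemma \ref{lem: AS sum trace t-tp hardy} reduces $\Tr[T_{f_1},\ldots,T_{f_{2n}}]$ to $\Tr[T^{(1,-1)}_{f_1},\ldots,T^{(1,-1)}_{f_{2n}}]$, and since $[T^{(1,-1)}_{f_1},\ldots,T^{(1,-1)}_{f_{2n}}]$ is the restriction to $H^2(\sn)\subset L^2_{a,1}(\bn)$ of $[T^{(1)}_{f_1},\ldots,T^{(1)}_{f_{2n}}]$, another application of Lemma \ref{lem: trace on two spaces} gives $\Tr_{-1}=\Tr_1$. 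Coupled with the induction $\Tr_t=\Tr_{t+1}$ for $t>-1$, this shows the trace is independent of $t\in\{-1\}\cup(-1,\infty)$.

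Finally, having established that $\Tr[\BTt_{f_1},\ldots,\BTt_{f_{2n}}]$ does not depend on $t$, the limit formula \eqref{eqn: AS sum trace limit} of Corollary \ref{cor: AS sum trace membership and trace limit} forces
\[
\Tr[\BTt_{f_1},\ldots,\BTt_{f_{2n}}]=\lim_{s\to\infty}\Tr[T^{(s)}_{f_1},\ldots,T^{(s)}_{f_{2n}}]=\frac{n!}{(2\pi i)^n}\int_{\bn}\intd f_1\wedge\cdots\wedge\intd f_{2n},
\]
which is \eqref{eq:trace integral}. The only real obstacle at this stage is bookkeeping: checking that the hypotheses of Lemma \ref{lem: trace on two spaces} (both operators in trace class) are met at every step, which is precisely what Corollary \ref{cor: AS sum trace membership and trace limit} guarantees, and keeping careful track of the Hardy-case indices. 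All genuine analytic work has already been absorbed into Lemmas \ref{lem: AS sum trace t-tp}, \ref{lem: AS sum trace t-tp hardy}, Theorem \ref{thm: AS partial trace dimension n}, and Corollary \ref{cor: AS sum trace membership and trace limit}.
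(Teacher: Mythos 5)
Your proposal is correct and follows essentially the same route as the paper's own proof: Corollary \ref{cor: AS sum trace membership and trace limit} gives the trace class membership and the $t\to\infty$ limit, Lemmas \ref{lem: AS sum trace t-tp} and \ref{lem: AS sum trace t-tp hardy} handle the passage $\BTt_{f_i}\leadsto T^{(t+1,t)}_{f_i}$ (resp. $T_{f_i}\leadsto T^{(1,-1)}_{f_i}$), and Lemma \ref{lem: trace on two spaces} identifies the restricted trace with the trace one weight up. The bookkeeping point you flag at the end (that both sides must separately be trace class before equality of traces can be inferred from a zero-trace difference) is exactly what the paper checks.
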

	\begin{proof}
		By Lemma \ref{lem: AS sum trace t-tp hardy}, the operator on $H^2(\sn)$,
		\[
		[T_{f_1},T_{f_2},\ldots,T_{f_{2n}}]-[T^{(1,-1)}_{f_1},T^{(1,-1)}_{f_2},\ldots,T^{(1,-1)}_{f_{2n}}]
		\]
		is a trace class operator of zero trace. By Corollary \ref{cor: AS sum trace membership and trace limit}, the operator
		\[
		[T_{f_1},T_{f_2},\ldots,T_{f_{2n}}]
		\]
		is itself in the trace class. Thus we have the following of traces
		\begin{flalign*}
			\Tr[T_{f_1},T_{f_2},\ldots,T_{f_{2n}}]
			=\Tr[T^{(1,-1)}_{f_1},T^{(1,-1)}_{f_2},\ldots,T^{(1,-1)}_{f_{2n}}].
		\end{flalign*}
		On the other hand, by Corollary \ref{cor: AS sum trace membership and trace limit}, the operator
		\[
		[T^{(1)}_{f_1},\ldots,T^{(1)}_{f_{2n}}]
		\]
		is a trace class operator on $L_{a,1}^2(\bn)$. Since
		\[
		[T^{(1)}_{f_1},\ldots,T^{(1)}_{f_{2n}}]\big|_{H^2(\sn)}=[T^{(1,-1)}_{f_1},T^{(1,-1)}_{f_2},\ldots,T^{(1,-1)}_{f_{2n}}],
		\]
		by Lemma \ref{lem: trace on two spaces}, we get the following equation of traces
		\[
		\Tr[T^{(1,-1)}_{f_1},T^{(1,-1)}_{f_2},\ldots,T^{(1,-1)}_{f_{2n}}]=\Tr[T^{(1)}_{f_1},\ldots,T^{(1)}_{f_{2n}}].
		\]
		Therefore we arrive at the following equation
		\begin{equation*}
			\Tr[T_{f_1},T_{f_2},\ldots,T_{f_{2n}}]=\Tr[T^{(1)}_{f_1},\ldots,T^{(1)}_{f_{2n}}].
		\end{equation*}
		Thus the case of the Hardy space reduces to that of the weighted Bergman space.
		
		Suppose $t>-1$, by Lemma \ref{lem: AS sum trace t-tp}, the operator on $\bert$
		\[
		[\BTt_{f_1},\BTt_{f_2},\ldots,\BTt_{f_{2n}}]-[\BTtpt_{f_1},\BTtpt_{f_2},\ldots,\BTtpt_{f_{2n}}]
		\]
		is a trace class operator of zero trace. By Corollary \ref{cor: AS sum trace membership and trace limit}, we know  that the antisymmetrization $[\BTt_{f_1},\BTt_{f_2},\ldots,\BTt_{f_{2n}}]$
		is itself in the trace class. Thus so does $[\BTtpt_{f_1},\BTtpt_{f_2},\ldots,\BTtpt_{f_{2n}}]$. Also by Corollary \ref{cor: AS sum trace membership and trace limit},
		\[
		[\BTtp_{f_1},\BTtp_{f_2},\ldots,\BTtp_{f_{2n}}]
		\]
		is a trace class operator on $L_{a,t+1}^2(\bn)$. Clearly
		\[
		[\BTtp_{f_1},\BTtp_{f_2},\ldots,\BTtp_{f_{2n}}]\big|_{\bert}= [\BTtpt_{f_1},\BTtpt_{f_2},\ldots,\BTtpt_{f_{2n}}].
		\]
		Thus by Lemma \ref{lem: trace on two spaces}, we have the following equation
		\[
		\Tr[\BTtp_{f_1},\BTtp_{f_2},\ldots,\BTtp_{f_{2n}}]=\Tr[\BTtpt_{f_1},\BTtpt_{f_2},\ldots,\BTtpt_{f_{2n}}].
		\]
		Therefore we conclude with the following identity
		\begin{flalign*}
			&\Tr[\BTt_{f_1},\BTt_{f_2},\ldots,\BTt_{f_{2n}}]\\
			=&\Tr[\BTtpt_{f_1},\BTtpt_{f_2},\ldots,\BTtpt_{f_{2n}}]\\
			=&\Tr[\BTtp_{f_1},\BTtp_{f_2},\ldots,\BTtp_{f_{2n}}].
		\end{flalign*}
		This holds for any $t>-1$. Thus by Corollary \ref{cor: AS sum trace membership and trace limit}, we have shown
		\[
		\Tr[\BTt_{f_1},\BTt_{f_2},\ldots,\BTt_{f_{2n}}]=\lim_{k\to\infty}\Tr[T^{(t+k)}_{f_1}, T^{(t+k)}_{f_2},\ldots, T^{(t+k)}_{f_{2n}}]=\frac{n!}{(2\pi i)^n}\int_{\bn}\intd f_1\wedge\intd f_2\wedge\ldots\wedge\intd f_{2n}.
		\]
		This completes the proof of Theorem \ref{thm: main}.
	\end{proof}
	
	\subsection{The Connes-Chern Character}\label{subsec: connes chern}
As mentioned in Remark \ref{rem: even odd AS}, in this subsection, we consider the Connes-Chern character at $p>n$.
\begin{prop}\label{prop: many sigma_t}
Suppose $p\geq n+1$ is an integer and $f_1, g_1, \ldots, f_p, g_p\in\mathscr{C}^2(\overline{\bn})$. Then for any $t>-1$, the product $\sigma_t(f_1,g_1)\sigma_t(f_2,g_2)\ldots\sigma_t(f_p,g_p)$ is in the trace class, and
\begin{flalign*}
&\lim_{t\to\infty}t^{p-n}\Tr\bigg(\sigma_t(f_1,g_1)\sigma_t(f_2,g_2)\ldots\sigma_t(f_p,g_p)\bigg)\\
=&\frac{n^p}{\pi^n}\int_{\bn}\prod_{j=1}^pC_1(f_j,g_j)(z)\frac{\intd m(z)}{(1-|z|^2)^{n+1}}\\
=&\frac{(-1)^p}{\pi^n}\int_{\bn}\prod_{j=1}^p\bigg[\sum_{i=1}^n\partial_if_j(z)\bpartial_ig_j(z)-Rf_j(z)\bar{R}g_j(z)\bigg](1-|z|^2)^{p-n-1}\intd m(z).
\end{flalign*}
\end{prop}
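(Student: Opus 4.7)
The plan is to reduce the problem to the trace of a single Toeplitz operator with a symbol vanishing on the boundary, and then evaluate that trace explicitly through the Berezin reproducing kernel formula.

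First I would dispatch the trace class membership. By Theorem~\ref{thm: quantization bergman}, each factor $\sigma_t(f_i,g_i)=R^{(t)}_{f_i,g_i,1}$ lies in $\mathcal{S}^q$ for every $q>n$, with $\|\sigma_t(f_i,g_i)\|_{\mathcal{S}^q}\lesssim t^{-1+n/q}$. Since $p\geq n+1>n$, pick any $q\in(n,p)$; H\"older (Lemma~\ref{lem: holder inequality for operators}) then places the $p$-fold product in $\mathcal{S}^{q/p}\subset\mathcal{S}^1$, with trace norm $\lesssim t^{-p+np/q}$. For the limit I would substitute the two-term expansion $\sigma_t(f_i,g_i)=c_{1,t}T^{(t)}_{C_1(f_i,g_i)}+R^{(t)}_{f_i,g_i,2}$ into the product. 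Any term containing at least one remainder $R^{(t)}_{f_j,g_j,2}$ can be controlled by H\"older against the Schatten estimates $\|R^{(t)}_{f_j,g_j,2}\|_{\mathcal{S}^q}\lesssim t^{-2+n/q}$ (Corollary~\ref{cor: Rabfgk membership}, valid for $q$ large) and $\|c_{1,t}T^{(t)}_{C_1(f_i,g_i)}\|_{\mathcal{S}^q}\lesssim t^{-1+n/q}$, yielding trace norm $\lesssim t^{-p-1+n}$; after multiplication by $t^{p-n}$ this is $O(t^{-1})$ and drops out.

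Next I would replace the product $\prod_j T^{(t)}_{C_1(f_j,g_j)}$ by the single Toeplitz operator $T^{(t)}_{\prod_j C_1(f_j,g_j)}$. Each $C_1(f_j,g_j)$ factors as $(1-|z|^2)D_j$ with $D_j\in\mathscr{C}^1(\overline{\bn})$, so by iterated application of Corollary~\ref{cor: semicom with radial functions}, combined with the Schatten bound $\|T^{(t)}_{(1-|z|^2)^k h}\|_{\mathcal{S}^q}\lesssim t^{n/q}$ from Lemma~\ref{lem: Toeplitz Schatten Class}, one can telescope the product to $T^{(t)}_{\prod_j C_1(f_j,g_j)}$ modulo operators whose trace norm is $o(t^{n-p+1})$. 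This is essentially the mechanism used in the proof of Theorem~\ref{thm: AS partial trace dimension n}, equation~\eqref{eqn: TC1 into one}, but without the antisymmetrization step, since $p\geq n+1$ already supplies enough boundary vanishing to make every residual term harmless.

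Finally, since $\prod_j C_1(f_j,g_j)(z)\lesssim(1-|z|^2)^p$ and $p>n$, Lemma~\ref{lem: Toeplitz Schatten Class} places $T^{(t)}_{\prod_j C_1(f_j,g_j)}$ in $\mathcal{S}^1$, and its trace equals
\[
\frac{(n-1)!}{\pi^n B(n,t+1)}\int_{\bn}\prod_{j=1}^p C_1(f_j,g_j)(z)\,\frac{\intd m(z)}{(1-|z|^2)^{n+1}}
\]
by integrating the Berezin symbol. With $c_{1,t}=nt^{-1}+O(t^{-2})$ and $B(n,t+1)\sim(n-1)!\,t^{-n}$, the prefactor $t^{p-n}c_{1,t}^{p}/B(n,t+1)$ tends to $n^p/(n-1)!$, giving the first stated limit. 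Substituting the explicit formula~\eqref{eq:C1} for $C_1(f,g)$ produces the second expression. The principal technical obstacle is the telescoping step: one must track carefully, at each H\"older splitting, which factors retain the extra $(1-|z|^2)$ needed to invoke Corollary~\ref{cor: semicom with radial functions} (rather than the weaker Theorem~\ref{thm: quantization bergman}), in order to keep every semi-commutator error genuinely subleading with respect to the target order $t^{n-p}$.
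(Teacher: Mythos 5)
Your argument matches the paper's proof step for step: expand each $\sigma_t$ via Theorem \ref{thm: quantization bergman}, bound the remainders, telescope the Toeplitz product with Corollary \ref{cor: semicom with radial functions} and Lemma \ref{lem: Toeplitz Schatten Class}, and evaluate via the Berezin trace; your cleaner factorization $C_1(f,g)=(1-|z|^2)D$ (rather than the paper's $\phi_1 D_{f,g,1}+\phi_2 D_{f,g,2}$) is valid here since $p\geq n+1$ already makes the normal/tangential split superfluous. Two small fixes: in the trace-class step take $q=p$ exactly (Lemma \ref{lem: holder inequality for operators} requires Schatten indices $\geq 1$, so $q/p<1$ is outside its scope), and the telescoping error one must beat is $o(t^{n-p})$ after inserting $c_{1,t}^p$ --- your claimed $o(t^{n-p+1})$ is weaker than what is needed, though the estimates actually give $O(t^{n-p-1})$, which does suffice.
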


\begin{proof}
The proof is similar to Part 2 of Section \ref{subsec: pf of thm t>-1}. Recall that the following facts were used in Part 2 and follow from Theorem \ref{thm: quantization bergman} and its corollaries. For $f, g\in\mathscr{C}^2(\overline{\bn})$, we decompose $\sigma_t(f,g)$ as follows
\begin{equation}\label{eqn: eqn 0}
	\sigma_t(f,g)=R^{(t)}_{f,g,1}=c_{1,t}\BTt_{C_1(f,g)}+R^{(t)}_{f,g,2},
\end{equation}
and the following hold.
\begin{enumerate}
	\item[(1)] $c_{1,t}=nt^{-1}+O(t^{-2})$.
	\item[(2)] $C_1(f, g)=\phi_1D_{f,g,1}+\phi_2D_{f,g,2}$,
where
\[
\phi_i(z)=(1-|z|^2)^i, \quad i=1, 2,\ldots,
\]
and $D_{f,g,1}, D_{f,g,2}\in\mathscr{C}^1(\overline{\bn})$ is defined as in \eqref{eqn: Dfg1} and \eqref{eqn: Dfg2}.
	\item[(3)] $R^{(t)}_{f,g,i}\in\mathcal{S}^p,\quad i=1,2$,
and for large $t$, $\|R^{(t)}_{f,g,i}\|_{\mathcal{S}^p}\lesssim t^{-i+\frac{n}{p}},\quad i=1,2$.
\item[(4)] For $t$ large enough, $i, j=1, 2, \ldots$, and $u, v\in\mathscr{C}^1(\overline{\bn})$,
\begin{enumerate}
	\item $\BTt_u\BTt_v-\BTt_{uv}\in\mathcal{S}^p,~\forall p>n,\quad\text{ and }\|\BTt_u\BTt_v-\BTt_{uv}\|_{\mathcal{S}^p}\lesssim_p t^{-1+\frac{n}{p}},$
	\item $\BTt_{\phi_i}\BTt_{u}-\BTt_{\phi_iu}\in\mathcal{S}^p,~\forall p>\frac{n}{i+\frac{1}{2}}, \quad\text{ and }\|\BTt_{\phi_i}\BTt_{u}-\BTt_{\phi_iu}\|_{\mathcal{S}^p}\lesssim_p t^{-1+\frac{n}{p}},$
	\item $\BTt_{u}\BTt_{\phi_i}-\BTt_{\phi_iu}\in\mathcal{S}^p,~\forall p>\frac{n}{i+\frac{1}{2}}, \quad\text{ and }\|\BTt_{u}\BTt_{\phi_i}-\BTt_{\phi_iu}\|_{\mathcal{S}^p}\lesssim_p t^{-1+\frac{n}{p}},$
	\item $\BTt_{\phi_i}\BTt_{\phi_j}-\BTt_{\phi_i\phi_j}\in\mathcal{S}^p,~\forall p>\frac{n}{i+j},\quad\text{ and }\|\BTt_{\phi_i}\BTt_{\phi_j}-\BTt_{\phi_i\phi_j}\|_{\mathcal{S}^p}\lesssim_p t^{-1+\frac{n}{p}},$
	\item $\BTt_{\phi_iu}\in\mathcal{S}^p,~\forall p>\frac{n}{i},\quad\text{ and }\|\BTt_{\phi_iu}\|_{\mathcal{S}^p}\lesssim_p t^{\frac{n}{p}}.$
\end{enumerate}
\end{enumerate}
Iterating Lemma \ref{lem: holder inequality for operators} as in Remark \ref{rem: X1...Xn trace},  we have that the property
\[
\sigma_t(f_1,g_1)\sigma_t(f_2,g_2)\ldots\sigma_t(f_p,g_p)\in\mathcal{S}^1.
\]

 As in the proof of Theorem \ref{thm: AS partial trace dimension n}, we write $S\sim_aT$ when $S-T$ is in trace class with trace norm converging to $0$. Then by \eqref{eqn: eqn 0}, we compute
\begin{flalign*}
&\sigma_t(f_1,g_1)\sigma_t(f_2,g_2)\ldots\sigma_t(f_p,g_p)-c_{1,t}^p\BTt_{C_1(f_1,g_1)}\BTt_{C_1(f_2,g_2)}\ldots\BTt_{C_1(f_p,g_p)}\\
=&R^{(t)}_{f_1,g_1,1}R^{(t)}_{f_2,g_2,1}\ldots R^{(t)}_{f_p,g_p,1}-\big(R^{(t)}_{f_1,g_1,1}-R^{(t)}_{f_1,g_1,2}\big)\big(R^{(t)}_{f_2,g_2,1}-R^{(t)}_{f_2,g_2,2}\big)\ldots\big(R^{(t)}_{f_p,g_p,1}-R^{(t)}_{f_p,g_p,2}\big)\\
=&\sum\pm R^{(t)}_{f_1,g_1,i_1}R^{(t)}_{f_2,g_2,i_2}\ldots R^{(t)}_{f_p,g_p,i_p},
\end{flalign*}
where $i_1, i_2,\ldots, i_p\in\{1,2\}$ and at least one $i_k=2$. Again, applying Lemma \ref{lem: holder inequality for operators} inductively as in Remark \ref{rem: X1...Xn trace} gives the following bounds
\[
\|R^{(t)}_{f_1,g_1,i_1}R^{(t)}_{f_2,g_2,i_2}\ldots R^{(t)}_{f_p,g_p,i_p}\|\leq\|R^{(t)}_{f_1,g_1,i_1}\|_{\mathcal{S}^p}\ldots\|R^{(t)}_{f_p,g_p,i_p}\|_{\mathcal{S}^p}\lesssim t^{-i_1+\frac{n}{p}}\cdot\ldots\cdot t^{-i_p+\frac{n}{p}}\leq t^{-p-1+n}.
\]
We reach the following equation
\begin{equation}\label{eqn: eqn 1}
t^{p-n}\sigma_t(f_1,g_1)\sigma_t(f_2,g_2)\ldots\sigma_t(f_p,g_p)\sim_at^{p-n}c_{1,t}^p\BTt_{C_1(f_1,g_1)}\BTt_{C_1(f_2,g_2)}\ldots\BTt_{C_1(f_p,g_p)}.
\end{equation}
Also, by (1) and (4)-(e), we have the following equation
\begin{equation}\label{eqn: eqn 2}
t^{p-n}c_{1,t}^p\BTt_{C_1(f_1,g_1)}\BTt_{C_1(f_2,g_2)}\ldots\BTt_{C_1(f_p,g_p)}\sim_a n^pt^{-n}\BTt_{C_1(f_1,g_1)}\BTt_{C_1(f_2,g_2)}\ldots\BTt_{C_1(f_p,g_p)}.
\end{equation}
Write $u_{ij}=D_{f_i,g_i,j}$, $i=1,\ldots,p$, $j=1, 2$. Then we arrive at the following equation,
\[
t^{-n}\BTt_{C_1(f_1,g_1)}\BTt_{C_1(f_2,g_2)}\ldots\BTt_{C_1(f_p,g_p)}=\sum_{j_1,\ldots,j_p=1,2}t^{-n}\BTt_{\phi_{j_1}u_{1j_1}}\BTt_{\phi_{j_2}u_{2j_2}}\ldots\BTt_{\phi_{j_p}u_{pj_p}}.
\]
By (4), we have the following estimate
\begin{flalign*}
&t^{-n}\BTt_{\phi_{j_1}u_{1j_1}}\BTt_{\phi_{j_2}u_{2j_2}}\ldots\BTt_{\phi_{j_p}u_{pj_p}}\\
=&t^{-n}\big(\BTt_{\phi_{j_1}u_{1j_1}}-\BTt_{\phi_i}\BTt_{u_{1j_1}}\big)\BTt_{\phi_{j_2}u_{2j_2}}\ldots\BTt_{\phi_{j_p}u_{pj_p}}+t^{-n}\BTt_{\phi_{j_1}}\BTt_{u_{1j_1}}\BTt_{\phi_{j_2}u_{2j_2}}\ldots\BTt_{\phi_{j_p}u_{pj_p}}\\
\sim_a&t^{-n}\BTt_{\phi_{j_1}}\BTt_{u_{1j_1}}\BTt_{\phi_{j_2}u_{2j_2}}\ldots\BTt_{\phi_{j_p}u_{pj_p}}\\
&\ldots\\
\sim_a&t^{-n}\BTt_{\phi_{j_1}}\BTt_{u_{1j_1}}\BTt_{\phi_{j_2}}\BTt_{u_{2j_2}}\ldots\BTt_{\phi_{j_p}}\BTt_{u_{pj_p}}\\
=&t^{-n}\BTt_{\phi_{j_1}}[\BTt_{u_{1j_1}},\BTt_{\phi_{j_2}}]\BTt_{u_{2j_2}}\ldots\BTt_{\phi_{j_p}}\BTt_{u_{pj_p}}+t^{-n}\BTt_{\phi_{j_1}}\BTt_{\phi_{j_2}}\BTt_{u_{1j_1}}\BTt_{u_{2j_2}}\ldots\BTt_{\phi_{j_p}}\BTt_{u_{pj_p}}\\
\sim_a&t^{-n}\BTt_{\phi_{j_1}}\BTt_{\phi_{j_2}}\BTt_{u_{1j_1}}\BTt_{u_{2j_2}}\ldots\BTt_{\phi_{j_p}}\BTt_{u_{pj_p}}\\
&\ldots\\
\sim_a&t^{-n}\BTt_{\phi_{j_1}}\BTt_{\phi_{j_2}}\ldots\BTt_{\phi_{j_p}}\BTt_{u_{1j_1}}\BTt_{u_{2j_2}}\ldots\BTt_{u_{pj_p}}\\
=&t^{-n}\big(\BTt_{\phi_{j_1}}\BTt_{\phi_{j_2}}-\BTt_{\phi_{j_1+j_2}}\big)\BTt_{\phi_{j_3}}\ldots\BTt_{\phi_{j_p}}\BTt_{u_{1j_1}}\BTt_{u_{2j_2}}\ldots\BTt_{u_{pj_p}}\\
&\qquad \qquad \qquad \qquad +t^{-n}\BTt_{\phi_{j_1+j_2}}\BTt_{\phi_{j_3}}\ldots\BTt_{\phi_{j_p}}\BTt_{u_{1j_1}}\BTt_{u_{2j_2}}\ldots\BTt_{u_{pj_p}}\\
\sim_a&t^{-n}\BTt_{\phi_{j_1+j_2}}\BTt_{\phi_{j_3}}\ldots\BTt_{\phi_{j_p}}\BTt_{u_{1j_1}}\BTt_{u_{2j_2}}\ldots\BTt_{u_{pj_p}}\\
&\ldots\\
\sim_a&t^{-n}\BTt_{\phi_{j_1+j_2+\ldots+j_p}}\BTt_{u_{1j_1}}\BTt_{u_{2j_2}}\ldots\BTt_{u_{pj_p}}\\
=&t^{-n}\BTt_{\phi_{j_1+j_2+\ldots+j_p}}\big(\BTt_{u_{1j_1}}\BTt_{u_{2j_2}}-\BTt_{u_{1j_1}u_{2j_2}}\big)\BTt_{u_{3j_3}}\ldots\BTt_{u_{pj_p}}\\
&\qquad \qquad \qquad \qquad +t^{-n}\BTt_{\phi_{j_1+j_2+\ldots+j_p}}\BTt_{u_{1j_1}u_{2j_2}}\BTt_{u_{3j_3}}\ldots\BTt_{u_{pj_p}}\\
\sim_a&t^{-n}\BTt_{\phi_{j_1+j_2+\ldots+j_p}}\BTt_{u_{1j_1}u_{2j_2}}\BTt_{u_{3j_3}}\ldots\BTt_{u_{pj_p}}\\
&\ldots\\
\sim_a&t^{-n}\BTt_{\phi_{j_1+j_2+\ldots+j_p}}\BTt_{u_{1j_1}u_{2j_2}\ldots u_{pj_p}}\\
\sim_a&t^{-n}\BTt_{\phi_{j_1+j_2+\ldots+j_p}u_{1j_1}u_{2j_2}\ldots u_{pj_p}}.
\end{flalign*}
Adding up over $j_1,j_2,\ldots,j_p=1,2$, we get the following equation
\begin{equation}\label{eqn: eqn 3}
t^{-n}\BTt_{C_1(f_1,g_1)}\BTt_{C_1(f_2,g_2)}\ldots\BTt_{C_1(f_p,g_p)}\sim_a t^{-n}\BTt_{C_1(f_1,g_1)C_1(f_2,g_2)\ldots C_1(f_p,g_p)}.
\end{equation}
Combining \eqref{eqn: eqn 1} \eqref{eqn: eqn 2} and \eqref{eqn: eqn 3}, we arrive at the following equation
\begin{equation}\label{eqn: eqn 4}
t^{p-n}\sigma_t(f_1,g_1)\sigma_t(f_2,g_2)\ldots\sigma_t(f_p,g_p)\sim_a n^pt^{-n}\BTt_{C_1(f_1,g_1)C_1(f_2,g_2)\ldots C_1(f_p,g_p)}.
\end{equation}
Denote
\[
F=C_1(f_1,g_1)C_1(f_2,g_2)\ldots C_1(f_p,g_p).
\]
Then $|F(z)|\lesssim(1-|z|^2)^p$.
By \cite[Lemma 2.5]{TWZ:semicommutator} and \ref{lem: Rudin Forelli generalizations}, we compute $\Tr\big(n^pt^{-n}\BTt_{F}\big)$
\begin{flalign*}
&\Tr\big(n^pt^{-n}\BTt_{F}\big)\\
=&n^pt^{-n}\int_{\bn}\la\BTt_F\BKt_\xi,\BKt_\xi\ra\intd\lambda_t(\xi)\\
=&n^pt^{-n}\int_{\bn}\int_{\bn}F(z)\BKt_\xi(z)\BKt_z(\xi)\intd\lambda_t(z)\intd\lambda_t(\xi)\\
=&n^pt^{-n}\int_{\bn}\int_{\bn}F(z)\BKt_\xi(z)\BKt_z(\xi)\intd\lambda_t(\xi)\intd\lambda_t(z)\\
=&n^pt^{-n}\int_{\bn}F(z)\BKt_z(z)\intd\lambda_t(z)\\
=&n^pt^{-n}\frac{(n-1)!}{\pi^nB(n,t+1)}\int_{\bn}\frac{F(z)}{(1-|z|^2)^{n+1}}\intd m(z)\\
\to&\frac{n^p}{\pi^n}\int_{\bn}\frac{F(z)}{(1-|z|^2)^{n+1}}\intd m(z),\quad t\to\infty.
\end{flalign*}
This gives the first equation. The second equation follows from plugging in the formula of $C_1(f,g)$ in Remark \ref{rem: C1 formula for any basis}.
This completes the proof of Proposition \ref{prop: many sigma_t}.
\end{proof}

Recall that $C_1(f,g)-C_1(g,f)=\frac{-i}{n}\{f,g\}$. Thus Proposition \ref{prop: many sigma_t} implies the following.

\begin{cor}\label{cor: many commutators trace}
Suppose $p\geq n+1$ is an integer and $f_1,g_1,\ldots,f_p,g_p\in\mathscr{C}^2(\overline{\bn})$. Then for any $t\geq-1$, $[\BTt_{f_1},\BTt_{g_1}][\BTt_{f_2},\BTt_{g_2}]\ldots[\BTt_{f_p},\BTt_{g_p}]$ is in the trace class, and
\[
\lim_{t\to\infty}t^{p-n}\Tr\bigg([\BTt_{f_1},\BTt_{g_1}][\BTt_{f_2},\BTt_{g_2}]\ldots[\BTt_{f_p},\BTt_{g_p}]\bigg)=\frac{(-i)^p}{\pi^n}\int_{\bn}\prod_{j=1}^p\{f_j,g_j\}(z)\frac{\intd m(z)}{(1-|z|^2)^{n+1}}.
\]
\end{cor}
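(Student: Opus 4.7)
The plan is to reduce the corollary to Proposition \ref{prop: many sigma_t} via the identity $[\BTt_{f_j},\BTt_{g_j}]=\sigma_t(f_j,g_j)-\sigma_t(g_j,f_j)$. Expanding the product,
\[
\prod_{j=1}^{p}[\BTt_{f_j},\BTt_{g_j}]=\sum_{\epsilon\in\{0,1\}^p}(-1)^{|\epsilon|}\,\sigma_t(h_{1,\epsilon_1},h'_{1,\epsilon_1})\cdots\sigma_t(h_{p,\epsilon_p},h'_{p,\epsilon_p}),
\]
where $(h_{j,0},h'_{j,0})=(f_j,g_j)$ and $(h_{j,1},h'_{j,1})=(g_j,f_j)$. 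This is a finite sum of $2^p$ products of $p$ semi-commutators.

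First I would address trace-class membership. For $t>-1$, Theorem \ref{thm: quantization bergman} gives $\sigma_t(f,g)=R^{(t)}_{f,g,1}\in\mathcal{S}^q$ for every $q>n$; for $t=-1$, the same Schatten-class membership holds because $\sigma(f,g)=-H^{(-1)*}_{\bar f}H^{(-1)}_g$ and Corollary \ref{cor: hankel schatten membership} gives $H^{(-1)}_u\in\mathcal{S}^q$ for $q>2n$. Choosing $q=p\geq n+1$ in H\"older's inequality (Lemma \ref{lem: holder inequality for operators}) with the exponents $1/p+\cdots+1/p=1$, each product of $p$ such semi-commutators lies in $\mathcal{S}^1$, so each term in the expansion, hence the full commutator product, is trace class for all $t\geq-1$.

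Next I would take $t^{p-n}\operatorname{Tr}$ term by term (a finite sum commutes with limits) and invoke Proposition \ref{prop: many sigma_t} to obtain, as $t\to\infty$,
\[
\lim_{t\to\infty}t^{p-n}\operatorname{Tr}\prod_{j=1}^{p}[\BTt_{f_j},\BTt_{g_j}]=\frac{n^p}{\pi^n}\int_{\bn}\prod_{j=1}^{p}\Bigl(C_1(f_j,g_j)(z)-C_1(g_j,f_j)(z)\Bigr)\frac{\intd m(z)}{(1-|z|^2)^{n+1}}.
\]
Finally, the antisymmetry formula $C_1(f,g)-C_1(g,f)=\frac{-i}{n}\{f,g\}$ recorded in \eqref{eq:C1} / \eqref{eqn: c0 C0 C1} converts each factor in the integrand, producing the overall constant $\frac{n^p}{\pi^n}\cdot\frac{(-i)^p}{n^p}=\frac{(-i)^p}{\pi^n}$ and yielding the stated formula.

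There is no substantive obstacle once Proposition \ref{prop: many sigma_t} is in hand; the only routine care needed is verifying that the finite expansion step is legitimate (which is automatic because all $2^p$ terms are individually trace class with uniformly bounded $t^{p-n}\|\cdot\|_{\mathcal{S}^1}$) and that the Hardy case $t=-1$ is covered by the Schatten bound on $\sigma_{-1}(f,g)$ derived from Hankel estimates, since the limit only involves large $t$.
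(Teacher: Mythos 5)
Your proof is correct and takes essentially the same route as the paper, which dispatches this corollary in one line after Proposition \ref{prop: many sigma_t} using $[\BTt_f,\BTt_g]=\sigma_t(f,g)-\sigma_t(g,f)$ together with $C_1(f,g)-C_1(g,f)=\tfrac{-i}{n}\{f,g\}$; your multilinear expansion over $\epsilon\in\{0,1\}^p$ and the normalization $n^p\cdot(-i/n)^p=(-i)^p$ are exactly the missing details. One small citation slip: Corollary \ref{cor: hankel schatten membership} is stated only for $t>-1$, so for the Hardy case $t=-1$ you should instead invoke either the decomposition $\sigma_{-1}(f,g)=P\Gamma_{f,g}$ with $P\in\mathcal{S}^q$ for $q>n$ from Lemma \ref{lem: hardy schatten memberships}, or the classical Schatten-class bound for Hardy-space Hankel operators with Lipschitz symbols; either gives $\sigma_{-1}(f,g)\in\mathcal{S}^q$ for $q>n$ and the rest of your trace-class argument goes through unchanged.
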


We can compare the formula above with the following Dixmier trace formula obtained by Engli\v{s}, Guo and Zhang in \cite{En-Gu-Zh:toeplitz}.
\[
Tr_\omega[\BTt_{f_1},\BTt_{g_1}]\ldots[\BTt_{f_n},\BTt_{g_n}]=\frac{1}{n!}\int_{\sn}\prod_{j=1}^n\{f_j,g_j\}(z)\frac{\intd \sigma(z)}{\sigma_{2n-1}}.
\]

Also recall the identity
\[
\sigma_t(f,g)=-H^{(t)*}_{\bar{f}}H^{(t)}_g.
\]
Thus taking $f_i=\bar{g}, g_i=g, i=1,\ldots,p$ in Proposition \ref{prop: many sigma_t} gives the following asymptotic formula for Schatten-norm of Hankel operators.
\begin{cor}
Suppose $p\geq n+1$ is an integer, and $g\in\mathscr{C}^2(\overline{\bn})$. Then
\[
\lim_{t\to\infty}t^{p-n}\|H^{(t)}_{g}\|_{\mathcal{S}^{2p}}^{2p}=\frac{1}{\pi^n}\int_{\bn}\bigg[|\bpartial g(z)|^2-|\bar{R}g(z)|^2\bigg]^p(1-|z|^2)^{p-n-1}\intd m(z).
\]
\end{cor}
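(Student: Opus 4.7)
The plan is to specialize Proposition~\ref{prop: many sigma_t} to the choice $f_i=\bar{g}$, $g_i=g$ for every $i=1,\ldots,p$, and then rewrite both sides in terms of the Hankel operator $H^{(t)}_g$. Using the identity $\sigma_t(f,g)=-H^{(t)*}_{\bar f}H^{(t)}_g$ recorded just before the corollary, I would observe that $\sigma_t(\bar g,g)=-H^{(t)*}_gH^{(t)}_g=-|H^{(t)}_g|^2$, whence
\[
\sigma_t(\bar g,g)\,\sigma_t(\bar g,g)\cdots\sigma_t(\bar g,g)=(-1)^p\,|H^{(t)}_g|^{2p}.
\]
Taking the trace gives $\Tr\,\sigma_t(\bar g,g)^p=(-1)^p\,\|H^{(t)}_g\|_{\mathcal{S}^{2p}}^{2p}$, so that the left hand side of Proposition~\ref{prop: many sigma_t} under this specialization equals $(-1)^p t^{p-n}\|H^{(t)}_g\|_{\mathcal{S}^{2p}}^{2p}$.

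Next I would identify the integrand appearing on the right hand side of Proposition~\ref{prop: many sigma_t}. Since $\partial_{z_i}\bar g(z)=\overline{\bpartial_{z_i}g(z)}$, each term in the sum satisfies $\partial_i\bar g\cdot\bpartial_i g=|\bpartial_i g|^2$, so $\sum_{i=1}^n\partial_i\bar g\cdot\bpartial_i g=|\bpartial g|^2$. Likewise $R\bar g(z)=\sum_i z_i\,\overline{\bpartial_i g(z)}=\overline{\bar R g(z)}$, which yields $R\bar g\cdot\bar R g=|\bar R g|^2$. Consequently each of the $p$ factors in the product equals $|\bpartial g(z)|^2-|\bar R g(z)|^2$, and the right hand side of Proposition~\ref{prop: many sigma_t} becomes
\[
\frac{(-1)^p}{\pi^n}\int_{\bn}\bigl[|\bpartial g(z)|^2-|\bar R g(z)|^2\bigr]^p(1-|z|^2)^{p-n-1}\,\intd m(z).
\]

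Equating the two expressions and cancelling the common factor $(-1)^p$ yields the stated asymptotic. The argument is essentially a bookkeeping exercise; the only subtlety is to confirm that the two sign factors $(-1)^p$ (one from the Hankel identity, one from the explicit integrand $\prod[\partial f_j\bpartial g_j-Rf_j\bar Rg_j]$ in Proposition~\ref{prop: many sigma_t}) cancel rather than compound, which the computation above verifies. No additional analytic input beyond Proposition~\ref{prop: many sigma_t} is required.
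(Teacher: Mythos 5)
Your proposal is correct and follows exactly the paper's route: specialize Proposition \ref{prop: many sigma_t} to $f_i=\bar g$, $g_i=g$, invoke $\sigma_t(\bar g,g)=-H^{(t)*}_gH^{(t)}_g$, identify $\Tr\bigl((-|H^{(t)}_g|^2)^p\bigr)=(-1)^p\|H^{(t)}_g\|_{\mathcal S^{2p}}^{2p}$, and observe that the bracket collapses to $|\bpartial g|^2-|\bar R g|^2$ so the two $(-1)^p$ factors cancel. No gaps.
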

\begin{rem}\label{rem: hankel schatten norms}
There are profound study of Schatten-class membership and Schatten norm formulas for Hankel operators. See \cite{Ar-Fi-Ja-Pe:hankel, Fa-Xi:hankel, Fe-Ro:hankel, Peller:hankel, Zhu:schattenHankel} for Schatten-class membership criteria of Hankel operators. For $q=2, 4, 6$, Janson, Upmeier and Wallst\'{e}n \cite{Ja-Up-Wa:schatten-hankel} gave the following identity on the Hardy space of the unit disk.
\[
\|H_\phi\|_{\mathcal{S}^q}^q=c_q\int_{\tori}\int_{\tori}\frac{|\psi(\zeta)-\psi(\tau)|^q}{|\zeta-\tau|^2}\intd\sigma(\zeta)\intd\sigma(\tau),
\]
where $c_q$ are constants, and $\psi=(I-P)\phi$. In fact, it was shown that such identities hold only for $q=2, 4, 6$. Recently, Xia \cite{Xia:schatten-hankel} extended this formula to the open unit ball $\bn$.
\end{rem}

Finally, Proposition \ref{prop: many sigma_t} gives the following asymptotic formula for the Connes-Chern character at $p>n$.
\begin{thm}\label{thm: connes chern}
Suppose $p\geq n+1$ is an integer and $f_0, f_1,\ldots, f_{2p-1}\in\mathscr{C}^2(\overline{\bn})$. Set $f_{2p}:=f_0$. Then 
\[
\begin{split}
&\lim_{t\to\infty}t^{p-n}\tau_t(f_0,f_1,\ldots,f_{2p-1})\\
=&\frac{n^p}{\pi^n}\int_{\bn}\bigg(\prod_{j=0}^{p-1}C_1(f_{2j},f_{2j+1})(z)-\prod_{j=0}^{p-1}C_1(f_{2j+1},f_{2j+2})(z)\bigg)\frac{\intd m(z)}{(1-|z|^2)^{n+1}}.
\end{split}
\]
\end{thm}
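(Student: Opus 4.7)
The proof will essentially be a direct corollary of Proposition \ref{prop: many sigma_t}, so the plan is quite short. By definition,
\[
\tau_t(f_0,\ldots,f_{2p-1}) = \Tr\bigl(\sigma_t(f_0,f_1)\sigma_t(f_2,f_3)\cdots\sigma_t(f_{2p-2},f_{2p-1})\bigr) - \Tr\bigl(\sigma_t(f_1,f_2)\sigma_t(f_3,f_4)\cdots\sigma_t(f_{2p-1},f_0)\bigr),
\]
so the plan is to apply Proposition \ref{prop: many sigma_t} to each of the two traces separately and then subtract.

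For the first trace, I would take the pairs $(f_j,g_j) = (f_{2j-2},f_{2j-1})$ for $j=1,\ldots,p$. Since $p\geq n+1$ and each $f_i\in\mathscr{C}^2(\overline{\bn})$, Proposition \ref{prop: many sigma_t} gives both the trace class membership of $\sigma_t(f_0,f_1)\sigma_t(f_2,f_3)\cdots\sigma_t(f_{2p-2},f_{2p-1})$ and the asymptotic
\[
\lim_{t\to\infty}t^{p-n}\Tr\bigl(\sigma_t(f_0,f_1)\sigma_t(f_2,f_3)\cdots\sigma_t(f_{2p-2},f_{2p-1})\bigr) = \frac{n^p}{\pi^n}\int_{\bn}\prod_{j=0}^{p-1}C_1(f_{2j},f_{2j+1})(z)\,\frac{\intd m(z)}{(1-|z|^2)^{n+1}}.
\]
For the second trace, I would apply Proposition \ref{prop: many sigma_t} with the pairs $(f_j,g_j)=(f_{2j-1},f_{2j})$ for $j=1,\ldots,p-1$ and $(f_p,g_p)=(f_{2p-1},f_0)$ (interpreting the subscript $2p$ cyclically as $0$, which is exactly the convention implicit in the statement of the theorem where $f_{2p}:=f_0$). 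This yields
\[
\lim_{t\to\infty}t^{p-n}\Tr\bigl(\sigma_t(f_1,f_2)\cdots\sigma_t(f_{2p-1},f_0)\bigr) = \frac{n^p}{\pi^n}\int_{\bn}\prod_{j=0}^{p-1}C_1(f_{2j+1},f_{2j+2})(z)\,\frac{\intd m(z)}{(1-|z|^2)^{n+1}}.
\]

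Subtracting the two limits delivers the stated formula. Since both individual asymptotics exist as limits (not merely in some weaker sense) and they share the same normalization $t^{p-n}$, there is no subtlety in combining them — the limit of the difference equals the difference of the limits. There is no real obstacle here beyond verifying that the cyclic labelling convention on the second product matches the hypotheses of Proposition \ref{prop: many sigma_t}, which it does once one reads the indices modulo $2p$. Thus the only genuine work has already been done in the preceding proposition; the theorem is extracted from it by linearity of trace.
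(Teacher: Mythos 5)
Your proposal is correct and is precisely the argument the paper intends: Theorem \ref{thm: connes chern} is an immediate corollary of Proposition \ref{prop: many sigma_t}, obtained by applying it separately to each of the two traces in the definition of $\tau_t$, reading $f_{2p}$ cyclically as $f_0$ in the second, and subtracting the two limits.
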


	\bibliographystyle{plain}
	\bibliography{referenceHH}

\begin{thebibliography}{10}

\bibitem{Al-En:quantization}
S.~Twareque Ali and Miroslav Engli\v{s}.
\newblock Quantization methods: a guide for physicists and analysts.
\newblock {\em Rev. Math. Phys.}, 17(4):391--490, 2005.

\bibitem{specialfunctions}
George~E. Andrews, Richard Askey, and Ranjan Roy.
\newblock {\em Special functions}, volume~71 of {\em Encyclopedia of
  Mathematics and its Applications}.
\newblock Cambridge University Press, Cambridge, 1999.

\bibitem{Ar-Fi-Ja-Pe:hankel}
Jonathan Arazy, Stephen~D. Fisher, Svante Janson, and Jaak Peetre.
\newblock Membership of {H}ankel operators on the ball in unitary ideals.
\newblock {\em J. London Math. Soc. (2)}, 43(3):485--508, 1991.

\bibitem{Bau:BTquantization}
Wolfram Bauer.
\newblock Berezin-{T}oeplitz quantization and composition formulas.
\newblock {\em J. Funct. Anal.}, 256(10):3107--3142, 2009.

\bibitem{BG}
Richard Beals and Peter Greiner.
\newblock {\em Calculus on {H}eisenberg manifolds}, volume 119 of {\em Annals
  of Mathematics Studies}.
\newblock Princeton University Press, Princeton, NJ, 1988.

\bibitem{Be:quantization}
F.~A. Berezin.
\newblock Quantization.
\newblock {\em Izv. Akad. Nauk SSSR Ser. Mat.}, 38:1116--1175, 1974.

\bibitem{BMS}
Martin Bordemann, Eckhard Meinrenken, and Martin Schlichenmaier.
\newblock Toeplitz quantization of {K}\"{a}hler manifolds and {${\rm gl}(N)$},
  {$N\to\infty$} limits.
\newblock {\em Comm. Math. Phys.}, 165(2):281--296, 1994.

\bibitem{Bo-Gu:spectral}
L.~Boutet~de Monvel and V.~Guillemin.
\newblock {\em The spectral theory of {T}oeplitz operators}, volume~99 of {\em
  Annals of Mathematics Studies}.
\newblock Princeton University Press, Princeton, NJ; University of Tokyo Press,
  Tokyo, 1981.

\bibitem{Bo-Le-Ta-We:asymptotic}
Louis Boutet~de Monvel, Eric Leichtnam, Xiang Tang, and Alan Weinstein.
\newblock Asymptotic equivariant index of {T}oeplitz operators and relative
  index of {CR} structures.
\newblock In {\em Geometric aspects of analysis and mechanics}, volume 292 of
  {\em Progr. Math.}, pages 57--79. Birkh\"{a}user/Springer, New York, 2011.

\bibitem{Ca-Pi:exponential}
Richard~W. Carey and Joel~D. Pincus.
\newblock An exponential formula for determining functions.
\newblock {\em Indiana Univ. Math. J.}, 23:1031--1042, 1973/74.

\bibitem{Ca-Pi:mosaics}
Richard~W. Carey and Joel~D. Pincus.
\newblock Mosaics, principal functions, and mean motion in von {N}eumann
  algebras.
\newblock {\em Acta Math.}, 138(3-4):153--218, 1977.

\bibitem{Co:deformation}
Louis~A. Coburn.
\newblock Deformation estimates for the {B}erezin-{T}oeplitz quantization.
\newblock {\em Comm. Math. Phys.}, 149(2):415--424, 1992.

\bibitem{Co:BTquantization}
Louis~A. Coburn.
\newblock Berezin-{T}oeplitz quantization.
\newblock In {\em Algebraic methods in operator theory}, pages 101--108.
  Birkh\"{a}user Boston, Boston, MA, 1994.

\bibitem{Co:noncommutative}
Alain Connes.
\newblock Noncommutative differential geometry.
\newblock {\em Inst. Hautes \'{E}tudes Sci. Publ. Math.}, (62):257--360, 1985.

\bibitem{Connes:book}
Alain Connes.
\newblock {\em Noncommutative geometry}.
\newblock Academic Press, Inc., San Diego, CA, 1994.

\bibitem{Connes-Moscovici:local}
Alain Connes and Henri Moscovici.
\newblock The local index formula in noncommutative geometry.
\newblock {\em Geom. Funct. Anal.}, 5(2):174--243, 1995.

\bibitem{En:asymptotics}
Miroslav Engli\v{s}.
\newblock Asymptotics of the {B}erezin transform and quantization on planar
  domains.
\newblock {\em Duke Math. J.}, 79(1):57--76, 1995.

\bibitem{En:berezin}
Miroslav Engli\v{s}.
\newblock Berezin quantization and reproducing kernels on complex domains.
\newblock {\em Trans. Amer. Math. Soc.}, 348(2):411--479, 1996.

\bibitem{En:forelli-rudin}
Miroslav Engli\v{s}.
\newblock A {F}orelli-{R}udin construction and asymptotics of weighted
  {B}ergman kernels.
\newblock {\em J. Funct. Anal.}, 177(2):257--281, 2000.

\bibitem{En:weighted}
Miroslav Engli\v{s}.
\newblock Weighted {B}ergman kernels and quantization.
\newblock {\em Comm. Math. Phys.}, 227(2):211--241, 2002.

\bibitem{Englis2016Tquantization}
Miroslav Engli\v{s}.
\newblock An excursion into {B}erezin-{T}oeplitz quantization and related
  topics.
\newblock In {\em Quantization, {PDE}s, and geometry}, volume 251 of {\em Oper.
  Theory Adv. Appl.}, pages 69--115. Birkh\"{a}user/Springer, Cham, 2016.

\bibitem{En-Gu-Zh:toeplitz}
Miroslav Engli\v{s}, Kunyu Guo, and Genkai Zhang.
\newblock Toeplitz and {H}ankel operators and {D}ixmier traces on the unit ball
  of {$\Bbb C^n$}.
\newblock {\em Proc. Amer. Math. Soc.}, 137(11):3669--3678, 2009.

\bibitem{En-Ro:dixmier}
Miroslav Engli\v{s} and Richard Rochberg.
\newblock The {D}ixmier trace of {H}ankel operators on the {B}ergman space.
\newblock {\em J. Funct. Anal.}, 257(5):1445--1479, 2009.

\bibitem{En-Up:quantization-bsdomain}
Miroslav Engli\v{s} and Harald Upmeier.
\newblock Toeplitz quantization and asymptotic expansions for real bounded
  symmetric domains.
\newblock {\em Math. Z.}, 268(3-4):931--967, 2011.

\bibitem{En-Zh:hankel-domains}
Miroslav Engli\v{s} and Genkai Zhang.
\newblock Hankel operators and the {D}ixmier trace on strictly pseudoconvex
  domains.
\newblock {\em Doc. Math.}, 15:601--622, 2010.

\bibitem{En-Zh:hankel}
Miroslav Engli\v{s} and Genkai Zhang.
\newblock Hankel operators and the {D}ixmier trace on the {H}ardy space.
\newblock {\em J. Lond. Math. Soc. (2)}, 94(2):337--356, 2016.

\bibitem{Fa-Wa-Xi:helton}
Quanlei Fang, Yi~Wang, and Jingbo Xia.
\newblock The {H}elton-{H}owe trace formula for submodules.
\newblock {\em J. Funct. Anal.}, 281(1):Paper No. 108997, 26, 2021.

\bibitem{Fa-Xi:hankel}
Quanlei Fang and Jingbo Xia.
\newblock Schatten class membership of {H}ankel operators on the unit sphere.
\newblock {\em J. Funct. Anal.}, 257(10):3082--3134, 2009.

\bibitem{Fe-Ro:hankel}
Mark Feldman and Richard Rochberg.
\newblock Singular value estimates for commutators and {H}ankel operators on
  the unit ball and the {H}eisenberg group.
\newblock In {\em Analysis and partial differential equations}, volume 122 of
  {\em Lecture Notes in Pure and Appl. Math.}, pages 121--159. Dekker, New
  York, 1990.

\bibitem{HH1}
J.~William Helton and Roger~E. Howe.
\newblock Integral operators: commutators, traces, index and homology.
\newblock In {\em Proceedings of a {C}onference {O}perator {T}heory
  ({D}alhousie {U}niv., {H}alifax, {N}.{S}., 1973)}, pages 141--209. Lecture
  Notes in Math., Vol. 345, 1973.

\bibitem{HH2}
J.~William Helton and Roger~E. Howe.
\newblock Traces of commutators of integral operators.
\newblock {\em Acta Math.}, 135(3-4):271--305, 1975.

\bibitem{HoweQuantumMechanics}
Roger Howe.
\newblock Quantum mechanics and partial differential equations.
\newblock {\em J. Functional Analysis}, 38(2):188--254, 1980.

\bibitem{Howeaftermath}
Roger Howe.
\newblock Traces of commutators of integral operators---the aftermath.
\newblock In {\em Mathematical methods in systems, optimization, and control},
  volume 222 of {\em Oper. Theory Adv. Appl.}, pages 221--231.
  Birkh\"{a}user/Springer Basel AG, Basel, 2012.

\bibitem{Ja-Up-Wa:schatten-hankel}
Svante Janson, Harald Upmeier, and Robert Wallst\'{e}n.
\newblock Schatten-norm identities for {H}ankel operators.
\newblock {\em J. Funct. Anal.}, 119(1):210--216, 1994.

\bibitem{Karabegov}
Alexander~V. Karabegov.
\newblock Deformation quantizations with separation of variables on a
  {K}\"{a}hler manifold.
\newblock {\em Comm. Math. Phys.}, 180(3):745--755, 1996.

\bibitem{Kl-Le-92}
S\l~awomir Klimek and Andrzej Lesniewski.
\newblock Quantum {R}iemann surfaces. {I}. {T}he unit disc.
\newblock {\em Comm. Math. Phys.}, 146(1):103--122, 1992.

\bibitem{Le-Ta-We:poisson}
Eric Leichtnam, Xiang Tang, and Alan Weinstein.
\newblock Poisson geometry and deformation quantization near a strictly
  pseudoconvex boundary.
\newblock {\em J. Eur. Math. Soc. (JEMS)}, 9(4):681--704, 2007.

\bibitem{Luecking92}
Daniel~H. Luecking.
\newblock Characterizations of certain classes of {H}ankel operators on the
  {B}ergman spaces of the unit disk.
\newblock {\em J. Funct. Anal.}, 110(2):247--271, 1992.

\bibitem{Ma-Mari:BTquantization-kahler}
Xiaonan Ma and George Marinescu.
\newblock Berezin-{T}oeplitz quantization on {K}\"{a}hler manifolds.
\newblock {\em J. Reine Angew. Math.}, 662:1--56, 2012.

\bibitem{Ma-Zh:Tquantization}
Xiaonan Ma and Weiping Zhang.
\newblock Toeplitz quantization and symplectic reduction.
\newblock In {\em Differential geometry and physics}, volume~10 of {\em Nankai
  Tracts Math.}, pages 343--349. World Sci. Publ., Hackensack, NJ, 2006.

\bibitem{Peller:hankel}
V.~V. Peller.
\newblock Hankel operators of class {$\mathcal{S}_{p}$} and their applications
  (rational approximation, {G}aussian processes, the problem of majorization of
  operators).
\newblock {\em Mat. Sb. (N.S.)}, 113(155)(4(12)):538--581, 637, 1980.

\bibitem{Pi:commutators}
Joel~David Pincus.
\newblock Commutators and systems of singular integral equations. {I}.
\newblock {\em Acta Math.}, 121:219--249, 1968.

\bibitem{Ponge}
Rapha\"{e}l~S. Ponge.
\newblock Heisenberg calculus and spectral theory of hypoelliptic operators on
  {H}eisenberg manifolds.
\newblock {\em Mem. Amer. Math. Soc.}, 194(906):viii+ 134, 2008.

\bibitem{Rudinbookunitball}
Walter Rudin.
\newblock {\em Function theory in the unit ball of {$\Bbb C^n$}}.
\newblock Classics in Mathematics. Springer-Verlag, Berlin, 2008.
\newblock Reprint of the 1980 edition.

\bibitem{Sc:quantization-kahler}
Martin Schlichenmaier.
\newblock Berezin-{T}oeplitz quantization for compact {K}\"{a}hler manifolds.
  {A} review of results.
\newblock {\em Adv. Math. Phys.}, pages Art. ID 927280, 38, 2010.

\bibitem{Simon}
Barry Simon.
\newblock {\em Trace ideals and their applications}, volume 120 of {\em
  Mathematical Surveys and Monographs}.
\newblock American Mathematical Society, Providence, RI, second edition, 2005.

\bibitem{TWZ:semicommutator}
Xiang Tang, Yi~Wang, and Dechao Zheng.
\newblock Trace formula of semicommutators.
\newblock {\em ar{X}iv}.

\bibitem{Taylor}
Michael~E. Taylor.
\newblock Noncommutative microlocal analysis. {I}.
\newblock {\em Mem. Amer. Math. Soc.}, 52(313):iv+182, 1984.

\bibitem{Up:TBquantization}
Harald Upmeier.
\newblock Toeplitz-{B}erezin quantization and non-commutative differential
  geometry.
\newblock In {\em Linear operators ({W}arsaw, 1994)}, volume~38 of {\em Banach
  Center Publ.}, pages 385--400. Polish Acad. Sci. Inst. Math., Warsaw, 1997.

\bibitem{Xia:schatten-hankel}
Jingbo Xia.
\newblock An integral formula for {S}chatten norm on the {H}ardy space: the
  only high-dimensional case.
\newblock {\em J. Funct. Anal.}, 281(7):Paper No. 109113, 13, 2021.

\bibitem{Zhu:schattenHankel}
Kehe Zhu.
\newblock Schatten class {H}ankel operators on the {B}ergman space of the unit
  ball.
\newblock {\em Amer. J. Math.}, 113(1):147--167, 1991.

\bibitem{Zhubookspaces}
Kehe Zhu.
\newblock {\em Spaces of holomorphic functions in the unit ball}, volume 226 of
  {\em Graduate Texts in Mathematics}.
\newblock Springer-Verlag, New York, 2005.

\bibitem{Zhu:bookoperator}
Kehe Zhu.
\newblock {\em Operator theory in function spaces}, volume 138 of {\em
  Mathematical Surveys and Monographs}.
\newblock American Mathematical Society, Providence, RI, second edition, 2007.

\end{thebibliography}

\end{document}